\newtheorem{theorem}{Theorem}
\newtheorem{lemma}[theorem]{Lemma}
\newtheorem{remark}[theorem]{Remark}
\newtheorem{corollary}[theorem]{Corollary}
\newtheorem{proposition}[theorem]{Proposition}
\numberwithin{theorem}{section} \numberwithin{equation}{section}
\newcommand{\beq}{\begin{small} \begin{equation}}
\newcommand{\eeq}{\end{equation} \end{small}}
\newcommand{\beqn}{\begin{small} \begin{equation*}}
\newcommand{\eeqn}{\end{equation*} \end{small}}
\newcommand\scalemath[2]{\scalebox{#1}{\mbox{\ensuremath{\displaystyle #2}}}}
\begin{document}
\title[On the duality of F-theory and the CHL string]{On the duality of F-theory and the CHL string in seven dimensions}
\author{Adrian Clingher}
\address{Department of Mathematics and Statistics, University of Missouri - St. Louis, St. Louis, MO 63121}
\email{clinghera@umsl.edu}
\author{Andreas Malmendier}
\address{Department of Mathematics \& Statistics, Utah State University, Logan, UT 84322}
\email{andreas.malmendier@usu.edu}
\begin{abstract}
We show that the duality between F-theory and the CHL string in seven dimensions defines algebraic correspondences between K3 surfaces polarized by the rank-ten lattices $H \oplus N$ and $H\oplus E_8(-2)$. In the special case when the F-theory admits an additional anti-symplectic involution or, equivalently, the CHL string admits a symplectic one, both moduli spaces coincide. In this case, we derive an explicit parametrization for the F-theory compactifications dual to the CHL string, using an auxiliary genus-one curve, based on a construction given by Andr\'e Weil. 
 \end{abstract}
\keywords{K3 surface, Nikulin involution, F-theory, CHL string, string duality}
\subjclass[2020]{14J27, 14J28; 14J81}
\maketitle
\section{Introduction}
By standard lattice-theoretic observations \cite{MR633160}, one has the following lattice isomorphism:
\beq
\label{eqn:isom0}
 H \oplus E_8(-2) \ \cong \  H(2) \oplus N  \,.
\eeq 
Here, $E_8$ is the positive definite root lattices associated with the $E_8$ root systems, $H$ is the unique even unimodular hyperbolic rank-two lattice and $N$ is the negative definite rank-eight Nikulin lattice (see \cite{MR728142}*{Def.~5.3}, for a definition). The notation $L(\lambda)$ refers to the lattice obtained from $L$ after scaling of its bilinear form by $\lambda \in \mathbb{Z}$. Moreover, as proved in \cite{MR4069236}, the lattice isomorphism~(\ref{eqn:isom0}) implies that the lattice $H(2) \oplus  E_8(-2)$ admits two special overlattices, namely the lattice $H \oplus N$, necessarily of index four, and $H \oplus E_8(-2)$ of index two.
\par In the above contexts, we shall consider $\mathfrak{M}_{H \oplus N}$ and  $\mathfrak{M}_{H \oplus E_8(-2)}$, as moduli spaces of complex algebraic K3 surfaces with lattice polarizations of type $H \oplus N$ and  $H \oplus E_8(-2)$. Both these moduli spaces are 10-dimensional. And the K3 surfaces classified by them can be described explicitly, via Weierstrass models. The first K3 family was studied by van Geemen and Sarti \cite{MR2274533}. These K3 surfaces carry a canonical Jacobian elliptic fibration with an element of order two (or 2-torsion section) in its Mordell-Weil group, which, in turn, determines a special class of K3 involution referred to in the literature as van Geemen-Sarti involution. The K3 surfaces in the second family, associated with $H \oplus E_8(-2)$ polarizations, carry canonical Jacobian elliptic fibrations, but in this case one has compatible anti-symplectic involutions, with the property that the minimal resolution of the associated $\mathbb{Z}/2\mathbb{Z}$ 
quotient is a rational elliptic surface \cite{MR4069236}.
\par The lattice isomorphism (\ref{eqn:isom0}), via Hodge theory, implies then that the two K3 families from above are related via special algebraic correspondences. These correspondences are governed by markings of even-eight configurations in the N\'eron-Severi group $\mathrm{NS}(\mathcal{X})$ for $\mathcal{X} \in \mathfrak{M}_{H \oplus N}$ and pairs $\{ \pm \theta \} \subset \mathrm{Br}(\widetilde{\mathcal{Y}})$ of Brauer group elements for $\widetilde{\mathcal{Y}} \in \mathfrak{M}_{H \oplus D_4(-1)^{\oplus 2}}$. Here, $\widetilde{\mathcal{Y}}$ is the K3 surface obtained from the Nikulin construction using a canonical involution on $\mathcal{Y}$. These algebraic correspondences stem from classical constructions in the (mathematics) literature \cites{MR2166182, MR2818742, MR3995925,MR4069236, MR3833460}.  
\par The above mentioned construction has a remarkable consequence in string theory- it provides a mathematical framework for a class of string dualities linked to the so-called \emph{CHL string}, named after Chaudhuri, Hockney, and Lykken \cite{MR1351447}. The CHL string is obtained from the more familiar $E_8 \times E_8$ heterotic string on a torus $T^2$, as a certain $\mathbb{Z}/2\mathbb{Z}$ quotient.  The Narain construction shows that the physical CHL moduli space is identical with the moduli space $\mathfrak{M}_{H \oplus E_8(-2)}$  of K3 surfaces principally polarized by the lattice $H \oplus E_8(-2)$ \cites{MR1479699, MR1772271}. 
\par F-theory, i.e., compactifications of the type-IIB string theory in which the complex coupling varies over a base, is a powerful tool for analyzing the non-perturbative aspects of heterotic string compactifications \cites{MR1409284,MR1412112}.  The simplest F-theory constructions are K3 surfaces that admit Jacobian elliptic fibrations over $\mathbb{P}^1$. It is well known (see \cite{MR1797021}) that, for F-theory backgrounds with non-zero flux given by a $B$-field along the base curve $\mathbb{P}^1$, the value of this flux is quantized and fixed to half the K\"ahler class of $\mathbb{P}^1$. In geometric language, this structure is equivalent to a Jacobian elliptic fibration supported on the K3 surface, admitting a 2-torsion section, i.e., a van Geemen-Sarti involution. The existence of the van Geemen-Sarti involution then implies that the F-theory on the K3 surface is to be further compactified on a circle $S^1$ yielding a 7-dimensional compactification.
\par It follows from above that the K3 moduli spaces $\mathfrak{M}_{H \oplus N}$ and  $\mathfrak{M}_{H \oplus E_8(-2)}$ are the moduli space of F-theory models and of the dual of the CHL string in seven dimensions, respectively. In this article, we shall use classical geometrical notions - algebraic correspondences, even-eight configurations, and elements of the Brauer group - to give a precise mathematical framework for the F-theory/CHL string duality.
\par Particular situations of the above duality are also interesting mathematically. One may study F-theory vacua and CHL string backgrounds in the presence of additional structure.  Our framework then allows us to give explicit parameterizations for F-theory vacua and CHL string backgrounds in the presence of an additional involution.  For instance, a natural 6-dimensional subspace that is contained simultaneously in both aforementioned physical moduli spaces is the moduli space of K3 surfaces polarized by the lattice $H \oplus N_0(-1)$. Here, $N_0$ is a positive definite lattice of rank $12$, determinant $2^8$, and its Gram matrix will be computed explicitly. This is a subspace where the F-theory admits an additional anti-symplectic involution, and, on the CHL string side, one has an additional a symplectic involution. We derive an explicit parametrization for elements of the moduli space using an auxiliary genus-one curve. This parametrization is based on a construction by Andr\'e Weil \cite{MR717601}, in which the Abel-Jacobi map is used to obtain embeddings of genus-one curves as symmetric divisors of bi-degree $(2,2)$ in $\mathbb{F}_0 =\mathbb{P}^1 \times \mathbb{P}^1$. In fact, we show that the algebraic correspondences for the F-theory backgrounds and vacua of the CHL string are \emph{double-quadrics} obtained from double covers of $\mathbb{F}_0$, where the branching divisor consists of a symmetric divisor of bi-degree $(2,2)$ and an additional collection of lines.
\par The F-theory moduli space has another natural 6-dimensional subspace, namely the moduli space of K3 surfaces polarized by the lattice $\langle 2 \rangle  \oplus \langle -2 \rangle \oplus D_4(-1)^{\oplus 3}$. This special situation corresponds to the case when, on the F-theory side, surfaces admit an additional symplectic involution and, on the CHL string side,  a special anti-symplectic involution exist.  The K3 surfaces associated with the CHL string in the situation above also carry a beautiful geometric description: they are special \emph{double-sextic surfaces}, i.e., they can be obtained as minimal resolutions from double covers of the projective plane branched over a configuration of three distinct lines coincident in a point and an additional generic cubic. The latter divisor gives rise to an elliptic curve capturing part of the K3 moduli coordinates. This fact relates the current work to pervious work by the authors in \cite{Clingher:2020baq}. 

\par It is important to note that, in the two special examples described above, both involving 6-dimensional subspaces of $ \mathfrak{M}_{H \oplus N}$,  an elliptic curve naturally emerges.  And this is \emph{not} the elliptic curve from which the CHL string is constructed, but rather a Seiberg-Witten type curve that  parameterizes certain moduli of the F-theory/CHL vacua under consideration.
\par This article is structured as follows: in Section~\ref{sec:CHL} we review the physics of the duality between F-theory and the CHL string. We will then argue that the lattice polarized K3 surfaces under consideration correspond to F-theory models that are dual of the CHL string in seven dimensions. In Section~\ref{sec:constructions} we give a construction for families of lattice polarized K3 surfaces with canonical anti-symplectic and symplectic involution, respectively. In Section~\ref{sec:geometry} we prove that the duality between F-theory with discrete flux and the CHL string determines certain algebraic correspondences between pairs of K3 surfaces polarized by the rank-ten lattices $H \oplus N$ and $H\oplus E_8(-2)$, respectively. In Section~\ref{sec:specialization}, we restrict our attention to the moduli space of K3 surfaces polarized by the lattice $H \oplus N_0(-1)$, which is contained simultaneously in both $\mathfrak{M}_{H \oplus N}$ and  $\mathfrak{M}_{H \oplus E_8(-2)}$. We derive an explicit classification of these surfaces, using an auxiliary genus-one curve. In Section~\ref{sec:specialization2} we investigate a special region of the F-theory moduli space corresponding to K3 surfaces principally polarized by the lattice $\langle 2 \rangle  \oplus \langle -2 \rangle \oplus D_4(-1)^{\oplus 3}$. Some concluding remarks are included in Section~\ref{sec:summary}.
\par This article is based on several prior papers by the authors and their collaborators  \cites{MR2369941, MR2824841, CM:2018b, MR3767270, MR2935386, MR2854198, MR3366121, MR3712162,  MR3798883, MR3995925, MR3992148, MR4015343, MR4099481, CHM19, MR4160930, CMS:2020}, as well as several other works \cites{MR0429917, MR1013073, MR894512, MR1023921, MR1877757, MR1013162, MR1458752, MR2409557, MR1703212, MR2427457, MR3263663, MR2306633, MR728142, MR3563178, MR3010125, Clingher:2020baq}. 
\subsection*{Acknowledgments}
The authors would like to thank the referees for their insightful comments, in particular with regards to the correct physical interpretation of our results.  We would also like to thank Matthias Sch\"utt for help in correcting a mistake occurring in an earlier version of this article. A.C. acknowledges support from a UMSL Mid-Career Research Grant. A.M. acknowledges support from the Simons Foundation through grant no.~202367.
\section{The duality between F-theory and the CHL string}
\label{sec:CHL}
Compactifications of the type-IIB string theory in which the complex coupling varies over a base are generically referred to as F-theory \cites{MR1409284,MR1412112}. One of the simplest F-theory construction corresponds to K3 surfaces that are elliptically fibered over $\mathbb{P}^1$, in physics equivalent to a type-IIB string theory compactified on $\mathbb{P}^1$ and hence eight-dimensional in the presence of 7-branes \cite{MR3366121}. In this way, an elliptically fibered K3 surface with section and fiber $F=\mathbb{C}/(\mathbb{Z}\oplus \mathbb{Z}\tau)$ defines an F-theory vacuum in eight dimensions where the complex-valued scalar axio-dilaton field $\tau$ of the type-IIB string theory is allowed to be multi-valued and undergo monodromy transformations in $\mathrm{SL}(2,\mathbb{Z})$ when encircling defects of co-dimension one. Kodaira's table of singular fibers \cite{MR0184257} gives the precise dictionary between characteristics of the elliptic fibration and the content of the 7-branes present in the physical theory and the local monodromy of $\tau$. It is well-known that the moduli space of these F-theory models is isomorphic to the moduli space of the heterotic string compactified on a two-torus $T^2$ -- equipped with a complex structure and complexified K\"ahler modulus -- together with a principal $G$-bundle where $G$ is the gauge group of the heterotic string, i.e., $G=E_8 \times E_8 \rtimes \mathbb{Z}_2$ or $\mathrm{Spin}(32)/\mathbb{Z}_2$ or a subgroup of these \cites{MR796086, MR844696, MR800347, MR1154939}. In fact, the moduli spaces for these physical theories are given by the same Narain space which is the quotient of the symmetric space for $\mathrm{O}(2,18)$ by a particular arithmetic group \cite{MR834338}. This is the basic form of the F-theory/heterotic string duality.
\par one can also consider the \emph{CHL string} \cites{MR1615617,MR1375878, MR1621170, MR1797021}. The CHL string is obtained from an $E_8 \times E_8$-heterotic string as a $\mathbb{Z}_2$-quotient. The quotient is obtained from a specific involution, called the \emph{CHL involution} which is due to Chaudhuri and Polchinski in \cite{MR1375878}: the CHL involution $\imath_\mathrm{CHL}$ acts by a half-period shift on the elliptic curve (obtained from the two-torus $T^2$ equipped with the given complex structure), acts trivially on the complex K\"ahler modulus, and permutes the two $E_8$'s of the gauge bundle.  The moduli space for the CHL string compactified on the elliptic curve is then obtained as follows: first, the Narain construction yields the moduli space of the $E_8 \times E_8$-heterotic string compactified on $T^2$ as the double coset space
\beq
\label{eqn:moduli}
 \mathrm{O}(\mathrm{Nar}) \backslash \mathrm{O}(\mathrm{Nar} \otimes \mathbb{R})  / \mathrm{K} \,,
\eeq
where $\mathrm{Nar} =  H \oplus H \oplus E_8(-1) \oplus E_8(-1)$ is the Narain lattice and $\mathrm{K} \subset  \mathrm{O}(\mathrm{Nar} \otimes \mathbb{R})$ is a maximal compact subgroup. The CHL involution $\imath_\mathrm{CHL}:  \mathrm{Nar} \to \mathrm{Nar}$ acts on the Narain lattice as the identity on $H \oplus H$ while interchanging the two summands of $E_8(-1)$. It follows that the invariant sublattice of the Narain lattice is $\mathrm{Nar}^{\langle \imath_{\text{CHL}} \rangle} \cong H \oplus H \oplus E_8(-2)$. The restriction of~(\ref{eqn:moduli}) to the corresponding $\imath_\mathrm{CHL}$-invariant symmetric space thus gives rise to a moduli space of K3 surfaces, namely the moduli space of K3 surfaces with a transcendental lattice isometric to $H \oplus H \oplus E_8(-2)$; its complement in the K3 lattice $\Lambda_\mathrm{K3} \cong H^{\oplus 3} \oplus E_8(-1)^{\oplus 2}$ is isomorphic to $H\oplus E_8(-2)$. This suggests, that the moduli space of the CHL string is the 10-dimensional moduli space of K3 surfaces polarized by the lattice $H\oplus E_8(-2)$.
\par Bershadsky et al.~analyzed the F-theory compactifications dual to the CHL string in the case of an isotrivial elliptic fibration on the F-theory background  \cite{MR1797021}:  they found that the CHL string in eight dimensions is dual to an F-theory with non-zero flux of an antisymmetric two-form field, or $B$-field, along the base curve $\mathbb{P}^1$. The value of this flux is quantized and fixed to half the K\"ahler class of $\mathbb{P}^1$. This picture was then extended to the interior of the Narain moduli space of toroidal compactifications where the elliptic fibrations of the F-theory models are no longer isotrivial. The presence of the flux freezes eight of the moduli in the physical moduli space, leaving a 10-dimensional moduli space \cite{MR1615617}. The construction of Witten is known today as \emph{the frozen phase of F-theory} \cite{MR3861184}.
\par On this moduli space, the single-valued background of an antisymmetric two-form in the physical theory is not compatible with a monodromy of the half-periods of an elliptic fiber in $\mathrm{SL}(2,\mathbb{Z})$ around the defects. Rather it must be contained in a subgroup of $\mathrm{SL}(2,\mathbb{Z})$ that keeps the flux of the $B$-field invariant. It was argued in \cite{MR1797021}  that the biggest possible being the congruence subgroup $\Gamma_0(2)$. Then, for a consistent model one has to start with an elliptically fibered K3 surface with section and a monodromy group contained in $\Gamma_0(2)$ that keeps one of the three half-periods of the elliptic fibers invariant.  The corresponding Weierstrass model indeed has eight fibers of type $I_2$ and two sections, the zero-section and a 2-torsion section, arranged in such a way that at each reducible fiber of type $A_1$ the two sections pass trough two different components of the fiber. In physics this fibration is called the \emph{$\Gamma_0(2)$ elliptic fibration} \cite{MR1797021}. 
\par We employ a different point of view that is based on recent work of the authors in \cite{Clingher:2021}. There, all Jacobian elliptic K3 surfaces with finite automorphism group were classified. Concretely,  all lattices $L$ that may occur as N\'eron-Severi lattice for a Jacobian elliptic K3 surface $\mathcal{X}$ with finite automorphism group were given, and all Jacobian elliptic fibration(s) with the reducible fibers and Mordell-Weil groups supported on a very general $L$-polarized K3 surface were determined. 
\par Given a lattice $L$, one has the \emph{discriminant group} $A_L = L^\vee/L$ and its associated discriminant form, denoted $q_L$.  A lattice $L$ is then called \emph{2-elementary} if $A_L$ is a 2-elementary abelian group, i.e., $A_L \cong (\mathbb{Z}/2\mathbb{Z})^\ell$ where $\ell=\ell_L$ is the {\it length} of L,  the minimal number of generators of the group $A_L$.  One also has the {\it parity} $\delta_L \in \{ 0,1\}$. By definition, $\delta_{L} = 0$ if $q_L(x)$ takes values in $\mathbb{Z}/2\mathbb{Z} \subset \mathbb{Q}/2\mathbb{Z}$ for all $x \in A_L$, and $ \delta_L=1$ otherwise. In this context, a result of  Nikulin \cite{MR633160}*{Thm.~4.3.2}. and \cite{MR633160b}*{Thm.~4.3.2} asserts that  even, indefinite, 2-elementary lattices (that are sublattices of the K3 lattice) are uniquely determined by their rank $\rho_L$, length $\ell_L$, and parity $\delta_L$.  
\par One consequence of the classification by the authors in \cite{Clingher:2021} is that a Jacobian elliptic K3 surface with an element of order two (or 2-torsion section) in its Mordell-Weil group and finite automorphism group requires the N\'eron-Severi lattice $L$ to be \emph{2-elementary}.  The minimal rank of $L$ then is $\rho_L =10$, and the only such lattice is precisely $L = H \oplus N$. In turn, the only Jacobian elliptic fibration that is supported on a very general $H \oplus N$-polarized K3 surface is the aforementioned fibration with the singular fibers $8 I_2 + 8 I_1$ and the Mordell-Weil group $\mathbb{Z}/2\mathbb{Z}$.  We call this fibration the \emph{alternate fibration}. In \cite{Clingher:2021}*{Table 3,4} all 2-elementary lattices $L$ were classified such that the very general $L$-polarized K3 surface has a finite automorphism group and admits an alternate fibration.  For ranks $\rho_L$ between 10 and 18, every such 2-elementary lattice $L$ supports at most one alternate fibration. This gives a precise criterion for the existence of an alternate fibration and hence a van Geemen-Sarti involution.  Note that for Picard numbers 16, 17, and 18 the corresponding lattices are
\beqn
 H \oplus E_7(-1) \oplus E_7(-1), \quad H \oplus E_8(-1) \oplus E_7(-1), \quad \text{and} \quad H \oplus E_8(-1) \oplus E_8(-1),
\eeqn 
respectively, and the corresponding alternate fibrations are well known in string theory. The advantage of this approach is that it includes \emph{all} admissible sub-varieties of the moduli spaces of $H \oplus N$-polarized K3 surfaces. For example, the lattice $H \oplus N$ admits an embedding into the lattice $L=H \oplus D_8(-1) \oplus D_4(-1)$ of rank 14. A very general $L$-polarized K3 surface admits two distinct Jacobian elliptic fibrations: the first one has the singular fibers $I_4^* + I_0^* + 8 I_1$ and a trivial Mordell-Weil group, the second fibration is the alternate fibration with the singular fibers $III^* + 5 I_2 + 5 I_1$ and the Mordell-Weil group $\mathbb{Z}/2\mathbb{Z}$. However, singular fiber of type $III^*$ have a monodromy group that is not contained in $\Gamma_0(2)$; see \cite{MR3366121}*{Table 1}. Indeed in  \cite{MR1797021} a reducible fiber of type $E_7$ was not considered problematic (as opposed to reducible fibers of type $E_6$ and $E_8$ in the alternate fibration). This demonstrates that the existence of a canonical van Geemen-Sarti involution is consistent for extensions of the lattice polarization as one restricts to natural subspaces in the moduli space of $H \oplus N$-polarized K3 surfaces whereas the  notion of $\Gamma_0(2)$-monodromy might not.
\par To our knowledge, however, the construction of  \cite{MR1797021} has not been much employed in the physics literature, and there are various criticisms that can be made of it. One such criticism is that, as suggested in the paper, one can also consider fluxes of the B-field that restrict the monodromy group to $\Gamma_0(3)$, $\Gamma_0(4)$, or $\Gamma_0(6)$. This would define other disconnected components in the moduli space of 8d string vacua with 16 supersymmetries and gauge groups of smaller rank. From the point of view of frozen singularities, however, these do not arise. Concretely, Tachikawa \cite{MR3538766} has given evidence that in F-theory there exist only frozen singularities corresponding to $I_4^*$ fibers, so that in eight dimensions one can only construct theories with rank reductions by 8 (one such fiber) and 16 (two such fibers)\footnote{The scenario has been recently claimed to be reconstructed in the Swampland Program in \cites{MR4316262,BHMV:2021}, excluding the other disconnected components mentioned above.}. Note that these cases also appear in the classification results of \cite{Clingher:2021}: the very general $H \oplus D_8(-1)$-lattice polarized K3 surface has a finite automorphism group and admits exactly one Jacobian elliptic fibration which has one singular fiber of type $I_4^*$ and a trivial Mordell-Weil group. The second case constitutes the family of $H \oplus D_8(-1) \oplus D_8(-1)$-polarized K3 surfaces which is the family of Kummer surfaces associated with two non-isogeneous elliptic curves.  In light of the work \cites{MR3538766, MR3861184} one concludes that $\mathfrak{M}_{H \oplus D_8(-1)}$ is in fact the moduli space of lattice polarized K3 surfaces corresponding to the frozen phase of F-theory.
\par In contrast, we argue that the moduli space of $H \oplus N$-polarized K3 surfaces corresponds to the dual of the CHL string in seven dimensions. The reason is that an $H \oplus N$-polarization is equivalent with the existence of a canonical van Geemen-Sarti involution as in Figure~\ref{diag:isogeny}. As explained for example in \cite{MR3538766}, if F-theory on a K3 surface $\mathcal{X}$ is further compactified on a circle $S^1$, one can orbifold the theory quotienting the $\mathcal{X} \times S^1$ by an involution $\imath_\mathcal{X}$ and a half-shift on the $S^1$; see Sections~\ref{ssec:notions} and~\ref{sec:VGS_K3} for the definitions of a Nikulin involution and van Geemen-Sarti involution.  Furthermore, this construction is dual to M-theory on K3 with two frozen singularities of type $I_0^*$, as already shown by Witten \cite{MR1615617}. The F-theory construction just mentioned is purely geometric and does not involve any flux; its M-theory dual \emph{does} involve a 3-form flux, and does not uplift to eight dimensions. In particular, this one can indeed be extended to further rank reductions; see~\cite{MR1868756} for a full treatment of this problem.
\par The identification of points in the physical moduli space corresponding to a specific non-abelian gauge group of the CHL string is then based on a second elliptic fibration with two fibers of type $I_0^*$, called the \emph{inherited elliptic fibration}~\cite{MR1797021}.  As explained in \cite{MR1615617}*{p.~31}, the two $I_0^*$ fibers appear when one compactifies further on a circle, i.e., the theory is seven dimensional. The correct F-theory compactification does not admit a section but only a bi-section due to the precise nature of the duality with the CHL string. In fact, Witten argued that the moduli space of F-theory compactifications dual to the CHL string is naturally isomorphic to the moduli space of K3 surfaces obtained as genus-one fibrations with two fibers of type $I_0^*$ and a bisection \cite{MR1615617}.  
\par In conclusion, it follows that the K3 moduli spaces $\mathfrak{M}_{H \oplus E_8(-2)}$ and $\mathfrak{M}_{H \oplus N}$ are the moduli space of the CHL string in seven dimensions and the dual F-theory, respectively. Below we shall use classical geometrical notions - algebraic correspondences, even-eight configurations, and elements of the Brauer group - to give a precise mathematical framework for the F-theory/CHL string duality.
\section{Constructions of K3 surfaces}
\label{sec:constructions}
\subsection{Notions for lattice polarized K3 surfaces}
\label{ssec:notions}
We will use the following standard notations: $L_1 \oplus L_2$ is orthogonal sum of the two lattices $L_1$ and $L_2$, $L(\lambda)$ is obtained from the lattice $L$ by multiplication of its form by $\lambda \in \mathbb{Z}$, $\langle R \rangle$ is a lattice with the matrix $R$ in some basis; $A_n$, $D_m$, and $E_k$ are the positive definite root lattices for the corresponding root systems,  $H$ is the unique even unimodular hyperbolic rank-two lattice, and $N$ is the negative definite rank-eight Nikulin lattice (for a definition, see \cite{MR728142}*{Sec.~5}).  For a lattice $L$ with a primitive embeddings $\imath: L \hookrightarrow \Lambda_\mathrm{K3}$ into the K3 lattice $\Lambda_\mathrm{K3} \cong H^{\oplus 3} \oplus E_8(-1)^{\oplus 2}$, Dolgachev proved that there exists a coarse moduli space $\mathfrak{M}_L$ of pseudo-ample $L$-polarized K3 surfaces, i.e., a moduli space of algebraic K3 surfaces $\mathcal{X}$ that are polarized by the lattice $L$ such that  $\imath(L)$ contains a numerically effective class of positive self-intersection in the N\'eron-Severi lattice $\mathrm{NS}(\mathcal{X})$. Dolgachev also established a version of mirror symmetry for the moduli spaces of lattice polarized K3 surfaces in \cite{MR1420220}: given a lattice $L$ with the property that for any two primitive embeddings $\imath_1, \imath_2 : L \hookrightarrow \Lambda_\mathrm{K3}$ there is an isometry $g \in O(\Lambda_\mathrm{K3})$ such that $\imath_2 = g \circ \imath_1$,  the isomorphism class of $\check{L}$ for a fixed splitting $L^\perp = H \oplus \check{L}$ of the orthogonal complement $L^\perp \subset\Lambda_\mathrm{K3}$  is well defined. The mirror moduli space of $\mathfrak{M}_L$ is then taken to be $\mathfrak{M}_{\check{L}}$. 
\par An important tool in relating families of K3 surfaces are Nikulin involutions. Recall that a {\it Nikulin involution} \cites{MR728142,MR544937} is an involution $\imath_{\mathcal{X}} : \mathcal{X} \rightarrow \mathcal{X}$ on a K3 surface $\mathcal{X}$ that satisfies $\imath_{\mathcal{X}}^*(\omega) = \omega $ for any holomorphic two-form $\omega$ on $\mathcal{X}$.  The minimal resolution of the quotient $\mathcal{X}/ \langle \imath_{\mathcal{X}} \rangle$ is another K3 surface $\mathcal{X}'$, and the quotient map induces a two-to-one rational map $\Phi: \mathcal{X} \dasharrow \mathcal{X}'$ whose branch locus is an even set of eight rational curves on $\mathcal{X}'$. Recall that a set of eight rational curves on a K3 surface is called an \emph{even eight} if the sum is divisible by two in the N\'eron-Severi lattice; see \cite{MR1922094}. 
\par For a \emph{Jacobian elliptic surface} $\mathcal{X}$ we denote the projection by $\pi_\mathcal{X}: \mathcal{X} \to \mathbb{P}^1$, the zero-section by $\sigma_\mathcal{X}$, and the Mordell-Weil group of sections by $\mathrm{MW}(\mathcal{X}, \pi_\mathcal{X})$ with its torsion subgroup  denoted by $\mathrm{MW}(\mathcal{X}, \pi_\mathcal{X})_\mathrm{tor}$.  A complete list of the possible singular fibers in a Weierstrass model was determined by Kodaira~\cite{MR0184257}: it encompasses two infinite families $(I_n, I_n^*, n \ge 0)$ and six exceptional cases $(II, III, IV, II^*, III^*, IV^*)$. The span of the cohomology classes associated with the elliptic fiber and the section $\sigma_\mathcal{X}$ induces a sublattice of the N\'eron-Severi lattice $\mathrm{NS}(\mathcal{X})$ isomorphic to $H$. This sublattice determines uniquely the Jacobian fibration. Conversely, a pseudo-ample lattice embedding $H \hookrightarrow \mathrm{NS}(\mathcal{X})$ determines -- up to the action of Hodge isometries of $H^2(\mathcal{X}, \mathbb{Z})$ -- a unique isomorphism class of a Jacobian elliptic fibration on $\mathcal{X}$. 
\subsection{K3 surfaces double covering a rational elliptic surface}
\label{ssec:double_covers}
For two cubics $p, q$ in $\mathbb{P}^2 =\mathbb{P}(X,Y,Z)$ the cubic pencil 
\beq
\label{eqn0:R}
 \mathcal{R} = \Big\lbrace  U p(X,Y,Z) + V q(X,Y,Z)  = 0 \Big\rbrace \ \subset \ \mathbb{P}^2 \times \mathbb{P}^1 
\eeq 
defines an elliptic surface with section $\pi_\mathcal{R}: \mathcal{R} \to \mathbb{P}^1 = \mathbb{P}(U,V)$ which is isomorphic to $\mathbb{P}^2$ blown up in the nine base points of the pencil.  The exceptional divisors are not contained in the fibers but yield independent sections.  Selecting one of them as the zero section $\sigma_\mathcal{R}$, the remaining eight generate a lattice of type $E_8$. Here, we are assuming that the two cubics $p, q$ do not have common components. Moreover, the base points of the cubic pencil may only yield independent sections if the pencil has no reducible fibers. The minimal resolution gives rise to a rational elliptic surface (RES). By the Shioda-Tate formula the rank of $\mathrm{MW}(\mathcal{R}, \pi_\mathcal{R})$ drops if there are reducible fibers.  It turns out that this characterization is complete \cite{MR986969}*{Thm. 5.6.1}, and every rational elliptic surface $\mathcal{R}$ is isomorphic to the blow-up of $\mathbb{P}^2$ in the base points of a cubic pencil.
\par The general rational elliptic surface with section $\pi_\mathcal{R}: \mathcal{R} \to \mathbb{P}^1 = \mathbb{P}(U, V)$ has the Weierstrass model
\beq
\label{eqn:R}
  \mathcal{R} : \quad Y^2 Z = X^3 +    f( U, V)  \, X Z^2 +   g(U, V)  \, Z^3 \,,
\eeq
where  $f, g$ are homogenous of degree four and six, respectively, such that the fibration has twelve singular fibers of type $I_1$ (that is, $4 f^3 + 27 g^2=0$ has twelve distinct roots), and the Mordell-Weil group is $\mathrm{MW}(\mathcal{R}, \pi_\mathcal{R}) \cong E_8$.  Here, the Mordell-Weil group can also read off in the classification of all rational elliptic surfaces given by Oguiso and Shioda in \cite{MR1104782}. The rational elliptic surface $\mathcal{R}$ with the singular fibers $12 I_1$ and $\mathrm{MW}(\mathcal{R}, \pi_\mathcal{R}) \cong E_8$ is referred to as $(1)$ in Oguiso-Shioda classification.
\par A family of K3 surfaces is obtained by a base change of order two on the rational elliptic fibration $\pi_\mathcal{R}$.  A double cover $h_{(d_0,d_\infty)}: \mathbb{P}^1=\mathbb{P}(u, v) \to \mathbb{P}^1=\mathbb{P}(U,V)$ is determined by choosing two points $[u, v]=[0:1], [1,0]$ as the branch points with images $[U, V]=[d_0 :1], [1, d_\infty]$ with $d_0 d_\infty \not = 1$, and an additional point, say $[u, v]=[1:1]$ with image $[U, V]=[1 :1]$. The double cover map is then given by
\beq
\label{eqn:double_cover}
 h_{(d_0,d_\infty)}: \ \Big[ u : v \Big] \ \mapsto \   \Big[ U: V \Big] \ = \  \Big[ \big(1 - d_0\big) u^2 + d_0 \big(1 - d_\infty) v^2 : \, d_\infty \big(1 - d_0\big) u^2 + \big( 1 - d_\infty\big) v^2 \Big] \,.
\eeq
Here, we are assuming that the fibers of~(\ref{eqn:R}) over the points $[d_0 :1], [1, d_\infty], [1:1]$ are not singular. Geometrically, $h_{(d_0,d_\infty)}$ determines a line bundle $\mathcal{L} \to \mathbb{P}^1$ such that $\mathcal{L}^{\otimes 2} = \mathcal{O}_{\mathbb{P}^1}$. Pulling back the Weierstrass model in Equation~(\ref{eqn:R}), we obtain a Jacobian elliptic K3 surfaces $\pi_\mathcal{Y}: \mathcal{Y} \to \mathbb{P}^1 = \mathbb{P}(u, v)$ given by
 \beq
 \label{eqn:Y}
  \mathcal{Y}: \quad y^2 z = x^3 +    F( u, v) \,   x z^2 +   G( u, v)   \, z^3 \,,
\eeq
where we have set $F = h^*_{(d_0,d_\infty)}f$ and $G= h^*_{(d_0,d_\infty)}g$. We have the following:
\begin{lemma}
\label{lem:Y}
The very general K3 surface $\mathcal{Y}$ in Equation~(\ref{eqn:Y}) has 24 singular fibers of type $I_1$ and a Mordell-Weil group $\mathrm{MW}(\mathcal{Y}, \pi_\mathcal{Y}) \cong E_8(2)$.
\end{lemma}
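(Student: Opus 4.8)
The plan is to prove the two assertions separately: the count of singular fibers is a formal consequence of the degree-two base change, while the determination of the Mordell-Weil group is the substantive part.

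For the singular fibers I would start from the discriminant $\Delta_\mathcal{R} = 4f^3 + 27 g^2$, a form of degree twelve whose twelve simple roots account for the $12\,I_1$ fibers of $\mathcal{R}$. Since $F = h^*_{(d_0,d_\infty)} f$ and $G = h^*_{(d_0,d_\infty)} g$, the discriminant of $\mathcal{Y}$ is $\Delta_\mathcal{Y} = 4F^3 + 27 G^2 = h^*_{(d_0,d_\infty)}\Delta_\mathcal{R}$, now of degree twenty-four. Because $h_{(d_0,d_\infty)}$ has degree two and is ramified only over $[d_0:1]$ and $[1:d_\infty]$ -- points carrying smooth fibers by hypothesis -- each of the twelve roots of $\Delta_\mathcal{R}$ has two distinct, unramified preimages, and for very general parameters the resulting twenty-four roots are simple. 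A local check at the two ramification points, where the fiber stays smooth so that no additive fiber is produced, shows that all twenty-four singular fibers are again of type $I_1$; this is also forced by $e(\mathcal{Y})=24$.

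For the Mordell-Weil group I would first observe that, since all fibers are irreducible, the trivial lattice is $T\cong H$ and, by Shioda--Tate, $\mathrm{MW}(\mathcal{Y},\pi_\mathcal{Y})\cong \mathrm{NS}(\mathcal{Y})/T$ with the narrow Mordell-Weil lattice realized as $T^\perp\subset\mathrm{NS}(\mathcal{Y})$; moreover the component groups of the $I_1$ fibers are trivial, so $\mathrm{MW}(\mathcal{Y},\pi_\mathcal{Y})$ is torsion-free. Next, the base-change morphism $\psi:\mathcal{Y}\to\mathcal{R}$ sends a section $P$ of $\mathcal{R}$ to a section $\tilde P = \psi^* P$ of $\mathcal{Y}$, giving an injective homomorphism $\mathrm{MW}(\mathcal{R},\pi_\mathcal{R})\hookrightarrow\mathrm{MW}(\mathcal{Y},\pi_\mathcal{Y})$. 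The key point is that this map multiplies the height pairing by $\deg\psi = 2$: using $\chi(\mathcal{O}_\mathcal{R})=1$, $\chi(\mathcal{O}_\mathcal{Y})=2$, the absence of reducible fibers, the identities $\psi^*[P]=[\tilde P]$ and $\psi^*[\sigma_\mathcal{R}]=[\sigma_\mathcal{Y}]$, and the projection formula $(\psi^* A\cdot\psi^* B)_\mathcal{Y}=2\,(A\cdot B)_\mathcal{R}$, Shioda's height formula gives $h_\mathcal{Y}(\tilde P)=4+4\,(P\cdot\sigma_\mathcal{R})_\mathcal{R}=2\,h_\mathcal{R}(P)$. Since $\mathrm{MW}(\mathcal{R},\pi_\mathcal{R})\cong E_8$, the image is a sublattice isometric to $E_8(2)$.

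It remains to show that for the very general $\mathcal{Y}$ the inclusion $E_8(2)\subseteq\mathrm{MW}(\mathcal{Y},\pi_\mathcal{Y})$ is an equality, and I expect this to be the main obstacle. First, the rank must be exactly eight, i.e.\ $\rho(\mathcal{Y})=10$: here I would count moduli, noting that $\mathcal{R}$ varies in an eight-dimensional family and the base change contributes the two parameters $(d_0,d_\infty)$, and match this against the ten-dimensional period domain, so that upper semicontinuity of the Picard number, together with the evident inclusion $H\oplus E_8(-2)\subseteq\mathrm{NS}(\mathcal{Y})$ giving $\rho\ge 10$, forces $\rho=10$ generically. Second, and more delicate, one must exclude any saturation: as the introduction records, $E_8(2)$, equivalently $H\oplus E_8(-2)$, genuinely admits nontrivial even overlattices, so the equality is not formal. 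To settle this I would use the deck involution $\tau$ of $\psi$, which is anti-symplectic because the quotient $\mathcal{R}$ is rational rather than a K3 surface; for the very general member $\tau^*=-1$ on the transcendental lattice, so that $\mathrm{NS}(\mathcal{Y})$ coincides with the fixed lattice $H^2(\mathcal{Y},\mathbb{Z})^{\tau^*=+1}$, which is $2$-elementary. Computing its invariants as $(\rho,\ell,\delta)=(10,8,0)$ and invoking Nikulin's uniqueness theorem identifies it as $H\oplus E_8(-2)$ on the nose -- in particular the length $\ell=8$ distinguishes it from the competing lattice $H\oplus N$ (length $6$), whose alternate fibration would carry eight $I_2$ fibers and a $2$-torsion section, both excluded by the all-$I_1$ configuration found above. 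Passing to $T^\perp$ then yields $\mathrm{MW}(\mathcal{Y},\pi_\mathcal{Y})\cong E_8(2)$ exactly.
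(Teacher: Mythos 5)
Your proposal is correct, and its engine is exactly the paper's: the published proof consists of the single observation that, under the degree-two base change~(\ref{eqn:double_cover}), all ingredients of Shioda's height formula --- the holomorphic Euler characteristic, the intersection numbers with the zero section, and the singular fibers --- double, which is precisely your first two paragraphs (discriminant pullback giving $24\,I_1$, and $h_{\mathcal{Y}}(\widetilde{P})=2h_{\mathcal{R}}(P)$ giving a copy of $E_8(2)$ inside $\mathrm{MW}(\mathcal{Y},\pi_\mathcal{Y})$). Where you genuinely diverge is in your third paragraph: the paper stops at the doubling observation and buries the exactness of $\mathrm{MW}(\mathcal{Y},\pi_\mathcal{Y})\cong E_8(2)$ in the phrase ``very general,'' in effect deferring both the rank statement and the absence of saturation to \cite{MR4069236}*{Prop.~4.17} (quoted as Proposition~\ref{prop1} immediately after the lemma). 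You correctly recognize that this is the nontrivial point --- the introduction's own discussion of overlattices of $H(2)\oplus E_8(-2)$ shows that $H\oplus N$ is a genuine even overlattice candidate --- and you close it with an argument the paper nowhere gives: the deck involution is anti-symplectic since the quotient $\mathcal{R}$ is rational, so for the very general member it acts as $-1$ on the transcendental lattice, making $\mathrm{NS}(\mathcal{Y})$ the (primitive, hence full) fixed lattice, which is $2$-elementary; Nikulin's uniqueness theorem with invariants $(\rho,\ell,\delta)=(10,8,0)$ then pins down $H\oplus E_8(-2)$, with $\ell=8$ versus $\ell=6$ ruling out $H\oplus N$ (consistently, an $H\oplus N$-polarization would force an $8\,I_2$ fibration with $2$-torsion, contradicting your fiber count and torsion-freeness). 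This buys a self-contained proof at the cost of two points you state more loosely than the rest: the claim $\rho(\mathcal{Y})=10$ rests on the moduli count $12+2-1-3=10$ being \emph{effective} (generic finiteness of the period map), which is exactly what the paper's citation of \cite{MR4069236} supplies and which semicontinuity alone does not; and the invariants $(10,8,0)$ of the fixed lattice deserve a word of justification (e.g., via Nikulin's classification of non-symplectic involutions, since the fixed locus of $k_\mathcal{Y}$ is the two smooth fibers over the branch points). Neither gap is worse than what the paper itself leaves implicit.
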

\begin{proof}
In the formula for the height pairing on an elliptically fibered surface with section all summands -- the holomorphic Euler characteristic, the intersection number, the number of singular fibers -- double when pulling back the Weierstrass equation via Equation~(\ref{eqn:double_cover}).
 \end{proof}
In particular, the very general K3 surface $\mathcal{Y}$ has $\mathrm{NS}(\mathcal{Y}) \cong H \oplus E_8(-2)$. Conversely, it is known that every element in $\mathfrak{M}_{H \oplus E_8(-2)}$  is represented by a family of double covers of the plane branched over the union of cubics, i.e., double covers of the rational elliptic surface obtained by the minimal resolution of a reducible double-sextic blown-up in nine points \cite{MR1420220}*{Sec.~9.1}.  To find the moduli from the Weierstrass model in Equation~(\ref{eqn:Y}), we observe that $f$ and $g$ depend on $5 + 7 =12$ parameters, and there are two additional parameters $d_0, d_\infty$. Using a transformation of the type
\beq
\label{eqn:rescalingHirzebruch}
 \Big( \big(u :v \big) , \; \big(x:y:z\big) \Big) \ \mapsto \   \Big( \big(\lambda u :\, \lambda v\big) , \; \big( \lambda^2 x: \lambda^3 y:z\big ) \Big) 
 \eeq
 with $\lambda \in \mathbb{C}^\times$ and the automorphism group $\mathrm{PGL}(2,\mathbb{C})$ of  $\mathbb{P}^1$ we obtain $12 +2 -1 -3 =10$ as the number of moduli.  In fact, the following was proved in \cite{MR4069236}*{Prop.~4.17}:
\begin{proposition}
\label{prop1}
The K3 surfaces in Equation~(\ref{eqn:Y}) form the 10-dimensional moduli space $\mathfrak{M}_{H \oplus E_8(-2)}$.
\end{proposition}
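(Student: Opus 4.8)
The plan is to build a classifying morphism $\Psi$ from the parameter space of the Weierstrass models~\eqref{eqn:Y} to $\mathfrak{M}_{H\oplus E_8(-2)}$, to verify that both sides are irreducible of dimension ten, and then to prove that $\Psi$ is dominant; surjectivity, and hence the assertion, follows from irreducibility.

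First I would exhibit the lattice polarization intrinsically. For a very general member $\mathcal{Y}$ of~\eqref{eqn:Y}, Lemma~\ref{lem:Y} provides twenty-four fibers of type $I_1$ and $\mathrm{MW}(\mathcal{Y},\pi_\mathcal{Y})\cong E_8(2)$. Because the fibration has no reducible fibers, its trivial lattice is just the copy of $H$ spanned by the fiber class and the zero section $\sigma_\mathcal{Y}$, and the Shioda map embeds the Mordell--Weil lattice into $\mathrm{NS}(\mathcal{Y})$ orthogonally to this $H$, contributing a negative-definite copy of $E_8(-2)$. This yields a primitive embedding $H\oplus E_8(-2)\hookrightarrow\mathrm{NS}(\mathcal{Y})$ that is an equality for the very general surface. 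Since the embedding depends only on the fibration and the group law, it is canonical and varies holomorphically, so it defines the classifying map $\Psi$ into $\mathfrak{M}_{H\oplus E_8(-2)}$.

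Next I would compare dimensions. On the source, $f$ and $g$ account for $5+7=12$ coefficients and $(d_0,d_\infty)$ for two more; the rescaling~\eqref{eqn:rescalingHirzebruch} and the $\mathrm{PGL}(2,\mathbb{C})$-action on $\mathbb{P}(u,v)$ remove $1+3$, leaving $12+2-1-3=10$ effective moduli, exactly as tallied before the statement. On the target, $\mathfrak{M}_{H\oplus E_8(-2)}$ is an irreducible quotient of a type~IV domain of dimension $20-\mathrm{rank}(H\oplus E_8(-2))=20-10=10$. The decisive step is dominance, and I would obtain it from Dolgachev's description in \cite{MR1420220}*{Sec.~9.1}: every $H\oplus E_8(-2)$-polarized K3 surface arises as the double cover of a rational elliptic surface cut out by a cubic pencil, branched along a pair of smooth fibers. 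The surfaces~\eqref{eqn:Y} are exactly of this shape, since the base change~\eqref{eqn:double_cover} is ramified over the two points $[d_0:1]$ and $[1:d_\infty]$, whose fibers on $\mathcal{R}$ are smooth by hypothesis; thus the image of $\Psi$ is dense. Being a dominant map between irreducible varieties of equal dimension ten, $\Psi$ is surjective, so the family~\eqref{eqn:Y} sweeps out all of $\mathfrak{M}_{H\oplus E_8(-2)}$, proving Proposition~\ref{prop1}.

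The main obstacle I anticipate lies precisely in this last step: one must show that the explicit Weierstrass parametrization is \emph{complete}, i.e.\ that $\Psi$ is generically finite and collapses no dimensions, so that its ten-dimensional source genuinely dominates rather than maps into a proper subvariety. Concretely this means matching the admissible branch data $(d_0,d_\infty)$ and the twelve coefficients of $(f,g)$ against Dolgachev's double-sextic model and confirming that a very general $H\oplus E_8(-2)$-polarized surface recovers its Weierstrass data up to the finite ambiguity coming from $\lambda$-rescaling, the $\mathrm{PGL}(2,\mathbb{C})$-action, and the labelling of the two branch fibers.
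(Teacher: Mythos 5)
Your proposal is correct and follows essentially the same route as the paper: Lemma~\ref{lem:Y} together with the Shioda--Tate/Mordell--Weil argument identifies $\mathrm{NS}(\mathcal{Y}) \cong H \oplus E_8(-2)$ for the very general member, the identical parameter count $12+2-1-3=10$ gives the dimension, and Dolgachev's double-cover description \cite{MR1420220}*{Sec.~9.1} supplies the converse (the paper itself delegates the formal statement to \cite{MR4069236}*{Prop.~4.17}). One caveat: your closing inference that a dominant map between irreducible varieties of equal dimension is surjective is false as a general principle, but it is also unnecessary here, since the description you invoke already asserts that \emph{every} $H \oplus E_8(-2)$-polarized K3 surface arises as a double cover of a rational elliptic surface branched over two smooth fibers, i.e., is of the form~(\ref{eqn:Y}), which is surjectivity itself.
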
  
\par Conversely, the rational elliptic surface $\mathcal{R}$ is the minimal resolution of the quotient $\mathcal{Y}/ \langle k_\mathcal{Y} \rangle$ where $k_\mathcal{Y}$ is the antisymplectic involution preserving the Jacobian elliptic fibration induced by the deck transformation $[u:v] \mapsto [-u:v]$. A Nikulin involution $\jmath_\mathcal{Y}=k_\mathcal{Y} \circ (-1)$ is given by the composition of $k_\mathcal{Y}$ with the (antisymplectic) hyperelliptic involution acting as $(-1)$ on the fibers.  The minimal resolution of $\mathcal{Y}/ \langle \jmath_\mathcal{Y}\rangle$ then yields another Jacobian elliptic K3 surface $\pi_{\widetilde{\mathcal{Y}}}: \widetilde{\mathcal{Y}} \to \mathbb{P}^1 = \mathbb{P}(U, V)$ with the Weierstrass model
 \beq
  \label{eqn:Yprime}
  \begin{split}
  \widetilde{\mathcal{Y}}: \quad Y^2 Z & = X^3   +   (U - d_0 V)^2  (d_\infty U -  V)^2 f( U, V)   \, X Z^2\\
   &+   (U - d_0 V)^3  (d_\infty U -  V)^3  g( U, V) \, Z^3 \,.
  \end{split} 
\eeq
We have the following:
\begin{lemma}
\label{lem:Yprime}
The very general K3 surface $\widetilde{\mathcal{Y}}$ in Equation~(\ref{eqn:Yprime}) has 2 singular fibers of type $I_0^*$, 12 singular fibers of type $I_1$, and a trivial Mordell-Weil group.
\end{lemma}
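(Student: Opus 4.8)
The plan is to read off the singular fibers directly from the Weierstrass data in~(\ref{eqn:Yprime}) and then pin down the Mordell--Weil group by combining the Shioda--Tate formula with a torsion estimate. Writing $A = (U-d_0V)^2(d_\infty U - V)^2 f$ and $B = (U-d_0V)^3(d_\infty U - V)^3 g$, the discriminant factors as
\[
\Delta \;=\; 4A^3 + 27 B^2 \;=\; (U-d_0V)^6(d_\infty U - V)^6\bigl(4 f^3 + 27 g^2\bigr),
\]
of total degree $24$, confirming that $\widetilde{\mathcal{Y}}$ is a K3 surface. At the point $[d_0:1]$ I would set $t = U - d_0 V$ and record the orders of vanishing $(\mathrm{ord}\,A,\mathrm{ord}\,B,\mathrm{ord}\,\Delta) = (2,3,6)$; by Tate's algorithm the fiber is of type $I_0^*$ precisely when the leading cubic $X^3 + aX + b$ has distinct roots, and here its discriminant equals $(d_0 d_\infty - 1)^6\bigl(4 f(d_0,1)^3 + 27 g(d_0,1)^2\bigr)$, which is nonzero by the standing assumptions $d_0 d_\infty \neq 1$ and smoothness of the fiber of~(\ref{eqn:R}) over $[d_0:1]$. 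The same analysis at $[1:d_\infty]$ gives a second $I_0^*$. The remaining zeros of $\Delta$ are the twelve simple roots of $4f^3 + 27 g^2$, which for very general $f,g$ are distinct from $[d_0:1],[1:d_\infty]$ and from one another and at which $A,B$ do not vanish; these yield twelve fibers of type $I_1$. The Euler-number bookkeeping $2\cdot 6 + 12\cdot 1 = 24$ is consistent and leaves room for no further reducible fibers.

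Equivalently, I would observe that~(\ref{eqn:Yprime}) is the quadratic twist of $\mathcal{R}$ by $P = (U-d_0V)(d_\infty U - V)$: twisting by a function with a simple zero converts the smooth fibers over $[d_0:1]$ and $[1:d_\infty]$ into $I_0^*$ while leaving the twelve $I_1$ fibers (where $P\neq 0$) unchanged. This viewpoint makes the torsion computation below transparent.

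For the Mordell--Weil rank I would invoke the Shioda--Tate formula $\rho(\widetilde{\mathcal{Y}}) = 2 + \sum_v (m_v - 1) + \mathrm{rank}\,\mathrm{MW}(\widetilde{\mathcal{Y}},\pi_{\widetilde{\mathcal{Y}}})$. The two $I_0^*$ fibers each contribute a $\widetilde{D}_4$ configuration with $m_v = 5$ components, so $\sum_v(m_v-1) = 8$, while the $I_1$ fibers are irreducible. For the very general member one has $\rho(\widetilde{\mathcal{Y}}) = 10$: indeed $\widetilde{\mathcal{Y}}$ is the minimal resolution of the Nikulin quotient $\mathcal{Y}/\langle \jmath_\mathcal{Y}\rangle$, and since $\jmath_\mathcal{Y}$ is symplectic the transcendental lattices of $\mathcal{Y}$ and $\widetilde{\mathcal{Y}}$ have equal rank $12$ (by Proposition~\ref{prop1}, $\mathcal{Y}$ is very general in a $10$-dimensional family with $\rho(\mathcal{Y})=10$). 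Substituting $\rho = 10$ and $\sum_v(m_v-1)=8$ forces $\mathrm{rank}\,\mathrm{MW} = 0$.

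It remains to show the torsion is trivial, which I expect to be the main obstacle. First, the torsion subgroup injects into the product of the component groups of the reducible fibers; since the component group of an $I_0^*$ fiber is $(\mathbb{Z}/2\mathbb{Z})^2$ and the $I_1$ fibers contribute nothing, one gets $\mathrm{MW}_{\mathrm{tor}} \hookrightarrow (\mathbb{Z}/2\mathbb{Z})^4$, so every torsion section is $2$-torsion. A $2$-torsion section corresponds to a root of $X^3 + AX + B$ in the function field $\mathbb{C}(U/V)$; writing $x = P\xi$ with $P = (U-d_0V)(d_\infty U - V)$ reduces this to a root of $\xi^3 + f\xi + g$, i.e. precisely a $2$-torsion section of $\mathcal{R}$. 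But $\mathrm{MW}(\mathcal{R},\pi_\mathcal{R}) \cong E_8$ is torsion-free, so no such root exists. Hence $\mathrm{MW}_{\mathrm{tor}} = 0$ and the Mordell--Weil group is trivial. The delicate points are the rigorous identification $\rho(\widetilde{\mathcal{Y}})=10$ (needed to clinch the rank) and the reduction of the $2$-torsion question to the torsion-freeness of $\mathcal{R}$; the fiber-type analysis is otherwise routine.
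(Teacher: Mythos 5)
Your proof is correct. There is, however, nothing in the paper to compare it against: Lemma~\ref{lem:Yprime} is stated without proof, being treated as an immediate consequence of the construction of $\widetilde{\mathcal{Y}}$ as the resolved quotient $\mathcal{Y}/\langle \jmath_\mathcal{Y}\rangle$, equivalently as the quadratic twist of the rational elliptic surface $\mathcal{R}$ by $(U-d_0V)(d_\infty U-V)$. Your write-up supplies exactly the details the paper leaves implicit, and all three ingredients are sound: (i) the Tate-algorithm check at $[d_0:1]$ and $[1:d_\infty]$ (vanishing orders $(2,3,6)$, leading cubic with nonzero discriminant because $d_0d_\infty\neq 1$ and the fibers of~(\ref{eqn:R}) over these points are assumed smooth) gives the two $I_0^*$ fibers, with the twelve simple zeros of $4f^3+27g^2$ accounting for the $I_1$ fibers and the Euler number $2\cdot 6+12=24$; (ii) the identification $\rho(\widetilde{\mathcal{Y}})=10$ via invariance of the transcendental rank under a Nikulin quotient, fed into Shioda--Tate with $\sum_v(m_v-1)=8$, forces Mordell--Weil rank zero, and this is legitimate since Lemma~\ref{lem:Y} and Proposition~\ref{prop1} give $\rho(\mathcal{Y})=10$ for the very general member; (iii) the torsion argument (injection of $\mathrm{MW}_{\mathrm{tor}}$ into the product of the component groups $(\mathbb{Z}/2\mathbb{Z})^2$ of the two $I_0^*$ fibers, then the substitution $X=P\xi$ showing that a $2$-torsion section of $\widetilde{\mathcal{Y}}$ would descend to one on $\mathcal{R}$, contradicting $\mathrm{MW}(\mathcal{R},\pi_\mathcal{R})\cong E_8$ torsion-free) is complete; this last step is just the statement that $2$-torsion is insensitive to quadratic twisting, which your twist viewpoint makes transparent. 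A small remark: the Picard-number step could be obtained more cheaply by a dimension count -- the family~(\ref{eqn:Yprime}) has $10$ moduli, so the very general member satisfies $\rho\le 20-10=10$, while Shioda--Tate already gives $\rho\ge 10$ -- which avoids invoking the Nikulin-quotient comparison; but your route is equally valid and hews closest to how the paper introduces $\widetilde{\mathcal{Y}}$.
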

Conversely, starting with a K3 surface $\widetilde{\mathcal{Y}}$ with the given singular fibers and Mordell-Weil group, a double cover is obtained as the pull-pack via the degree-2 rational map branched over an obvious even eight, namely the even eight given by the non-central components of the two reducible fibers of type $D_4$.  We have the following:
 \begin{proposition}
 \label{prop4}
The K3 surfaces in Equation~(\ref{eqn:Yprime}) form the 10-dimensional moduli space $\mathfrak{M}_{H \oplus D_4(-1)^{\oplus 2}}$.
\end{proposition}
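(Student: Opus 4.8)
The plan is to pin down the N\'eron--Severi lattice of a very general member of the family~(\ref{eqn:Yprime}), to match dimensions with $\mathfrak{M}_{H \oplus D_4(-1)^{\oplus 2}}$, and then to establish completeness of the family. First I would invoke Lemma~\ref{lem:Yprime}: a very general $\widetilde{\mathcal{Y}}$ is a Jacobian elliptic K3 surface with singular fibers $2\,I_0^* + 12\,I_1$ and trivial Mordell--Weil group. By the Shioda--Tate formula, $\rho(\widetilde{\mathcal{Y}}) = 2 + \sum_v (m_v - 1) + \mathrm{rk}\,\mathrm{MW}(\widetilde{\mathcal{Y}}, \pi_{\widetilde{\mathcal{Y}}}) = 2 + (4+4) + 0 = 10$. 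The trivial lattice $T$ spanned by the zero section, the general fiber, and the non-identity fiber components is $H \oplus D_4(-1)^{\oplus 2}$, since a fiber of type $I_0^*$ contributes exactly the root lattice $D_4$ with its induced negative-definite form. As the Mordell--Weil group is trivial, the Shioda--Tate sequence $0 \to T \to \mathrm{NS}(\widetilde{\mathcal{Y}}) \to \mathrm{MW}(\widetilde{\mathcal{Y}}, \pi_{\widetilde{\mathcal{Y}}}) \to 0$ reduces to an isomorphism $T \cong \mathrm{NS}(\widetilde{\mathcal{Y}})$, so $\mathrm{NS}(\widetilde{\mathcal{Y}}) \cong H \oplus D_4(-1)^{\oplus 2}$.

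Second I would confirm that $\mathfrak{M}_{H \oplus D_4(-1)^{\oplus 2}}$ is well defined and 10-dimensional. The lattice $H \oplus D_4(-1)^{\oplus 2}$ is even, indefinite, and 2-elementary, with discriminant group $(\mathbb{Z}/2\mathbb{Z})^4$; Nikulin's uniqueness theorem for such lattices then guarantees a unique primitive embedding into $\Lambda_{\mathrm{K3}}$, so Dolgachev's construction yields a coarse moduli space of dimension $20 - 10 = 10$. For the family~(\ref{eqn:Yprime}) itself, the parameter count from Proposition~\ref{prop1} carries over unchanged: the coefficients of $f$ and $g$ give $5 + 7 = 12$ parameters, together with $d_0$ and $d_\infty$, modulo the one-dimensional rescaling~(\ref{eqn:rescalingHirzebruch}) and the three-dimensional $\mathrm{PGL}(2,\mathbb{C})$-action on $\mathbb{P}(U,V)$, for a total of $12 + 2 - 1 - 3 = 10$ moduli.

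The remaining and most delicate step is completeness: that every very general $H \oplus D_4(-1)^{\oplus 2}$-polarized K3 surface $\mathcal{X}$ is realized by~(\ref{eqn:Yprime}). The strategy is to reconstruct the Weierstrass model from the lattice. The distinguished summand $H \hookrightarrow \mathrm{NS}(\mathcal{X})$ determines a Jacobian elliptic fibration up to Hodge isometry, and the two orthogonal $D_4(-1)$ blocks, being configurations of $(-2)$-curves arranged in $D_4$ Dynkin diagrams, must be fibral; since in Kodaira's list $D_4$ is produced \emph{only} by a fiber of type $I_0^*$, the fibration has exactly two $I_0^*$ fibers, and triviality of the Mordell--Weil group forces the residual twelve fibers to be of type $I_1$ for the very general member. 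Writing the fibration in Weierstrass form with $\deg a = 8$ and $\deg b = 12$, the two $I_0^*$ fibers require $a$ to acquire a double root and $b$ a triple root at each of the two base points; moving these points to $[d_0 : 1]$ and $[1 : d_\infty]$ by $\mathrm{PGL}(2,\mathbb{C})$ yields exactly the factorization in~(\ref{eqn:Yprime}) with $f$ of degree four and $g$ of degree six. I expect the hard part to be verifying that the $D_4$-configurations are necessarily fibral rather than partly horizontal, and that no enhancement to a larger fiber type (such as $I_n^*$) occurs generically; both should follow from the genericity of the transcendental lattice together with the classification of Jacobian fibrations supported on $H \oplus D_4(-1)^{\oplus 2}$, which singles out the configuration $2\,I_0^* + 12\,I_1$ with trivial Mordell--Weil group.
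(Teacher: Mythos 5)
Your proposal is correct, but it takes a genuinely different route to completeness than the paper does. The paper offers no standalone proof of Proposition~\ref{prop4}: the statement is meant to follow from the surrounding construction, namely (i) every surface in the family~(\ref{eqn:Yprime}) is the minimal resolution of $\mathcal{Y}/\langle \jmath_\mathcal{Y}\rangle$ for a surface $\mathcal{Y}$ in the family~(\ref{eqn:Y}), which by Proposition~\ref{prop1} (quoted from \cite{MR4069236}) exhausts $\mathfrak{M}_{H \oplus E_8(-2)}$, and (ii) conversely, any K3 surface with singular fibers $2I_0^*+12I_1$ and trivial Mordell--Weil group carries an obvious even eight --- the non-central components of the two $I_0^*$ fibers --- whose associated double cover recovers such a $\mathcal{Y}$; completeness of the $\widetilde{\mathcal{Y}}$-family is thus \emph{inherited} from the $\mathcal{Y}$-family through the van~Geemen--Sarti/Nikulin correspondence. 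You instead reconstruct the Weierstrass model directly from the lattice polarization: Shioda--Tate identifies $\mathrm{NS}(\widetilde{\mathcal{Y}})$, the $H$ summand induces the Jacobian fibration, the fiber types are forced, and the order conditions $\mathrm{ord}(a)\geq 2$, $\mathrm{ord}(b)\geq 3$ at two base points give exactly the factorization in~(\ref{eqn:Yprime}). Your route is more self-contained and actually makes explicit a step the paper leaves implicit (that an abstract $H \oplus D_4(-1)^{\oplus 2}$-polarization yields a fibration with fibers $2I_0^*+12I_1$), and it avoids leaning on the external result behind Proposition~\ref{prop1}. The price is precisely the step you flag as delicate: to rule out, say, a fibration with eight $I_2$ fibers and torsion sections attached to the same abstract lattice, one needs the standard frame-lattice theorem for elliptic K3 surfaces --- with $W = H^{\perp} \subset \mathrm{NS}(\widetilde{\mathcal{Y}})$, the fibral root lattice equals $W_{\mathrm{root}}$ and $\mathrm{MW} \cong W/W_{\mathrm{root}}$; since $W \cong D_4(-1)^{\oplus 2}$ is itself a root lattice, this gives $W_{\mathrm{root}}=W$ and trivial Mordell--Weil group, so exactly two $I_0^*$ fibers. (Your phrasing that the $D_4$ blocks consist of $(-2)$-curves ``arranged in Dynkin diagrams'' and hence are fibral is too loose on its own; abstract lattice generators need not be classes of irreducible curves, and the frame-lattice theorem is what closes this gap.) The paper's route buys brevity and keeps the even-eight/double-cover correspondence, which is the conceptual theme of the paper, in the foreground.
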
  
\par The results of Proposition~\ref{prop1} and Proposition~\ref{prop4} also hold (for suitably modified lattices) if we restrict $\mathcal{R}$ to a rational elliptic surface with reducible fibers given in the classification by Oguiso and Shioda in \cite{MR1104782}. Geometrically, these rational elliptic surfaces arise if the base points of the pencil in Equation~(\ref{eqn0:R}) are not distinct.  We will focus on the cases when the Mordell-Weil group of the rational elliptic surface has half the rank and is isomorphic to $D^\vee_4$ (up to torsion). That is, we will consider cases $(9)$ or $(13)$ in the Oguiso-Shioda classification \cite{MR1104782}, i.e., the rational elliptic surfaces with the singular fibers and Mordell-Weil groups given by
\beq
\label{eqn:casesRES}
 I_0^* + 6 I_1, \, \mathrm{MW}(\mathcal{R}, \pi_\mathcal{R}) \cong D_4^\vee, \qquad 4 I_2 + 4 I_1,\,  \mathrm{MW}(\mathcal{R}, \pi_\mathcal{R}) \cong \mathbb{Z}/2\mathbb{Z} \oplus D_4^\vee,
\eeq 
respectively. We have the following:
\begin{proposition}
\label{cor:2coverK3_14}
 \leavevmode
\begin {enumerate}
\item Consider the rational elliptic surfaces $\mathcal{R}$ associated with labels $(9)$ and $(13)$ in the Oguiso-Shioda classification. Let $\mathcal{Y}$ be the K3 surfaces of  Equation~(3.7) obtained via a base-change of order two from $\mathcal{R}$. Then, in the very general case, the Neron-Severy lattice $\mathrm{NS}(\mathcal{Y})$ is  $H \oplus K_0(-1)$ or $H \oplus N_0(-1)$, respectively. Here, $K_0$ is the positive definite lattice of rank $12$, determinant $2^6$, and the Gram matrix
\beq
\label{eqn:Gram_matrix_1}
\scalemath{0.8}{
 \left( \begin{array}{rrrrrrrrrrrrrrr} 
4 	&-1	& 1	&  1	& -1	&  1	&  1	& -1	&  1	&  1 & -1	& 1 \\
 -1  	&2 	&-1  	& 0  	& 0  	& 0   & 0  	& 0  	& 0  	& 0 	& 0  	& 0 \\
  1 	& -1 	& 2  	& 0  	& 0  	& 0   & 0  	& 0  	& 0  	& 0 	& 0  	& 0 \\
 1	& 0	& 0	& 2	& -1 	& 0   & 0  	& 0  	& 0  	& 0 	& 0  	& 0 \\
 -1	& 0	& 0	& -1	& 2 	& -1  & 0  	& 0  	& 0  	& 0 	& 0  	& 0 \\
  1	& 0	& 0	& 0	& -1 	& 2  & 0  	& 0  	& 0  	& 0 	& 0  	& 0 \\
  1	& 0	& 0	& 0	& 0 	& 0  & 2  	& -1  & 0  	& 0 	& 0  	& 0 \\
  -1	& 0	& 0	& 0	& 0 	& 0  & -1  	& 2  	& -1  & 0 	& 0  	& 0 \\
  1	& 0	& 0	& 0	& 0 	& 0  & 0  	& -1  & 2 	& 0 	& 0  	& 0 \\
  1	& 0	& 0	& 0	& 0 	& 0  & 0  	& 0 	& 0 	& 2	& -1 	& 0 \\
  -1	& 0	& 0	& 0	& 0 	& 0  & 0  	& 0 	& 0 	& -1	& 2 	& -1 \\
  1	& 0	& 0	& 0	& 0 	& 0  & 0  	& 0 	& 0 	& 0	& -1 	& 2 
\end{array}\right),} 
\eeq
and $N_0$ is the positive definite lattice of rank $12$, determinant $2^8$, and the Gram matrix
\beq
\scalemath{0.8}{
\label{eqn:Gram_matrix_2}
 \left( \begin{array}{rrrrrrrrrrrr} 
 4 & 1 &1 &1 &1 &1 &1 &1  &-2  &-2  &-2  &-2\\
 1 & 2  &0  &0  &0  &0  &0  &0  &1  &1  &1  &0\\
 1  &0 &2  &0  &0  &0  &0  &0  &-1  &-1  &-1  &-1\\
 1  &0  &0 &2  &0  &0  &0  &0  &-1  &-1  &-1  &-1\\
 1  &0  &0  &0 &2  &0  &0  &0  &0  &0  &0  &-1\\
 1  &0  &0  &0  &0 &2  &0  &0  &0  &0  &0  &-1\\
 1  &0  &0  &0  &0  &0 &2  &0  &0  &0  &0  &0\\
 1  &0  &0  &0  &0  &0  &0 &2  &0  &0  &0  &0\\
-2  &-1  &-1  &-1  &0  &0  &0  &0 &4 &2 &2  &0\\
-2  &-1  &-1  &-1  &0  &0  &0  &0 &2 &4 &2  &0\\
-2  &-1  &-1  &-1  &0  &0  &0  &0 &2 &2 &4  &0\\
-2  &0  &-1  &-1  &-1  &-1  & \ 0   & \ 0 &0  &0  &0 &4
\end{array}\right).}
\eeq
In particular the associated moduli spaces for $\mathcal{Y}$ are 6-dimensional, given by $\mathfrak{M}_L$  with $L = H \oplus K_0(-1)$ and $L= H \oplus N_0(-1)$, respectively. 
\item The 6-dimensional moduli spaces $\mathfrak{M}_L$ for $L = H \oplus D_4(-1)^{\oplus 3}$ and $L= H \oplus D_8(-1) \oplus  A_1(-1)^{\oplus 4}$  are given by K3 surfaces $\widetilde{\mathcal{Y}}$  in Equation~(\ref{eqn:Yprime})  obtained  as the minimal resolution of $\mathcal{Y}/ \langle j_\mathcal{Y} \rangle$ where $j_\mathcal{Y}$ is the Nikulin involution constructed above and $\mathcal{Y}$ are chosen from the first and second family in (1), respectively.
\end{enumerate}
These cases are summarized in Table~\ref{tab:K3JEF4}.
\end{proposition}
\begin{proof}
The singular fibers and Mordell-Weil groups of the K3 surfaces $\mathcal{Y}$ are $2 I_0^* + 12 I_1$ and $D_4^\vee(2)$  (which is isometric to $D_4$) and $8 I_2 + 8I_1$ and $\mathbb{Z}/2\mathbb{Z} \oplus D_4^\vee(2)$, respectively. Thus,  $\mathrm{NS}(\mathcal{Y})$ have determinant $2^6$ and $2^8$, respectively. In the first case, $\mathrm{NS}(\mathcal{Y})$ is isomorphic to the polarizing lattice of the surface $\mathcal{X}'$ given in Table~\ref{tab:WEQ_2}, which, in turn, also admits an elliptic fibration with section, singular fibers of type $4 I_4 + 8 I_1$, and Mordell-Weil group $\mathbb{Z}/2\mathbb{Z}$. This follows from the fact that $\mathcal{G}$ in Table~\ref{tab:WEQ_2} admits sections for the elliptic fibrations induced by the projection both onto $\mathbb{P}(u, v)$ and $\mathbb{P}(s, t)$. It follows that $\mathrm{NS}(\mathcal{Y})$ is of the form $H \oplus K_0(-1)$ and the overlattice of $H \oplus A_3(-1)^{\oplus 4}$ associated with $\vec{v} = \langle 0, 0 | 1, 0, 1| \dots | \dots | 1, 0, 1\rangle/2$. The latter is equivalent to the class of the 2-torsion section.  A Gram matrix computation yields (\ref{eqn:Gram_matrix_1}). 
\par In the second case, one works in the context of Figure~\ref{fig:FTH-CHLproof}, with explicit equations for all surfaces involved given in Table~\ref{tab:WEQ}. The K3 surface $\mathcal{G}$ is described as a double cover of $\mathbb{P}(s, t) \times \mathbb{P}(u, v)$, with both projections inducing elliptic fibrations with section. The relative Jacobians of the two elliptic fibrations are $ \mathcal{Y} $ and $ \mathcal{X}' $, respectively. As the two elliptic fibrations have sections and the relative Jacobian map is birational, one obtains that K3 surfaces  $\mathcal{G}$, $ \mathcal{Y} $ and $ \mathcal{X}'$ are in fact mutually isomorphic. In particular, one has $\mathrm{NS}(\mathcal{X}') \cong \mathrm{NS}(\mathcal{G}) \cong \mathrm{NS}(\mathcal{Y})$. By construction, $\mathcal{X}$ and $\mathcal{F}$ are isomorphic surfaces and, in particular, one has $ \mathrm{NS}(\mathcal{F}) \cong \mathrm{NS}(\mathcal{X})$. In addition, $\mathcal{X}$ and $ \mathcal{X}' $ both carry Jacobian elliptic fibrations with singular fiber type $8 I_1 + 8 I_2$ and Mordell-Weil group $\mathbb{Z}/2\mathbb{Z} \oplus D_4$.  This implies $\mathrm{NS}(\mathcal{X}') \cong \mathrm{NS}(\mathcal{X}) $. One obtains therefore that surfaces $\mathcal{X}$, $ \mathcal{X}' $, $\mathcal{G}$, $ \mathcal{Y} $, $\mathcal{F}$ carry isometric Neron-Severi lattices. 
\par Using the Jacobian elliptic fibration on $\mathcal{X}$, with singular fiber type $8 I_1 + 8 I_2$ and Mordell-Weil group $\mathbb{Z}/2\mathbb{Z} \oplus D_4$, one may construct the N\'eron-Severi lattice explicitly. Note that the zero section $S_0$ and 2-torsion section $S_1$, which span $\operatorname{MW}(\mathcal{X}, \pi_\mathcal{X})_{\mathrm{tor}}$, intersect the neutral and the non-neutral components of each reducible fiber of type $A_1$, respectively. Moreover, via the formulas for $\mathcal{X}$ and $\mathcal{F}$ inTable~\ref{tab:WEQ}, one obtains explicitly four disjoint non-torsion sections $R_i$ with $i=1, \dots, 4$, by cutting out:
\beqn
 X = \gamma(s^2, t^2) U_i, \quad Z=V_i, \quad Y^2 = \frac{U_i}{4 c(U_i, V_i)} \gamma(s^2, t^2)^2 \big(2 c(U_i, V_i) s^2 + a(U_i, V_i) t^2\big)^2 ,
\eeqn
where $[U_i: V_i]$ are solutions of $a(U_i, V_i)^2 -4 c(U_i, V_i) d(U_i, V_i)=0$. We note that each section $R_i$ intersects the neutral components of the fibers over $\delta(s^2, t^2)=0$ and the non-neutral components over $\gamma(s^2, t^2)=0$. One can similarly construct a new set of sections $\tilde{R}_i$ for $i=1, \dots, 4$, by choosing an alternate factorization $\gamma(s^2, t^2)\delta(s^2, t^2) = \tilde{\gamma}(s^2, t^2) \tilde{\delta}(s^2, t^2)$. In this context, note that if $\tilde{\gamma}$ (and  $\tilde{\delta}$) is chosen such that  $\tilde{\gamma}(s^2, t^2)$ has exactly one pair of roots in common with $\gamma(s^2, t^2)$ and one pair of roots in common with $\delta(s^2, t^2)$, then the four sections $(R_1, R_2, R_3, \tilde{R}_4)$ generate the non-torsion part of the Mordell-Weil group and their height-pairing matrix is the Cartan matrix for $D_4$. 
\par The classes
\beq
 \langle F, \, F+S_0, \ a_1, \dots, a_8, \ R_1 -F -2 S_0, \, R_2-F -2 S_0, \,  R_3 -F -2 S_0, \,  \tilde{R}_4 -F -2 S_0 \rangle
\eeq 
span then a lattice $H\oplus \widetilde{N}_0$, where $a_j$ are generators for the non-neutral components of the fibers of type $A_1$ and $F$ is the fiber class. It follows that $\mathrm{NS}(\mathcal{X})$ is the overlattice of $H\oplus \widetilde{N}_0$ associated with $\vec{w} = \langle 0, 0| 1, \dots, 1 | 0, 0, 0, 0\rangle/2$. The latter is equivalent to the class of $S_1$. One obtains that $\mathrm{NS}(\mathcal{X})$ is of the form $H \oplus N_0(-1)$, and a Gram matrix computation yields (\ref{eqn:Gram_matrix_2}). 
\begin{figure}
\beqn
\xymatrix{
 \mathcal{X}'  \ar@<-2pt>[d]  &  \mathcal{G} \ar[l]_{\cong} \ar[dl]  \ar[d]  \ar[r]^{\cong} & \mathcal{Y}  \ar[d] \ar[dr]\\
 \mathcal{X}  \ar@<-2pt>[u]  &  \mathcal{F} \ar[l]_{\cong}   \ar[r]^{\mathrm{Jac}^0} & \widetilde{\mathcal{Y}} &  \mathcal{R}\\
 }
\eeqn
\caption{The F-theory/CHL string duality in rank 14}
\label{fig:FTH-CHLproof}
\end{figure}
\end{proof}
\par The importance of the subfamilies above stems from their geometric interpretation: the two families in Proposition~\ref{cor:2coverK3_14}~(1) can be understood as imposing the existence of another compatible involution.  We consider the two cases when this additional involution, compatible with the elliptic fibration, is antisymplectic or symplectic.
\par Let us assume that the Weierstrass model in Equation~(\ref{eqn:Y}) admits a second commuting \emph{antisymplectic involution} preserving the Jacobian elliptic fibration. We use a transformation in $\mathrm{PGL}(2,\mathbb{C})$ to have $d_0 = d_\infty=0$. We can also assume that the second antisymplectic involution is induced by the deck transformation $[u:v] \mapsto [v:u]$. Thus, without loss of generality, a family of K3 surfaces $\mathcal{Y}'$ admitting two commuting antisymplectic involutions preserving the Jacobian elliptic fibration is given by the Weierstrass model
 \beq
 \label{eqn:Z}
  \mathcal{Y}': \quad Y^2 Z = X^3 +    f\Big( (\tilde{u}^2-\tilde{v}^2)^2, \,  (\tilde{u}^2+\tilde{v}^2)^2\Big) \,   X Z^2 +   g\Big( (\tilde{u}^2-\tilde{v}^2)^2, \,  (\tilde{u}^2+\tilde{v}^2)^2\Big)    \, Z^3 ,
\eeq
where $f$ and $g$ are homogenous of degree 2 and 3, respectively, and we changed the variables from $(u, v)$ to $(\tilde{u}, \tilde{v})$ to avoid any conflict of notation. To find the moduli from the Weierstrass model in Equation~(\ref{eqn:Z}), we observe that $f$ and $g$ depend on $3 + 4 =7$ parameters. With only the freedom of a transformation as in Equation~(\ref{eqn:rescalingHirzebruch}) remaining, we obtain $7 -1 =6$ as the number of moduli in Equation~(\ref{eqn:Z}). Thus, we have the following:

\begin{proposition}
\label{prop7}
The K3 surfaces in Equation~(\ref{eqn:Z}) form a 6-dimensional subvariety of $\mathfrak{M}_{H \oplus E_8(-2)}$ whose general member admits an additional antisymplectic involution induced by an involution on the base curve.
\end{proposition}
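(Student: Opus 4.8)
The plan is to realize $\mathcal{Y}'$ as a sub-case of the family~(\ref{eqn:Y}) and then to produce the extra involution by hand. First I would set $d_0=d_\infty=0$, so that the double cover of Equation~(\ref{eqn:double_cover}) becomes $[u:v]\mapsto[U:V]=[u^2:v^2]$ and the pulled-back coefficients read $F(u,v)=f(u^2,v^2)$, $G(u,v)=g(u^2,v^2)$. Requiring $\mathcal{Y}$ to admit the extra base involution $[u:v]\mapsto[v:u]$, equivalently requiring the rational elliptic surface $\mathcal{R}$ to be invariant under $[U:V]\mapsto[V:U]$, forces $f(U,V)=f(V,U)$ and $g(U,V)=g(V,U)$. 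Passing to the coordinates $u=\tilde u+\tilde v$, $v=\tilde u-\tilde v$, one has $UV=(\tilde u^2-\tilde v^2)^2$ and $(U+V)^2=4(\tilde u^2+\tilde v^2)^2$; since a symmetric polynomial of even degree involves only even powers of $U+V$ (by degree parity, $UV$ having degree two), $f$ and $g$ turn into polynomials of degree $2$ and $3$ in $(\tilde u^2-\tilde v^2)^2$ and $(\tilde u^2+\tilde v^2)^2$, which is precisely Equation~(\ref{eqn:Z}). As $\mathcal{Y}'$ is thereby a member of the family~(\ref{eqn:Y}), Proposition~\ref{prop1} shows at once that it is $H\oplus E_8(-2)$-polarized, hence lies in $\mathfrak{M}_{H\oplus E_8(-2)}$.

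Next I would produce the additional antisymplectic involution. In the coordinates $(\tilde u,\tilde v)$ the original deck involution $k_\mathcal{Y}$ becomes the swap $[\tilde u:\tilde v]\mapsto[\tilde v:\tilde u]$, while the second base involution takes the form $\iota\colon[\tilde u:\tilde v]\mapsto[\tilde u:-\tilde v]$. Lifting $\iota$ to the Weierstrass model by the identity on $[X:Y:Z]$ gives a well-defined automorphism, because the two coefficients of~(\ref{eqn:Z}) depend on $\tilde u,\tilde v$ only through $\tilde u^2$ and $\tilde v^2$ and are therefore $\iota$-invariant. To check that $\iota$ is antisymplectic I would pass to the affine base coordinate $t=\tilde v/\tilde u$, in which $\iota$ is $t\mapsto -t$ and the holomorphic two-form is $\omega=\tfrac{dX}{Y}\wedge dt$; as $\iota$ fixes the fiber coordinates and $dt\mapsto -dt$, one obtains $\iota^*\omega=-\omega$. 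This $\iota$ is the sought additional antisymplectic involution, distinct from the canonical $k_\mathcal{Y}$ carried by every member of $\mathfrak{M}_{H\oplus E_8(-2)}$, and it settles the final clause of the statement.

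It remains to pin down the dimension. After the $\mathrm{PGL}(2,\mathbb{C})$-freedom has been spent normalizing $d_0=d_\infty=0$ and the involutions, the symmetry conditions leave $3$ free coefficients in $f$ and $4$ in $g$, and the only residual reparametrization is the weighted scaling~(\ref{eqn:rescalingHirzebruch}), which acts as a single $\mathbb{C}^\times$ on these coefficients; this yields the upper bound $7-1=6$ on the number of moduli. The step I expect to be the main obstacle is to show this bound is attained, i.e.\ that the parameter count genuinely computes the dimension of the image in $\mathfrak{M}_{H\oplus E_8(-2)}$ rather than overcounting. Here I would argue Hodge-theoretically. For general $\mathcal{Y}'$ the commuting pair $k_\mathcal{Y}$, $\iota$ generates a group $(\mathbb{Z}/2\mathbb{Z})^2$ whose remaining nontrivial element $k_\mathcal{Y}\circ\iota$ is symplectic, and $H^2(\mathcal{Y}',\mathbb{Z})$ decomposes into the four eigenlattices $V_{\pm\pm}$ of $\langle k_\mathcal{Y}^*,\iota^*\rangle$. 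Each of $k_\mathcal{Y}$ and $\iota$ fixes two smooth elliptic fibers, hence has a rank-$10$ invariant lattice, giving $\operatorname{rank}V_{++}+\operatorname{rank}V_{+-}=10$ and $\operatorname{rank}V_{++}+\operatorname{rank}V_{-+}=10$, while the anti-invariant lattice $V_{+-}\oplus V_{-+}$ of the Nikulin involution $k_\mathcal{Y}\circ\iota$ is $E_8(-2)$ of rank $8$; solving forces $\operatorname{rank}V_{--}=8$. The transcendental lattice of any member is contained in $V_{--}$, and the crux is to verify that it fills out $V_{--}$ for the general member, equivalently that the period map dominates the $6$-dimensional period domain attached to a lattice of signature $(2,6)$. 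Granting this, the lattice-polarized Torelli theorem identifies the family with a $6$-dimensional subvariety of $\mathfrak{M}_{H\oplus E_8(-2)}$ whose generic N\'eron-Severi lattice has rank $14$, matching the parameter count and completing the proof.
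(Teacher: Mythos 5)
Your proposal is correct, and its core coincides with the paper's own argument: the paper's entire ``proof'' of Proposition~\ref{prop7} is the paragraph preceding it, namely the normalization $d_0=d_\infty=0$, the assumption that the extra involution is induced by a deck transformation of the base, the resulting Weierstrass form~(\ref{eqn:Z}), and the count $3+4-1=6$. Your derivation of~(\ref{eqn:Z}) via symmetric functions (writing a symmetric $f,g$ in $UV$ and $(U+V)^2$, which become $(\tilde u^2-\tilde v^2)^2$ and $4(\tilde u^2+\tilde v^2)^2$) and your explicit verification that the lift of $[\tilde u:\tilde v]\mapsto[\tilde u:-\tilde v]$ acts by $-1$ on the two-form simply make explicit what the paper asserts without proof. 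Where you genuinely depart from the paper is on attainment of the dimension bound: the paper takes the parameter count to be the number of moduli, whereas you supply a Hodge-theoretic lower-bound scheme via the eigenlattice decomposition for the Klein four-group generated by the two antisymplectic involutions. Your bookkeeping there is sound -- a fixed locus consisting of two smooth elliptic fibers forces an invariant lattice of invariants $(r,a,\delta)=(10,8,0)$ by Nikulin's classification, the symplectic composite has anti-invariant lattice $E_8(-2)$, and solving the rank equations gives $\operatorname{rank} V_{--}=8$ with signature $(2,6)$ -- and it is consistent with the rank-$14$ polarizations the paper later records for the quotient families in Proposition~\ref{cor:2coverK3_14} and Proposition~\ref{prop5} (cf.\ Table~\ref{tab:K3JEF4}). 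What your approach buys is a genuine answer to the overcounting worry, at the cost of one step you leave ``granted'' (dominance of the period map onto the $6$-dimensional domain); note that this is precisely the step the paper also leaves implicit in its count, so you are not below the paper's level of rigor. Within the paper's own toolkit you could close that last step: the minimal resolution of $\mathcal{Y}'/\langle\jmath_1\rangle$ is the family of Proposition~\ref{cor:2coverK3_14}(1), already shown to sweep out the $6$-dimensional moduli space $\mathfrak{M}_{H\oplus K_0}$, and the Nikulin quotient together with the even-eight double cover gives a finite correspondence between the two families, so the image of your family in $\mathfrak{M}_{H\oplus E_8(-2)}$ cannot have dimension less than $6$.
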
  
\par The minimal resolution of $\mathcal{Y}'/ \langle \jmath_1 \rangle$ (where $k_1$ is induced by $[\tilde{u}:\tilde{v}] \mapsto [\tilde{u}: -\tilde{v}]$ and $\jmath_1=k_1 \circ (-1)$) yields the Jacobian elliptic K3 surface $\pi_\mathcal{Y}: \mathcal{Y} \to \mathbb{P}^1 = \mathbb{P}(u, v)$ with the Weierstrass model 
 \beq
  \label{eqn:Y_sub}
  \mathcal{Y}: \quad y^2 z  = x^3   +   (u^2- v^2)^2   f( u^2, v^2)   \, x z^2 +    (u^2- v^2)^3  g( u^2, v^2)  \, z^3 .
\eeq
The image of the (second)  involution $k_2$ on  $\mathcal{Y}'$ (where $k_2$ is induced by $[\tilde{u}: \tilde{v}] \mapsto [\tilde{v} : \tilde{u}]$) is an antisymplectic involution $k_\mathcal{Y}$ preserving the Jacobian elliptic fibration on $\mathcal{Y}$ and is induced by $[u:v] \mapsto [-u:v]$.  For the Nikulin involution $\jmath_\mathcal{Y}=k_\mathcal{Y} \circ (-1)$ (which is the image of $\jmath_2 =k_2 \circ (-1)$), the minimal resolution of $\mathcal{Y}/ \langle \jmath_\mathcal{Y}\rangle$ yields another Jacobian elliptic K3 surface $\pi_{\widetilde{\mathcal{Y}}}: \widetilde{\mathcal{Y}} \to \mathbb{P}^1 = \mathbb{P}(U, V)$ with the Weierstrass model
 \beq
  \label{eqn:Yprime_sub}
  \widetilde{\mathcal{Y}}: \quad Y^2 Z  = X^3   + U^2 V^2 (U-V)^2 f(U, V)  \, X Z^2
   +     U^3 V^3 (U-V)^3 g(U, V)\, Z^3 .
\eeq
Similarly, one checks that the minimal resolution of $\mathcal{Y}/ \langle k_\mathcal{Y}\rangle$ yields the rational elliptic surface with the Weierstrass model 
 \beq
  \label{eqnYtildetildeR}
   Y^2 Z  = X^3   +  (U-V)^2 f( U, V)   \, X Z^2
   +     (U-V)^3 g(U, V) \, Z^3 \,,
\eeq
which realizes the first case in~(\ref{eqn:casesRES}). Thus, we have the following:
\begin{proposition}
\label{prop5}
The K3 surfaces in Equation~(\ref{eqn:Y_sub}) and~(\ref{eqn:Yprime_sub}) are the minimal resolutions of $\mathcal{Y}' / \langle \jmath_1 \rangle$ and $\mathcal{Y}' / \langle \jmath_1 , \jmath_2 \rangle$ and form the 6-dimensional moduli spaces $\mathfrak{M}_L$ for $L = H \oplus K_0(-1)$  (with the Gram matrix of $K_0$ given in Equation~(\ref{eqn:Gram_matrix_1})) and $L= H \oplus D_4(-1)^{\oplus 3}$, respectively. 
\end{proposition}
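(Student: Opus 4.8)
The plan is to analyse the two Weierstrass models in parallel: in each case I extract the singular fibres from the discriminant, pin down the Mordell--Weil group, and assemble $\mathrm{NS}$ through the Shioda--Tate sequence; the moduli count then promotes the resulting polarization to an identification of moduli spaces. That $\mathcal{Y}$ and $\widetilde{\mathcal{Y}}$ are the minimal resolutions of $\mathcal{Y}'/\langle\jmath_1\rangle$ and $\mathcal{Y}'/\langle\jmath_1,\jmath_2\rangle$ has already been established in the discussion preceding the statement, so only the lattice bookkeeping remains.

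For $\widetilde{\mathcal{Y}}$ in~\eqref{eqn:Yprime_sub} I would first compute the discriminant
\[
 \Delta = U^6V^6(U-V)^6\bigl(4f(U,V)^3+27g(U,V)^2\bigr),
\]
the last factor having degree six. At each of $U=0$, $V=0$, $U=V$ the Weierstrass coefficients vanish to orders $(2,3)$ and $\Delta$ to order $6$, so Tate's algorithm produces a fibre of type $I_0^*$, while the six simple roots of $4f^3+27g^2$ give six fibres $I_1$—the configuration $3I_0^*+6I_1$. The substitution $X=UV(U-V)\,\xi$ turns $X^3+AX+B$ into $U^3V^3(U-V)^3(\xi^3+f\,\xi+g)$, so a $2$-torsion section would force $\xi^3+f\xi+g$ to split over $\mathbb{C}(U/V)$; for very general $f,g$ this cubic is irreducible, so the Mordell--Weil torsion is trivial. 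The three $I_0^*$ fibres thus contribute a trivial lattice $H\oplus D_4(-1)^{\oplus3}$ of rank $14$ that, by the absence of torsion, is primitively embedded; combined with the moduli count below, which makes the very general Picard number equal to $14$, this forces the free rank to vanish as well, so $\mathrm{NS}(\widetilde{\mathcal{Y}})\cong H\oplus D_4(-1)^{\oplus3}$.

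For $\mathcal{Y}$ in~\eqref{eqn:Y_sub} the key point is that it is the pullback of the rational elliptic surface~\eqref{eqnYtildetildeR}, i.e.\ case $(9)$ of~\eqref{eqn:casesRES}, under the degree-two base change $[u:v]\mapsto[u^2:v^2]$. The fibre $I_0^*$ over $U=V$ splits into the two $I_0^*$ fibres over $u=\pm v$, each of the six $I_1$ fibres splits into a pair, and the two branch points $u=0$, $v=0$ lie over smooth fibres, giving $2I_0^*+12I_1$. Pulling back the rank-four Mordell--Weil group $D_4^\vee$ of the rational surface and invoking the height-doubling principle from the proof of Lemma~\ref{lem:Y}, I obtain a copy of $D_4^\vee(2)\cong D_4$ in $\mathrm{MW}(\mathcal{Y},\pi_\mathcal{Y})$; since the trivial lattice $H\oplus D_4(-1)^{\oplus2}$ has rank $10$ and the very general Picard number is $14$, these pulled-back sections already exhaust the free Mordell--Weil group. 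The full Néron--Severi lattice is the overlattice of $H\oplus D_4(-1)^{\oplus2}$ determined by these sections; it is most transparently computed on the alternate fibration with fibres $4I_4+8I_1$ and $2$-torsion, where it appears as $H\oplus K_0$ with $K_0$ the overlattice of $A_3^{\oplus4}$ by $\vec v$, reproducing the Gram matrix~\eqref{eqn:Gram_matrix} exactly as in the proof of Proposition~\ref{cor:2coverK3_14}.

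Finally I would count moduli: $f$ and $g$ supply $3+4=7$ coefficients, and the only residual freedom is the rescaling~\eqref{eqn:rescalingHirzebruch}, leaving $7-1=6$, in agreement with $20-\operatorname{rank}L=6$ for both lattices. Since each very general member has Néron--Severi lattice equal to $L$, the two families lie in and, by dimension, dominate the irreducible spaces $\mathfrak{M}_{H\oplus K_0}$ and $\mathfrak{M}_{H\oplus D_4(-1)^{\oplus3}}$, consistent with their appearance as the special loci of the families in Propositions~\ref{prop4} and~\ref{cor:2coverK3_14}. The step I expect to be most delicate is the end of the $\mathcal{Y}$ computation—verifying that no sections beyond the pulled-back $D_4^\vee(2)$ arise and identifying the overlattice $K_0$—but this is precisely what Proposition~\ref{cor:2coverK3_14} already supplies through the alternate $4I_4$ fibration, so I would cite it rather than redo the overlattice computation by hand.
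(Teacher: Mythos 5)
Your proposal is correct and follows essentially the same route as the paper, which states this proposition without a separate proof: it is meant to follow from the quotient constructions in the preceding paragraphs together with Proposition~\ref{cor:2coverK3_14} (case $(9)$ of the Oguiso--Shioda list), which is exactly what you invoke for the overlattice $K_0$ and the Gram matrix. Your added details---the Tate-algorithm verification of $3I_0^*+6I_1$ and $2I_0^*+12I_1$, the irreducibility argument ruling out torsion, and the $7-1=6$ parameter count matching $20-\operatorname{rank}L$---are consistent with the assertions the paper delegates to Proposition~\ref{cor:2coverK3_14} and Table~\ref{tab:K3JEF4}.
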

\par As for the second case in~(\ref{eqn:casesRES}), we remark that the existence of a 2-torsion section on $\mathcal{R}$ guarantees the existence of a 2-torsion section on $\mathcal{Y}$ in Equation~(\ref{eqn:Y}). In turn, the 2-torsion section yields an additional \emph{symplectic involution} on $\mathcal{Y}$ known as van~Geemen-Sarti involution -- we will explain the construction of a van~Geemen-Sarti involution in detail in Section~\ref{sec:VGS_K3}.
\begin{table}
\scalemath{0.9}{
\begin{tabular}{c|ll:ccc}
$\rho$ & \multicolumn{2}{|c:}{surface} & polar. lattice $L$ & sing. fibers &  Mordell-Weil grp. \\
\hline
10	& K3 	& $\mathcal{Y}$			& $H \oplus E_8(-2)$ 			& $24 I_1$ 		& $E_8(2)$ \\
10	& K3 	& $\widetilde{\mathcal{Y}}$ 	& $H \oplus D_4(-1)^{\oplus 2}$	& $2I_0^* + 12 I_1$	& $\{ \mathbb{I} \}$\\
& RES: (1)	& $\mathcal{R}$ 			& --							& $12 I_1$		& $E_8$\\
\hline	
14	& K3 	& $\mathcal{Y}$ 			& $H \oplus K_0(-1)$				& $2I_0^* + 12 I_1$	&  $D_4$ \\
14	& K3 	& $\widetilde{\mathcal{Y}}$ 	& $H \oplus D_4(-1)^{\oplus 3}$	& $3I_0^* + 6 I_1$	& $\{ \mathbb{I} \}$\\
& RES: (9)	& $\mathcal{R}$ 			& --							& $I_0^* + 6 I_1$	& $D_4^\vee$\\
\hdashline
14	& K3 	& $\mathcal{Y}$ 			& $H \oplus N_0(-1)$			& $8 I_2 + 8 I_1$ 		&  $\mathbb{Z}/2\mathbb{Z} \oplus D_4$ \\
14	& K3 	& $\widetilde{\mathcal{Y}}$ 	& $H \oplus D_8(-1) \oplus A_1(-1)^{\oplus 4}$	& $2I_0^* + 4 I_2 + 4 I_1$	& $\mathbb{Z}/2\mathbb{Z}$\\
& RES: (13)	& $\mathcal{R}$ 			& --							& $4 I_2 + 4 I_1$	& $\mathbb{Z}/2\mathbb{Z} \oplus D_4^\vee$\\
\hline
\end{tabular}}
\captionsetup{justification=centering}
\caption{K3 lattices and Jacobian elliptic fibrations in Prop.~\ref{prop1}-\ref{prop4}}
\label{tab:K3JEF4}
\end{table}
\subsection{K3 surfaces with van Geemen-Sarti involution}
\label{sec:VGS_K3}
When a K3 surface $\mathcal{X}$ admits a Jacobian elliptic fibration with a 2-torsion section, then it admits a special Nikulin involution, called a \emph{van~Geemen-Sarti involution}; see  \cite{MR2274533}. When quotienting by this involution, denoted by $\jmath_\mathcal{X}$, and blowing up the fixed locus, one obtains a new K3 surface $\mathcal{X}'$ together with a rational double cover map $ \Phi_\mathcal{X} : \mathcal{X} \dashrightarrow \mathcal{X}'$. In general, a van~Geemen-Sarti involution $\jmath_\mathcal{X}$ does not determine a Hodge isometry between the transcendental lattices $\mathrm{T}_\mathcal{X}(2)$ and $\mathrm{T}_{\mathcal{X}'}$. Van Geemen-Sarti involutions always appear as fiber-wise translation by 2-torsion in a suitable Jacobian elliptic fibration $\pi_\mathcal{X} : \mathcal{X} \to \mathbb{P}^1$ which we call the \emph{alternate fibration}; see \cite{MR3995925} for the nomenclature. In the Mordell-Weil group $\mathrm{MW}(\mathcal{X}, \pi_\mathcal{X})$ with identity element $\sigma_\mathcal{X}$, let $\tau_\mathcal{X}$  denote the  2-torsion section such that translation by $\tau_\mathcal{X}$ is the involution $\jmath_\mathcal{X}$. Moreover, the construction induces a Jacobian elliptic fibration $\pi_{\mathcal{X}'}: \mathcal{X}' \to \mathbb{P}^1$ on $\mathcal{X}'$ which also admits a 2-torsion section. Thus, we obtain Figure~\ref{diag:isogeny}. We will refer to the construction of Figure~\ref{diag:isogeny} as \emph{van~Geemen-Sarti-Nikulin duality}. If the Mordell-Weil group contains $(\mathbb{Z}/2\mathbb{Z})^2$, then the fiberwise translations by the different 2-torsion sections constitute commuting van Geemen-Sarti involutions compatible with the given Jacobian elliptic fibration, or,  commuting van Geemen-Sarti involutions for short.
\begin{figure}
\beqn
\xymatrix{
\mathcal{X} \ar @(dl,ul) ^{\jmath_\mathcal{X} } \ar [dr] _{\pi_\mathcal{X} } \ar @/_0.5pc/ @{-->} _{\Phi_\mathcal{X}} [rr]
&
& \mathcal{X}' \ar @(dr,ur) _{\jmath_{\mathcal{X}'} } \ar [dl] ^{\pi_{\mathcal{X}'} } \ar @/_0.5pc/ @{-->} _{\Phi_{\mathcal{X}'}} [ll] \\
& \mathbb{P}^1 }
\eeqn
\caption{K3 surfaces connected by van~Geemen-Sarti-Nikulin duality}
\label{diag:isogeny}
\end{figure}
\par The K3 surface $\mathcal{X}$ has the Weierstrass equation
\beq
\label{eqn:X}
 \mathcal{X}: \quad Y^2 Z = X \Big( X^2 - A(s, t) \, XZ + B(s, t) \, Z^2 \Big) \,,
\eeq 
where $[s:t] \in \mathbb{P}^1$, $[X:Y:Z]\in \mathbb{P}^2$, $A(s, t)$ and $B(s, t)$ are homogeneous polynomials of degree four and eight, respectively, and the sections $\sigma_\mathcal{X}, \tau_\mathcal{X}$ are given by the section at infinity and $[X:Y:Z]=[0:0:1]$. To find the moduli from the Weierstrass model in Equation~(\ref{eqn:X}), we note that $A$ and $B$ depend on $5 + 9 =14$ parameters. Using transformations of the type $(X,Y) \mapsto (\lambda^2 X, \lambda^3 Y)$ with $\lambda \in \mathbb{C}^\times$ and the automorphism group $\mathrm{PGL}(2,\mathbb{C})$ of  $\mathbb{P}^1$ we get $14 -1 -3 =10$ moduli. Since we have identified coordinates according to
\beqn
 \Big((s, t) , \, (X,Y,Z)\Big) \ \sim \  \Big((\lambda s, \lambda t) , \, (\lambda^4 X, \lambda^6 Y,Z)\Big)  \,,
\eeqn
Equation~(\ref{eqn:X}) defines a double cover of the Hirzebruch surface $\mathbb{F}_4$. Similarly, the K3 surface $\mathcal{X}'$ has the Weierstrass model
\beq
\label{eqn:Xprime}
 \mathcal{X}': \quad y^2 z = x \Big( x^2 + 2 A(s, t) \, x z+ \big(A(s, t)^2 - 4 B(s, t)\big) \, z^2 \Big) \,.
\eeq
Explicit equations for the rational maps $\Phi_\mathcal{X}$ and $\Phi_{\mathcal{X}'} $ in Figure~\ref{diag:isogeny} were given in \cite{MR3995925}. The discriminant functions of the two elliptic fibrations are as follows:
\beq
\label{eqn:DiscrX_Xp}
 \Delta_\mathcal{X} = B(s, t)^2 \big(A(s, t)^2 - 4 B(s, t)\big)\,, \qquad  \Delta_{\mathcal{X}'} = 16 B(s, t) \big(A(s, t)^2 - 4 B(s, t)\big)^2\,.
\eeq
\begin{lemma}
\label{lem:X}
The very general K3 surface $\mathcal{X}$ in Equation~(\ref{eqn:X}) and $\mathcal{X}'$ in Equation~(\ref{eqn:Xprime}) have 8 fibers of type $I_1$ over the zeroes of $A^2 - 4 B=0$ (resp.~$B=0$) and 8 fibers of type $I_2$ over the zeroes of $B=0$ (resp.~$A^2 - 4 B=0$) with $\mathrm{MW}(\mathcal{X}, \pi_\mathcal{X}) \cong \mathrm{MW}(\mathcal{X}', \pi_{\mathcal{X}'}) \cong \mathbb{Z}/2\mathbb{Z}$.
\end{lemma}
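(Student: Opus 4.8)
The plan is to read off the Kodaira fiber types from the two Weierstrass models by a local application of Tate's algorithm, and then to determine the Mordell-Weil group by combining the Shioda-Tate formula with a direct computation of torsion over the function field.

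First I would put Equation~(\ref{eqn:X}) in the affine form $Y^2 = X\bigl(X^2 - A\,X + B\bigr)$ and use the discriminant $\Delta_\mathcal{X} = B^2(A^2 - 4B)$ recorded in~(\ref{eqn:DiscrX_Xp}), whose total degree is $2\cdot 8 + 8 = 24$, as expected for an elliptic K3 surface. Its zero locus splits into the eight zeros of $B$ and the eight zeros of $A^2-4B$, which are simple and mutually distinct for the very general pair $(A,B)$. Over a simple zero of $A^2-4B$ one has $B\neq 0$, so the repeated root of the cubic lies at $X=A/2\neq 0$, away from $\tau_\mathcal{X}=[0:0:1]$; since $v(\Delta_\mathcal{X})=1$ and the singular point is a node, the fiber is of type $I_1$. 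Over a simple zero of $B$ one has $A\neq 0$, the singular point is the node at $X=0$, and the tangent cone $Y^2 + A\,X^2 = 0$ factors into two distinct lines, so the reduction is multiplicative; with $v(\Delta_\mathcal{X})=2$ this forces type $I_2$. This produces the configuration $8I_2 + 8I_1$ for $\mathcal{X}$. The same computation applied to Equation~(\ref{eqn:Xprime}), now with $\Delta_{\mathcal{X}'} = 16\,B\,(A^2-4B)^2$, simply interchanges the two loci and yields $I_2$ fibers over $A^2-4B=0$ and $I_1$ fibers over $B=0$.

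For the Mordell-Weil group I would argue in two steps. The section $\tau_\mathcal{X}=[0:0:1]$ is visibly $2$-torsion, so $\mathbb{Z}/2\mathbb{Z}\subseteq\mathrm{MW}(\mathcal{X},\pi_\mathcal{X})$. For the rank, the trivial lattice spanned by the zero section, a general fiber, and the non-identity components of the eight $I_2$ fibers has rank $2+8=10$; since the very general $\mathcal{X}$ has Picard number $\rho=10$ (its N\'eron-Severi lattice being $H\oplus N$, as recalled in Section~\ref{sec:CHL}), the Shioda-Tate formula $\rho = 2 + \operatorname{rank}\mathrm{MW}(\mathcal{X},\pi_\mathcal{X}) + \sum_v(m_v-1)$ forces $\operatorname{rank}\mathrm{MW}(\mathcal{X},\pi_\mathcal{X})=0$. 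For the torsion, the natural map to the product of component groups $\bigoplus_v G_v\cong(\mathbb{Z}/2\mathbb{Z})^8$ is injective on torsion, so $\mathrm{MW}(\mathcal{X},\pi_\mathcal{X})_{\mathrm{tor}}$ is $2$-elementary; moreover the full $2$-torsion over $\mathbb{C}(s,t)$ equals $\{O,(0,0)\}$ because the quadratic factor $X^2-A\,X+B$ is irreducible, equivalently $A^2-4B$ is not a square in $\mathbb{C}(s,t)$, which holds precisely because it has eight simple zeros. Hence $\mathrm{MW}(\mathcal{X},\pi_\mathcal{X})\cong\mathbb{Z}/2\mathbb{Z}$, and the identical argument gives $\mathrm{MW}(\mathcal{X}',\pi_{\mathcal{X}'})\cong\mathbb{Z}/2\mathbb{Z}$.

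The main obstacle is not any individual local computation but controlling the genericity hypotheses. One must verify that for the very general member the sixteen discriminant zeros stay simple and distinct, so that no $I_2$ fiber collides with an $I_1$ or degenerates to an additive fiber, and that the Picard number is exactly $10$ rather than larger; for the latter I would invoke the prior identification of the polarizing lattice as $H\oplus N$ instead of reproving it through a period computation. The single local point needing genuine care is the splitting of the tangent cone at the node $X=0$, since this is what separates the multiplicative $I_2$ reduction from a potential cuspidal, additive fiber.
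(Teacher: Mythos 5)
Your local computations are correct, and they are exactly what the paper leaves implicit: Lemma~\ref{lem:X} is stated without proof, immediately after the discriminants~(\ref{eqn:DiscrX_Xp}) are recorded, so the intended argument is precisely the Kodaira--Tate reading of the two Weierstrass models that you carry out. Your fiber-type analysis (simple zeros, tangent cone at the node, $v(\Delta)$) is right, and your torsion argument is complete: injectivity of $\mathrm{MW}_{\mathrm{tor}}$ into the sum of component groups forces $2$-elementarity, and $X^2-A\,X+B$ is irreducible over $\mathbb{C}(s,t)$ because $A^2-4B$ has eight simple zeros and hence is not a square, so the only nontrivial $2$-torsion section is $[0:0:1]$.

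The gap is in your justification of $\operatorname{rank}\mathrm{MW}=0$. You invoke $\mathrm{NS}(\mathcal{X})\cong H\oplus N$ ``as recalled in Section~\ref{sec:CHL}'', but inside this paper that identification, Equation~(\ref{eqn:lattice}), appears \emph{after} Lemma~\ref{lem:X} and as a consequence of it (``In particular\dots''), and what Section~\ref{sec:CHL} actually recalls is the converse-direction statement: a very general $H\oplus N$-polarized K3 surface carries a fibration with fibers $8I_2+8I_1$ and $\mathrm{MW}\cong\mathbb{Z}/2\mathbb{Z}$. Neither statement tells you that the very general member of the explicit family~(\ref{eqn:X}) has Picard number $10$, which, by Shioda--Tate and your fiber computation, is exactly equivalent to the rank claim; so as written the step is circular. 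A non-circular argument is available from the paper's own toolkit: the zero section, a fiber, the eight non-identity components of the $I_2$ fibers, and the $2$-torsion section span a rank-$10$ sublattice of $\mathrm{NS}(\mathcal{X})$ (this is the copy of $H\oplus N$ computed by van Geemen and Sarti \cite{MR2274533}), so $\rho\geq 10$ for every member; and the parameter count $14-1-3=10$ given just before the lemma, combined with the fact that a polarized K3 surface determines its Weierstrass model up to that group action (so the classifying map of the family is generically finite onto its image in the moduli space), shows the family varies in $10$ moduli, whence $\rho\leq 10$, i.e. $\rho=10$, for the very general member. Alternatively, cite \cite{MR2274533} directly for $\mathrm{NS}(\mathcal{X})$; the citation just cannot be to a statement that this lemma is itself used to establish. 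The same remark applies to $\mathcal{X}'$, which is a member of the same family via $(A,B)\mapsto(-2A,\,A^2-4B)$, a finite dominant change of parameters, so very generality is preserved.
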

In particular, the very general Jacobian elliptic K3 surfaces $\mathcal{X}$ in Equation~(\ref{eqn:X}) and $\mathcal{X}'$  in Equation~(\ref{eqn:Xprime}) have
\beq
\label{eqn:lattice}
  \mathrm{NS}(\mathcal{X}) \cong \mathrm{NS}(\mathcal{X}') \cong H \oplus N \,, \qquad
  \mathrm{T}_\mathcal{X} \cong \mathrm{T}_{\mathcal{X}'}  \cong H^2 \oplus N \,.
\eeq
Thus, we have the following:
 \begin{proposition}
 \label{prop6}
The K3 surfaces in Equation~(\ref{eqn:X}) form the 10-dimensional moduli space $\mathfrak{M}_{H \oplus N}$.
\end{proposition}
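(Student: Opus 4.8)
The plan is to combine the moduli count already carried out above with an explicit determination of the N\'eron--Severi lattice of the very general member, and then to identify the resulting family with all of $\mathfrak{M}_{H\oplus N}$ using the uniqueness of the alternate fibration. First I would record that the dimension count preceding Lemma~\ref{lem:X} already shows that Equation~(\ref{eqn:X}) defines a $10$-parameter family: the polynomials $A,B$ carry $5+9=14$ coefficients, and the rescaling $(X,Y)\mapsto(\lambda^2 X,\lambda^3 Y)$ together with the $\mathrm{PGL}(2,\mathbb{C})$-action on $\mathbb{P}^1$ removes $1+3$ of them. Since $\dim\mathfrak{M}_{H\oplus N}=20-\operatorname{rank}(H\oplus N)=10$, the family has exactly the expected dimension, so it remains to show that the very general member is $H\oplus N$-polarized and that the classifying map dominates $\mathfrak{M}_{H\oplus N}$.

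Next I would compute $\mathrm{NS}(\mathcal{X})$ from the fiber data of Lemma~\ref{lem:X}. The Jacobian fibration contributes a copy of $H$ spanned by the fiber class and the zero-section $\sigma_\mathcal{X}$, while each of the eight $I_2$ fibers contributes a non-identity component $\Theta_i$ generating an $A_1(-1)$ summand; thus the trivial lattice is $H\oplus A_1(-1)^{\oplus 8}$, of rank $10$. Because $\mathrm{MW}(\mathcal{X},\pi_\mathcal{X})\cong\mathbb{Z}/2\mathbb{Z}$ is pure torsion, the Shioda--Tate formula gives $\operatorname{rank}\mathrm{NS}(\mathcal{X})=10$ for the very general member, so $\mathrm{NS}(\mathcal{X})$ is a finite-index overlattice of $H\oplus A_1(-1)^{\oplus 8}$. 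The crucial point is to pin this overlattice down to $H\oplus N$. Here I would use the $2$-torsion section $\tau_\mathcal{X}$: at each reducible fiber the sections $\sigma_\mathcal{X}$ and $\tau_\mathcal{X}$ pass through the two \emph{distinct} components, so $\tau_\mathcal{X}$ meets $\Theta_i$ for every $i$. Applying the Shioda homomorphism to $\tau_\mathcal{X}$ and using that its image vanishes because $\tau_\mathcal{X}$ is torsion, one obtains a relation $\tau_\mathcal{X}=\sigma_\mathcal{X}+(\cdots)F-\tfrac12\sum_{i=1}^8\Theta_i$ after the integral fiber-component corrections, in which each $I_2$ fiber contributes a correction of $\tfrac12$. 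Since the left-hand side is integral, this forces $\tfrac12\sum_{i=1}^8\Theta_i\in\mathrm{NS}(\mathcal{X})$; that is, the eight classes $\Theta_i$ form an even eight, and the overlattice of $A_1(-1)^{\oplus 8}$ generated by this half-sum is by definition the Nikulin lattice $N$ (cf.~\cite{MR728142}*{Sec.~5}). This gives $\mathrm{NS}(\mathcal{X})\cong H\oplus N$ as asserted in~(\ref{eqn:lattice}).

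Finally, to conclude that the family fills out all of $\mathfrak{M}_{H\oplus N}$, I would invoke Nikulin's uniqueness criterion \cite{MR633160}*{Thm.~4.3.2}, which makes the primitive embedding $H\oplus N\hookrightarrow\Lambda_{\mathrm{K3}}$ unique up to isometry, so that $\mathfrak{M}_{H\oplus N}$ is irreducible of dimension $10$. By the classification in \cite{Clingher:2021}, the alternate fibration with singular fibers $8I_2+8I_1$ and Mordell--Weil group $\mathbb{Z}/2\mathbb{Z}$ is the canonical Jacobian elliptic fibration carried by every $H\oplus N$-polarized K3 surface, and Equation~(\ref{eqn:X}) is precisely its Weierstrass normal form in the presence of the $2$-torsion section $\tau_\mathcal{X}=[0:0:1]$. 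Hence the classifying map from our irreducible $10$-parameter family to the irreducible $10$-dimensional space $\mathfrak{M}_{H\oplus N}$ is a dominant morphism between varieties of equal dimension, and is therefore generically surjective, which establishes the proposition.

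The step I expect to be the main obstacle is the middle one: fixing the overlattice to be exactly $N$ rather than some other index-$2$ enlargement of $A_1(-1)^{\oplus 8}$. This rests on the precise bookkeeping of which fiber components $\tau_\mathcal{X}$ meets and of the corresponding rational corrections in the Shioda homomorphism; a miscount of the meeting components, or a sign error in those corrections, would yield a different discriminant form and hence the wrong polarizing lattice.
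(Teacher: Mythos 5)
Your proof is correct and takes essentially the same route the paper does (implicitly): the dimension count given right after Equation~(\ref{eqn:X}), the fiber and Mordell--Weil data of Lemma~\ref{lem:X} yielding $\mathrm{NS}(\mathcal{X})\cong H\oplus N$ as recorded in~(\ref{eqn:lattice}), and the uniqueness of the alternate fibration on $H\oplus N$-polarized K3 surfaces from \cite{Clingher:2021} to see that the family fills out all of $\mathfrak{M}_{H\oplus N}$. You additionally spell out the Shioda--Tate/torsion-section computation showing that the index-two overlattice of $H\oplus A_1(-1)^{\oplus 8}$ forced by the $2$-torsion section is exactly $H\oplus N$ (the even-eight giving the Nikulin lattice), a step the paper leaves implicit and only echoes in the even-eight discussion following the proposition.
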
  
Geometrically, the K3 surface $\mathcal{X}$ is a double cover of $\mathcal{X}'$ (via the rational map $\Phi_\mathcal{X}$) branched over the even eight that consists of the eight components of the fibers of type $I_2$ that are not met by the zero section $\sigma_{\mathcal{X}'} $, i.e., the eight exceptional curves in the corresponding reducible fibers of type $A_1$. The other components, which meet $\sigma_{\mathcal{X}'} $ map 2:1 to components in $\mathcal{X}$ where they are interchanged and also the two singular points of the fiber are permuted resulting in $I_1$-type fibers on $\mathcal{X}$. The fixed points of the translation by $\tau_\mathcal{X}$ are the eight nodes in the $I_1$-type fibers, blowing them up gives $I_2$-type fibers in $\mathcal{X}'$.  Similarly, the K3 surface $\mathcal{X}'$ is a double cover of $\mathcal{X}$ (via the rational map $\Phi_{\mathcal{X}'}$).  The eight exceptional curves $\mathit{C}_1, \mathit{C}_2, \dots, \mathit{C}_8$ resulting on $\mathcal{X}$ from the singularity resolution form an \emph{even-eight configuration}  \cite{MR1922094}, i.e.
\beq
\label{eq:even_eight}
  \frac{1}{2} \ \left ( \mathit{C}_1 + \mathit{C}_2 + \cdots +  \mathit{C}_8 \right )  \ \in \  \operatorname{NS}(\mathcal{X}) \,.
 \eeq 
This configuration of eight curves whose formal sum is in $2\operatorname{NS}(\mathcal{X})$ is known to determine in full the double cover $\Phi_{\mathcal{X}'}: \mathcal{X}' \dashrightarrow \mathcal{X}$; see  \cite{MR1922094}. Concretely, each reducible fiber in the Jacobian elliptic fibration $\pi: \mathcal{X} \to \mathbb{P}(s, t)$ consists of two components $\mathit{F}_{i0}$ and $\mathit{F}_{i1}$ such that
\beq
 \mathit{F}_{i0} \circ \sigma_\mathcal{X} =1, \quad \mathit{F}_{i0} \circ \tau_\mathcal{X} =0, \qquad 
 \mathit{F}_{i1} \circ \sigma_\mathcal{X} =0, \quad \mathit{F}_{i1} \circ \tau_\mathcal{X} =1,
\eeq 
for $i=1, \dots, 8$. The van Geemen-Sarti involution interchanges $\mathit{F}_{i0}$ and $\mathit{F}_{i1}$ for $i=1, \dots, 8$. The eight curves $\mathit{F}_{i1}$ for $i=1, \dots, 8$ form an even eight that is not met by the zero section $\sigma_\mathcal{X}$; see Figure~\ref{fig:EFS}. Thus, the double cover $\mathcal{X}'$ obtained from the double cover branched on $\mathit{F}_{11} + \cdots + \mathit{F}_{81}$ is elliptically fibered with section $\sigma_{\mathcal{X}'}$ and the two-torsion section $\tau_{\mathcal{X}'}$; the two sections form the preimage of $\sigma_\mathcal{X}$ under $\Phi_{\mathcal{X}'}$.
\begin{figure}
\centering
\begin{tikzpicture}
\draw[blue] (1,1) .. controls (1.75,2.5) and (1.75,2.5) .. (1,4);
\draw[red] (1.5,1) .. controls (0.75,2.5) and (0.75,2.5) .. (1.5,4);
\draw[blue]  (2,1) .. controls (2.75,2.5) and (2.75,2.5) .. (2,4);
\draw[red]  (2.5,1) .. controls (1.75,2.5) and (1.75,2.5) .. (2.5,4);
\draw[blue]  (3,1) .. controls (3.75,2.5) and (3.75,2.5) .. (3,4);
\draw[red]  (3.5,1) .. controls (2.75,2.5) and (2.75,2.5) .. (3.5,4);
\draw[blue]  (4,1) .. controls (4.75,2.5) and (4.75,2.5) .. (4,4);
\draw[red]  (4.5,1) .. controls (3.75,2.5) and (3.75,2.5) .. (4.5,4);
\draw[blue]  (5,1) .. controls (5.75,2.5) and (5.75,2.5) .. (5,4);
\draw[red]  (5.5,1) .. controls (4.75,2.5) and (4.75,2.5) .. (5.5,4);
\draw[blue]  (6,1) .. controls (6.75,2.5) and (6.75,2.5) .. (6,4);
\draw[red]  (6.5,1) .. controls (5.75,2.5) and (5.75,2.5) .. (6.5,4);
\draw[blue]  (7,1) .. controls (7.75,2.5) and (7.75,2.5) .. (7,4);
\draw[red]  (7.5,1) .. controls (6.75,2.5) and (6.75,2.5) .. (7.5,4);
\draw[blue]  (8,1) .. controls (8.75,2.5) and (8.75,2.5) .. (8,4);
\draw[red]  (8.5,1) .. controls (7.75,2.5) and (7.75,2.5) .. (8.5,4);
\draw (0,-1) -- (9.9,-1);
\draw[->] (4.75,0.75) -- (4.75,-0.5);
\draw[red]  (0.0,3) -- (1.3,3);
\draw[red]  (1.6,3) -- (2.3,3);
\draw[red]  (2.6,3) -- (3.3,3);
\draw[red]  (3.6,3) -- (4.3,3);
\draw[red]  (4.6,3) -- (5.3,3);
\draw[red]  (5.6,3) -- (6.3,3);
\draw[red]  (6.6,3) -- (7.3,3);
\draw[red]  (7.6,3) -- (8.3,3);
\draw[red]  (8.6,3) -- (9.9,3);
\draw[red] (9.5,3.3) node{\mbox{\small$\sigma_\mathcal{X}$}};
\draw[blue]  (0.0,2) -- (0.9,2);
\draw[blue]  (1.2,2) -- (1.9,2);
\draw[blue]  (2.2,2) -- (2.9,2);
\draw[blue]  (3.2,2) -- (3.9,2);
\draw[blue]  (4.2,2) -- (4.9,2);
\draw[blue]  (5.2,2) -- (5.9,2);
\draw[blue]  (6.2,2) -- (6.9,2);
\draw[blue]  (7.2,2) -- (7.9,2);
\draw[blue]  (8.2,2) -- (9.9,2);
\draw[blue] (9.5,1.7) node{\mbox{\small$\tau_\mathcal{X}$}};
\draw[blue] (1.25,2.25) node{\mbox{\tiny$F_{11}$}};
\draw[red] (1.25,2.75) node{\mbox{\tiny$F_{10}$}};
\draw[blue] (2.25,2.25) node{\mbox{\tiny$F_{21}$}};
\draw[red] (2.25,2.75) node{\mbox{\tiny$F_{20}$}};
\draw[blue] (3.25,2.25) node{\mbox{\tiny$F_{31}$}};
\draw[red] (3.25,2.75) node{\mbox{\tiny$F_{30}$}};
\draw[blue] (4.25,2.25) node{\mbox{\tiny$F_{41}$}};
\draw[red] (4.25,2.75) node{\mbox{\tiny$F_{40}$}};
\draw[blue] (5.25,2.25) node{\mbox{\tiny$F_{51}$}};
\draw[red] (5.25,2.75) node{\mbox{\tiny$F_{50}$}};
\draw[blue] (6.25,2.25) node{\mbox{\tiny$F_{61}$}};
\draw[red] (6.25,2.75) node{\mbox{\tiny$F_{60}$}};
\draw[blue] (7.25,2.25) node{\mbox{\tiny$F_{71}$}};
\draw[red] (7.25,2.75) node{\mbox{\tiny$F_{70}$}};
\draw[blue] (8.25,2.25) node{\mbox{\tiny$F_{81}$}};
\draw[red] (8.25,2.75) node{\mbox{\tiny$F_{80}$}};
\draw (9.4,-0.7) node{\mbox{\small$\mathbb{P}(s, t)$}};
\end{tikzpicture}
\caption{Fibration $\pi: \mathcal{X} \to \mathbb{P}(s, t)$ with 8 fibers of type $I_2$.}
\label{fig:EFS}
\end{figure}
\par The results of Proposition~\ref{prop6} and Figure~\ref{diag:isogeny} also hold for suitable lattice extensions induced by the existence of a second involution:
\begin{corollary}
\label{cor:6dim_vGS}
\leavevmode
\begin{enumerate}
\item The 6-dimensional moduli space $\mathfrak{M}_{\langle 2 \rangle \oplus \langle -2 \rangle\oplus D_4(-1)^{\oplus 3}} \subset \mathfrak{M}_{H \oplus N}$ is given by the K3 surfaces admitting commuting van~Geemen-Sarti involutions.
\item The 6-dimensional moduli space $\mathfrak{M}_{H \oplus N_0(-1)}  \subset \mathfrak{M}_{H \oplus N}$ (with the Gram matrix of $N_0$ given in Equation~(\ref{eqn:Gram_matrix_2})) is given by the K3 surfaces admitting a canonical van~Geemen-Sarti involution and a commuting antisymplectic involution induced by an involution on the base curve.
\end{enumerate}
\end{corollary}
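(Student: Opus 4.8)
The plan is to treat both parts as specializations of Proposition~\ref{prop6}: starting from the Weierstrass model~(\ref{eqn:X}) for the alternate fibration on the generic member of $\mathfrak{M}_{H \oplus N}$, I would impose the additional involution, show that it forces the N\'eron--Severi lattice to jump from rank ten to rank fourteen, and then pin down the resulting $2$-elementary lattice by its Nikulin invariants $(\rho_L, \ell_L, \delta_L)$. Throughout I would use the equivalence, recorded in the text, that an $H \oplus N$--polarization is the same datum as a canonical van~Geemen--Sarti involution, together with the remark following Figure~\ref{diag:isogeny} that a pair of commuting van~Geemen--Sarti involutions is the same as the inclusion $(\mathbb{Z}/2\mathbb{Z})^2 \subseteq \mathrm{MW}(\mathcal{X},\pi_\mathcal{X})$.

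For part~(1) I would argue as follows. Commuting van~Geemen--Sarti involutions mean full two-torsion, so the quadratic factor in~(\ref{eqn:X}) splits and the surface acquires the form $Y^2 Z = X(X - e_1 Z)(X - e_2 Z)$ with $e_1, e_2$ homogeneous of degree four; equivalently, $A^2 - 4B$ becomes a perfect square. By~(\ref{eqn:DiscrX_Xp}) the discriminant is then $(e_1 e_2)^2 (e_1 - e_2)^2$, so the eight fibers of type $I_1$ over $A^2 - 4B = 0$ collide pairwise into four fibers of type $I_2$ and the configuration specializes to $12\,I_2$. A parameter count ($e_1, e_2$ give ten coefficients, minus the usual one-parameter rescaling and the three-dimensional $\mathrm{PGL}(2,\mathbb{C})$) yields a $6$-dimensional family. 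Shioda--Tate gives $\rho = 2 + 12 = 14$, and $\mathrm{NS}(\mathcal{X})$ is the index-four overlattice of the trivial lattice $H \oplus A_1(-1)^{\oplus 12}$ determined by the glue vectors of the three two-torsion sections; its determinant is $2^{12}/4^2 = 2^8$, it is $2$-elementary of length $8$ as a subquotient of $(\mathbb{Z}/2\mathbb{Z})^{12}$, it has signature $(1,13)$, and one computes $\delta = 1$. By Nikulin's uniqueness theorem quoted in the text this lattice is $\langle 2\rangle \oplus \langle -2\rangle \oplus D_4(-1)^{\oplus 3}$. Since the $8\,I_2$ over $B=0$ still furnish the even eight not met by $\sigma_\mathcal{X}$, the embedding $H \oplus N \hookrightarrow \mathrm{NS}(\mathcal{X})$ persists, so this family is a sub-locus of $\mathfrak{M}_{H \oplus N}$.

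For part~(2) I would impose, in addition to the van~Geemen--Sarti involution, a commuting antisymplectic involution covering an involution of $\mathbb{P}^1$. Normalizing the base involution to $[s:t]\mapsto[t:s]$ forces $A$ and $B$ in~(\ref{eqn:X}) to be symmetric, which is exactly the condition that the Weierstrass data descend through an order-two base change from a rational elliptic surface carrying a two-torsion section. The generic such rational surface has singular fibers $4\,I_2 + 4\,I_1$ and $\mathrm{MW} \cong \mathbb{Z}/2\mathbb{Z} \oplus D_4^\vee$, i.e. it is the second case in~(\ref{eqn:casesRES}), label $(13)$ of Oguiso--Shioda, and Proposition~\ref{cor:2coverK3_14}(1) already identifies the resulting N\'eron--Severi lattice as $H \oplus D_4(-1)^{\oplus 2} \oplus A_1(-1)^{\oplus 4}$ with singular fibers $8\,I_2 + 8\,I_1$. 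Here the two commuting involutions are the van~Geemen--Sarti involution induced by the $\mathbb{Z}/2\mathbb{Z}$-torsion and the antisymplectic involution $k_\mathcal{Y}$ coming from the deck transformation of the base change, and one checks directly that they commute. Again the presence of the even eight shows the family lies in $\mathfrak{M}_{H \oplus N}$.

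The main obstacle is the lattice identification in part~(1): determining the index-four gluing explicitly and, in particular, verifying the parity $\delta = 1$ so that Nikulin's theorem applies --- this is the computation flagged as analogous to \cite{Clingher:2021}*{Sec.~5}. A secondary point worth recording is that $\langle 2\rangle \oplus \langle -2\rangle \oplus D_4(-1)^{\oplus 3} \cong H \oplus D_4(-1)^{\oplus 2} \oplus A_1(-1)^{\oplus 4}$ (both are even, indefinite, $2$-elementary with invariants $(\rho,\ell,\delta) = (14, 8, 1)$), so the two parts describe abstractly isomorphic moduli spaces even though they are cut out inside $\mathfrak{M}_{H \oplus N}$ by genuinely different conditions on the Mordell--Weil group --- pure two-torsion of rank zero in~(1) versus $\mathbb{Z}/2\mathbb{Z} \oplus D_4^\vee(2)$ of rank four in~(2). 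For part~(2) one should also confirm that the symmetry of $A,B$ really forces the base-change form rather than a more general fiber arrangement, which is where the antisymplectic (as opposed to symplectic) nature of the base involution is used.
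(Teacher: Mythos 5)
Your proposal is correct and follows essentially the same route as the paper: in part (1) the paper likewise derives $A^2-4B=C^2$ from the second two-torsion section and obtains $12\,I_2$ fibers with Mordell--Weil group $(\mathbb{Z}/2\mathbb{Z})^2$, and in part (2) it likewise reduces to the base-change family over the rational elliptic surface with $\mathrm{MW}\cong\mathbb{Z}/2\mathbb{Z}\oplus D_4^\vee$ (the paper normalizes the base involution to $[s:t]\mapsto[-s:t]$ rather than $[s:t]\mapsto[t:s]$, an immaterial difference, and lands on the same family from Proposition~\ref{cor:2coverK3_14}). The one genuine divergence is the lattice identification in part (1): the paper disposes of it by citing the classification in \cite{Clingher:2021}, whereas you outline a self-contained computation via Nikulin invariants --- the index-four overlattice of $H\oplus A_1(-1)^{\oplus 12}$ glued along the torsion sections, determinant $2^8$, length $8$, signature $(1,13)$, and parity. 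Your emphasis that the parity check $\delta=1$ is the essential remaining step is well placed: the invariants $(\rho,\ell,\delta)=(14,8,0)$ are also realized by an even hyperbolic $2$-elementary lattice, e.g.\ $H(2)\oplus D_4(-1)^{\oplus 3}$, so Nikulin's uniqueness theorem cannot distinguish the candidates without it. Your closing observation that the two polarizing lattices are isomorphic --- so that parts (1) and (2) describe one and the same $6$-dimensional moduli space, carrying two distinct Jacobian elliptic fibrations with data $12\,I_2$, $(\mathbb{Z}/2\mathbb{Z})^2$ and $8\,I_2+8\,I_1$, $\mathbb{Z}/2\mathbb{Z}\oplus D_4^\vee(2)$, respectively --- agrees with the isomorphism the paper itself records at the beginning of Section~\ref{sec:specialization2}.
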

\begin{proof}
Part (1) is proved as follows: the existence of a second van~Geemen-Sarti involution means that there is a second 2-torsion section. Its existence implies that there is a homogeneous polynomial $C(s, t)$ of degree four such that $A^2- 4B = C^2$.  One checks that then there are now 12 singular fibers of type $I_2$ on $\mathcal{X}$ and the Mordell-Weil group is $(\mathbb{Z}/2\mathbb{Z})^2$. It follows from results in \cite{Clingher:2021}  that the family of K3 surfaces is polarized by the lattice $\langle 2 \rangle \oplus \langle -2 \rangle\oplus D_4(-1)^{\oplus 3}$. For (2) we can assume that the antisymplectic involution $k_\mathcal{X}$ is induced by the deck transformation $[s:t] \mapsto [-s:t]$ on $\pi_\mathcal{X} : \mathcal{X} \to \mathbb{P}^1\cong \mathbb{P}(s, t)$. A Nikulin involution $\jmath_\mathcal{X}=k_\mathcal{X} \circ (-1)$ is then given by the composition with the (antisymplectic) hyperelliptic involution which acts as $(-1)$ on the fibers.  In turn, this implies that the set of singular fibers of type $I_2$ and $I_1$ on $\mathcal{X}$, respectively, is invariant under $\jmath_\mathcal{X}$, and we obtain the family K3 surfaces in Proposition~\ref{prop4}. 
\end{proof}
\subsection{Relation to mirror symmetry}
\label{ssec:mirror}
 We argued in Section~\ref{sec:CHL} that the \emph{moduli space of the CHL string} is the moduli space $\mathfrak{M}_{H \oplus E_8(-2)}$. We also derived a normal form for these K3 surfaces relating them to certain rational elliptic surfaces. Dolgachev proved that $L = H(2) \oplus E_8(-2)$ diagonally embeds into $\Lambda_\mathrm{K3} = H^3 \oplus E_8(-1)^{\oplus 2}$ such that $L^\perp = H \oplus H(2) \oplus E_8(-2)$ and a splitting $L^\perp = H \oplus L$  is admissible  \cite{MR3586505}. Thus, the moduli space $\mathfrak{M}_{H(2) \oplus E_8(-2)}$ is a self-mirror with respect to this splitting. However, a second admissible splitting is given by $L^\perp = H(2) \oplus L'$ with $L'=H \oplus E_8(-2)$ and yields $\mathfrak{M}_{H \oplus E_8(-2)}$ as the mirror moduli space. 
 \par As we have seen, a polarization by the lattice $H \oplus N$ is equivalent with the existence of a canonical van~Geemen-Sarti involution $\jmath_{\mathcal{X}}: \mathcal{X} \rightarrow \mathcal{X} $ on the K3 surface $\mathcal{X}$. The construction  in Figure~\ref{diag:isogeny} induces an involution on the moduli space $\imath_\mathrm{VGS}: \mathfrak{M}_{H \oplus N} \to \mathfrak{M}_{H \oplus N}$.In the situation of Equation~(\ref{eqn:lattice}) one checks that for $L=H \oplus N$ and $L^\perp = H^2 \oplus N$ the splitting $L^\perp = H \oplus \check{L}$ with $\check{L} \cong L$ is admissible in the sense of Dolgachev's mirror symmetry. Thus, the moduli space of the K3 surfaces given by Equation~(\ref{eqn:X}) is a self-mirror, i.e., $\mathfrak{M}_L \cong \mathfrak{M}_{\check{L}}$ and the van~Geemen-Sarti-Nikulin duality acts as involution on this moduli space. In fact, the action of $\imath_\mathrm{VGS}$ can be identified with the action of mirror symmetry on the corresponding F-theory models.   
\section{The geometric description of the duality}
\label{sec:geometry}
In this section we will prove that the F-theory/CHL string duality can be understood as a relation between certain moduli spaces of lattice polarized K3 surfaces. The key observation is the following isometry of lattices 
\par For the K3 surface $\mathcal{X}$ in Equation~(\ref{eqn:X}) we group the base points of the 8 fibers of type $I_2$ into two unordered sets of 4 elements. Concretely, we group the fibers of type $I_2$ over $B=0$ into two sets by writing $B(s, t)=C(s, t) D(s ,t)$ where $C, D$ are homogenous polynomials of degree four.  The symplectic transformation given by
\beq
\label{eqn:base_change}
 \Big(X, Y, Z \Big) \ \mapsto \  \Big( U, V, W \Big)  = \Big( C(s, t) XZ, \, Z^2, \, C(s, t) Y\Big) \,,
\eeq
yields the equation
\beq
\label{eqn:F}
  \mathcal{F}: \quad W^2 = U V  \,  \Big( C(s, t) \, U^2 - A(s, t) \, UV + D(s, t) \, V^2 \Big) \,.
\eeq 
Equation~(\ref{eqn:F}) puts the (marked) K3 surface into the equivalent form of a \emph{double-quadrics} surface, i.e., a double cover of the Hirzebruch surface $\mathbb{F}_0= \mathbb{P}^1 \times \mathbb{P}^1 \ni ([s:t], [U: V])$ branched over a curve of bi-degree $(4,4)$.  The ruling given by the projection onto the first factor, denoted by $\pi_\mathcal{F}: \mathcal{F} \to \mathbb{P}^1 = \mathbb{P}(s, t)$,  recovers the Jacobian elliptic fibration  in Equation~(\ref{eqn:X}). 
\par Changing the ruling one recovers another elliptic fibration $\pi'_\mathcal{F}: \mathcal{F} \to \mathbb{P}^1 = \mathbb{P}(U, V)$ without section. Concretely, the elliptic fibration $\pi'_\mathcal{F}$ is obtained by re-writing Equation~(\ref{eqn:F}) in the form
\beq
\label{eqn:F2}
   \mathcal{F}:  \quad W^2 =  U V \,  \Big( a_4 (U, V) \, s^4 +   a_3 (U, V) \, s^3 t  + \dots +  a_0 (U, V)\, t^4 \Big) \,,
\eeq 
where $a_i (U, V)$ for $ 0 \le i \le 4$ are homogeneous polynomial of degree two, obtained by requiring that we have
\beq
\label{eqn:hermite00}
  C(s, t) \, U^2 - A(s, t) \, U V+ D(s, t) \, V^2 =  \ a_4 ( U, V ) \, s^4 +   a_3 ( U, V ) \, s^3 t  + \dots +  a_0 ( U, V )\, t^4
\eeq
for all $s, t, U, V$. The relative Jacobian (fibration) $\mathrm{Jac}^0(\pi'_\mathcal{F})$ is the Jacobian elliptic K3 surface $\pi_{\widetilde{\mathcal{Y}}}: \widetilde{\mathcal{Y}} \to \mathbb{P}^1= \mathbb{P}(U, V)$ with the Weierstrass model
\beq
\label{eqn2:Yprime}
 \widetilde{\mathcal{Y}}: \quad y^2 z = x^3 + U^2 V^2  f( U, V ) \, x z^2 +  U^3 V^3  g( U, V ) \, z^3 \,,
\eeq 
where $f, g$ are the homogeneous polynomials of degree four and six, respectively, given by
\beq
\label{eqn:hermite0}
 f = - 4 a_0 a_4 + a_1 a_3 - \frac{1}{3} a_2^2 \,, \qquad g = -\frac{8}{3} a_0 a_2 a_4 + a_0 a_3^2 + a_1^2 a_4 - \frac{1}{3} a_1 a_2 a_3 + \frac{2}{27} a_2^3 \,.
\eeq
For a review and details of these classical formulas; see \cites{MR2166182, MR3995925, MR717601}.    The very general Jacobian elliptic K3 surface $\widetilde{\mathcal{Y}}$ in Equation~(\ref{eqn2:Yprime}) has two singular fibers of type $I_0^*$, twelve singular fibers of type $I_1$, and a trivial Mordell-Weil group. 
\par  We also construct a K3 surface $\mathcal{G}$ as the double cover of $\mathcal{X}$ branched over the even eight that consists of eight components in the reducible fibers over the zeros of $B(s, t)=C(s, t)D(s, t)=0$.  Using the same notation as the one used in Figure~\ref{fig:EFS}, we denote by $F_{i0}$ and $F_{i1}$ for $i=1, \dots, 4$ the components over $C(s, t)=0$ and by $\mathit{F}_{i0}$ and $\mathit{F}_{i1}$ for $i=5, \dots, 8$ the components over $D(s, t)=0$. The even eight we choose this time is different from the one in~(\ref{eq:even_eight}) used in the construction of $\mathcal{X}'$ in Equation~(\ref{eqn:Xprime}).  Over the zeros of $D(s, t)=0$ we choose the components $\{ \mathit{F}_{11}, \dots, \mathit{F}_{41} \}$ of the reducible fibers that are not met by the zero section $\sigma_\mathcal{X}$. Over the zeros of $C(s, t)=0$ we choose  the components $\{ \mathit{F}_{50}, \dots, \mathit{F}_{80} \}$ that \emph{are} met by the zero section $\sigma_\mathcal{X}$. In this way, the double cover branched on $\mathit{F}_{11} + \cdots + \mathit{F}_{41} + \mathit{F}_{50} + \cdots + \mathit{F}_{80}$ is an elliptic fibration $\pi_\mathcal{G}: \mathcal{G} \to \mathbb{P}^1=\mathbb{P}(s ,t)$ with the same singular fibers as $\mathcal{X}'$, but it does not admit a section since both $\sigma_\mathcal{X}$ and $\tau_\mathcal{X}$ now intersect the branch locus. Thus, the total spaces of $\mathcal{G}$ and $\mathcal{X}'$ are not isomorphic, but the relative Jacobian satisfies $\mathrm{Jac}^0(\pi_\mathcal{G}) = \pi_{\mathcal{X}'}$.\footnote{Here and in the following, several elliptic fibrations are stated to admit no sections, which holds only for the very general K3 surfaces. When the relative Jacobian of this fibration is then considered, we do so by direct computation using the general formulae, displayed as~(\ref{eqn:hermite0}) applied to~(\ref{eqn:hermite00}).}  Concretely, the new K3 surface is obtained as the minimal resolution of the double-quadrics  surface
\beq
\label{eqn:G}
   \mathcal{G}: \quad w^2 =   C(s, t) \, u^4 - A(s, t) \, u^2 v^2 + D(s, t) \, v^4 \,,
\eeq
which is branched over a curve bi-degree $(4,4)$ (which has genus 9) in $\mathbb{F}_0 = \mathbb{P}(s, t) \times \mathbb{P}(u, v)$.  Moreover, acting upon the even eight $\mathit{F}_{11} + \cdots + \mathit{F}_{41} + \mathit{F}_{50} + \cdots + \mathit{F}_{80} \in 2 \mathrm{NS}(\mathcal{X})$ with the van Geemen-Sarti involution on $\mathcal{X}$ yields the even eight $\mathit{F}_{10} + \dots + \mathit{F}_{40} + \mathit{F}_{51} + \dots + \mathit{F}_{81}$. The double cover branched on the latter yields a genus-one fibration isomorphic to Equation~(\ref{eqn:G}), as it simply corresponds to the interchange $u \leftrightarrow v$.  We call the two combinations of eight exceptional curves selected from the components of the reducible fibers, namely $\{ \mathit{F}_{10}, \dots, \mathit{F}_{40}, \mathit{F}_{51}, \dots, \mathit{F}_{81}\}$ and $\{ \mathit{F}_{11}, \dots, \mathit{F}_{41}, \mathit{F}_{50}, \dots, \mathit{F}_{80}\}$, whose induced double covers yield isomorphic genus-one fibrations a \emph{marked pair of even eights} on $\mathcal{X}$. 
\par A change of ruling yields a second elliptic fibration $\pi'_\mathcal{G}: \mathcal{G} \to \mathbb{P}^1=\mathbb{P}(u, v)$ given by
\beq
\label{eqn:G_b}
    \mathcal{G}: \quad  w^2 =  a_4 (u^2, v^2) \, s^4 +   a_3 (u^2, v^2) \, s^3 t  + \dots +  a_0 (u^2, v^2)\, t^4  \,,
\eeq 
where we assumed Equation~(\ref{eqn:hermite00}). It follows that the relative Jacobian $\mathrm{Jac}^0(\pi'_\mathcal{G})$ is the Jacobian elliptic K3 surface $\pi_\mathcal{Y}: \mathcal{Y} \to \mathbb{P}^1= \mathbb{P}(u, v)$ with the Weierstrass model
\beq
\label{eqn2:Y}
 \mathcal{Y}: \quad Y^2 Z = X^3 +   f( u^2, v^2) \, X Z^2 +  g( u^2, v^2) \, Z^3 \,,
\eeq 
where the polynomials $f, g$ are the same polynomials as in Equation~(\ref{eqn:hermite0}). Thus, the K3 surfaces in Equation~(\ref{eqn2:Y}) and~(\ref{eqn2:Yprime}) coincides with K3 surfaces in Equation~(\ref{eqn:Y}) and~(\ref{eqn:Yprime}), respectively, obtained from the general rational elliptic surface $\mathcal{R}$ in Equation~(\ref{eqn:R}),  normalized so that $d_0=0$ and $d_\infty=0$. We have the following:
\begin{lemma}
\label{lem:sections}
The elliptic fibration $\pi'_\mathcal{G}: \mathcal{G} \to \mathbb{P}(u, v)$ admits sections.
\end{lemma}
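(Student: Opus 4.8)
The plan is to exhibit an explicit section of $\pi'_\mathcal{G}$; together with the identification $\mathrm{Jac}^0(\pi'_\mathcal{G}) = \pi_\mathcal{Y}$ already established, this shows that $\mathcal{G}$ equipped with the second ruling is isomorphic to the Jacobian surface $\mathcal{Y}$ of Equation~(\ref{eqn2:Y}). The guiding idea is that the \emph{reducible fibers of the first ruling} $\pi_\mathcal{G}$ supply the desired sections of the second ruling: a component of a reducible fiber of $\pi_\mathcal{G}$ is ``horizontal'' for $\pi'_\mathcal{G}$ and should meet each of its fibers once.

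I would first locate those reducible fibers. Reading the right-hand side of Equation~(\ref{eqn:G}) as a binary quadratic form in $(u^2,v^2)$ with coefficients in $\mathbb{C}[s,t]$, its discriminant is $A(s,t)^2 - 4\,C(s,t)D(s,t) = A^2 - 4B$, a form of degree eight in $(s,t)$. Its eight zeros are precisely the base points of the reducible (type $I_2$) fibers of $\pi_\mathcal{G}$, consistent with $\pi_\mathcal{G}$ carrying the same singular fibers as $\mathcal{X}'$; see Lemma~\ref{lem:X} and the discriminants in Equation~(\ref{eqn:DiscrX_Xp}). Fixing one zero $[s_0:t_0]$ and writing $C_0, A_0, D_0$ for the values of $C, A, D$ there, one has $A_0^2 = 4\,C_0 D_0$, so the fiber of $\pi_\mathcal{G}$ over $[s_0:t_0]$ is cut out by $w^2 = C_0 u^4 - A_0 u^2 v^2 + D_0 v^4 = \big(\sqrt{C_0}\,u^2 - \sqrt{D_0}\,v^2\big)^2$. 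This perfect square makes the fiber split into the two rational curves $w = \pm\big(\sqrt{C_0}\,u^2 - \sqrt{D_0}\,v^2\big)$, meeting in the two points where the square vanishes. Each such curve is parameterized isomorphically by $[u:v]$ and therefore defines a section $[u:v]\mapsto \big([s_0:t_0],\ \sqrt{C_0}\,u^2 - \sqrt{D_0}\,v^2\big)$ of $\pi'_\mathcal{G}$; equivalently, the two components of each $I_2$ fiber of the first ruling are sections of the second.

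The point that requires care — and that explains the asymmetry with the first ruling, which admits no section — is the following. Here the degeneration into a perfect square is forced by the vanishing of the \emph{scalar} $A^2 - 4B$ at one of its eight zeros, which always occurs; whereas producing a section of $\pi_\mathcal{G}$ would instead require the binary quartic $C\,u_0^4 - A\,u_0^2 v_0^2 + D\,v_0^4$ in $(s,t)$ to be a perfect square for some fixed $[u_0:v_0]$, a condition that fails for the general member. I would then confirm that the curve produced is genuinely a section on the minimal resolution: at a zero of $A^2 - 4B$ with $B \neq 0$ one has $C_0, D_0 \neq 0$, so $\sqrt{C_0}\,u^2 - \sqrt{D_0}\,v^2$ is a nonzero quadratic form and the two components are honest $(-2)$-curves forming the $I_2$ fiber, each meeting every fiber of $\pi'_\mathcal{G}$ exactly once. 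This last verification is the main, though routine, obstacle.
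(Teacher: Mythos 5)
Your proposal is correct and takes essentially the same route as the paper: the paper's proof also produces the sections as the two components of each fiber of $\pi_\mathcal{G}$ lying over the eight zeros of $A(s,t)^2-4\,C(s,t)D(s,t)$ (equivalently, the preimages of the $I_1$ fibers of $\pi_\mathcal{X}$), where the quartic in Equation~(\ref{eqn:G}) degenerates to a perfect square, giving the same sixteen explicit sections via $4\,C(s,t)\,w^2=\bigl(2\,C(s,t)\,u^2-A(s,t)\,v^2\bigr)^2$. The only difference is presentational: the paper phrases the fiber splitting through the double cover $\mathcal{G}\to\mathcal{X}$, whereas you compute directly from the defining equation of $\mathcal{G}$ and additionally record the (routine) genericity check $C_0,D_0\neq 0$.
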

\begin{proof}
The double cover $\mathcal{G} \to \mathcal{X}$ is constructed using an even eight that consists of 4 neutral components and 4 non-neutral components. It follows that the double cover has an elliptic fibration with respect to the projection onto $\mathbb{P}(s, t)$, but the sections of $\mathcal{X}$ do not lift to sections as they intersect the branch locus. The preimages of the 8 fibers of type $I_1$ then determine in the very general case 16 sections of the fibration with respect to the projection onto $\mathbb{P}(u, v)$. The sections can be computed explicitly: they are the solutions given by the 8 fixed constants $[s:t]$  obtained from solving $A(s,  t)^2 - 4 C(s, t) \, D(s, t) =0$ and then solving for $w$ in the equation $4 C(s, t) w^2 = (2 C(s, t) u^2 - A(s, t) v^2)^2$.
\end{proof}
\par One can also construct a third kind of double covers from $\mathcal{X}$ by grouping the eight fibers of type $I_2$ over $B=0$ into sets of two and six elements:
\begin{remark}
If we group the eight fibers of type $I_2$ over $B=0$ into sets of 2 and 6 elements by writing $B(s, t)=C'(s, t) D'(s ,t)$ where $C', D'$ are homogenous polynomials of degree 2 and 6, respectively, we obtain a K3 double cover as the minimal resolution of the double-sextic surface
\beq
\label{eqn23Yprime}
    w^2 =   C'(s, t) \, u^4 - A(s, t) \, u^2 + D'(s, t)  \,,
\eeq
which is branched over the strict transform of a sextic in $\mathbb{F}_1 = \mathbb{P}^2 = \mathbb{P}(s, t, u)$. 
\end{remark}
Returning to the K3 surface $\mathcal{G}$, note that the Jacobian elliptic fibration on $\mathcal{Y}$ was obtained by taking the relative Jacobian of the elliptic fibration on $\mathcal{G}$ induced by the projection onto $\mathbb{P}(u, v)$. However, the existence of a section implies that the construction of the relative Jacobian is realized as a birational map. This is a classical result by Hermite and explicit formulas are given in Section~\ref{ssec:AJM}. It follows that K3 surfaces $\mathcal{G}$ and $\mathcal{Y}$ are isomorphic and carry the same lattice polarization $H \oplus E_8(-2)$.  We then have the following:\footnote{For a definition of the multi-section index of a genus-one fibration; see \cite{MR986969}.}
\begin{proposition}
\label{prop:G}
Let $\mathcal{G}$ be a K3 surface, as in Equation~(4.7). Then, in the very general case, the Neron-Severi lattice $\mathrm{NS}(\mathcal{G})$ is $H \oplus E_8(-2)$.  In particular, the elliptic fibration $\pi_\mathcal{G}: \mathcal{G} \to \mathbb{P}(s, t)$ admits a bi-section and the elliptic fibration $\pi'_\mathcal{G}: \mathcal{G} \to \mathbb{P}(u, v)$ admits a section.
\end{proposition}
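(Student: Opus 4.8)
The plan is to derive the whole statement from Lemma~\ref{lem:sections} together with the relative-Jacobian identification that precedes it. First I would note that, since $\pi'_\mathcal{G}\colon\mathcal{G}\to\mathbb{P}(u,v)$ carries a section by Lemma~\ref{lem:sections}, it is a Jacobian genus-one fibration; and a genus-one fibration equipped with a section is canonically isomorphic to its relative Jacobian, the isomorphism being the fibrewise Abel--Jacobi map $p\mapsto\mathcal{O}(p-\sigma)$. Hence $\mathcal{G}\cong\mathrm{Jac}^0(\pi'_\mathcal{G})$ as elliptic surfaces over $\mathbb{P}(u,v)$. Since $\mathrm{Jac}^0(\pi'_\mathcal{G})$ was already identified with the K3 surface $\mathcal{Y}$ of Equation~(\ref{eqn2:Y}), this gives an isomorphism $\mathcal{G}\cong\mathcal{Y}$ of surfaces, and in particular $\mathrm{NS}(\mathcal{G})\cong\mathrm{NS}(\mathcal{Y})$.

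Next I would pin down $\mathrm{NS}(\mathcal{Y})$. The surface $\mathcal{Y}$ of Equation~(\ref{eqn2:Y}) is precisely the surface of Equation~(\ref{eqn:Y}) normalized to $d_0=d_\infty=0$, with $f$ and $g$ the degree-four and degree-six polynomials produced by the Weil formulas~(\ref{eqn:hermite0}). By Lemma~\ref{lem:Y} the very general such $\mathcal{Y}$ has $24$ singular fibers of type $I_1$ and Mordell--Weil group $E_8(2)$, so that $\mathrm{NS}(\mathcal{Y})\cong H\oplus E_8(-2)$, and by Proposition~\ref{prop1} these surfaces exhaust $\mathfrak{M}_{H\oplus E_8(-2)}$. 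Combined with the previous step, this yields $\mathrm{NS}(\mathcal{G})\cong H\oplus E_8(-2)$ for the very general $\mathcal{G}$.

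It then remains to read off the two multi-section indices. The section statement for $\pi'_\mathcal{G}$ is Lemma~\ref{lem:sections}. For $\pi_\mathcal{G}\colon\mathcal{G}\to\mathbb{P}(s,t)$ I would argue geometrically: the double cover $\mathcal{G}\to\mathcal{X}$ pulls back $\pi_\mathcal{X}$, and the zero-section $\sigma_\mathcal{X}$ meets the branch even eight exactly along the neutral components $\mathit{F}_{50},\dots,\mathit{F}_{80}$ through which it passes. Being connected and meeting the branch divisor, the preimage of $\sigma_\mathcal{X}$ is therefore an irreducible curve mapping two-to-one onto $\sigma_\mathcal{X}$, hence a bi-section of $\pi_\mathcal{G}$. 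To rule out a genuine section I would invoke the relative Jacobian $\mathrm{Jac}^0(\pi_\mathcal{G})=\pi_{\mathcal{X}'}$: a section on $\pi_\mathcal{G}$ would force $\mathcal{G}\cong\mathcal{X}'$, which is impossible since $\mathrm{NS}(\mathcal{X}')\cong H\oplus N$ by Equation~(\ref{eqn:lattice}) while $\mathrm{NS}(\mathcal{G})\cong H\oplus E_8(-2)$, and these two lattices are not isometric (their discriminant groups have lengths $6$ and $8$). Thus $\pi_\mathcal{G}$ has multi-section index exactly two.

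The hard part is the very-general clause rather than any single isomorphism. The identification $\mathcal{G}\cong\mathcal{Y}$ is robust, but to conclude that $\mathrm{NS}(\mathcal{G})$ equals $H\oplus E_8(-2)$ on the nose—and not some strictly larger, more special overlattice—I must check that the very general $\mathcal{G}$ maps to the very general member of $\mathfrak{M}_{H\oplus E_8(-2)}$. I would argue this by a dimension count: the surface $\mathcal{G}$ is determined by $\mathcal{X}$, which carries $10$ moduli by Proposition~\ref{prop6}, together with the finite choice of factorization $B=CD$ into degree-four forms; hence the family of such $\mathcal{G}$ is $10$-dimensional, matching $\dim\mathfrak{M}_{H\oplus E_8(-2)}=10$, so the construction is dominant and the generic member attains Picard number $10$ with lattice exactly $H\oplus E_8(-2)$. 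This genericity is the one point that must be argued rather than quoted from the earlier results.
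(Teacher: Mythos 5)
Your first step and your section/bisection statements track the paper's proof closely: Lemma~\ref{lem:sections} gives a section of $\pi'_\mathcal{G}$, hence $\mathcal{G}\cong\mathrm{Jac}^0(\pi'_\mathcal{G})=\mathcal{Y}$, and bisections of $\pi_\mathcal{G}$ exist (the paper is terser here; your preimage-of-$\sigma_\mathcal{X}$ argument and your exclusion of a section via $\mathrm{Jac}^0(\pi_\mathcal{G})=\pi_{\mathcal{X}'}$ are fine, modulo the lattice identification). The gap sits exactly where you flag it. From ``the family of such $\mathcal{G}$ is $10$-dimensional, matching $\dim\mathfrak{M}_{H\oplus E_8(-2)}=10$'' you conclude that the construction is dominant, but a $10$-dimensional parameter space mapping algebraically into a $10$-dimensional moduli space is dominant only if the classifying map $(\mathcal{X},\text{marking})\mapsto[\mathcal{G}]$ is generically finite. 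You give no argument for that: a priori non-isomorphic marked pairs could sweep out positive-dimensional families with isomorphic double covers, in which case the image would be a proper subvariety and could even lie inside a Noether--Lefschetz divisor, so the very general $\mathcal{G}$ could have Picard number $\geq 11$ and $\mathrm{NS}(\mathcal{G})$ a strict overlattice of $H\oplus E_8(-2)$. So the key clause of the proposition remains unproven on your route.

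The paper sidesteps dominance altogether, and its device is the ingredient missing from your argument: Ogg--Shafarevich theory for genus-one fibrations, in the form of \cite{MR1707196}*{Lemma 2.1}. Since $\pi_\mathcal{G}$ has multisection index $l_1=2$ and $\mathrm{Jac}^0(\pi_\mathcal{G})=\pi_{\mathcal{X}'}$, one gets $\rho(\mathcal{G})=\rho(\mathcal{X}')$ and $\det q_{\mathrm{NS}(\mathcal{G})}=l_1^2\,\det q_{\mathrm{NS}(\mathcal{X}')}$. Genericity is then imposed directly on the parameters $(A,C,D)$, where it is manifest: every degree-eight form $B$ factors over $\mathbb{C}$ as $CD$, so by Lemma~\ref{lem:X} and Proposition~\ref{prop6} the very general choice makes $\mathcal{X}'$ very general in $\mathfrak{M}_{H\oplus N}$, whence $\rho(\mathcal{X}')=10$ and $|\det\mathrm{NS}(\mathcal{X}')|=2^6$. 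Therefore $\mathrm{NS}(\mathcal{G})$ has rank $10$ and determinant of absolute value $2^8$; since $\mathcal{G}\cong\mathcal{Y}$ carries a primitively embedded copy of $H\oplus E_8(-2)$, which has the same rank and determinant, equality follows. No statement about the image of any moduli map is needed. (Your route could in principle be repaired by proving generic finiteness of the classifying map --- e.g., from $(\mathcal{G},\pi_\mathcal{G})$ one recovers $\mathcal{X}'$ as the relative Jacobian and then $\mathcal{X}$ by van Geemen--Sarti--Nikulin duality, and a K3 surface carries only countably many fibration classes --- but this requires care, and the lattice-theoretic transfer is both cleaner and what the paper actually does.)
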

\begin{proof}
Lemma~\ref{lem:sections} shows that for $\pi'_\mathcal{G}$ one has sections whence $\mathcal{G} \cong \mathcal{Y}$. For $\pi_\mathcal{G}$ there clearly exist bisections. Moreover,  the N\'eron-Severi lattice $\operatorname{NS}(\mathcal{G})$  is a sublattice of the lattice $\operatorname{NS}(\mathcal{X}')$ of index $l_1$ such that  $\det{q_{\operatorname{NS}(\mathcal{G})}}= l_1^2 \det{q_{\operatorname{NS}(\mathcal{X}')}}$ with $l_1=2$.  The number $l_1$ is the multi-section index of fibration~(\ref{eqn:G}); see \cite{MR1707196}*{Lemma 2.1}.
\end{proof}
Note that the defining Jacobian elliptic fibration on $\mathcal{Y}$ may be identified with the relative Jacobian of the elliptic fibration on $\mathcal{G}$ induced by the projection onto $\mathbb{P}(u, v)$. The presence of sections on the latter implies, however, that the relative Jacobian map is birational. This is a classical result by Hermite and explicit formulas realizing the identification are given in Section~\ref{ssec:AJM}. The two K3 surfaces $\mathcal{G}$ and $\mathcal{Y}$ are isomorphic and carry the same  Neron-Severi lattice:  $H \oplus E_8(-2)$. 
\par The decomposition $H \oplus E_8(-2)$ in $\mathrm{NS}(\mathcal{G})$ may be described explicitly. To see this, note that the double cover  $\mathcal{G} \to \mathcal{X}$ is branched over a specific even-eight curve configuration. This configuration consists of eight rational curves on $\mathcal{X}$, four of which are neutral components of four $I_2$ fibers and additional four are non-neutral components of the remaining $I_2$ fibers; see Section~\ref{sec:VGS_K3}. Furthermore, the preimages of the section and 2-torsion section if the Jacobian elliptic fibration on $\mathcal{X}$ give two disjoint elliptic curves on $\mathcal{G}$ that are smooth members of an elliptic pencil $\vert F \vert $. In the context of this elliptic fibration, one has eight pairs of sections $L_i,R_i$, $1\leq i \leq 8$, obtained as preimages of the eight $I_1$ fibers on $\mathcal{X}$. Consider $R_1$ as the zero-section. Then $\langle R_1, F \rangle$ is isomorphic to $H$.  Let us examine the orthogonal complement $\langle R_1, F \rangle ^{\perp} $ in $\mathrm{NS}(\mathcal{G})$. Denote $ B_1  = L_1 - R_1 - 4 F$ and $ B_i = L_i - R_i - 2F $, $ 2 \leq i \leq 8$.  One obtains that $(1/2) \sum B_i  \in \langle R_1, F \rangle ^{\perp} $ and furthermore $\langle R_1, F \rangle ^{\perp}  = \langle  B_1, B_2, \dots, B_8, (1/2) \sum B_i \rangle $. Upon checking the intersection matrix, one obtains that $\langle R_1, F \rangle ^{\perp} = Q(-2)$ where $Q$ is a unimodular, even positive-definite lattice of rank eight.  It follows that $\langle R_1, F \rangle ^{\perp} $ is isomorphic to $E_8(-2)$.
\begin{remark}
The situation of Proposition~\ref{prop:G} is depicted in Figure~\ref{fig:FTH-CHL}.  The rational maps $\mathcal{F} \to \widetilde{\mathcal{Y}}$ and $\mathcal{G} \to \mathcal{X}'$ are reflected by the lattice-theoretic identities
$H \oplus N \cong H(2) \oplus D_4(-1)^{\oplus 2}$ and $H(2) \oplus N \cong H \oplus E_8(-2)$, respectively.
\end{remark}
\begin{figure}
\beqn
\xymatrix{
 \mathcal{X}'  \ar@<-2pt>[d]  &  \mathcal{G} \ar[l]_{\mathrm{Jac}^0} \ar[dl]  \ar[d]  \ar[r]^{\cong} & \mathcal{Y}  \ar[d] \ar[dr]\\
 \mathcal{X}  \ar@<-2pt>[u]  &  \mathcal{F} \ar[l]_{\cong}   \ar[r]^{\mathrm{Jac}^0} & \widetilde{\mathcal{Y}} &  \mathcal{R}\\
 }
\eeqn
\caption{Geometric construction of the F-theory/CHL string duality}
\label{fig:FTH-CHL}
\end{figure}
\par As explained before, the K3 surface $\mathcal{F}$ was obtained as double covers of $\mathbb{F}_0$ branched over the vanishing divisor of a section in the line bundle $\mathscr{L}=\mathcal{O}_{\mathbb{F}_0}(4,4)$. This  branch locus is reducible: it decomposes as $\mathit{L}_0 \cup \mathit{L}_\infty  \cup \mathit{E}$ into two lines $\mathit{L}_0, \mathit{L}_\infty$ and a curve $\mathit{C}$. The latter is a curve of genus three in the linear system $|\mathscr{L}|$.  Let $\mathfrak{W}$ be the moduli space of K3 surfaces that can be obtained as the minimal resolution of double-quadrics over $\mathbb{F}_0$ whose branch locus is the union of divisors of type $(1,0)$, $(1,0)$, and $(2,4)$.  We note that the moduli space $\mathfrak{W}$ is precisely the moduli space of F-theory compactifications dual to the CHL string considered by Witten in \cite{MR1615617}: each general element $\mathcal{F} \in \mathfrak{W}$ admits the Jacobian elliptic fibration $\pi_\mathcal{F}$ and the elliptic fibration $\pi'_\mathcal{F}$ without section, corresponding to the $\Gamma_0(2)$ elliptic fibration and the inherited elliptic fibration, respectively. Similarly, let $\mathfrak{W}'$ be the moduli space of K3 surfaces that are obtained as the minimal resolution of double-quadrics over $\mathbb{F}_0$ whose branch locus is of type $(4,4)$ and invariant under the involution $([u:v], [s:t]) \to ([u:-v], [s:t])$. The moduli space $\mathfrak{W}'$ is the moduli space of  CHL strings dual to F-theory compactifications considered in \cite{MR1797021}. We have the following:
\begin{proposition}
\label{prop:F}
$\mathfrak{W}'$ is a finite covering spaces of $\mathfrak{M}_{H \oplus N}$.
\end{proposition}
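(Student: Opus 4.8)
The plan is to construct an explicit finite morphism $j\colon \mathfrak{W}' \to \mathfrak{M}_{H\oplus N}$ and compute its degree. A very general $\mathcal{G}\in\mathfrak{W}'$ is presented by a $v$-even branch divisor, so it has the normal form of Equation~(\ref{eqn:G}), $w^2 = C(s,t)\,u^4 - A(s,t)\,u^2v^2 + D(s,t)\,v^4$ with $C,A,D$ of degree four; the marked involution singles out the $[u:v]$-factor, so the ruling $\pi_\mathcal{G}\colon\mathcal{G}\to\mathbb{P}(s,t)$ onto the other factor is canonical. I would define $j(\mathcal{G}) = \mathrm{Jac}^0(\pi_\mathcal{G})$. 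The generic fibre of $\pi_\mathcal{G}$ is the genus-one double cover of $\mathbb{P}(u,v)$ branched at the four zeros of the binary quartic $C u^4 - A u^2v^2 + D v^4$, whose classical invariants are $I = B + \tfrac1{12}A^2$ and $J = -\tfrac16 AB + \tfrac1{216}A^3$ with $B = CD$. Since these depend only on $A$ and $B$, the Weierstrass model of $\mathrm{Jac}^0(\pi_\mathcal{G})$ is, after a coordinate change, exactly Equation~(\ref{eqn:Xprime}); thus $j(\mathcal{G})\cong\mathcal{X}'$, which is $H\oplus N$-polarized for very general moduli by Lemma~\ref{lem:X} and Proposition~\ref{prop6}. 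This shows $j$ is a well-defined morphism and, crucially, that its value is independent of the factorization $B = CD$.

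Surjectivity is then straightforward: an arbitrary point of $\mathfrak{M}_{H\oplus N}$ is the van~Geemen-Sarti dual of a surface $\mathcal{X}$ of the form~(\ref{eqn:X}) with octic $B$; splitting $B$ into two quartics $C, D$ and forming the associated $\mathcal{G}$ of~(\ref{eqn:G}) gives $j(\mathcal{G})=\mathcal{X}'$, and as $\mathcal{X}$ ranges over the moduli space so does $\mathcal{X}'=\imath_\mathrm{VGS}(\mathcal{X})$. The computation of $I,J$ also controls the fibre: every unordered splitting $B = C\cdot D$ of the eight $I_2$-base points of $\mathcal{X}$ into two sets of four produces a $\mathcal{G}$ with the \emph{same} image $\mathcal{X}'$, and there are $\tfrac12\binom{8}{4}=35$ such splittings, the transposition $C\leftrightarrow D$ being realized by the isomorphism $u\leftrightarrow v$ already identified with the van~Geemen-Sarti action on the marked pair of even eights. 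I would then show these $35$ surfaces are pairwise non-isomorphic and exhaust the fibre: the coefficients of $u^4$ and $v^4$ in the branch divisor recover the unordered pair $\{C,D\}$, hence the splitting, so any isomorphism inside $\mathfrak{W}'$ forces the splittings to coincide; conversely a $\mathcal{G}$ lying over $\mathcal{X}'$ reconstructs $A,C,D$, hence $B$ and a unique source $\mathcal{X}$, from its branch data. For very general $\mathcal{X}'$ the eight points are distinct and no extra automorphisms occur, so the fibre has exactly $35$ elements.

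Finally I would record that $\mathfrak{W}'$ and $\mathfrak{M}_{H\oplus N}$ are irreducible of dimension ten — for $\mathfrak{W}'$ the count is $15 - 3 - 2 = 10$, parametrizing $(C,A,D)$ modulo $\mathrm{PGL}(2,\mathbb{C})$ on $\mathbb{P}(s,t)$ and the two-dimensional torus rescaling $u,v,w$. A dominant morphism of irreducible varieties of equal dimension whose very general fibre is finite of constant cardinality is finite and unramified over a dense open subset, so $j$ exhibits $\mathfrak{W}'$ as a finite covering space of $\mathfrak{M}_{H\oplus N}$, of degree $35$ over the very general locus.

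The step I expect to be the main obstacle is the fibre analysis of the second paragraph: proving that distinct $4{+}4$ splittings yield non-isomorphic double-quadrics for very general $\mathcal{X}'$, and that no double-quadric outside this combinatorial family lies over $\mathcal{X}'$ — that is, ruling out accidental automorphisms of $\mathbb{F}_0$ that could permute or merge splittings. By contrast, the well-definedness of $j$ reduces to the short binary-quartic invariant identity above, and the $H\oplus N$-polarization is inherited directly from Lemma~\ref{lem:X}.
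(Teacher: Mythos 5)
Your proposal is correct and takes essentially the same approach as the paper: both identify the very general element of $\mathfrak{W}'$ with the normal form of Equation~(\ref{eqn:G}), and both identify the fibre of the projection to $\mathfrak{M}_{H \oplus N}$ with the finitely many factorizations $B = C\cdot D$, i.e., with the marked pairs of even eights on the target surface. The only differences are cosmetic: your covering map sends $\mathcal{G}$ to $\mathcal{X}' \cong \mathrm{Jac}^0(\pi_\mathcal{G})$ rather than, as in the paper, directly to $\mathcal{X}$ reconstructed from the branch data via Equation~(\ref{eqn:X}) --- the two maps differ by the moduli involution $\imath_\mathrm{VGS}$ of the base --- and your explicit degree count of $\tfrac12\binom{8}{4}=35$ and the dimension checks are additions that the paper's (terser) proof does not carry out.
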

\begin{proof}
Every marked pair of even eights on a K3 surface $\mathcal{X} \in \mathfrak{M}_{H \oplus N}$ allows to construct a K3 surface $\mathcal{G} \in \mathfrak{W}'$ using Equation~(\ref{eqn:G}). Conversely, for every K3 surface $\mathcal{G} \in \mathfrak{W}'$, Equation~(\ref{eqn:X}) then determines a double cover of the Hirzebruch surface $\mathbb{F}_4$ and an element in $\mathfrak{M}_{H \oplus N}$. The K3 surfaces $\mathcal{G} \in \mathfrak{W}'$ obtained as the double coverings of an element in $\mathfrak{M}_{H \oplus N}$ branched along all possible marked pairs of even eights all project on the same element $\mathcal{X}$.
\end{proof}
\par We also have the equation for a two-parameter family of double-quadrics  surfaces given by
\beq
\label{eqn:F_2param}
  \mathcal{F}_{(d_0, d_\infty)}: \quad W^2 = \big(U - d_0 V\big)  \big(d_\infty U -  V\big) \ \Big( C(s, t) \, U^2 - A(s, t) \, UV + D(s, t) \, V^2 \Big) \,,
\eeq
where the two fibers of type $I_0^*$ are now at $[U:V]=[d_0:1]$ and $[1:d_\infty]$. A base transform, similar to Equation~(\ref{eqn:base_change}), converts Equation~(\ref{eqn:F_2param}) into a double cover of the Hirzebruch surface $\mathbb{F}_4$ by forgetting the marking, given by
\beq
\label{eqn:X_2param}
\begin{split}
 \mathcal{X}_{(d_0, d_\infty)}: \quad Y^2 Z =  X^3  & -  \big(2 d_0 C+ 2 d_\infty D -(1+ d_0 d_\infty) A \big)  \, X^2Z \\
 & +  \big(C + d_\infty^2 D - d_\infty A\big)\big(d_0^2 C + D - d_0 A\big)   X Z^2 ,
\end{split}
\eeq 
with the discriminant
\beq
\label{eqn:DiscrX_2param}
  \Delta_{\mathcal{X}_{(d_0, d_\infty)}} = (d_0 d_\infty -1)^2  \big(C + d_\infty^2 D - d_\infty A\big)^2 \big(d_0^2 C + D - d_0 A\big)^2 \big(A^2 - 4 C D\big)\,.
\eeq
A comparison with the discriminant of $\mathcal{X}$ in Equation~(\ref{eqn:DiscrX_Xp}) shows that only the location of the eight fibers of type $I_2$ has changed whereas the location of the fibers of type $I_1$ has remained fixed.  Moreover, for $d_0 = d_\infty =0$ the surfaces $\mathcal{F}_{(d_0, d_\infty)}$ and $\mathcal{F}$ coincide. It turns out that the relation between $\mathcal{F}$ and $\mathcal{F}_{(d_0, d_\infty)}$ is symmetric. That is, replacing polynomials as follows
\beq
\label{eqn:involution}
 \imath_{(d_0, d_\infty)}: \quad 
\left(\begin{array}{c} A \\ C \\ D \end{array}\right) \ \mapsto \ \left(\begin{array}{c} A' \\ C' \\ D' \end{array}\right)
= \left(\begin{array}{c} 2 d_0 C+ 2 d_\infty D -(1+ d_0 d_\infty) A  \\  C + d_\infty^2 D - d_\infty A \\ d_0^2 C + D - d_0 A \end{array}\right) \,,
\eeq
maps $\mathcal{F}$ in Equation~(\ref{eqn:F}) to $ \mathcal{F}_{(d_0, d_\infty)}$ and vice versa (up to a trivial rescaling of the equation defined over $\mathbb{Q}[d_0d_\infty-1]$). Thus, for $d_0 d_\infty \not = 1$ Equation~(\ref{eqn:involution}) defines a two-parameter family of involutions $\imath_{(d_0, d_\infty)}$ on  the moduli space $\mathfrak{W}$.
\subsection{Connection to the Brauer group}
As explained above, the N\'eron-Severi  lattice $\operatorname{NS}(\mathcal{F}) \cong H \oplus N \cong H(2) \oplus D_4(-1)^{\oplus 2}$ is a sublattice of the lattice $\operatorname{NS}(\widetilde{\mathcal{Y}})= H \oplus D_4(-1)^{\oplus 2}$ of index two.  We now consider the problem of reconstructing $\mathcal{F}$ from $\widetilde{\mathcal{Y}}$, that is, constructing a polarized Hodge substructure of the transcendental lattice $\mathrm{T}_{\widetilde{\mathcal{Y}}}$ that is the transcendental lattice of another K3 surface. One checks that
\beq
 \Gamma:=\mathrm{T}_{\widetilde{\mathcal{Y}}} =  H \oplus H \oplus D_4(-1)^{\oplus 2} \,.
\eeq
Recall that an element $\theta$ of order $n$ in the \emph{Brauer group} $\operatorname{Br}(\widetilde{\mathcal{Y}}) =\operatorname{Hom}(\mathrm{T}_{\widetilde{\mathcal{Y}}}, \mathbb{Q}/\mathbb{Z})$ is a surjective homomorphism $\theta: \mathrm{T}_\mathcal{Y}  \to \mathbb{Z}/n\mathbb{Z}$ and defines a sublattice of index $n$ of $\Gamma$. This sublattice is given by $\mathrm{T}_{\langle \theta \rangle} = \ker{( \theta: \Gamma \to  \mathbb{Z}/n\mathbb{Z})}$ where $\langle \theta \rangle$ denotes  the cyclic subgroup generated by $\theta$. Conversely, any sublattice $\Gamma'$ of index $n$ in $\Gamma$ with cyclic quotient group $\Gamma/\Gamma'$ determines a subgroup of order $n$ in $\operatorname{Br}(\widetilde{\mathcal{Y}})$.  If there exists a primitive lattice embedding of $\mathrm{T}_{\langle \theta \rangle}$ into the K3 lattice $\Lambda_\mathrm{K3}$,  then the Hodge structure $\mathrm{T}_{\langle \theta \rangle}$ is guaranteed to be the transcendental lattice of another K3 surface $\mathcal{F}$.  Since the lattice embedding is in general not unique, neither is the surface $\mathcal{F}$; two K3 surfaces with isomorphic transcendental lattice $\mathrm{T}_{\langle \theta \rangle}$ are Fourier-Mukai partners. 
\par In our situation, we are considering elements $\theta \in \operatorname{Br}(\widetilde{\mathcal{Y}})_2$ of order two such that the sublattice $\mathrm{T}_{\langle \theta \rangle} = \ker{( \theta: \Gamma \to  \mathbb{Z}/2\mathbb{Z})}$ has index two in $\Gamma$.  The existence of a primitive lattice embedding $\mathrm{T}_{\langle \theta \rangle} \hookrightarrow \Lambda_\mathrm{K3}$ is a-priori guaranteed since the K3 surface $\widetilde{\mathcal{Y}}$ already admits an elliptic fibration with a section; see \cite{MR2166182}.  
\par Using the arguments of \cite{MR2166182} it follows that there are isomorphism classes for elements in the Brauer group $\operatorname{Br}(\widetilde{\mathcal{Y}})_2$, with representatives $\Gamma_{2,c}=\Lambda_{2, c} \oplus H \oplus D_4(-1)^{\oplus 2}$ for $c=0, 1$.  Here, we are using the notation  $\Lambda_{b, c}=(0,b,2c)$ for the indefinite lattice of rank two such that $\Lambda_{2,0} = H(2)$ and $\Lambda_{2,1} = \langle 2 \rangle \oplus \langle -2 \rangle$. Notice that there are more sublattices of $\Gamma$ of index 2. However, we are only considering elements with the isomorphism classes $\Gamma_{2,c}$. One can then construct projective models for the corresponding K3 surfaces $\mathcal{F}_{2,c}$ with the N\'eron-Severi lattice $\Lambda_{2, c}  \oplus D_4(-1)^{\oplus 2}$.  The construction of the projective models is based on the well-developed theory of linear series on K3 surfaces; see \cite{MR364263}.  It follows that the a projective model for the K3 surface $\mathcal{F}_{2,0}$ is always that of a double cover of $\mathbb{F}_0$ branched over a curve of bi-degree $(4,4)$, necessarily with two elliptic fibrations, $\pi_\mathcal{F}$ and $\pi'_\mathcal{F}$, associated with the two elliptic pencils; see \cite{MR2166182}*{Sec.~5.5}. Similarly, $\mathcal{F}_{2,1}$ is obtained as the desingularization of a double-sextic surface; see \cite{MR2166182}*{Sec.~5.6}. 
\par From Ogg-Shafarevich theory it follows that $\mathcal{F}_{2,c}$ with $c=0, 1$ admits a genus-one fibration $\pi'_{\mathcal{F}_{2,c}} : \mathcal{F}_{2,c} \to \mathbb{P}^1$ with a bisection such that the relative Jacobian fibration recovers $\pi_\mathcal{Y}: \mathcal{Y} \to \mathbb{P}^1$. (For $\mathcal{F}_{2,0}$ we always take this elliptic fibration to coincide with the elliptic fibration $\pi'_\mathcal{F}$ introduced before.) Moreover,  every pair of elements $\pm \theta \in \operatorname{Br}(\widetilde{\mathcal{Y}})_2$  uniquely determines such a genus-one fibration on $\mathcal{F}_{2,c}$; see \cite{MR2166182}*{Sec.~4.1}.  
We then have the following:
\begin{proposition}
\label{prop:coverging2}
$\mathfrak{W}$ is a finite covering space of $\mathfrak{M}_{H \oplus D_4(-1)^{\oplus 2}}$. In particular, for a K3 surface $\widetilde{\mathcal{Y}} \in \mathfrak{M}_{H \oplus D_4(-1)^{\oplus 2}}$ and every pair $\pm \theta \in \operatorname{Br}(\widetilde{\mathcal{Y}})_2$ in the isomorphism class $\Gamma_{2,0}$, the Hodge substructure $\mathrm{T}_{\langle \theta \rangle}$ matches the one on the transcendental lattice of a K3 surface $\mathcal{F}$ in Equation~(4.2).
\end{proposition}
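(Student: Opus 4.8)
The plan is to realize the covering explicitly through the relative-Jacobian construction already assembled in this section. I would define $p : \mathfrak{W} \to \mathfrak{M}_{H \oplus D_4(-1)^{\oplus 2}}$ by sending a double-quadrics surface $\mathcal{F}$ to $\widetilde{\mathcal{Y}} = \mathrm{Jac}^0(\pi'_\mathcal{F})$, the relative Jacobian of the fibration coming from the second ruling of $\mathbb{F}_0$. By Equation~(\ref{eqn2:Yprime}) this Jacobian is the Weierstrass model with two fibers of type $I_0^*$, twelve of type $I_1$, and trivial Mordell-Weil group, so $p(\mathcal{F})$ genuinely lies in $\mathfrak{M}_{H \oplus D_4(-1)^{\oplus 2}}$ and $\mathrm{T}_{\widetilde{\mathcal{Y}}} = \Gamma = H \oplus H \oplus D_4(-1)^{\oplus 2}$. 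Surjectivity of $p$ follows by running the construction in reverse: every $\widetilde{\mathcal{Y}}$ in the target carries at least one order-two Brauer class of type $\Gamma_{2,0}$, and the associated genus-one torsor with a bisection is a double-quadrics in $\mathfrak{W}$.

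First I would identify the fiber $p^{-1}(\widetilde{\mathcal{Y}})$ with Brauer data. By the Ogg-Shafarevich dictionary in the form used in \cite{MR2166182}, genus-one fibrations with a bisection and relative Jacobian $\pi_{\widetilde{\mathcal{Y}}}$ correspond to the order-two elements of $\operatorname{Br}(\widetilde{\mathcal{Y}}) = \operatorname{Hom}(\mathrm{T}_{\widetilde{\mathcal{Y}}}, \mathbb{Q}/\mathbb{Z})$, the bisection matching the multisection index two with the order of $\theta$. Two opposite classes $\pm\theta$ produce isomorphic fibrations (the interchange of the two components of the ruling), so the fiber is parameterized by pairs $\{\pm\theta\}$, and among these precisely the classes in the isomorphism type $\Gamma_{2,0}$ yield a double cover of $\mathbb{F}_0$ branched along a $(4,4)$-curve, that is, an element of $\mathfrak{W}$; the type $\Gamma_{2,1}$ would instead give the double-sextics $\mathcal{F}_{2,1} \notin \mathfrak{W}$. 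Finiteness is then immediate from $\operatorname{Br}(\widetilde{\mathcal{Y}})_2 = \operatorname{Hom}(\mathrm{T}_{\widetilde{\mathcal{Y}}}, \mathbb{Z}/2\mathbb{Z}) \cong (\mathbb{Z}/2\mathbb{Z})^{12}$ being a finite group, so only finitely many pairs $\{\pm\theta\}$ occur and $p$ is a finite covering.

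For the ``in particular'' clause I would argue on Hodge structures. Given $\theta$ of type $\Gamma_{2,0}$, set $\mathrm{T}_{\langle\theta\rangle} = \ker(\theta : \Gamma \to \mathbb{Z}/2\mathbb{Z})$, an index-two sublattice that inherits the weight-two Hodge structure of $\Gamma = \mathrm{T}_{\widetilde{\mathcal{Y}}}$ (the period of $\widetilde{\mathcal{Y}}$ still spans the $(2,0)$-part, since $\mathrm{T}_{\langle\theta\rangle} \otimes \mathbb{Q} = \Gamma \otimes \mathbb{Q}$). As a lattice one has the isometries
\beqn
\mathrm{T}_{\langle\theta\rangle} \cong \Gamma_{2,0} = H(2) \oplus H \oplus D_4(-1)^{\oplus 2} \cong H^{\oplus 2} \oplus N,
\eeqn
where the second isometry uses $H \oplus N \cong H(2) \oplus D_4(-1)^{\oplus 2}$; by Equation~(\ref{eqn:lattice}) the right-hand side is exactly $\mathrm{T}_\mathcal{F}$ for $\mathcal{F}$ as in Equation~(\ref{eqn:F}), recalling $\mathcal{F} \cong \mathcal{X}$. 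Because $\widetilde{\mathcal{Y}}$ already admits an elliptic fibration with a section, a primitive embedding $\mathrm{T}_{\langle\theta\rangle} \hookrightarrow \Lambda_{\mathrm{K3}}$ exists a priori, and the surjectivity of the period map realizes the polarized Hodge structure on $\mathrm{T}_{\langle\theta\rangle}$ as the transcendental lattice of a K3 surface. The linear-series analysis of \cite{MR2166182} then shows that for the type $\Gamma_{2,0}$ this surface is the double cover of $\mathbb{F}_0$ of Equation~(\ref{eqn:F}), so $\mathrm{T}_{\langle\theta\rangle}$ matches the transcendental Hodge structure of such an $\mathcal{F}$.

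The hard part will be this last identification: upgrading the abstract isometry $\mathrm{T}_{\langle\theta\rangle} \cong \mathrm{T}_\mathcal{F}$ to a match of Hodge structures realized by a double-quadrics rather than by a Fourier-Mukai partner or a double-sextic. Two points require care. One must track the discriminant form through the index-two extension to confirm that $\theta$ falls in the class $\Gamma_{2,0}$ and not $\Gamma_{2,1}$, since only then is the projective model a double cover of $\mathbb{F}_0$; and one must check that the induced Hodge structure on the finite-index sublattice is genuinely of K3 type and full, that is, that the very general $\widetilde{\mathcal{Y}}$ has Picard rank ten so no further algebraic class enters $\mathrm{T}_{\langle\theta\rangle}$. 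Both are exactly the discriminant-form computations underpinning the Brauer-group classification recalled above, so the argument ultimately reduces to bookkeeping with the discriminant forms of $\Gamma$ and its index-two sublattices.
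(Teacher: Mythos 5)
Your skeleton is the paper's: the covering map is the relative Jacobian $\mathcal{F} \mapsto \mathrm{Jac}^0(\pi'_\mathcal{F})$, the fiber over $\widetilde{\mathcal{Y}}$ is identified with pairs $\pm\theta \in \operatorname{Br}(\widetilde{\mathcal{Y}})_2$ of isomorphism class $\Gamma_{2,0}$ via the Ogg--Shafarevich/Brauer dictionary of \cite{MR2166182}, finiteness follows from finiteness of $\operatorname{Br}(\widetilde{\mathcal{Y}})_2$, and your lattice chain $\Gamma_{2,0} \cong H \oplus H(2) \oplus D_4(-1)^{\oplus 2} \cong H^{\oplus 2} \oplus N \cong \mathrm{T}_{\mathcal{X}}$ is correct. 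However, there is a genuine gap, located exactly where you flag ``the hard part,'' and your proposed remedy points in the wrong direction. Throughout the proposal you identify ``double cover of $\mathbb{F}_0$ branched along a $(4,4)$-curve'' with ``element of $\mathfrak{W}$,'' but these are not the same: membership in $\mathfrak{W}$, equivalently the form of Equation~(\ref{eqn:F}), requires the $(4,4)$ branch curve to be \emph{reducible}, splitting off two fibers of the ruling plus a residual curve of bi-degree $(2,4)$. The linear-series analysis of \cite{MR2166182} cited in the paper yields only the weaker statement (a double cover of $\mathbb{F}_0$ branched along some $(4,4)$-curve, with two genus-one pencils), and ``bookkeeping with discriminant forms'' cannot supply the missing reducibility: discriminant forms pin down at most the isometry class $\mathrm{NS}(\mathcal{F}_{2,0}) \cong H(2)\oplus D_4(-1)^{\oplus 2} \cong H \oplus N$, and passing from that isometry to the split branch locus is a fibration-theoretic assertion, not a lattice-theoretic one.

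The paper closes precisely this step by invoking \cite{Clingher:2021}: on an $H \oplus N$-polarized K3 surface there is a \emph{unique} Jacobian elliptic fibration, namely the alternate fibration with singular fibers $8I_2 + 8I_1$ and Mordell--Weil group $\mathbb{Z}/2\mathbb{Z}$; hence the fibration coming from the other ruling of $\mathbb{F}_0$ admits a section and a 2-torsion section, and a fractional linear transformation on $\mathbb{P}(U,V)$ then brings $\mathcal{F}_{2,0}$ into the form of Equation~(\ref{eqn:F}). (An alternative repair within your own setup: a genus-one fibration on a K3 surface has the same singular fiber types as its relative Jacobian, so $\pi'_{\mathcal{F}_{2,0}}$ inherits the two $I_0^*$ fibers of $\pi_{\widetilde{\mathcal{Y}}}$, and an $I_0^*$ fiber of a double-quadric fibration occurs exactly over a ruling line contained in the branch curve --- which produces the two lines directly, followed by the same M\"obius normalization.) Finally, note that the two ``points requiring care'' you list are not the real issues: that $\theta$ lies in the class $\Gamma_{2,0}$ is a hypothesis of the proposition, not something to verify, and fullness of the Hodge structure for the very general $\widetilde{\mathcal{Y}}$ (Picard rank ten) is immediate; neither substitutes for the reducibility argument above.
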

\begin{proof}
Every pair of elements $\pm \theta \in \operatorname{Br}(\widetilde{\mathcal{Y}})_2$ in the isomorphism class $\Gamma_{2,0}$ determines an equation for $\mathcal{F}_{2,0}$ as a double cover of $\mathbb{F}_0$ branched over a curve of bi-degree $(4,4)$, with a genus-one fibration $\pi'_{\mathcal{F}_{2,0}} : \mathcal{F}_{2,0} \to \mathbb{P}^1=\mathbb{P}(u, v)$ with multi-section index 2 such that the relative Jacobian fibration is $\pi_{\widetilde{\mathcal{Y}}}: \widetilde{\mathcal{Y}} \to \mathbb{P}(u, v)$. Because of the lattice isomorphism $H(2) \oplus D_4(-1)^{\oplus 2} \cong H \oplus N$, the second elliptic fibration $\pi_{\mathcal{F}_{2,0}}$ must admit a section and a 2-torsion section. In fact, it was proved in \cite{Clingher:2021} that there is a unique Jacobian elliptic fibration on the K3 surface with N\'eron-Severi lattice $H \oplus N$, i.e., the Jacobian elliptic fibration with singular fibers $8 I_2 + 8 I_1$ and Mordell-Weil group $\mathbb{Z}/2\mathbb{Z}$. A fractional linear transformation on $\mathbb{P}(U,V)$ then allows us to bring $\mathcal{F}_{2,0}$ in the form of Equation~(\ref{eqn:F}). Conversely, taking the relative Jacobian $\mathrm{Jac}^0(\pi'_{\mathcal{F}_{2,0}})$ we always obtain a Jacobian elliptic K3 surface $\widetilde{\mathcal{Y}} \in \mathfrak{M}_{H \oplus D_4(-1)^{\oplus 2}}$ such that K3 surfaces $\mathcal{F}_{2,0} \in \mathfrak{W}$ obtained from different factorization $B(s, t)=C(s, t)D(s, t)$ yield the same Jacobian elliptic K3 surface $\widetilde{\mathcal{Y}}$.
\end{proof}
We have the following result:
\begin{theorem}
\label{thm:duality}
In Figure~\ref{fig:FTH-CHL} we have the following:
\begin {enumerate}
\item every K3 surface $\mathcal{X} \in \mathfrak{M}_{H \oplus N}$ has an algebraic correspondence with an element $\mathcal{Y} \in \mathfrak{M}_{H \oplus E_8(-2)}$ and vice versa,
\item every K3 surface $\mathcal{X} \in \mathfrak{M}_{H \oplus N}$ has an algebraic  correspondence with an element  $\widetilde{\mathcal{Y}} \in \mathfrak{M}_{H \oplus D_4(-1)^{\oplus 2}}$ and vice versa.
\end{enumerate}
A correspondence is a double-quadric $\mathcal{G} \in \mathfrak{W}'$ in (1) and $\mathcal{F} \in \mathfrak{W}$ in (2).
\end{theorem}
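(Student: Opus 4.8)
The plan is to read off both correspondences by walking along the two legs of Figure~\ref{fig:FTH-CHL}, using that each edge of that diagram is of one of three types --- an isomorphism, a rational double cover branched over an even eight, or the passage to a relative Jacobian --- and that each of these three types induces an algebraic cycle on the product of the two surfaces realizing the prescribed map of transcendental Hodge structures. For an isomorphism this is the graph; for a rational double cover $Z \dashrightarrow S$ branched on an even eight it is the Nikulin correspondence recording the generically two-to-one incidence; and for a genus-one fibration together with its relative Jacobian it is the Ogg--Shafarevich correspondence, whose effect on transcendental lattices is exactly the finite-index inclusion $\mathrm{T}_{\langle \theta \rangle} \hookrightarrow \Gamma$ analyzed in the discussion preceding Proposition~\ref{prop:coverging2}. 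Granting this dictionary, the theorem becomes the assembly of the constructions already established.

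For part (2) I would start with $\mathcal{X} \in \mathfrak{M}_{H \oplus N}$ in the Weierstrass form~(\ref{eqn:X}), choose a factorization $B = C D$, and apply the symplectic base change~(\ref{eqn:base_change}) to present $\mathcal{X}$ as the double-quadric $\mathcal{F}$ of~(\ref{eqn:F}); thus $\mathcal{F} \cong \mathcal{X}$ and $\mathcal{F} \in \mathfrak{W}$. The second ruling $\pi'_\mathcal{F}$ has relative Jacobian $\mathrm{Jac}^0(\pi'_\mathcal{F}) = \widetilde{\mathcal{Y}}$ given by~(\ref{eqn2:Yprime}), which lies in $\mathfrak{M}_{H \oplus D_4(-1)^{\oplus 2}}$. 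The Ogg--Shafarevich correspondence attached to $\pi'_\mathcal{F}$ then realizes the inclusion $\mathrm{T}_\mathcal{F} = \mathrm{T}_{\langle \theta \rangle} \hookrightarrow \mathrm{T}_{\widetilde{\mathcal{Y}}} = \Gamma$ --- equivalently the lattice identity $H \oplus N \cong H(2) \oplus D_4(-1)^{\oplus 2}$ --- as an algebraic cycle, and composing with $\mathcal{F} \cong \mathcal{X}$ produces the correspondence between $\mathcal{X}$ and $\widetilde{\mathcal{Y}}$. For the reverse direction I would invoke Proposition~\ref{prop:coverging2}: for a given $\widetilde{\mathcal{Y}}$ and a pair $\pm \theta \in \operatorname{Br}(\widetilde{\mathcal{Y}})_2$ in the isomorphism class $\Gamma_{2,0}$ one obtains $\mathcal{F} \in \mathfrak{W}$ with $\mathrm{Jac}^0(\pi'_\mathcal{F}) = \widetilde{\mathcal{Y}}$, whose first ruling $\pi_\mathcal{F}$ carries a section and a $2$-torsion section; by the uniqueness of the alternate fibration on an $H \oplus N$-polarized K3 surface \cite{Clingher:2021} this recovers $\mathcal{F} \cong \mathcal{X} \in \mathfrak{M}_{H \oplus N}$, as required.

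For part (1) I would again start from $\mathcal{X}$ with a factorization $B = CD$ and form the double cover $\mathcal{G}$ of $\mathcal{X}$ branched over the marked pair of even eights, i.e.\ the double-quadric~(\ref{eqn:G}); the generically two-to-one rational map $\mathcal{G} \dashrightarrow \mathcal{X}$ is the desired Nikulin correspondence. By Lemma~\ref{lem:sections} and Proposition~\ref{prop:G} the ruling $\pi'_\mathcal{G}$ admits a section, whence $\mathcal{G} \cong \mathcal{Y} \in \mathfrak{M}_{H \oplus E_8(-2)}$ (the even-eight double cover realizing $H(2) \oplus N \cong H \oplus E_8(-2)$), so that $\mathcal{G} \in \mathfrak{W}'$ is the correspondence between $\mathcal{X}$ and $\mathcal{Y}$. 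Conversely, given $\mathcal{Y} \cong \mathcal{G}$, the other ruling $\pi_\mathcal{G}$ carries a bisection with $\mathrm{Jac}^0(\pi_\mathcal{G}) = \mathcal{X}'$, and the van~Geemen--Sarti--Nikulin duality of Figure~\ref{diag:isogeny} pairs $\mathcal{X}'$ with $\mathcal{X} \in \mathfrak{M}_{H \oplus N}$; composing gives the reverse correspondence.

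The part I expect to require the most care is not the bookkeeping along the two legs --- that is essentially forced by the propositions above --- but the verification that the even-eight double cover and the relative-Jacobian passage genuinely descend to \emph{algebraic} cycles inducing the stated Hodge isometries, together with the well-posedness of the two reconstruction directions. The double cover contributes a Nikulin correspondence only because the chosen branch locus is an honest even eight, which is guaranteed by the marked-pair structure; the relative Jacobian contributes an Ogg--Shafarevich correspondence whose transcendental effect is pinned down by the explicit Weil--Hermite formulas~(\ref{eqn:hermite0}) feeding~(\ref{eqn2:Yprime}) and~(\ref{eqn2:Y}). Finally, both correspondences are inherently many-valued --- the fibre of $\mathfrak{W}' \to \mathfrak{M}_{H \oplus N}$ records the choices of factorization $B = CD$ and of marked even eight, and that of $\mathfrak{W} \to \mathfrak{M}_{H \oplus D_4(-1)^{\oplus 2}}$ records the choices of $\pm \theta$ --- so the phrase ``and vice versa'' should be read as the finite-covering statements Proposition~\ref{prop:F} and Proposition~\ref{prop:coverging2}, which is where the genuine content of the converse directions resides.
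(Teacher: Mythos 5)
Your proposal is correct and follows essentially the same route as the paper: the paper's own proof is a one-line assembly of Propositions~\ref{prop:G}, \ref{prop:F}, and \ref{prop:coverging2}, and your argument is precisely a detailed unpacking of those propositions (together with Lemma~\ref{lem:sections} and the constructions of $\mathcal{F}$ and $\mathcal{G}$ along the two legs of Figure~\ref{fig:FTH-CHL}). Your closing observation that the ``vice versa'' clauses amount to the finite-covering statements of Propositions~\ref{prop:F} and~\ref{prop:coverging2} is exactly where the paper locates the content of the converse directions as well.
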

\begin{proof}
The proof follows from Propositions~\ref{prop:G}, \ref{prop:F}, \ref{prop:coverging2}.
\end{proof}
The discrete choices involved in Theorem~\ref{thm:duality} are as follows:
\begin{remark}
One has:
\begin {enumerate}
\item for every $\mathcal{X} \in \mathfrak{M}_{H \oplus N}$, an element $\mathcal{G} \in\mathfrak{M}_{H \oplus E_8(-2)}$ is determined by a marked pair of even eight configurations $\mathit{F}_{1i} + \dots + \mathit{F}_{4i} + \mathit{F}_{5j} + \dots + \mathit{F}_{8j} \in 2 \mathrm{NS}(\mathcal{X})$, for $(i, j)=(0,1), (1,0)$,
\item for every $\widetilde{\mathcal{Y}} \in \mathfrak{M}_{H \oplus D_4(-1)^{\oplus 2}}$, an element $\mathcal{F} \in \mathfrak{M}_{H \oplus N}$  is determined by a pair $\pm \theta \in \operatorname{Br}(\widetilde{\mathcal{Y}})_2$ in the isomorphism class $\Gamma_{2,0}$.
\end{enumerate}
\end{remark}
\section{A natural subspace of dimension six}
\label{sec:specialization}
The construction of the K3 surface $\mathcal{Y}$ in Equation~(\ref{eqn:Y}), considered a CHL string background,  is based on a rational elliptic surface $\mathcal{R}$; see Proposition~\ref{prop1}. We recall that in the general case the existence of an antisymplectic involution $k_\mathcal{Y}$ on $\mathcal{Y}$ (such that the minimal resolution of $\mathcal{Y}/\langle k_\mathcal{Y} \rangle$ is isomorphic to $\mathcal{R}$) is equivalent to $\mathrm{NS}(\mathcal{Y}) \cong H \oplus E_8(-2)$. One can ask what happens when there also exists a 2-torsion section $\tau_\mathcal{R} \in \mathrm{MW}(\mathcal{R}, \pi_\mathcal{R})$ on the rational elliptic surface. If there is such a section, then the  K3 surface $\mathcal{Y}$ will inherit this property. In fact, one easily checks that there exists a 2-torsion section $\tau_\mathcal{R} \in \mathrm{MW}(\mathcal{R}, \pi_\mathcal{R})$ if and only if there exists a 2-torsion section $\tau_\mathcal{Y} \in \mathrm{MW}(\mathcal{Y}, \pi_\mathcal{Y})$. The physical relevance of this requirement for the CHL string was discussed in \cite{MR3995925}. The classification of rational elliptic surfaces in \cite{MR1104782} provides an answer for what this rational elliptic surface $\mathcal{R}$ with a 2-torsion section is: it must satisfy
\beq
\label{eqn:MW_special}
 \mathrm{MW}(\mathcal{R}, \pi_\mathcal{R}) \ \cong \  \mathbb{Z}/2\mathbb{Z} \oplus D_4^\vee .
\eeq
Equation~(\ref{eqn:MW_special}) describes the rational elliptic surface of \emph{highest} rank, admitting a 2-torsion section. Moreover, there are no other rational elliptic surfaces with section and $\mathrm{rank}(\mathrm{MW})\ge 4$ that contain a 2-torsion section. We note that Equation~(\ref{eqn:MW_special})  is precisely the second case considered earlier in~(\ref{eqn:casesRES}). It follows that now there is also a (canonical) van Geemen-Sarti involution on $\mathcal{Y}$, i.e., a symplectic involution obtained as the translation by the 2-torsion section.  Thus, we have the following:
\begin{corollary}
\label{cor:subspace1}
The moduli space of F-theory models with discrete flux admitting an antisymplectic involution induced by an involution on the base curve whose quotient is a rational elliptic surface is naturally isomorphic to the moduli space of the CHL string backgrounds admitting a (symplectic) van Geemen-Sarti involution.
\end{corollary}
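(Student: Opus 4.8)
The plan is to recognize that each of the two moduli spaces in the statement is, by results already in hand, the single lattice-polarized moduli space $\mathfrak{M}_{H \oplus D_4(-1)^{\oplus 2} \oplus A_1(-1)^{\oplus 4}}$; the two therefore coincide, and the naturality of the identification is supplied by the algebraic correspondence of Theorem~\ref{thm:duality}.

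For the F-theory side, an F-theory model with discrete flux is a K3 surface $\mathcal{X}$ polarized by $H \oplus N$ carrying its canonical van Geemen-Sarti involution, and the additional datum is a commuting antisymplectic involution $k_\mathcal{X}$ induced by an involution on the base $\mathbb{P}^1$. This is precisely the hypothesis of Corollary~\ref{cor:6dim_vGS}(2), which identifies the resulting locus with $\mathfrak{M}_{H \oplus D_4(-1)^{\oplus 2} \oplus A_1(-1)^{\oplus 4}}$. The clause that the quotient be a rational elliptic surface is part of this picture: $k_\mathcal{X}$ is antisymplectic with nonempty fixed locus, so the minimal resolution of $\mathcal{X}/\langle k_\mathcal{X}\rangle$ is a rational elliptic surface.

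For the CHL side, a CHL background is a K3 surface $\mathcal{Y}$ polarized by $H \oplus E_8(-2)$, obtained from a rational elliptic surface $\mathcal{R}$ by the order-two base change of Proposition~\ref{prop1}. By the discussion opening Section~\ref{sec:specialization}, a symplectic van Geemen-Sarti involution exists on $\mathcal{Y}$ if and only if $\mathcal{R}$ carries a $2$-torsion section, and the Oguiso-Shioda classification then forces $\mathrm{MW}(\mathcal{R},\pi_\mathcal{R}) \cong \mathbb{Z}/2\mathbb{Z} \oplus D_4^\vee$, i.e.\ case $(13)$ of Equation~(\ref{eqn:casesRES}). Proposition~\ref{cor:2coverK3_14}(1) identifies this locus with $\mathfrak{M}_{H \oplus D_4(-1)^{\oplus 2} \oplus A_1(-1)^{\oplus 4}}$ as well, so the two parameter spaces coincide; in particular the rational elliptic surface produced on the F-theory side must also be the case-$(13)$ surface $\mathcal{R}$.

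It remains to see that this coincidence is the asserted \emph{natural} isomorphism, i.e.\ that it realizes the F-theory/CHL duality rather than being accidental. Here I would restrict Theorem~\ref{thm:duality} to the common rank-fourteen locus. The step demanding the most care --- the main obstacle --- is to confirm that a very general surface polarized by $H \oplus D_4(-1)^{\oplus 2} \oplus A_1(-1)^{\oplus 4}$ genuinely carries both extra involutions and that the duality exchanges them: concretely, that this lattice admits primitive sublattices $H \oplus N$ and $H \oplus E_8(-2)$ furnishing, respectively, the van Geemen-Sarti involution and the base-change antisymplectic involution, and that the correspondence $\mathcal{G}$ of Theorem~\ref{thm:duality} carries the F-theory antisymplectic base involution to the CHL van Geemen-Sarti involution. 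I would verify this by tracking the normal forms of Section~\ref{sec:geometry} under the specialization $d_0 = d_\infty = 0$ together with the factorization $B = CD$, checking that both additional involutions descend from the single base involution $[s:t]\mapsto[-s:t]$ and that the $2$-torsion section of $\mathcal{R}$ survives on both sides. Granting this compatibility, the coincidence of the two loci is the natural isomorphism claimed.
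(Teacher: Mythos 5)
Your proposal is correct and takes essentially the same route as the paper: the paper's (implicit) proof is exactly the discussion preceding the corollary, which shows that a van Geemen-Sarti involution on the CHL background $\mathcal{Y}$ is equivalent to a 2-torsion section on the underlying rational elliptic surface, forcing case $(13)$ of the Oguiso-Shioda classification, so that by Proposition~\ref{cor:2coverK3_14}(1) (and, on the F-theory side, Corollary~\ref{cor:6dim_vGS}(2), as you cite) both loci are identified with the single moduli space $\mathfrak{M}_{H \oplus D_4(-1)^{\oplus 2} \oplus A_1(-1)^{\oplus 4}}$. Your final compatibility check for naturality --- tracking the normal forms, the factorization $B=CD$, and the exchange of the two extra involutions under the correspondence --- is not needed for the corollary itself in the paper but is precisely what the paper carries out immediately afterwards in the proposition accompanying Figure~\ref{fig:FTH-CHL_2} and Table~\ref{tab:WEQ}.
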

For a general CHL string background $\mathcal{Y}$ we also constructed the Jacobian elliptic K3 surface $\widetilde{\mathcal{Y}}$ in Equation~(\ref{eqn:Yprime}) as the minimal resolution of $\mathcal{Y}/ \langle \jmath_\mathcal{Y}\rangle$ using the Nikulin involution $\jmath_\mathcal{Y}=k_\mathcal{Y} \circ (-1)$. The K3 surfaces $\widetilde{\mathcal{Y}}$ form the 10-dimensional moduli space $\mathfrak{M}_{H \oplus D_4(-1)^{\oplus 2}}$; see Proposition~\ref{prop4}.  Theorem~\ref{thm:duality} proves that the algebraic correspondences between the K3 surfaces $\mathcal{X}$ and $\widetilde{\mathcal{Y}}$ form a moduli space $\mathfrak{W}$. Here, $\mathfrak{W}$ is the moduli space of K3 surfaces that can be obtained as the minimal resolution of double-quadrics  surfaces over $\mathbb{F}_0$ whose branch locus is the union of divisors of type $(1,0)$, $(1,0)$, and $(2,4)$. 
\par Proposition~\ref{cor:2coverK3_14} proved that -- as the lattice polarization of $\mathcal{Y}$ extends from $H \oplus E_8(-2)$ to $H \oplus N_0(-1)$ -- the lattice polarization of $\widetilde{\mathcal{Y}}$ extends from $H \oplus D_4(-1)^{\oplus 2}$ to $H \oplus D_8(-1) \oplus  A_1(-1)^{\oplus 4}$. Figure~\ref{fig:FTH-CHL} can then be extended into Figure~\ref{fig:FTH-CHL_2} where now the Jacobian elliptic K3 surfaces $\mathcal{X}, \mathcal{X}'$, considered F-theory backgrounds, also admit antisymplectic involutions (with rational quotient surfaces $\widetilde{\mathcal{R}}, \widetilde{\mathcal{R}}'$ and K3 quotients $\widetilde{\mathcal{X}}, \widetilde{\mathcal{X}}'$) and  the Jacobian elliptic K3 surface $\mathcal{Y}$, considered a CHL background, also admits a van Geemen-Sarti-Nikulin dual $\mathcal{Y}'$ which in turn inherits an antisymplectic involution with rational quotient surface $\mathcal{R}'$. Antisymplectic involutions also lift to the double-quadrics  surfaces $\mathcal{F}$ and $\mathcal{G}$ whose quotients (after composing with the hyperelliptic involution) are $\widetilde{\mathcal{F}}$ and $\widetilde{\mathcal{G}}$, respectively. For the K3 surface $\mathcal{F}$ in Equation~(\ref{eqn:F}) the existence of a compatible antisymplectic involutions implies that the polynomials $A, C, D$ satisfy 
\beq
 A(s, t) = - \alpha(s^2, t^2) \,, \qquad C(s, t) = \gamma(s^2,t^2) \,, \qquad D(s, t) = \delta(s^2, t^2) \,,
\eeq
for some homogeneous polynomials  $\alpha, \gamma, \delta$ of degree two. (Here, we introduced a minus sign  for convenience.) We have the following:
\begin{proposition}
In the situation of Corollary~\ref{cor:subspace1} there exist the algebraic correspondences in  Figure~\ref{fig:FTH-CHL_2}. The  defining equations for the surfaces are given in Table~\ref{tab:WEQ}. Here,  $\alpha, \gamma, \delta$ are general homogeneous polynomials of degree 2, and the polynomials $a, c, d$ of degree 2 are defined by requiring
\beq
\label{eqn:hermite00_2}
  \gamma(S, T) \, U^2 + \alpha(S, T) \, U V+ \delta(S, T) \, V^2 =  \  c( U, V ) \, S^2+   a ( U, V ) \, S T  + d( U, V )\, T^2
\eeq
for all $S, T, U, V$.  In particular, we have:
\begin{enumerate}
\item $\mathcal{X}, \mathcal{X}', \mathcal{Y}, \mathcal{Y}', \mathcal{G}, \mathcal{F}, \widetilde{\mathcal{G}},  \in \mathfrak{M}_{H \oplus N_0(-1)}$,
\item $\widetilde{\mathcal{X}}, \widetilde{\mathcal{X}}', \widetilde{\mathcal{Y}}, \widetilde{\mathcal{Y}}',  \widetilde{\mathcal{F}}, \in \mathfrak{M}_{H \oplus D_8(-1) \oplus  A_1(-1)^{\oplus 4}}$,
\item $\mathcal{R}, \mathcal{R}', \widetilde{\mathcal{R}},   \widetilde{\mathcal{R}}'$ are rational elliptic surfaces with $\mathrm{MW}(\mathcal{R}, \pi_\mathcal{R}) \ \cong \  \mathbb{Z}/2\mathbb{Z} \oplus D_4^\vee$,
\item $\mathcal{G}, \mathcal{F}, \widetilde{\mathcal{G}}, \widetilde{\mathcal{F}}$ are double-quadrics  surfaces over $\mathbb{F}_0$.
\end{enumerate}
\end{proposition}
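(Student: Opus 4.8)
The plan is to take the symmetric normal form for $\mathcal{X}$ in which $A, C, D$ are even in the base coordinates, as in the hypothesis, and to propagate it through the entire web of constructions of Section~\ref{sec:geometry} (Figure~\ref{fig:FTH-CHL}), reading off an explicit defining equation at each node; this is exactly what fills Table~\ref{tab:WEQ}. Concretely, first I would substitute $A(s,t) = -\alpha(s^2,t^2)$, $C(s,t) = \gamma(s^2,t^2)$, $D(s,t) = \delta(s^2,t^2)$ into the Weierstrass model~(\ref{eqn:X}) for $\mathcal{X}$ and its van~Geemen-Sarti-Nikulin dual~(\ref{eqn:Xprime}) for $\mathcal{X}'$, then into the double-quadric equations~(\ref{eqn:G}) for $\mathcal{G}$ and~(\ref{eqn:F}) for $\mathcal{F}$. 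For the surfaces obtained by changing the ruling and passing to the relative Jacobian, I would feed the even forms into the Hermite-type relation~(\ref{eqn:hermite00_2}) to obtain the degree-two polynomials $a, c, d$, and then apply the classical formulae~(\ref{eqn:hermite0}) to produce Weierstrass equations for $\mathcal{Y}$ and $\widetilde{\mathcal{Y}}$. Each resulting equation inherits evenness in the base coordinates, which is the geometric shadow of the extra base involution.

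Next I would organize the additional involutions. By Corollary~\ref{cor:subspace1} we are in the setting where $\mathcal{Y}$ carries a van~Geemen-Sarti involution, so $\mathcal{Y}$ acquires a Nikulin dual $\mathcal{Y}'$ exactly as in Figure~\ref{diag:isogeny}, and each K3 surface at a node of the diagram carries an antisymplectic involution $k$ induced by the base involution $[s:t]\mapsto[-s:t]$ (respectively $[u:v]\mapsto[-u:v]$). Composing $k$ with the hyperelliptic involution gives a Nikulin involution $\jmath = k\circ(-1)$ whose quotient resolves to the corresponding tilded K3 surface $\widetilde{\mathcal{X}}, \widetilde{\mathcal{X}}', \widetilde{\mathcal{Y}}', \widetilde{\mathcal{F}}$, while the minimal resolution of the quotient by $k$ alone is the corresponding rational elliptic surface $\widetilde{\mathcal{R}}, \widetilde{\mathcal{R}}', \mathcal{R}'$, precisely as $\mathcal{R}$ arose from $\mathcal{Y}$. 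I would then verify that the rational maps of Figure~\ref{fig:FTH-CHL_2} commute, i.e. that forming relative Jacobians and changing rulings is compatible with these involutions; this is immediate once every equation is in even form, since each map of Figure~\ref{fig:FTH-CHL} becomes equivariant for the involution.

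Finally I would settle the lattice assertions (1)--(4) by the very-general fiber-counting argument already used in Proposition~\ref{cor:2coverK3_14}. For each Weierstrass or double-cover equation I would read off the Kodaira singular fibers and the Mordell-Weil group via Shioda-Tate, compute the determinant of the N\'eron-Severi lattice, and match its discriminant form to identify the lattice up to isometry by Nikulin's classification of $2$-elementary lattices. This yields $H\oplus D_4(-1)^{\oplus 2}\oplus A_1(-1)^{\oplus 4}$ for the K3 surfaces in (1), $H\oplus D_8(-1)\oplus A_1(-1)^{\oplus 4}$ for their Nikulin quotients in (2), and $\mathbb{Z}/2\mathbb{Z}\oplus D_4^\vee$ for the Mordell-Weil groups of the rational surfaces in (3) via the Oguiso-Shioda list \cite{MR1104782}; part (4) is then read directly off the defining equations as double covers of $\mathbb{F}_0$.

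The main obstacle I anticipate is the lattice bookkeeping rather than the map-chasing. As the analogous Remark~\ref{rem:fibration_subspace} warns in the symplectic case, the van~Geemen-Sarti-Nikulin dual of a surface in one of these subspaces need not be polarized by the same lattice, so I must compute each N\'eron-Severi lattice from its own fibration rather than transport it across a correspondence. In particular, identifying the correct index-two overlattices, distinguishing $H\oplus D_4(-1)^{\oplus 2}\oplus A_1(-1)^{\oplus 4}$ from the a priori sublattice $H\oplus D_8(-1)\oplus A_1(-1)^{\oplus 4}$ of index two exactly as in the proof of Proposition~\ref{cor:2coverK3_14}, and checking that the glue vectors are integral, will require the careful discriminant-form computations of the kind carried out in \cite{Clingher:2021}.
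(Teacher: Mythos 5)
The constructive skeleton of your proposal --- substituting the even forms into the equations of Section~\ref{sec:geometry} to fill Table~\ref{tab:WEQ}, forming the quotients by $k$ and $\jmath = k \circ (-1)$ to produce the tilded surfaces and the rational elliptic surfaces, and identifying the lattices via fiber counts, Shioda--Tate, and discriminant forms --- is exactly how the paper proceeds, and your caution about computing each N\'eron--Severi lattice from its own fibration is well placed. But your proposal omits the step on which the paper's proof actually concentrates: proving the isomorphisms that appear in Figure~\ref{fig:FTH-CHL_2} but \emph{not} in Figure~\ref{fig:FTH-CHL}, namely $\mathcal{X}' \cong \mathcal{G} \cong \mathcal{Y}$, $\widetilde{\mathcal{G}} \cong \mathcal{Y}'$, and $\widetilde{\mathcal{X}} \cong \widetilde{\mathcal{F}} \cong \widetilde{\mathcal{Y}}'$. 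In the general situation the arrow $\mathcal{G} \to \mathcal{X}'$ is only a relative-Jacobian map: $\pi_\mathcal{G}$ has no section, only a bisection, and indeed $\mathrm{NS}(\mathcal{G}) \cong H \oplus E_8(-2) \not\cong H \oplus N \cong \mathrm{NS}(\mathcal{X}')$, so no isomorphism can exist there. The whole point of the specialization in Corollary~\ref{cor:subspace1} is that these genus-one fibrations acquire sections and therefore become isomorphic to their relative Jacobians; this is the geometric mechanism by which the F-theory and CHL moduli spaces coincide on this six-dimensional locus.

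Your substitute for this step --- ``each map of Figure~\ref{fig:FTH-CHL} becomes equivariant for the involution, hence the diagram commutes'' --- cannot do this job. Equivariance extends the existing maps to the quotients; it never upgrades a $\mathrm{Jac}^0$ arrow to an isomorphism, because that upgrade is equivalent to the existence of a section, an additional geometric fact that is false before specialization. What is needed is an explicit section-existence argument as in Lemma~\ref{lem:sections}: for instance, for $\widetilde{\mathcal{F}}$ the sections can be read off directly from its defining equation in Table~\ref{tab:WEQ} (whose branch locus is symmetric under interchanging the two rulings), which yields $\widetilde{\mathcal{X}} \cong \widetilde{\mathcal{F}} \cong \widetilde{\mathcal{Y}}'$, and the analogous computations for $\mathcal{G}$ and $\widetilde{\mathcal{G}}$ give $\mathcal{X}' \cong \mathcal{G} \cong \mathcal{Y}$ and $\widetilde{\mathcal{G}} \cong \mathcal{Y}'$. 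Without this, your argument only establishes an equivariant version of Figure~\ref{fig:FTH-CHL}, with Jacobian correspondences where the proposition asserts isomorphisms.
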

\begin{proof}
One uses the explicit constructions for double covers in Section~\ref{ssec:double_covers} and Section~\ref{sec:VGS_K3} to construct all surfaces in Figure~\ref{fig:FTH-CHL_2} explicitly. The Gram matrix of $N_0$ was given in Equation~(\ref{eqn:Gram_matrix_2}).  The existence of sections that imply isomorphisms $\mathcal{X}' \cong \mathcal{G} \cong \mathcal{Y}$ and $\widetilde{\mathcal{G}} \cong \mathcal{Y}'$ is proved as in Lemma~\ref{lem:sections}. The existence of sections implies $\widetilde{\mathcal{X}} \cong \widetilde{\mathcal{F}} \cong \widetilde{Y}'$ and follows immediately from the equation for $\widetilde{\mathcal{F}}$ in Table~\ref{tab:WEQ}.
\end{proof}
We make the following:
\begin{remark}
A marking of $\mathcal{X}$, given by the factorization $\gamma(s^2, t^2) \cdot \delta(s^2, t^2)$, determines the double cover $\mathcal{G}$. The marking also induces a canonical marking of $\mathcal{Y}'$, given by the factorization $c(u^2, v^2) \cdot d(u^2, v^2)$, with the same double cover $\mathcal{G}$.
\end{remark}
\begin{remark}
The fundamental object in Figure~\ref{fig:FTH-CHL_2} is the double-quadric $\widetilde{\mathcal{F}}$. All other K3 surfaces can be obtained from $\widetilde{\mathcal{F}}$ using various the double covers.
\end{remark}
\begin{figure}
\beqn
\xymatrix{
&&& \widetilde{\mathcal{G}} \ar@/_1pc/[ddll]_{\mathrm{Jac}^0}   \ar[r]^{\cong} & \mathcal{Y}'  \ar@<-2pt>[d]  \ar[dr] \ar[drr]  \\
&&  \mathcal{X}'  \ar@<-2pt>[d] \ar[dl] \ar[dll]|(.32)\hole &   \mathcal{G} \ar[l]_{\cong} \ar[dl] \ar[ur] \ar[d]  \ar[u] \ar[r]^{\cong} & \mathcal{Y} \ar@<-2pt>[u]   \ar[dr]|(.35)\hole  \ar[drr]|(.25)\hole |(.50)\hole & \widetilde{\mathcal{Y} }' \ar@<-2pt>[d]  & \mathcal{R}'  \ar@<-2pt>[d] \\
\widetilde{\mathcal{R}}'  \ar@<-2pt>[d] 	& \widetilde{\mathcal{X}}'  \ar@<-2pt>[d] 	& \mathcal{X}  \ar@<-2pt>[u]  \ar[dl]  \ar[dll]|(.505)\hole	&   \mathcal{F} \ar[l]_{\cong} \ar[d]   \ar[rr]^{\mathrm{Jac}^0}|(.3)\hole  &&  \widetilde{\mathcal{Y}} \ar@<-2pt>[u]   & \mathcal{R} \ar@<-2pt>[u]  \\
\widetilde{\mathcal{R}} \ar@<-2pt>[u]  	& \widetilde{\mathcal{X}} \ar@<-2pt>[u]  	& & \widetilde{\mathcal{F}} \ar[ll]_{\cong}  \ar@/^1pc/[uurr]^{\cong}  \\
 }
\eeqn
\caption{Extension of the F-theory/CHL string duality (Case I)}
\label{fig:FTH-CHL_2}
\end{figure}
\begin{table}
\begin{tabular}{c|c}
label & defining equation \\
\hline
&\\[-0.9em]
$\mathcal{X}$ & $Y^2 Z = X^3 + \alpha(s^2, t^2) \, X^2 Z + \gamma(s^2, t^2) \,  \delta(s^2, t^2) \, X Z^2$ \\ 
$\mathcal{X}'$ &  $y^2 z = x^3 - 2 \,  \alpha(s^2, t^2) \, x^2 z + \big( \alpha(s^2, t^2)^2 - 4 \gamma(s^2, t^2) \,  \delta(s^2, t^2)\big) x z^2$ \\ 
$\mathcal{Y}$ &  $Y^2 Z = X^3 - 2 \, a(u^2, v^2) \, X^2 Z + \big( a(u^2, v^2)^2 - 4 c(u^2, v^2) \,  d(u^2, v^2)\big) X Z^2$ \\ 
$\mathcal{Y}'$ &$y^2 z = x^3 + a(u^2, v^2) \, x^2 z + c(u^2,v^2) \,  d(u^2, v^2) \, x z^2$ \\[0.2em]
\hdashline
&\\[-0.9em]
$\widetilde{\mathcal{X}}$ & $y^2 z = x^3 + S T \alpha(S, T) \, x^2 z + S^2 T^2 \gamma(S, T) \,  \delta(S,T) \, x z^2$ \\ 
$\widetilde{\mathcal{X}}'$ &  $Y^2 Z = X^3 - 2\, S T \alpha(S, T) \, X^2 Z + S^2 T^2 \big( \alpha(S, T)^2 - 4 \gamma(S, T) \,  \delta(S, T)\big) X Z^2$ \\ 
$\widetilde{\mathcal{Y}}$ &  $y^2 z = x^3 - 2 \, U V a(U, V) \, x^2 z + U^2 V^2 \big( a(U, V)^2 - 4 c(U, V) \,  d(U, V)\big) x z^2$ \\ 
$\widetilde{\mathcal{Y}}'$ &$Y^2 Z = X^3 + UV a(U, V) \, X^2 Z + U^2 V^2 c(U, V) \,  d(U, V) \, X Z^2$ \\[0.2em]
\hdashline
&\\[-0.9em]
$\widetilde{\mathcal{R}}$ & $y^2 z = x^3 + \alpha(S, T) \, x^2 z + \gamma(S, T) \,  \delta(S,T) \, x z^2$ \\ 
$\widetilde{\mathcal{R}}'$ &  $Y^2 Z = X^3 - 2 \, \alpha(S, T) \, X^2 Z +  \big( \alpha(S, T)^2 - 4 \gamma(S, T) \,  \delta(S, T)\big) X Z^2$ \\ 
$\mathcal{R}$ &  $y^2 z = x^3 - 2 \, a(U, V) \, x^2 z + \big( a(U, V)^2 - 4 c(U, V) \,  d(U, V)\big) x z^2$ \\ 
$\mathcal{R}'$ &$Y^2 Z = X^3 + a(U, V) \, X^2 Z +  c(U, V) \,  d(U, V) \, X Z^2$ \\[0.2em]
\hdashline
&\\[-0.9em]
$\mathcal{G}$ & $w^2 =  \left\lbrace \begin{array}{l}  \gamma(s^2, t^2) \, u^4 + \alpha(s^2, t^2) \, u^2v^2 + \delta(s^2, t^2) \, v^4 \\  c(u^2, v^2) \, s^4 + a(u^2, v^2) \, s^2 t^2 + d(u^2, v^2) \, t^4  \end{array}\right.$\\[0.2em]
$\mathcal{F}$ & $W^2 =  \left\lbrace \begin{array}{l} U V   \big( \gamma(s^2, t^2) \, U^2 + \alpha(s^2, t^2) \, UV + \delta(s^2, t^2) \, V^2 \big) \\ U V   \big( c(U, V) \, s^4 + a(U, V) \, s^2 t^2 + d(U, V) \, t^4 \big) \end{array}\right.$\\[0.2em]
$\widetilde{\mathcal{G}}$ & $W^2 =  \left\lbrace \begin{array}{l} S T   \big( \gamma(S, T) \, u^4 + \alpha(S, T) \, u^2v^2 + \delta(S, T) \, v^4 \big) \\ S T  \big( c(u^2, v^2) \, S^2 + a(u^2, v^2) \, S T + d(u^2, v^2) \, T^2 \big) \end{array}\right.$\\[0.2em]
$\widetilde{\mathcal{F}}$ & $w^2 =  \left\lbrace \begin{array}{l} S T U V \big( \gamma(S, T) \, U^2 + \alpha(S, T) \, UV + \delta(S, T) \, V^2 \big) \\ S T U V \big( c(U, V) \, S^2 + a(U, V) \, S T + d(U, V) \, T^2 \big) \end{array}\right.$\\[0.2em]
\hline
\end{tabular}
\caption{Defining equations for surfaces in Figure~\ref{fig:FTH-CHL_2}}
\label{tab:WEQ}
\end{table}
\par We will now determine the space of correspondences between $\widetilde{\mathcal{X}}$ and $\widetilde{\mathcal{Y}}'$. We consider the general double-quadrics given by
\beq
\label{eqn:divisor44}
 W^2 =     \big(S - c_0 T\big)  \big(c_\infty S -  T\big)\big(U - d_0 V\big)  \big(d_\infty U -  V\big)  \Big(\gamma(U, V) \, S^2 + \alpha(U, V) \, ST + \delta(U, V) \, T^2 \Big)\,,
\eeq
branched over bi-degree $(4,4)$-curves in $\mathbb{F}_0 = \mathbb{P}(S,T) \times \mathbb{P}(U,V)$. Here $\alpha, \gamma, \delta$ are homogenous polynomials of degree two and  $c_0, c_\infty, d_0, d_\infty$  are complex parameters such that $c_0 c_\infty \not = 1$ and $d_0 d_\infty \not = 1$.  The branch locus is reducible and consists of two curves of bi-degree $(1,0)$ and $(0,1)$, respectively, and a genus-one curve in the linear system $|\mathcal{O}_{\mathbb{F}_0}(2,2)|$ such that the curves intersect in ordinary rational double points.  Let $\widetilde{\mathfrak{W}}$ be the moduli space of the K3 surfaces that can be obtained as the minimal resolution of the  surfaces in Equation~(\ref{eqn:divisor44}). We note that Equation~(\ref{eqn:divisor44}) depends on 12 parameters since one has $\mathbb{P} H^0(\mathbb{F}_0, \mathcal{O}_{\mathbb{F}_0}(2,2)) = 3 \cdot 3 -1=8$  in addition to $c_0, c_\infty, d_0, d_\infty$.  Since $\mathbb{F}_0$ has a 6-dimensional group of automorphisms, the moduli space $\widetilde{\mathfrak{W}}$ is 6-dimensional. We have the following:
\begin{corollary}
The correspondences $\widetilde{\mathcal{F}}$ in Figure~\ref{fig:FTH-CHL_2} form the moduli space $\widetilde{\mathfrak{W}}$.
\end{corollary}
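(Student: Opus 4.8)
The plan is to exhibit both $\widetilde{\mathfrak{W}}$ and the family of double-quadrics $\widetilde{\mathcal{F}}$ as two $6$-dimensional families that agree on a dense open set, by recognizing $\widetilde{\mathcal{F}}$ as the normalized position of the four lines in the branch locus of Equation~(\ref{eqn:divisor44}). First I would observe that $\widetilde{\mathcal{F}}$, as recorded in Table~\ref{tab:WEQ}, is precisely the member of~(\ref{eqn:divisor44}) obtained by setting $c_0=c_\infty=d_0=d_\infty=0$: with these values the line factors $(S-c_0T)(c_\infty S-T)(U-d_0V)(d_\infty U-V)$ collapse to $STUV$ up to sign, and the residual factor is a general member of $|\mathcal{O}_{\mathbb{F}_0}(2,2)|$, matching the defining equation of $\widetilde{\mathcal{F}}$ (written in either ruling via the classical relation~(\ref{eqn:hermite00_2})). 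Hence every $\widetilde{\mathcal{F}}$ determines a point of $\widetilde{\mathfrak{W}}$, and the generic such surface carries the polarization $H\oplus D_8(-1)\oplus A_1(-1)^{\oplus 4}$ established in the preceding Proposition.

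For the converse I would produce, from an arbitrary member of~(\ref{eqn:divisor44}), an isomorphic surface of the form $\widetilde{\mathcal{F}}$ by acting with the subgroup $\mathrm{PGL}(2,\mathbb{C})\times\mathrm{PGL}(2,\mathbb{C})\subset\mathrm{Aut}(\mathbb{F}_0)$. The first pair of lines consists of the fibers over the points $[c_0:1],[1:c_\infty]\in\mathbb{P}(S,T)$, which are distinct because $c_0c_\infty\neq 1$; the second pair are the fibers over the distinct points $[d_0:1],[1:d_\infty]\in\mathbb{P}(U,V)$. Since $\mathrm{PGL}(2,\mathbb{C})$ is transitive on ordered pairs of distinct points of $\mathbb{P}^1$, I can move these to $[0:1],[1:0]$ in each factor, thereby sending the four lines to $S=0,\,T=0,\,U=0,\,V=0$; the $(2,2)$-form is carried to another $(2,2)$-form, and the equation becomes that of a $\widetilde{\mathcal{F}}$. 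Thus every element of $\widetilde{\mathfrak{W}}$ is isomorphic to some $\widetilde{\mathcal{F}}$.

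To finish I would match dimensions. The normalization above leaves a residual automorphism group, namely the stabilizer in $\mathrm{PGL}(2,\mathbb{C})^2$ of the configuration $\{0,\infty\}$ in each ruling, whose connected component is the two-torus $[S:T]\mapsto[\lambda S:T]$, $[U:V]\mapsto[\mu U:V]$. A general $(2,2)$-form has $9$ coefficients, i.e.\ $8$ projective parameters, and modding out by this two-torus leaves $8-2=6$ moduli, exactly the dimension of $\widetilde{\mathfrak{W}}$ computed above. Because the $\widetilde{\mathcal{F}}$-family maps into $\widetilde{\mathfrak{W}}$ with Zariski-dense image by the previous paragraph, the induced morphism between these two $6$-dimensional spaces is dominant and generically finite; since both classify K3 surfaces with the same polarization lattice $H\oplus D_8(-1)\oplus A_1(-1)^{\oplus 4}$, the two moduli spaces coincide, which is the assertion of the corollary.

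The main obstacle I anticipate is not the line-normalization itself but verifying that the resulting map onto $\widetilde{\mathfrak{W}}$ is genuinely dominant rather than landing in a proper subvariety: one must check that the two-torus acts on the projectivized space $\mathbb{P}H^0(\mathbb{F}_0,\mathcal{O}_{\mathbb{F}_0}(2,2))$ with generically two-dimensional orbits, so that the quotient really is $6$-dimensional, and that the discrete residual symmetries $S\leftrightarrow T$ and $U\leftrightarrow V$ account for any finite fibers. Both points reduce to an explicit but routine orbit computation for the torus action, together with the observation that the generic member of~(\ref{eqn:divisor44}) has the same branch configuration type as $\widetilde{\mathcal{F}}$ and hence the same generic N\'eron-Severi lattice.
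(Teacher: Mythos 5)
Your proposal is correct and matches the paper's (implicit) justification: the paper states this corollary without a separate proof, treating it as immediate from the preceding proposition (where $\widetilde{\mathcal{F}}$ in Table~\ref{tab:WEQ} is branched over $STUV$ times a general $(2,2)$-form, i.e.\ the $c_0=c_\infty=d_0=d_\infty=0$ member of Equation~(\ref{eqn:divisor44})) together with the dimension count $12-6=6$ for $\widetilde{\mathfrak{W}}$. Your normalization of the four lines by $\mathrm{PGL}(2,\mathbb{C})\times\mathrm{PGL}(2,\mathbb{C})$ and the matching count $8-2=6$ simply make explicit the two inclusions the paper leaves to the reader (and in fact the set-theoretic argument alone already suffices, so your worry about dominance is moot); the same normalization idea reappears in the paper's Proposition~\ref{prop:N0}, where a finer normal form retaining $c_0,c_\infty$ is derived.
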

We now derive an explicit parametrization for the K3 surfaces in $\widetilde{\mathfrak{W}}$ using an auxiliary genus-one curve. This parametrization of the curves in the branch locus is based on a construction given by Andr\'e Weil \cite{MR717601} using the Abel-Jacobi map for genus-one curves. 
\subsection{The Abel-Jacobi map}
\label{ssec:AJM}
Let $\mathit{H}$ be a smooth curve of genus one given by $w^2 = P(x) = \sum_{i=0}^4 a_i x^i$, using the affine coordinates $(x, w) \in \mathbb{C}^2$.  For a point $(x_0,-w_0) \in \mathit{H}$ we consider the Abel-Jacobi map $J_{(x_0,-w_0)} : \mathit{H} \to \mathrm{Jac}(\mathit{H})$ which relates the algebraic curve $\mathit{H}$ to its Jacobian variety $\mathrm{Jac}(\mathit{H})$, i.e., an elliptic curve. A classical result due to Hermite states that $\mathrm{Jac}(\mathit{H}) \cong \mathit{E}$ where $\mathit{E}$ is the elliptic curve given by
\beq
\label{eqn:EC}
 \mathit{E}: \quad \eta^2 = S(\xi) = \xi^3 + f\,  \xi + g \,.
\eeq
Here, we are using the affine coordinates $(\xi,\eta) \in \mathbb{C}^2$ and Equations~(\ref{eqn:hermite0}), i.e.,
\beq
\label{eqn:hermite}
 f = - 4 a_0 a_4 + a_1 a_3 - \frac{1}{3} a_2^2 \,, \qquad g = -\frac{8}{3} a_0 a_2 a_4 + a_0 a_3^2 + a_1^2 a_4 - \frac{1}{3} a_1 a_2 a_3 + \frac{2}{27} a_2^3 \,.
\eeq
We introduce the polynomial
\beq
 R(x, x_0)  = a_4 \,x^2 x_0^2 + \frac{a_3}{2} x x_0 \big( x + x_0\big) + \frac{a_2}{6}  \big( x^2 + x_0^2\big)  + \frac{2 a_2}{3} x x_0 + \frac{a_1}{2} \big( x+x_0\big) + a_0  
\eeq
such that $R(x, x)=P(x)$. It turns out that the polynomial $P(x) P(x_0) - R(x, x_0)^2$ factors. There is a polynomial $R_1(x, x_0)$ of bi-degree $(2,2)$ such that 
\beq
\label{eqn:relat}
 \forall x, x_0: R(x, x_0)^2 +  R_1(x, x_0) \, \big(x - x_0\big)^2 - P(x) \, P(x_0) =0 \,.
\eeq
We obtain
\beq
\begin{split}
 R_1(x, x_0) & = \frac{8 a_2-3a_3^2}{12} x^2 x_0^2 + \frac{6a_1 - a_2 a_3}{6} x x_0 (x+x_0) + \frac{36 a_0-a_2^2}{36} (x^2+x_0^2)  \\
 & + \frac{36 a_0  + 9 a_1 a_3 - 5 a_2^2}{18} x x_0 +  \frac{6a_0 a_3 - a_1 a_2}{6} (x + x_0) + \frac{8 a_0 a_2-3a_1^2}{12} ,
\end{split}
\eeq 
and set $Q(x)=R_1(x, x)$. In particular, we have 
\beq
 Q(x)= \frac{1}{3} P(x) P''(x) - \frac{1}{4} P'(x)^2 \,.
\eeq
We denote the discriminants of $\mathit{H}$ and $\mathit{E}$ by $\Delta_\mathit{H} =\mathrm{Discr}_x(P)$ and $\Delta_\mathit{E} =\mathrm{Discr}_\xi(S)$, respectively, such that $\Delta_\mathit{H} =\Delta_\mathit{E}$ by construction. One also checks $\mathrm{Discr}_x(Q)=S(0)^2\mathrm{Discr}_x(P)$. From now on, we will assume that 
\beq
\label{eqn:constraint0}
 \mathrm{Discr}_x(Q) =S(0)^2\mathrm{Discr}_x(P) \not =0 \,.
\eeq 
We also set $[P, Q]= \partial_xP \cdot Q  - P \cdot \partial_xQ$. A tedious but straight-forward computation yields the following:
\begin{lemma}
\label{lem:AJM}
For a smooth curve $\mathit{H}$ of genus one given by $w^2 = \sum_{i=0}^4 a_i x^{4-i}$, the Abel-Jacobi map $J_{(x_0,-w_0)} : \mathit{H} \to \mathit{E} \cong  \mathrm{Jac}(\mathit{H})$ maps $(x, y) \mapsto (\xi,\eta)$ with
\beq
\label{eqn:AJM}
 \xi = 2 \frac{ R(x, x_0) - w w_0}{(x-x_0)^2} \,, \quad \eta =  \frac{4 w w_0 (w-w_0)}{(x-x_0)^3} - \frac{P'(x) w_0+ P'(x_0) w}{(x-x_0)^2} \quad \text{for $x\not = x_0$} \,, 
\eeq 
the point $(x_0,-w_0) \in \mathit{H}$ to the point at infinity on $\mathit{E}$, and $(x_0,w_0)$ to the point with $\xi=-Q(x_0)/P(x_0)$, $\eta= [P, Q]_{x_0}/(2w_0^3)$ if $w_0 \not = 0$. 
\end{lemma}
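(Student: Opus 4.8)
The plan is to treat the expressions in~(\ref{eqn:AJM}) as a rational map $\phi\colon \mathit{H}\dashrightarrow \mathit{E}$ and to verify directly three facts: that $\phi(\mathit{H})\subseteq \mathit{E}$; that $\xi\in L(2P_0)$ and $\eta\in L(3P_0)$ for $P_0=(x_0,-w_0)$, so that $\phi$ is the closed embedding of $\mathit{H}$ by the complete linear system $|3P_0|$ and hence an isomorphism onto a Weierstrass cubic with $\phi(P_0)=O$; and that the conjugate point $(x_0,w_0)$ has the stated image. By the rigidity of genus-one curves, an isomorphism $\mathit{H}\xrightarrow{\sim}\mathit{E}$ sending the base point to the origin realizes the Abel--Jacobi map under Hermite's identification $\operatorname{Jac}(\mathit{H})\cong\mathit{E}$ of~(\ref{eqn:EC}), with the residual sign fixed by the base point and the special-point values. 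The whole argument is driven by the factorization identity~(\ref{eqn:relat}).

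First I would record a second form of $\xi$. Using $w^2=P(x)$ and $w_0^2=P(x_0)$, identity~(\ref{eqn:relat}) factors as $\bigl(R(x,x_0)-ww_0\bigr)\bigl(R(x,x_0)+ww_0\bigr)=-R_1(x,x_0)(x-x_0)^2$, whence
\[
 \xi \;=\; 2\,\frac{R(x,x_0)-ww_0}{(x-x_0)^2}\;=\;\frac{-2\,R_1(x,x_0)}{R(x,x_0)+ww_0}\,.
\]
The two forms make the divisor of $\xi$ transparent: near $P_0$ the first form has nonvanishing numerator $R(x_0,x_0)+w_0^2=2P(x_0)$ over the double zero $(x-x_0)^2$, giving a pole of order exactly two, while at $(x_0,w_0)$ and at the two points over $x=\infty$ the function stays finite, so $\operatorname{div}_\infty(\xi)=2P_0$ and $\xi\in L(2P_0)$. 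For $\eta$, the cubic term $4ww_0(w-w_0)/(x-x_0)^3$ carries the order-three pole at $P_0$ with leading Laurent coefficient $8w_0^3\neq 0$, and the correction term $-(P'(x)w_0+P'(x_0)w)/(x-x_0)^2$ is exactly what cancels the lower-order poles that the cubic term would otherwise have at $(x_0,w_0)$ and over $x=\infty$; hence $\eta\in L(3P_0)$. Since $\{1,\xi\}$ spans $L(2P_0)$ and $\{1,\xi,\eta\}$ spans $L(3P_0)$, the map $[\xi:\eta:1]$ is the complete embedding by $|3P_0|$ and sends $P_0$ to $[0:1:0]=O$.

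The core of the first step is the identity $\eta^2=\xi^3+f\xi+g$. The leading Laurent expansions $\xi\sim 4P(x_0)/(x-x_0)^2$ and $\eta\sim 8w_0^3/(x-x_0)^3$ give $\xi^3$ and $\eta^2$ the same leading pole $64P(x_0)^3/(x-x_0)^6$, so the image cubic is already monic and in short Weierstrass form with no residual rescaling; matching the subleading Laurent coefficients at $P_0$ kills the possible $\xi^2$, $\xi\eta$, and $\eta$ terms and reduces the claim to identifying the two remaining coefficients with $f$ and $g$ from~(\ref{eqn:hermite}). One either substitutes the second form of $\xi$ into $\eta^2-\xi^3-f\xi-g$ and reduces modulo $w^2-P(x)$, $w_0^2-P(x_0)$ using~(\ref{eqn:relat}), or invokes Hermite's isomorphism $\operatorname{Jac}(\mathit{H})\cong\mathit{E}$ together with the normalization just fixed. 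For the conjugate point, the second form of $\xi$ evaluates instantly: $\xi(x_0,w_0)=-2Q(x_0)/\bigl(P(x_0)+w_0^2\bigr)=-Q(x_0)/P(x_0)$, using $R_1(x_0,x_0)=Q(x_0)$. The value $\eta(x_0,w_0)=[P,Q]_{x_0}/(2w_0^3)$ then comes from a second-order expansion of $\eta$ along the branch $w=w_0+\tfrac{P'(x_0)}{2w_0}(x-x_0)+\cdots$, in which the relation $\partial_x R(x_0,x_0)=\tfrac12 P'(x_0)$ ensures the apparent lower-order poles cancel. The hypothesis~(\ref{eqn:constraint0}) guarantees $\mathit{H}$ and $\mathit{E}$ smooth and $P(x_0),S(0)\neq 0$, so these are honest affine points.

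The main obstacle is the bihomogeneous polynomial identity $\eta^2=\xi^3+f\xi+g$: even after~(\ref{eqn:relat}) collapses the worst cancellations, arranging the reduction so that the invariant combinations of the $a_i$ assemble into exactly the Hermite forms~(\ref{eqn:hermite}) is where all the labour lies --- this is precisely the ``tedious but straightforward'' computation of the statement. The secondary fiddly point is the second-order expansion yielding $\eta(x_0,w_0)$, for which there is no shortcut as clean as the factored form available for $\xi$.
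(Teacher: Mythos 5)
Your proposal is correct in substance, but it is organized quite differently from the paper's argument, which consists, in its entirety, of the sentence ``A tedious but straight-forward computation yields the following'' --- i.e., a direct verification that the expressions in~(\ref{eqn:AJM}) satisfy $\eta^2=S(\xi)$ identically modulo the relations $w^2=P(x)$, $w_0^2=P(x_0)$, together with limit computations at the two special points. What you do differently is to interpose a divisor-theoretic skeleton: the factored form $\xi=-2R_1/(R+ww_0)$ extracted from~(\ref{eqn:relat}), the pole analysis showing $\xi\in L(2P_0)$ and $\eta\in L(3P_0)$ with exact orders $2$ and $3$ at $P_0=(x_0,-w_0)$, the identification of $[\xi:\eta:1]$ with the embedding by $|3P_0|$, and the matching of leading Laurent coefficients ($4P(x_0)$ and $8w_0^3$, so $\xi^3$ and $\eta^2$ share the pole $64w_0^6/(x-x_0)^6$). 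This buys two things the paper's bare computation does not make visible: the map is an isomorphism onto a short Weierstrass cubic for structural reasons rather than by inspection, and the value $\xi(x_0,w_0)=-Q(x_0)/P(x_0)$ drops out instantly from the second form of $\xi$. Conversely, the paper's brute-force route avoids the one soft spot in your write-up: your rigidity remark pins down the map only up to $\mathrm{Aut}(E,O)$ (generically the sign $\eta\mapsto-\eta$), and since Hermite's result is quoted in the paper only as an isomorphism class, the explicit formulas are themselves what fix that residual choice --- so this step should be phrased as a normalization, not as a consequence of rigidity. Finally, note that the two places where you defer to ``the tedious computation'' --- identifying the surviving Weierstrass coefficients with $f,g$ of~(\ref{eqn:hermite}), and the second-order expansion giving $\eta(x_0,w_0)=[P,Q]_{x_0}/(2w_0^3)$ --- are exactly the computations the paper performs silently, so neither account is more complete there; and one small correction: hypothesis~(\ref{eqn:constraint0}) does not guarantee $P(x_0)\neq 0$, that is the separate assumption $w_0\neq 0$ stated in the lemma, which your pole analysis at $P_0$ quietly uses.
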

\par  It follows from Equation~(\ref{eqn:AJM}) that the coordinates $x$ and $\xi$ in the Abel-Jacobi map  $(\xi,\eta) = J_{(x_0,-y_0)} (x, y)$ are related by the bi-quadratic polynomial
\beq
\label{eqn:correspondence}
  \xi^2  (x - x_0)^2  - 4 \, \xi R(x, x_0)  -  4 R_1(x, x_0)   =0 \,.
\eeq 
The equation defines an algebraic correspondence between points of the two projective lines with affine coordinates $\xi$ and $x$, respectively, where -- given a point $x$ -- there are two solutions for $\xi$  in Equation~(\ref{eqn:correspondence}) and vice versa.  Equivalently, we consider Equation~(\ref{eqn:correspondence}) an affine equation of bi-degree $(2,2)$ in $\mathbb{P}^1 \times \mathbb{P}^1$ with the affine variables $(x, x_0)$ (which has genus one).  A direct computation yields the following:
\begin{lemma}
\label{lem:j}
The j-invariants of the following genus-one curves are identical:
\beq
 \eta^2 = S(\xi) \,, \qquad w^2 = P(x) \,, \qquad  \xi^2  (x - x_0)^2  - 4 \, \xi R(x, x_0)  -  4 R_1(x, x_0) =0 \,.
\eeq
\end{lemma}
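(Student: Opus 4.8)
The plan is to prove the two equalities $j(\mathit{H})=j(\mathit{E})$ and $j(\mathit{H})=j(\mathit{C})$, where $\mathit{C}$ denotes the genus-one curve cut out by the bi-quadratic relation~\eqref{eqn:correspondence}. The first equality is the classical content of the Hermite construction already invoked: $\mathit{E}$ in~\eqref{eqn:EC} is the Jacobian of $\mathit{H}$, and over $\mathbb{C}$ a smooth genus-one curve is isomorphic to its Jacobian, with the isomorphism furnished explicitly by the Abel--Jacobi map of Lemma~\ref{lem:AJM}. For a self-contained ``direct computation'' I would instead note that the Hermite quantities $f,g$ of~\eqref{eqn:hermite} are constant multiples of the two fundamental $\mathrm{SL}_2$-invariants of the binary quartic $P$: with $I=12a_0a_4-3a_1a_3+a_2^2$ the quadratic invariant and $J$ the cubic one, one has $f=-I/3$ and $g=-J/27$. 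Since $j(\mathit{E})=1728\cdot 4f^3/(4f^3+27g^2)$ is a weighted-homogeneous expression in $(f,g)$, it coincides with the classical $j$-invariant of the double cover $w^2=P(x)$.

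The substance of the lemma is the third curve, and here the factorization identity~\eqref{eqn:relat} does the work. Reading~\eqref{eqn:correspondence} as a quadratic in $\xi$ with coefficients in $\mathbb{C}[x]$, the base point $x_0$ (hence $w_0$ with $w_0^2=P(x_0)$) being fixed,
\[
 (x-x_0)^2\,\xi^2-4R(x,x_0)\,\xi-4R_1(x,x_0)=0,
\]
exhibits $\mathit{C}$ as a double cover of the $x$-line. Its branch locus is the zero set of the $\xi$-discriminant $16\bigl(R(x,x_0)^2+(x-x_0)^2R_1(x,x_0)\bigr)$, which by~\eqref{eqn:relat} equals $16\,P(x_0)\,P(x)$. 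As $P(x_0)$ is a nonzero constant, the four branch points are exactly the roots of $P$.

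I would then conclude that $\mathit{C}$ and $\mathit{H}$ are double covers of $\mathbb{P}^1$ branched along the same four points, so $\mathit{C}\cong\mathit{H}$ and $j(\mathit{C})=j(\mathit{H})$. Concretely, solving the quadratic yields $\xi=2\bigl(R(x,x_0)-ww_0\bigr)/(x-x_0)^2$ with $w=\sqrt{P(x)}$ --- precisely the first coordinate of the Abel--Jacobi map~\eqref{eqn:AJM} --- so the function field of $\mathit{C}$ is $\mathbb{C}(x)\bigl(\sqrt{P(x_0)P(x)}\bigr)=\mathbb{C}(x,w)$, the function field of $\mathit{H}$. Chaining the two equalities gives $j(\mathit{E})=j(\mathit{H})=j(\mathit{C})$.

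The step needing the most care is not the discriminant, which collapses at once via~\eqref{eqn:relat}, but verifying that $\mathit{C}$ is a smooth genus-one curve for which the $j$-invariant is meaningful, and that the branch divisor stays reduced of degree four after passing to the bi-degree $(2,2)$ model in $\mathbb{P}^1\times\mathbb{P}^1$. This is exactly what the nondegeneracy hypothesis~\eqref{eqn:constraint0} controls: it forces the roots of $P$ to be distinct, and a direct check of the fibres over $x=x_0$ and over $x=\infty$ --- using the explicit form $16P(x_0)P(x)$ --- shows that neither contributes a branch point, so the four roots of $P$ account for the entire branch locus. I expect no further obstacle beyond this bookkeeping.
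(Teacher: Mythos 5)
Your first equality, $j(\mathit{H})=j(\mathit{E})$, is fine: identifying $f$ and $g$ of~(\ref{eqn:hermite}) with the classical invariants of the binary quartic $P$ is exactly Hermite's construction, and over $\mathbb{C}$ a smooth genus-one curve has the same $j$-invariant as its Jacobian. The gap is in the third curve: you have fixed the wrong variable. In the paper the displayed bi-quadratic is read, as stated in the sentence immediately preceding the lemma, as a curve of bi-degree $(2,2)$ \emph{in the variables $(x,x_0)$, with $\xi$ held fixed as a parameter}; this is also the only reading compatible with the remark right after the lemma (``the $j$-invariant \dots\ is independent of the variable $\xi$''), with the embedding $\imath_\xi:\mathit{H}\hookrightarrow\mathbb{P}^1\times\mathbb{P}^1$ of Proposition~\ref{prop:RR1}, and with the way the lemma is consumed in Theorem~\ref{thm:genus-one-curves}, where the symmetric $(2,2)$ curves $\imath_\xi(\mathit{H})$ for varying $\xi\in\mathrm{Jac}(\mathit{H})$ must all carry the $j$-invariant of $\mathit{H}$. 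You instead treat $x_0$ (and $w_0$) as the fixed datum and $(\xi,x)$ as the coordinates; your computation --- the $\xi$-discriminant $16\bigl(R^2+(x-x_0)^2R_1\bigr)=16\,P(x)P(x_0)$ via~(\ref{eqn:relat}), hence function field $\mathbb{C}\bigl(x,\sqrt{P(x)}\bigr)$ --- correctly shows that \emph{this other slice} of the correspondence is a copy of $\mathit{H}$. That is a true statement, but it is not the lemma: it says nothing about the curves $\xi=\mathrm{const}$, which are the ones the paper needs.

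To close the gap you must control the projection of the $\xi=\mathrm{const}$ curve onto the $x$-line, i.e.\ the discriminant with respect to $x_0$ of $\xi^2(x-x_0)^2-4\xi R(x,x_0)-4R_1(x,x_0)$. This is a quartic in $x$ depending on $\xi$, and one has to check --- this is the ``direct computation'' the paper alludes to, since it prints no argument --- that its Hermite invariants, fed into~(\ref{eqn:hermite}), produce the same $j$-invariant as $P$ itself, for every $\xi$. Alternatively, argue geometrically: by~(\ref{eqn:AJM}) a pair $(x,x_0)$ satisfies~(\ref{eqn:correspondence}) exactly when the corresponding points $p,q\in\mathit{H}$ differ, under the Abel--Jacobi identification $\mathit{H}\cong\mathrm{Jac}(\mathit{H})$, by a point $Q_\xi$ lying over $\xi$; hence the $\xi=\mathrm{const}$ curve is the image of $\mathit{H}$ under $p\mapsto\bigl(x(p),x(p-Q_\xi)\bigr)$, and this map is generically injective whenever $Q_\xi$ is not $2$-torsion (a collision forces $p'=-p$ and $2Q_\xi=0$), so the image is birational to $\mathit{H}$. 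Either route gives that the $(x,x_0)$-curve has $j$-invariant $j(\mathit{H})$, independent of $\xi$; your argument as written does not establish this.
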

In particular, Lemma~\ref{lem:j} shows that the $j$-invariant of the curve in Equation~(\ref{eqn:correspondence}) is independent of the variable $\xi$.  We have the following:
\begin{proposition}
\label{prop:RR1}
Equation~(\ref{eqn:correspondence}) is an embedding $\imath_\xi: \mathit{H} \hookrightarrow \mathit{H}' \subset \mathbb{P}^1 \times \mathbb{P}^1$ of a genus-one curve $\mathit{H}$ as a curve of bi-degree $(2,2)$ in $\mathbb{P}^1 \times \mathbb{P}^1$, given by
\beq
\label{eqn:correspondence_prop}
   \mathit{H}': \quad  \xi^2  (x - x_0)^2  - 4 \, \xi R(x, x_0)  -  4 R_1(x, x_0)   =0 \,.
\eeq 
In particular, it is a symmetric affine equation in the variables $(x, x_0)$. 
\end{proposition}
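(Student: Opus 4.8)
The plan is to realise $\imath_\xi$ as the morphism induced by the Abel-Jacobi map of Lemma~\ref{lem:AJM} and then to verify that it is a closed immersion onto the zero locus of~(\ref{eqn:correspondence_prop}). I would begin with the two purely formal assertions. Inspecting the explicit expressions for $R(x,x_0)$ and $R_1(x,x_0)$ recorded before~(\ref{eqn:constraint0}), both are invariant under $x \leftrightarrow x_0$ and are jointly of bi-degree $(2,2)$; since $(x-x_0)^2$ is symmetric of the same bi-degree and $\xi$ enters only as a coefficient, the left-hand side of~(\ref{eqn:correspondence_prop}) is a symmetric form of bi-degree $(2,2)$ in $(x,x_0)$. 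This settles the bi-degree and the symmetry claims and exhibits $\mathit{H}'$ as a member of $|\mathcal{O}_{\mathbb{P}^1\times\mathbb{P}^1}(2,2)|$, hence of arithmetic genus one.

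The key tool for the remaining claims is the identity~(\ref{eqn:relat}), namely $R^2 + R_1\,(x-x_0)^2 = P(x)\,P(x_0)$. Regarding~(\ref{eqn:correspondence_prop}) as a quadratic in $\xi$ with coefficients $(x-x_0)^2$, $-4R$, $-4R_1$, its discriminant equals $16\big(R^2+R_1(x-x_0)^2\big)=16\,P(x)P(x_0)$, so that solving for $\xi$ returns precisely the first formula of~(\ref{eqn:AJM}) on the two sheets $w=\pm\sqrt{P(x)}$ and $w_0=\pm\sqrt{P(x_0)}$. Thus~(\ref{eqn:correspondence_prop}) is exactly the relation obtained by clearing $w$ and $w_0$ from the $\xi$-component of the Abel-Jacobi map, and $\imath_\xi$ is the morphism $\mathit{H}\to\mathbb{P}^1\times\mathbb{P}^1$ sending $p$ to the pair $\big(x(p),x(p')\big)$, where $p'$ is the partner point determined by requiring the first coordinate of $J_{(x_0,-w_0)}(p)$ to equal the fixed value $\xi$. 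Since $J_{(x_0,-w_0)}$ is an isomorphism of the smooth genus-one curve $\mathit{H}$ onto $\mathit{E}$, the map $\imath_\xi$ is a non-constant morphism, and by the discriminant computation its image is contained in, and therefore equals, the irreducible curve $\mathit{H}'$.

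To upgrade this to an embedding I would argue by degrees. By construction the composite $\mathrm{pr}_1\circ\imath_\xi$ is the hyperelliptic coordinate $x\colon\mathit{H}\to\mathbb{P}^1$, which has degree two, while the first projection $\mathrm{pr}_1\colon\mathit{H}'\to\mathbb{P}^1$ also has degree two because~(\ref{eqn:correspondence_prop}) is quadratic in $x_0$; multiplicativity of degrees then forces $\deg\imath_\xi=1$, so $\imath_\xi$ is birational onto $\mathit{H}'$. Finally, Lemma~\ref{lem:j} shows that $\mathit{H}'$ carries the same finite $j$-invariant as $\mathit{H}$, hence has geometric genus one; an irreducible $(2,2)$-curve with equal arithmetic and geometric genus is smooth, so the birational morphism $\imath_\xi$ from the smooth curve $\mathit{H}$ onto the smooth curve $\mathit{H}'$ is an isomorphism, i.e.\ a closed immersion.

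The step I expect to be the real obstacle is the nondegeneracy underlying the degree count: one must ensure that for generic data the two rulings of $\mathbb{P}^1\times\mathbb{P}^1$ cut out genuinely distinct degree-two pencils on $\mathit{H}'$, so that $\mathrm{pr}_1$ does not drop degree and $\imath_\xi$ does not identify two points of $\mathit{H}$ sharing both coordinates. Concretely this amounts to checking that the hyperelliptic involution of the coordinate $x$ on $\mathit{H}$ and the involution induced through $J_{(x_0,-w_0)}$ by the elliptic involution of $\mathit{E}$ do not coincide; they agree only when the base point is a Weierstrass point ($w_0=0$) or the point of $\mathit{E}$ with first coordinate $\xi$ is $2$-torsion, both of which are excluded for generic data, in particular by~(\ref{eqn:constraint0}) together with the standing assumption $w_0\neq 0$ of Lemma~\ref{lem:AJM}. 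Everything else reduces to the explicit formulas and the single identity~(\ref{eqn:relat}).
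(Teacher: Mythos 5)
The paper never actually proves this proposition: it is stated as a direct consequence of the preceding construction --- the derivation of~(\ref{eqn:correspondence}) from the Abel--Jacobi formulas of Lemma~\ref{lem:AJM}, the identity~(\ref{eqn:relat}), the manifest symmetry and bi-degree $(2,2)$ of $R$ and $R_1$, and Lemma~\ref{lem:j} --- together with the classical results of Weil cited immediately afterwards. Your argument is assembled from exactly these ingredients, so it is the same approach; what you add, usefully, is the chain of deductions the paper leaves implicit: the discriminant computation (via~(\ref{eqn:relat})) showing that solving~(\ref{eqn:correspondence_prop}) for $\xi$ reproduces the first formula of~(\ref{eqn:AJM}) on the two sheets, the degree count forcing $\imath_\xi$ to be birational onto $\mathit{H}'$, and the equality of arithmetic and geometric genus forcing $\mathit{H}'$ to be smooth, whence the birational morphism between smooth projective curves is an isomorphism.

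The one step that does not hold up as written is the genericity bookkeeping at the end, on which your irreducibility claim and degree count depend. The case that must be excluded is the one where the point $(\xi,\eta)\in\mathit{E}$ is $2$-torsion, i.e.\ $S(\xi)=0$: then translation by this point commutes with the hyperelliptic involution, so $\imath_\xi$ identifies $p$ with its hyperelliptic conjugate, the map becomes $2\!:\!1$ onto a rational curve, the $(2,2)$ form degenerates into a reducible or non-reduced divisor, and the proposition is genuinely false. You claim this case is ruled out by~(\ref{eqn:constraint0}), but that condition reads $S(0)^2\,\mathrm{Discr}_x(P)\neq 0$: it constrains only the curve $\mathit{H}$ (smoothness of $P$) and the special value $S(0)$, and says nothing about the chosen $\xi$. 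Likewise ``$w_0\neq 0$'' in Lemma~\ref{lem:AJM} is not a standing hypothesis on the data: the base point varies along the curve, so Weierstrass base points occur at finitely many points of $\mathit{H}'$ and have no bearing on whether $\imath_\xi$ is an embedding. The correct fix is simply to impose $S(\xi)\neq 0$ (equivalently $\eta\neq 0$) as a hypothesis; this restriction is needed for the statement itself, and the paper is silent about it as well.
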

Based on results in \cites{MR61857,MR717601,MR990136} we have the following:
\begin{theorem}
\label{thm:genus-one-curves}
Let
\beq
\label{eqn:divisor22_later}
  0 = \gamma(S, T) \, U^2 + \alpha(S, T) \, UV + \delta(S, T) \, V^2 
\eeq
be a curve of genus one in the linear system $|\mathcal{O}_{\mathbb{F}_0}(2,2)|$ over $\mathbb{F}_0 \ni ([S:T],[U,V])$ with the same j-invariant as $\mathit{H}: w^2 = P(x)$. Then, there is a point in the Jacobian $\xi \in \mathrm{Jac}(\mathit{H})$ such that Equation~(\ref{eqn:divisor22_later}) is isomorphic to $\imath_\xi(\mathit{H})$.
\end{theorem}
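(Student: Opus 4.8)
The plan is to classify smooth curves of bi-degree $(2,2)$ in $\mathbb{F}_0$ with a fixed $j$-invariant up to $\mathrm{Aut}(\mathbb{F}_0) = \big(\mathrm{PGL}(2,\mathbb{C}) \times \mathrm{PGL}(2,\mathbb{C})\big) \rtimes \mathbb{Z}/2\mathbb{Z}$, and then to show that the one-parameter family $\{\imath_\xi(\mathit{H})\}_\xi$ of Proposition~\ref{prop:RR1} already exhausts this classification. First I would attach to the given curve
\[ C: \quad \gamma(S,T)\, U^2 + \alpha(S,T)\, UV + \delta(S,T)\, V^2 = 0 \]
its two rulings: the projections $\pi_1 : C \to \mathbb{P}^1_{(S,T)}$ and $\pi_2 : C \to \mathbb{P}^1_{(U,V)}$ are the degree-two maps cut out by the complete linear systems of the line bundles $L_1 = \pi_1^* \mathcal{O}(1)$ and $L_2 = \pi_2^* \mathcal{O}(1)$ in $\mathrm{Pic}^2(C)$. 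The map $\pi_1$ is the double cover branched over the four zeroes of $\alpha^2 - 4\gamma\delta$, so $C$ is a smooth genus-one curve, and the hypothesis $j(C) = j(\mathit{H})$ gives $\mathrm{Jac}(C) \cong \mathit{E} = \mathrm{Jac}(\mathit{H})$ over $\mathbb{C}$. The embedded curve is determined up to $\mathrm{Aut}(\mathbb{F}_0)$ by the isomorphism class of the pair $\big(C, \{L_1, L_2\}\big)$, since the two factors of $\mathrm{PGL}(2,\mathbb{C})$ only rechoose bases of $H^0(L_1)$ and $H^0(L_2)$ while the factor-swap interchanges $L_1$ and $L_2$. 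As the translations in $\mathrm{Aut}(C)$ act trivially on $\mathrm{Pic}^0(C)$ but transitively on $\mathrm{Pic}^2(C)$, whereas the hyperelliptic involution and the factor-swap each negate differences, the complete invariant of $C$ is the class $\pm\,\epsilon(C)$ with $\epsilon(C) = L_1 \otimes L_2^{-1} \in \mathrm{Pic}^0(C) = \mathit{E}$, that is, a point of $\mathit{E}/\{\pm 1\} \cong \mathbb{P}^1_\xi$.

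Next I would compute the same invariant for Weil's symmetric model $W := \imath_\xi(\mathit{H})$. Its two rulings are the $x$- and $x_0$-projections of the symmetric bi-quadratic (\ref{eqn:correspondence_prop}), and Lemmas~\ref{lem:AJM} and~\ref{lem:j} identify each of them with the hyperelliptic map $x : \mathit{H} \to \mathbb{P}^1$ twisted by a translation governed by the chosen point of $\mathrm{Jac}(\mathit{H})$. Unwinding the Abel--Jacobi relation (\ref{eqn:correspondence}) through the group law on $\mathit{E}$ shows that the difference class of the two rulings of $W$ is exactly the point $\xi$, read modulo the hyperelliptic involution $\eta \mapsto -\eta$; this is precisely the content of Weil's construction in \cite{MR717601} and \cites{MR61857, MR990136}. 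In particular the assignment $\xi \mapsto \epsilon(W)$ is surjective onto $\mathit{E}/\{\pm 1\}$.

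To conclude, given $C$ I would choose $\xi$ with $\epsilon(W) = \epsilon(C)$ in $\mathit{E}/\{\pm 1\}$, which is possible by the surjectivity just established. Fixing any isomorphism $\phi_0 : C \xrightarrow{\sim} \mathit{H} \cong W$ of the underlying genus-one curves (available over $\mathbb{C}$ because the $j$-invariants agree), I would first absorb the sign ambiguity in $\pm\,\epsilon$ using that $W$ is symmetric, so that the factor-swap is an automorphism of $W$ carrying $\epsilon(W)$ to $-\epsilon(W)$; thus one may assume $\phi_{0*}\epsilon(C) = \epsilon(W)$. A translation on $W$ then moves $\phi_{0*}L_1$ to the first ruling class of $W$, and since translations preserve differences, the equality of difference classes forces the second ruling classes to agree as well. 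The resulting isomorphism $\psi : C \to W$ intertwines both pairs of complete linear systems, hence descends to an element of $\mathrm{PGL}(2,\mathbb{C}) \times \mathrm{PGL}(2,\mathbb{C})$ carrying $C$ onto $W = \imath_\xi(\mathit{H})$ in $\mathbb{F}_0$, which is the desired isomorphism.

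The hard part will be the middle step: verifying that the difference class of the two rulings of $\imath_\xi(\mathit{H})$ equals $\xi$ in $\mathit{E}/\{\pm 1\}$. Everything else is formal bookkeeping with degree-two line bundles and the transitivity of the translation action on $\mathrm{Pic}^2(C)$, but pinning down the ruling classes requires unwinding the explicit formulas of Lemma~\ref{lem:AJM}: one must check that fixing the value $\xi$ in the correspondence (\ref{eqn:correspondence}) fixes the relative position $J(x) \ominus J(x_0)$ of the two sheets of the cover and that this relative position is the point $\xi$ itself. I expect that, after the substitutions already recorded in deriving (\ref{eqn:relat}) and Lemma~\ref{lem:j}, this reduces to the direct verification that (\ref{eqn:correspondence_prop}) is the pullback, under the hyperelliptic projection $\mathit{E} \to \mathbb{P}^1_\xi$, of the translation-by-$\xi$ graph on $\mathit{E} \times \mathit{E}$ — a computation of exactly the type already carried out for Lemma~\ref{lem:AJM}.
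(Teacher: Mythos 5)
You take a genuinely different route from the paper, and your route is sound. The paper's own proof is a two-line parameter count: $\mathbb{P} H^0(\mathbb{F}_0,\mathcal{O}_{\mathbb{F}_0}(2,2))$ has dimension $8$ and $\mathrm{Aut}(\mathbb{F}_0)$ has dimension $6$, hence there are two moduli, one of which is the $j$-invariant; the remaining modulus is then identified with the point $\xi$ on the grounds that points of $\mathrm{Jac}(\mathit{H})$ parameterize the translation automorphisms of $\mathit{H}$. Your argument replaces this count by a complete invariant: the unordered pair of ruling bundles $\{L_1,L_2\}\subset\mathrm{Pic}^2(C)$, equivalently the difference class $\epsilon(C)=L_1\otimes L_2^{-1}\in\mathrm{Pic}^0(C)$ taken modulo $\pm1$, classifies a smooth $(2,2)$-curve up to $\mathrm{Aut}(\mathbb{F}_0)$, and the theorem reduces to computing this invariant for Weil's model and checking surjectivity. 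This buys genuine rigor that the paper's argument lacks: a dimension count shows only that the dimensions are consistent, not that the family $\{\imath_\xi(\mathit{H})\}_\xi$ meets \emph{every} $\mathrm{Aut}(\mathbb{F}_0)$-orbit in the fiber of the $j$-map, which is exactly what your surjectivity step supplies. Your bookkeeping (translations act trivially on $\mathrm{Pic}^0$ and transitively on $\mathrm{Pic}^2$; the hyperelliptic involution and the factor swap negate differences) is correct.

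One correction to the middle step, the one you flag as the hard part: the difference class of the two rulings of $W=\imath_\xi(\mathit{H})$ is not $\pm\Xi$ but $\pm2\Xi$, where $\Xi\in\mathit{E}$ denotes a Jacobian point with $\xi$-coordinate equal to $\xi$. Indeed, by Lemma~\ref{lem:AJM} the correspondence~(\ref{eqn:correspondence}) pairs $x(p)$ with $x(q)$ precisely when $[p]-[q]=\pm\Xi$ in $\mathrm{Pic}^0(\mathit{H})$, so $W$ is the image of $p\mapsto\bigl(x(p),\,x(p-\Xi)\bigr)$; the fiber of the second ruling, pulled back to $\mathit{H}$, is the divisor $(q+\Xi)+(\iota(q)+\Xi)$ with $x(q)$ fixed and $\iota$ the hyperelliptic involution, and its difference with the hyperelliptic divisor $q+\iota(q)$ is a degree-zero class with Abel--Jacobi image $2\Xi$; hence $L_2\otimes L_1^{-1}$ corresponds to $2\Xi$. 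Consistently, $\imath_{\Xi}$ and $\imath_{\Xi+T}$ with $T\in\mathit{E}[2]$ give $\mathrm{Aut}(\mathbb{F}_0)$-equivalent curves, since translation by $2$-torsion descends to a M\"obius transformation of the Kummer line $\mathit{H}/\iota\cong\mathbb{P}^1$ --- so the parameter $\xi$ generically hits each orbit four times, a multiplicity invisible to the paper's count. The factor of two does not damage your proof: multiplication by $2$ is surjective on $\mathit{E}(\mathbb{C})$, so $\xi\mapsto\epsilon(W)$ is still surjective onto $\mathit{E}/\{\pm1\}$ and your final rigidification step goes through verbatim. But the identification of the invariant must be stated as $\pm2\Xi$, and since you deferred this verification, the short computation above is the one item that still has to be written out before your argument is complete.
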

\begin{proof}
Equation~(\ref{eqn:divisor22_later}) depends on eight parameters since one has
\beq
\mathbb{P} H^0(\mathbb{F}_0, \mathcal{O}_{\mathbb{F}_0}(2,2)) = 3 \cdot 3 -1=8 \,.
\eeq
Since $\mathbb{F}_0$ has a 6-dimensional group of automorphisms, we have two moduli. The first one is the $j$-invariant defining an elliptic curve $\mathit{E}$. It follows that  $\mathit{E} \cong \mathrm{Jac}(\mathit{H})$.  On the hand, the points in $\mathrm{Jac}(\mathit{H})$ parameterize the automorphisms of $\mathit{H}$. Thus, there is a point $\xi \in \mathrm{Jac}(\mathit{H})$ such that Equation~(\ref{eqn:divisor22_later}) is isomorphic to $\imath_\xi(\mathit{H})$.
\end{proof}
\subsection{A parametrization of K3 surfaces}
We derive an explicit parametrization for the K3 surfaces in $\widetilde{\mathfrak{W}}$. The central idea is that genus-one curves in the branch locus of K3 surfaces $\widetilde{\mathcal{F}} \in \widetilde{\mathfrak{W}}$ can be represented by equations of bi-degree $(2,2)$ that are symmetric under the interchange of $(U,V)$ and $(S,T)$ in $\mathbb{F}_0 = \mathbb{P}(S,T) \times  \mathbb{P}(U,V)$.
\par We start by using transformations in $\mathrm{PGL}(2,\mathbb{C})$ to turn the first terms of Equation~(\ref{eqn:divisor44}) into the monomial $STUV$. We then use the transformation
\beq
 \big(U,V\big) \ \mapsto \ \big(\mu U, \lambda V\big) \,, \qquad \big(S,T\big) \ \mapsto \ \big( \lambda (T - \lambda^{-1} c_\infty S), \, S- \lambda c_0 T \big),
\eeq
with parameters $\lambda, \mu \in \mathbb{C}^\times$ and $c_0, c_\infty$ with $c_0 c_\infty \not =1$, to transform an equation of the form
\beq
 W^2 = STUV  \Big(\gamma'(U, V) \, S^2 + \alpha'(U, V) \, ST + \delta'(U, V) \, T^2 \Big)
\eeq
for polynomials $\alpha', \gamma', \delta'$ with coefficients $\alpha'_2, \dots, \gamma'_0$ into the equation
\beq
\label{eqn:K3prelim}
\begin{split}
  W^2 =   &\,  \big(S - \lambda c_0 T\big)  \big(T - \lambda^{-1} c_\infty S \big) \, U V \,  \Big(  \big( \gamma_2 S^2 + \lambda \alpha_2 ST + \lambda^2 \gamma_0 T^2 \big) U^2 \\
  + &   \big( \lambda \alpha_2 S^2 + \lambda^2 \alpha_1 ST + \lambda^3 \alpha_0 T^2 \big) UV +   \big( \lambda^2 \gamma_0 S^2 + \lambda^3 \alpha_0 ST + \lambda^4 \delta_0 T^2 \big) V^2 \Big) \,, 
  \end{split} 
\eeq 
where $\mu, c_0, c_\infty$ have been chosen such that the new coefficients $\alpha_2, \dots, \gamma_0$ do not depend on $\lambda$ and satisfy $\gamma_1= \alpha_2$,  $\delta_2=\gamma_0$, $\delta_1=\alpha_0$. Thus, we have proved the following:
\begin{proposition}
\label{prop:N0}
The moduli space $\widetilde{\mathfrak{W}}$ is given by the K3 surfaces obtained from the minimal resolution of the double-quadrics
\beq
\label{eqn:K3_in_Mpprime}
  W^2 =  \big(S - c_0 T\big)  \big(T - c_\infty S \big) \, U V \,\big( \gamma(U, V) \, S^2 + \alpha(U, V) \, ST + \delta(U, V) \, T^2 \big) \,.
\eeq  
Here, $c_0, c_\infty \in \mathbb{C}$ with $c_0 c_\infty \not = 1$ and $\alpha, \gamma, \delta$ are given by
\beq
\alpha  =  \alpha_2 x^2 + \alpha_1 xy + \alpha_0 y^2,  \quad
\gamma  = \gamma_2 x^2 + \alpha_2 xy + \gamma_0 y^2, \quad
\delta  = \gamma_0 x^2 + \alpha_0 xy + \delta_0 y^2 \,.
\eeq
\end{proposition}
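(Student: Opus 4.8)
The plan is to prove the proposition by exhibiting, for an arbitrary member of $\widetilde{\mathfrak{W}}$ presented as in Equation~(\ref{eqn:divisor44}), an explicit element of $\mathrm{Aut}(\mathbb{F}_0) \cong \big(\mathrm{PGL}(2,\mathbb{C}) \times \mathrm{PGL}(2,\mathbb{C})\big) \rtimes \mathbb{Z}/2\mathbb{Z}$ that transports its defining double-quadric into the normal form~(\ref{eqn:K3_in_Mpprime}). Since every automorphism of $\mathbb{F}_0$ lifts to an isomorphism between the double cover branched over a divisor and the double cover branched over its image, such a change of coordinates alters only the presentation of the branch locus, not the isomorphism class of the resulting K3 surface. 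The key observation is that the three relations $\gamma_1 = \alpha_2$, $\delta_2 = \gamma_0$, $\delta_1 = \alpha_0$ are precisely the conditions under which the bi-degree $(2,2)$ factor $\gamma(U,V)\,S^2 + \alpha(U,V)\,ST + \delta(U,V)\,T^2$ is invariant under the interchange $(S,T) \leftrightarrow (U,V)$, so the content of the statement is that this symmetry can always be arranged within the automorphism orbit.

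First I would use the two $\mathrm{PGL}(2,\mathbb{C})$ factors to move the zeros of the two $(0,1)$-components of the branch locus in~(\ref{eqn:divisor44}) to $[U:V]=[0:1],[1:0]$ and the zeros of the two $(1,0)$-components to $[S:T]=[0:1],[1:0]$. This collapses the product of the four linear factors to the monomial $STUV$ and puts the equation in the intermediate shape $W^2 = STUV\,\big(\gamma'(U,V)\,S^2 + \alpha'(U,V)\,ST + \delta'(U,V)\,T^2\big)$, in which $\gamma',\alpha',\delta'$ are unrestricted binary quadratics carrying the coefficients $\alpha'_2,\dots,\gamma'_0$.

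Next I would apply the explicit transformation $(U,V)\mapsto(\mu U,\lambda V)$, $(S,T)\mapsto\big(\lambda(T-\lambda^{-1}c_\infty S),\,S-\lambda c_0 T\big)$ displayed before the proposition. Its action on the $\mathbb{P}(U,V)$-ruling is a mere scaling, so it preserves the factor $UV$, while on the $\mathbb{P}(S,T)$-ruling it is a genuine projective transformation reintroducing two distinct linear factors $(S-c_0T)(T-c_\infty S)$; simultaneously it carries the coefficients $\alpha'_2,\dots,\gamma'_0$ to the coefficients displayed in~(\ref{eqn:K3prelim}). The heart of the argument, and the step I expect to be the main obstacle, is to verify that the three essential parameters $\mu, c_0, c_\infty$ can be chosen so that the six core coefficients $\gamma_2,\alpha_2,\gamma_0,\alpha_1,\alpha_0,\delta_0$ become independent of $\lambda$ and satisfy the three relations simultaneously. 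This amounts to eliminating three unknowns from a system of polynomial equations; the dimension bookkeeping is consistent, since a general $(2,2)$ form has nine coefficients and a symmetric one has six, so exactly three conditions must be imposed, matched by the three available parameters. The remaining parameter $\lambda$ then represents only a residual scaling of the $\mathbb{P}(S,T)$-ruling, under which the core coefficients are invariant, and normalizing it yields the form~(\ref{eqn:K3_in_Mpprime}).

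Finally, substituting the solution reduces~(\ref{eqn:K3prelim}) to~(\ref{eqn:K3_in_Mpprime}) with $\alpha,\gamma,\delta$ of the asserted shape; the condition $c_0 c_\infty \neq 1$ persists because the two reintroduced $(1,0)$-factors remain distinct. Reversing the construction shows conversely that every equation of type~(\ref{eqn:K3_in_Mpprime}) defines a member of $\widetilde{\mathfrak{W}}$, and counting the six free coefficients $\gamma_2,\alpha_2,\gamma_0,\alpha_1,\alpha_0,\delta_0$ together with $c_0,c_\infty$, modulo the residual scaling and the overall rescaling of the equation, recovers the six-dimensional moduli space, in agreement with the earlier determination of $\dim\widetilde{\mathfrak{W}}$.
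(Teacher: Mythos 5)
Your proposal follows essentially the same route as the paper's own argument: first use $\mathrm{Aut}(\mathbb{F}_0)$ to collapse the four linear factors of~(\ref{eqn:divisor44}) to the monomial $STUV$, then apply the displayed transformation $(U,V)\mapsto(\mu U,\lambda V)$, $(S,T)\mapsto\big(\lambda(T-\lambda^{-1}c_\infty S),\,S-\lambda c_0 T\big)$ with $\mu, c_0, c_\infty$ chosen so that the resulting coefficients are $\lambda$-independent and satisfy $\gamma_1=\alpha_2$, $\delta_2=\gamma_0$, $\delta_1=\alpha_0$. The paper asserts the existence of this choice just as tersely as you do, so your three-parameters-versus-three-conditions bookkeeping is at the same level of rigor as the original.
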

We make the following:
\begin{remark}
Proposition~\ref{prop:N0} implies that we can assume 
\beq
\label{eqn:divisor22}
  \gamma(S, T) \, U^2 + \alpha(S, T) \, UV + \delta(S, T) \, V^2  = \gamma(U, V) \, S^2 + \alpha(U, V) \, ST + \delta(U, V) \, T^2 
\eeq
for all $S, T, U, V$.
\end{remark}
By a slight abuse of notation we will also denote by $\widetilde{\mathcal{F}} ( \delta_0,  \alpha_0,  \gamma_0,  \alpha_1,  \alpha_2,  \gamma_2;  c_0, c_\infty)$ the smooth complex surface obtained as the minimal resolution of  Equation~(\ref{eqn:K3_in_Mpprime}). We have the following symmetries:
\begin{lemma}
\label{symmetries1}
One has the following isomorphisms of K3 surfaces:
\begin{itemize}
\item [(a)] $\scalemath{0.9}{\widetilde{\mathcal{F}} ( \delta_0,  \alpha_0,  \gamma_0,  \alpha_1,  \alpha_2,  \gamma_2;  c_0, c_\infty) \ \simeq \
\widetilde{\mathcal{F}} ( \lambda \delta_0,  \lambda\alpha_0,  \lambda\gamma_0,  \lambda\alpha_1, \lambda \alpha_2,  \lambda\gamma_2;  c_0, c_\infty )}$,
\item [(b)] $\scalemath{0.9}{\widetilde{\mathcal{F}}  ( \delta_0,  \alpha_0,  \gamma_0,  \alpha_1,  \alpha_2,  \gamma_2;  c_0, c_\infty) \ \simeq \
\widetilde{\mathcal{F}} ( \mu^4\delta_0,  \mu^3\alpha_0,  \mu^2\gamma_0,  \mu^2 \alpha_1, \mu \alpha_2,  \gamma_2 ;   \mu c_0, \mu^{-1}c_\infty )}$,
\item [(c)] $\scalemath{0.9}{\widetilde{\mathcal{F}}  ( \delta_0,  \alpha_0,  \gamma_0,  \alpha_1,  \alpha_2,  \gamma_2;  c_0, c_\infty) \ \simeq \
\widetilde{\mathcal{F}} ( \gamma_2,  \alpha_2,  \gamma_0,   \alpha_1, \alpha_0,  \delta_0 ;   c_0^{-1}, c_\infty^{-1} )}$,
\item [(d)] $\scalemath{0.9}{\widetilde{\mathcal{F}}  ( \delta_0,  \alpha_0,  \gamma_0,  \alpha_1,  \alpha_2,  \gamma_2;  c_0, c_\infty) \ \simeq \
\widetilde{\mathcal{F}} (\delta'_0,  \alpha'_0,  \gamma'_0,  \alpha'_1,  \alpha'_2,  \gamma'_2;   -c_0, -c_\infty )}$,
\end{itemize}
for $\lambda, \mu\in \mathbb{C}^\times$ and
\beq
\begin{split}
 \delta'_0 & = \gamma_2 c_0^4 + 2 \alpha_2 c_0^3 +(\alpha_1 + 2 \gamma_0) c_0^2 + 2 \alpha_0 c_0 + \delta_0,\\
 \alpha'_0 & = (\alpha_2 c_\infty + 2 \gamma_2) c_0^3 + ((\alpha_1 + 2 \gamma_0) c_\infty + 3 \alpha_2) c_0^2 + (3 \alpha_0 c_\infty + \alpha_1 + 2 \gamma_0) c_0 
 + 2 \delta_0 c_\infty + \alpha_0,\\
 \gamma'_0 & = (\gamma_0 c_\infty^2 + \alpha_2 c_\infty + \gamma_2) c_0^2 + (\alpha_0 c_\infty^2 + \alpha_1 c_\infty+ \alpha_2) c_0 + \delta_0 c_\infty^2 + \alpha_0 c_\infty + \gamma_0,\\
 \alpha'_1 & = (\alpha_1 c_\infty^2 + 4 \alpha_2 c_\infty+ 4 \gamma_2) c_0^2 + (4 \alpha_0 c_\infty^2 + 2(\alpha_1 + 4 \gamma_0) c_\infty+ 4 \alpha_2) c_0\\
 & + 4 \delta_0 c_\infty^2 + 4 \alpha_0 c_\infty + \alpha_1 ,\\
 \alpha'_2 & = (\alpha_0 c_0 + 2 \delta_0) c_\infty^3 + ((\alpha_1 + 2 \gamma_0) c_0 + 3 \alpha_0) c_\infty^2 + (3 \alpha_2 c_0 + \alpha_1 + 2 \gamma_0) c_\infty 
 + 2 \gamma_2 c_0 + \alpha_2,\\
 \gamma'_2 & = \delta_0 c_\infty^4 + 2 \alpha_0 c_\infty^3 +(\alpha_1 + 2 \gamma_0) c_\infty^2 + 2 \alpha_2 c_\infty + \gamma_2,\\
\end{split}
\eeq
\end{lemma}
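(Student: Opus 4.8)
The plan is to realize each of the four isomorphisms by an explicit change of coordinates on $\mathbb{F}_0 = \mathbb{P}(S,T)\times\mathbb{P}(U,V)$ together with a rescaling of the double-cover coordinate $W$, and then to read off the effect on the tuple $(\delta_0,\alpha_0,\gamma_0,\alpha_1,\alpha_2,\gamma_2;c_0,c_\infty)$ by substituting into Equation~(\ref{eqn:K3_in_Mpprime}). The governing principle is that the minimal resolution of the double cover depends only on the branch divisor as a reduced member of $|\mathcal{O}_{\mathbb{F}_0}(4,4)|$; hence any automorphism of $\mathbb{F}_0$ carrying the branch divisor of one surface to that of another, followed by a rescaling $W\mapsto\nu W$ that absorbs the resulting constant, induces an isomorphism of the resolved K3 surfaces. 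It is convenient to encode the symmetric $(2,2)$-form of Proposition~\ref{prop:N0} by the symmetric matrix $M$ with rows and columns indexed by $(S^2,ST,T^2)$ and $(U^2,UV,V^2)$, whose entries are exactly $\gamma_2,\alpha_2,\gamma_0,\alpha_1,\alpha_0,\delta_0$; the constraint~(\ref{eqn:divisor22}) is precisely the relation $M=M^{T}$. Item (a) is then immediate: scaling all six coefficients by $\lambda$ scales the last factor of~(\ref{eqn:K3_in_Mpprime}), hence the whole right-hand side, by $\lambda$, and this is undone by $W\mapsto\lambda^{1/2}W$.

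For (b) I would use the torus substitution $T\mapsto\mu T$, $V\mapsto\mu V$. On monomials this gives $M_{ij}\mapsto\mu^{(i-1)+(j-1)}M_{ij}$, which reproduces $(\delta_0,\alpha_0,\gamma_0,\alpha_1,\alpha_2,\gamma_2)\mapsto(\mu^4\delta_0,\mu^3\alpha_0,\mu^2\gamma_0,\mu^2\alpha_1,\mu\alpha_2,\gamma_2)$, while the two linear factors give $(c_0,c_\infty)\mapsto(\mu c_0,\mu^{-1}c_\infty)$ up to a scalar absorbed into $W$. For (c) I would use the simultaneous swap of the rulings $(S,T;U,V)\mapsto(T,S;V,U)$; since $M$ is symmetric the form acquires matrix $JMJ$ with $J$ the antidiagonal reversal, interchanging $\gamma_2\leftrightarrow\delta_0$ and $\alpha_2\leftrightarrow\alpha_0$ and fixing $\gamma_0,\alpha_1$, while the line factors give $(c_0,c_\infty)\mapsto(c_0^{-1},c_\infty^{-1})$ with the constant $c_0c_\infty$ absorbed into $W$. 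In each of (a)--(c) the transformed matrix is again symmetric, so the image is again of the normalized type, and the verification is a direct substitution.

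The last isomorphism (d) is the substantive one, and I expect it to be the main obstacle. A short computation shows that (d) \emph{cannot} come from a product automorphism of $\mathbb{F}_0$ preserving the $(U,V)$-lines $\{U=0,V=0\}$: the new coefficient $\delta_0'=\gamma_2c_0^4+2\alpha_2c_0^3+(\alpha_1+2\gamma_0)c_0^2+2\alpha_0c_0+\delta_0$ is exactly the value $Q(c_0,1;c_0,1)$ of the symmetric form on the diagonal at the line-point $c_0$, and hence depends on all six entries of $M$, whereas a product map with fixed $(U,V)$-lines can only feed a single row or column of $M$ into $\delta_0'$. My plan is therefore to realize (d) as a composite: apply a Möbius transformation $\phi$ to the $(S,T)$-factor moving the line-pair $\{c_0,1/c_\infty\}$ to $\{-c_0,-1/c_\infty\}$, apply the \emph{same} $\phi$ to the $(U,V)$-factor so that the diagonal $\mathrm{PGL}_2$-action preserves the symmetry $M=M^{T}$, and then re-normalize the displaced $(U,V)$-lines back to $\{0,\infty\}$ by a final $(U,V)$-transformation, re-symmetrizing as in Proposition~\ref{prop:N0}. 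The quartic dependence of $\delta_0',\dots,\gamma_2'$ on $c_0,c_\infty$ is the footprint of this three-step composite, and the diagonal values $Q(c_0,1;c_0,1)$ and $Q(1,c_\infty;1,c_\infty)$ appearing as $\delta_0'$ and $\gamma_2'$ pin down $\phi$ on the line-points. Conceptually this is the algebraic shadow of the freedom in the Abel--Jacobi embedding of the genus-one branch curve (Proposition~\ref{prop:RR1}, Theorem~\ref{thm:genus-one-curves}): the same genus-one curve $\mathit{H}$, whose quartic is $c\mapsto Q(c,1;c,1)$, is re-embedded as a different symmetric $(2,2)$-curve through a shifted base point, and (d) records this reparametrization together with $(c_0,c_\infty)\mapsto(-c_0,-c_\infty)$.

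The hard part will be twofold. First, I must exhibit $\phi$ and the final normalizing map explicitly and confirm that the composite is an isomorphism of the \emph{minimal resolutions} rather than a merely birational transformation of $\mathbb{F}_0$. Second, I must carry through the polynomial identities confirming that the six transformed entries are exactly $\delta_0',\alpha_0',\gamma_0',\alpha_1',\alpha_2',\gamma_2'$; since each is a quartic in $c_0,c_\infty$ with coefficients linear in the original parameters, this is a finite but lengthy check, best organized by first verifying the diagonal entries $\delta_0',\gamma_2'$ (the quartic evaluations), then the off-diagonal ones, and finally confirming $M'={M'}^{T}$. As a consistency test I would check that (d) specializes to, and is compatible with, the two-parameter involution $\imath_{(d_0,d_\infty)}$ of Equation~(\ref{eqn:involution}).
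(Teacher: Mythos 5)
Parts (a), (b), (c) of your proposal are correct and are exactly the paper's proof: a rescaling of $W$, the substitution $(T,V)\mapsto(\mu T,\mu V)$, and the simultaneous interchange $S\leftrightarrow T$, $U\leftrightarrow V$. The gap is in part (d), and it is fatal to the route you chose, by an argument you yourself supply. Your three-step composite --- the diagonal M\"obius map $\phi$ on both factors, then a $(U,V)$-only map returning the displaced lines to $\{U=0\}\cup\{V=0\}$, then the re-symmetrization of Proposition~\ref{prop:N0} --- never exchanges the two rulings of $\mathbb{F}_0$: each step, including every transformation used in the normalization of Proposition~\ref{prop:N0}, acts on the two $\mathbb{P}^1$-factors separately, so the total map is a product automorphism $(\sigma_1,\sigma_2)$ whose second component preserves $\{0,\infty\}$ as a set. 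For such a map, the new coefficient of $T^2V^2$ is (up to an overall scalar) $Q\bigl(\sigma_1^{-1}(0,1);\sigma_2^{-1}(0,1)\bigr)$ with $\sigma_2^{-1}([0:1])\in\{[0:1],[1:0]\}$, hence a combination of only $(\gamma_0,\alpha_0,\delta_0)$ or only $(\gamma_2,\alpha_2,\gamma_0)$ --- exactly your ``single row or column'' observation. Since the claimed $\delta_0'=Q(c_0,1;c_0,1)$ depends on all six coefficients (in particular on $\alpha_1$, with coefficient $c_0^2\neq 0$), no product automorphism can realize the stated formulas for generic parameters; composing further with (a)--(c), which are also product automorphisms, does not help. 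So your plan for (d) cannot be completed, independently of the additional (real) problem that the re-symmetrization step would move the $(S,T)$-lines away from $\{-c_0,-1/c_\infty\}$ again.

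The missing idea is that the automorphism realizing (d) must lie in the non-identity component of $\operatorname{Aut}(\mathbb{F}_0)$, i.e., it must swap the rulings. The paper's proof consists of the single substitution
\beqn
U = S' + c_0 T', \qquad V = c_\infty S' + T', \qquad S = U' + c_0 V', \qquad T = c_\infty U' + V',
\eeqn
the exchange of the two factors composed with the M\"obius map with matrix $\left(\begin{smallmatrix}1 & c_0\\ c_\infty & 1\end{smallmatrix}\right)$ applied on each factor. This is precisely what your obstruction permits: because the $(2,2)$-curve is symmetric, the factor swap preserves it, while the four lines are exchanged in pairs --- one computes $S-c_0T=(1-c_0c_\infty)\,U'$ and $T-c_\infty S=(1-c_0c_\infty)\,V'$, while $U$ and $V$ vanish on the new $(1,0)$-lines $[S':T']=[-c_0:1]$ and $[1:-c_\infty]$. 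Absorbing $(1-c_0c_\infty)$ into $W$ (this uses $c_0c_\infty\neq1$) reproduces Equation~(\ref{eqn:K3_in_Mpprime}) with parameters $(-c_0,-c_\infty)$, and expanding $\gamma(U'+c_0V',c_\infty U'+V')(S'+c_0T')^2+\alpha(\cdots)(\cdots)+\delta(\cdots)(\cdots)$ yields the stated quartic formulas; in particular $\delta_0'$ and $\gamma_2'$ are the diagonal values $Q(c_0,1;c_0,1)$ and $Q(1,c_\infty;1,c_\infty)$, exactly as you observed. Your diagnosis of where the difficulty sits, and the identification of $\delta_0',\gamma_2'$ as diagonal values of the symmetric form, are both correct and genuinely useful; but the construction you propose excludes the one ingredient --- the ruling swap --- that makes the lemma true.
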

\begin{proof}
Part (1) follows from rescaling $W \mapsto W/\sqrt{\lambda}$, similarly part (2) follows from the rescaling $(T,V) \mapsto (\mu T, \mu V)$. For the remaining parts, the proof follows by constructing a suitable automorphism of $\mathbb{F}_0$ in Equation~(\ref{eqn:K3_in_Mpprime}) that gives to the change of parameters. For part (3) this is given by interchanging $S \leftrightarrow T$  and $U \leftrightarrow V$. For part (4) the transformation is given by
\beq
 U  = S ' + c_0 T', \quad V = c_\infty S' + T' , \quad S  = U ' + c_0 V', \quad T = c_\infty U' + V' .
\eeq
\end{proof}
\par We now give an explicit parametrization for the K3 surfaces in $\widetilde{\mathfrak{W}}$ using the Abel-Jacobi map from Section~\ref{ssec:AJM}. We have the following:
\begin{theorem}
\label{thm:subspace1}
The moduli space $\widetilde{\mathfrak{W}}$ of correspondences $\widetilde{\mathcal{F}}$ in Figure~\ref{fig:FTH-CHL_2} is given by the K3 surfaces obtained as the minimal resolution of the double-quadrics
\beq
\label{eqn:K3_in_subspace1}
  W^2 =  \big(S - c_0 T\big)  \big(T - c_\infty S \big) \, U V \,\big( \gamma(U, V) \, S^2 + \alpha(U, V) \, ST + \delta(U, V) \, T^2 \big) \,.
\eeq  
Here, $c_0, c_\infty \in \mathbb{C}$ with $c_0 c_\infty \not = 1$, the polynomials $\alpha, \gamma, \delta$ are given by
\beq
\forall x, x_0: \ \gamma(x, 1) \, x_0^2 + \alpha(x, 1) \, x_0 + \delta(x, 1)  = \xi^2  (x - x_0)^2  - 4 \, \xi R(x, x_0)  -  4 R_1(x, x_0) ,
\eeq
and the polynomials $R(x, x_0)$ and $R_1(x, x_0)$ are 
\beq
\begin{split}
 R(x, x_0)  & = x^2 x_0^2 + \frac{a_2}{6}  \big( x^2 + x_0^2\big)  + \frac{2 a_2}{3} x x_0 + \frac{a_1}{2} \big( x+x_0\big) + a_0 \,, \\
 R_1(x, x_0) & = \frac{2 a_2}{3} x^2 x_0^2 + a_1 x x_0 (x+x_0) + \frac{36 a_0-a_2^2}{36} (x^2+x_0^2)  \\
 & + \frac{36 a_0  - 5 a_2^2}{18} x x_0 -  \frac{a_1 a_2}{6} (x + x_0) + \frac{8 a_0 a_2-3a_1^2}{12} \, ,
\end{split}
\eeq 
for the smooth genus-1 curve $\mathit{H}: w^2 = x^4 + \sum_{i=0}^2 a_i x^{i}$ and $\xi \in \mathrm{Jac}(\mathit{H})$.
\end{theorem}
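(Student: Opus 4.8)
The plan is to combine the normal form of Proposition~\ref{prop:N0} with the Abel--Jacobi embedding theory developed in Section~\ref{ssec:AJM}. By Proposition~\ref{prop:N0} and the Remark following it, every element of $\widetilde{\mathfrak{W}}$ is the minimal resolution of a double-quadric of the shape in Equation~(\ref{eqn:K3_in_subspace1}), in which the bi-degree $(2,2)$ factor $\gamma(U,V)\,S^2 + \alpha(U,V)\,ST + \delta(U,V)\,T^2$ is symmetric under the interchange of $(S,T)$ and $(U,V)$, cf.\ Equation~(\ref{eqn:divisor22}). Thus it suffices to show that every symmetric genus-one curve in $|\mathcal{O}_{\mathbb{F}_0}(2,2)|$ arises, for a suitable $\xi$ and suitable coefficients $a_0,a_1,a_2$, as the curve $\imath_\xi(\mathit{H})$ of Proposition~\ref{prop:RR1}; the remaining factors $(S-c_0T)(T-c_\infty S)\,UV$ are already in the desired form and contribute the parameters $c_0,c_\infty$.

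First I would fix the $j$-invariant of the symmetric $(2,2)$ curve and let $\mathit{E}$ be the elliptic curve it defines. Choosing a smooth genus-one curve $\mathit{H}:w^2=P(x)$ with $\mathrm{Jac}(\mathit{H})\cong\mathit{E}$, Theorem~\ref{thm:genus-one-curves} produces a point $\xi\in\mathrm{Jac}(\mathit{H})$ for which the given $(2,2)$ curve is isomorphic to the Abel--Jacobi image $\imath_\xi(\mathit{H})$, which by Proposition~\ref{prop:RR1} is precisely
\[
\xi^2\,(x-x_0)^2 - 4\,\xi R(x,x_0) - 4 R_1(x,x_0) = 0
\]
and is automatically symmetric in $(x,x_0)$. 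Dehomogenizing the symmetric $(2,2)$ factor by setting $T=V=1$ and writing $x=S$, $x_0=U$ then identifies $\gamma(x,1)\,x_0^2 + \alpha(x,1)\,x_0 + \delta(x,1)$ with the left-hand side above, which is exactly the stated defining relation for $\alpha,\gamma,\delta$.

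Next I would pin down the explicit shapes of $R$ and $R_1$. Using a fractional-linear transformation on the $x$-line together with a rescaling I normalize $\mathit{H}$ to $w^2 = x^4 + a_2 x^2 + a_1 x + a_0$, i.e.\ $a_4=1$ and $a_3=0$; substituting these values into the general expressions for $R(x,x_0)$ and $R_1(x,x_0)$ from Section~\ref{ssec:AJM} reproduces the two polynomials recorded in the statement. A dimension count then confirms that nothing is lost: $c_0,c_\infty$ contribute $2$, the normalized curve $\mathit{H}$ contributes $a_0,a_1,a_2$, and the Jacobian point $\xi$ contributes $1$, for a total of $6$, matching $\dim\widetilde{\mathfrak{W}}=6$ as established before the theorem.

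The hard part will be the bookkeeping that makes the identification rigorous rather than merely numerical: one must check that the isomorphism of $\mathbb{F}_0$ produced by Theorem~\ref{thm:genus-one-curves} can be chosen to respect the symmetric normal form of Proposition~\ref{prop:N0}, so that the symmetric branch curve is matched with the manifestly symmetric embedding $\imath_\xi(\mathit{H})$ of Proposition~\ref{prop:RR1} without disturbing the factorized lines $(S-c_0T)(T-c_\infty S)\,UV$. I expect this to follow from the observation that both the source curve and $\imath_\xi(\mathit{H})$ are symmetric with equal $j$-invariant, so that the two available moduli --- the $j$-invariant and the Jacobian point $\xi$ --- already exhaust the equivalence, leaving only the explicit substitution of the normalized $R,R_1$ to complete the argument.
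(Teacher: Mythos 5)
Your skeleton does match the paper's proof---both invoke Proposition~\ref{prop:N0}, Theorem~\ref{thm:genus-one-curves}, and Proposition~\ref{prop:RR1}, and your substitution of $a_4=1$, $a_3=0$ into the general expressions for $R$, $R_1$ from Section~\ref{ssec:AJM} does reproduce the polynomials in the statement. However, there is a genuine gap at precisely the step you flag as ``the hard part,'' and the resolution you propose for it does not work. The theorem demands an \emph{exact polynomial identity} between the symmetric $(2,2)$ factor of Equation~(\ref{eqn:K3_in_subspace1}) and $\xi^2(x-x_0)^2-4\xi R(x,x_0)-4R_1(x,x_0)$ in the very coordinates in which the remaining branch components are $UV\,(S-c_0T)(T-c_\infty S)$. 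In those coordinates the normalization $a_3=0$ is \emph{not} free: killing $a_3$ requires a shift of $x$, i.e.\ a diagonal automorphism of $\mathbb{F}_0$, and any such automorphism moves the lines $U=0$, $V=0$ (and the two $S$-lines) out of normal-form position. The obstruction is quantitative, and it is the identity at the heart of the paper's proof: for a general quartic $P(x)=\sum_{i=0}^4 a_ix^i$ one computes
\beqn
 2\alpha_0\gamma_2-\alpha_1\alpha_2+2\alpha_2\gamma_0 \;=\; -8\,a_3\,\eta^2,
\eeqn
where $(\xi,\eta)$ is the corresponding point of $\mathrm{Jac}(\mathit{H})$. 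Hence every curve your parametrization produces (where $a_3=0$) lies on the quadric $2\alpha_0\gamma_2-\alpha_1\alpha_2+2\alpha_2\gamma_0=0$ in coefficient space, a hypersurface which is invariant under both rescalings (a) and (b) of Lemma~\ref{symmetries1}, while the symmetric curve attached by Proposition~\ref{prop:N0} to a general member of $\widetilde{\mathfrak{W}}$ does \emph{not} satisfy this condition. So ``symmetric with the same $j$-invariant'' does not exhaust the equivalence: what matters is the embedded position of the curve relative to the four fixed lines, and your dimension count $2+3+1=6$ only shows that your family is $6$-dimensional, not that its image is dense in $\widetilde{\mathfrak{W}}$ rather than confined to a proper subvariety.

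This is exactly where the paper does its real work. After establishing the identity above, it invokes Lemma~\ref{symmetries1}\,(d)---the isomorphism exchanging the two pairs of lines, which replaces $(\delta_0,\alpha_0,\gamma_0,\alpha_1,\alpha_2,\gamma_2)$ by the primed coefficients depending on $(c_0,c_\infty)$---and checks that the equation $2\alpha'_0\gamma'_2-\alpha'_1\alpha'_2+2\alpha'_2\gamma'_0=0$ is \emph{linear in $c_0$} and solvable, with $c_0$ a rational function of $c_\infty$ and the coefficients. Only after this step, which places a representative of the general member of $\widetilde{\mathfrak{W}}$ on the constrained locus, can one legitimately assume $a_4\neq0$, shift $x$ to obtain $a_3=0$, and rescale to $a_4=1$ via Lemma~\ref{symmetries1}\,(a). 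Without this argument (or some substitute proving dominance of the constrained family), your proposal establishes only that the surfaces of the stated form belong to $\widetilde{\mathfrak{W}}$, not the converse inclusion that the theorem asserts.
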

\begin{remark}
The parameterization in Theorem~\ref{thm:subspace1} is given by the six parameters $c_0, c_\infty, a_0, a_1, a_2, \xi$ such that  $c_0 c_\infty \not = 1$, the curve $\mathit{H}: w^2 = x^4 + \sum_{i=0}^2 a_i x^{i}$ is smooth, and $\xi \in \mathrm{Jac}(\mathit{H})$. The curve $\mathit{H}$ is smooth if its discriminant does not vanish, i.e.,
\beqn
 16 a_0 a_2^4 - 4 a_1^2 a_2^3 - 128 a_0^2 a_2^2 + 144 a_0 a_1^2 a_2 - 27 a_1^4 + 256 a_0^3 \neq 0 .
\eeqn
The elliptic curve $\mathit{E} = \mathrm{Jac}(\mathit{H})$  in $\mathbb{P}^2=\mathbb{P}(Z_1,  Z_2, Z_3)$ is given by
\beq
\label{eqn:EC2}
 \mathit{E}: \quad  Z_2^2 Z_3= Z_1^3 - \left(4 a_0   + \frac{1}{3} a_2^2  \right)  Z_1^2 Z_3 + \left( a_1^2 + \frac{2}{27} a_2^3  -\frac{8}{3} a_0 a_2 \right) Z_3^3.
\eeq
\end{remark}
\begin{proof}
We apply Proposition~\ref{prop:N0}, Theorem~\ref{thm:genus-one-curves}, and Proposition~\ref{prop:RR1}. One then checks that in terms of the curve $\mathit{H}$ given by
$w^2 = P(x) = \sum_{i=0}^4 a_i x^i$ and the Jacobian $\mathrm{Jac}(\mathit{H})$ given by Equation~(\ref{eqn:EC}) one has
\beq
 2 \alpha_0 \gamma_2 - \alpha_1 \alpha_2 + 2 \alpha_2 \gamma_0 = - 8 a_3 \eta^2 .
\eeq
Using Lemma~\ref{symmetries1} (4) one checks that the equation
\beq
 2 \alpha'_0 \gamma'_2 - \alpha'_1 \alpha'_2 + 2 \alpha'_2 \gamma'_0 = 0 .
\eeq
is linear in $c_0$ and can be solved such that $c_0$ is a rational function in $c_\infty$ where both the numerator and denominator have degree 3 in $c_\infty$ with coefficients quadratic in $\mathbb{Z}[ \delta_0,  \alpha_0,  \gamma_0,  \alpha_1,  \alpha_2,  \gamma_2]$. Thus, for a smooth curve $\mathit{H}$ of genus one we can assume $a_4 \not=0$ and then shift the coordinate $x$ to obtain $a_3=0$. Moreover, Lemma~\ref{symmetries1} (1) provides an overall scaling that can be used to have $a_4=1$.
\end{proof}
\section{A second subspace of dimension six}
\label{sec:specialization2}
The Jacobian elliptic K3 surfaces $\mathcal{X}$ with $\mathrm{MW}(\mathcal{X}, \pi_\mathcal{X}) \cong \mathbb{Z}/2\mathbb{Z}$ in Equation~(\ref{eqn:X}) represent F-theory backgrounds with discrete flux, or, equivalently, elements of the moduli space $\mathfrak{M}_{H\oplus N}$. Based on Corollary~\ref{cor:6dim_vGS}, imposing the existence of a second (commuting) van~Geemen-Sarti involution on $\mathcal{X}$ implies that the Jacobian elliptic fibration has 12 singular fibers of type $I_2$, a Mordell-Weil group $(\mathbb{Z}/2\mathbb{Z})^2$, and a lattice polarization that extends to 
\beq
 H\oplus D_4(-1)^{\oplus 2} \oplus  A_1(-1)^{\oplus 4} \cong H\oplus D_6(-1) \oplus  A_1(-1)^{\oplus 6} \cong  \langle 2 \rangle \oplus \langle -2 \rangle\oplus D_4(-1)^{\oplus 3} .
\eeq
The equivalence of the lattices was proven in \cite{Clingher:2021}. The existence of commuting van~Geemen-Sarti involutions means that there are two independent 2-torsion sections in $\mathrm{MW}(\mathcal{X}, \pi_\mathcal{X})$ and implies that the polynomial $B(s, t)$ in Equation~(\ref{eqn:X}) satisfies $B = A^2-C^2$ for some homogeneous polynomial $C(s, t)$ of degree 4; see proof of Corollary~\ref{cor:6dim_vGS}. In turn, the factorization $4B = (A+C)(A-C)$ determines a marking of the eight fibers of type $I_2$ over $B=0$ on $\mathcal{X}$ with the double cover $\mathcal{G}$. Observing that the coefficient of $x z^2$ in Equation~(\ref{eqn:Xprime}) then becomes a perfect square, we obtain a canonical marking of $\mathcal{X}'$ with another double cover $\mathcal{G}'$. The double-quadrics $\mathcal{G}'$ has a particular simple form which we will determine presently. Changing the ruling and computing the relative Jacobian fibration we obtain the K3  surface $\mathcal{Y}'$ admitting two commuting antisymplectic involutions due to the symmetry of $\mathcal{G}'$. We already constructed the K3 surfaces $\mathcal{Y}'$ and their quotients $\mathcal{Y}, \widetilde{\mathcal{Y}}$ in Propositions~\ref{prop7} and~\ref{prop5}. Thus, we have the following:
\begin{corollary}
\label{cor:subspace2}
The moduli space of F-theory models with discrete flux admitting commuting van Geemen-Sarti involutions is dual to the moduli space of the CHL string admitting 2 antisymplectic involutions induced by involutions on the base curve. 
\end{corollary}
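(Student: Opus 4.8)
The plan is to realize the stated duality as the restriction of the general F-theory/CHL correspondence of Theorem~\ref{thm:duality} to the two six-dimensional families at hand, using the double-quadric $\mathcal{G}'$ built in the paragraph preceding the statement as the connecting correspondence.

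First I would fix the F-theory side. By Corollary~\ref{cor:6dim_vGS}(1) a surface $\mathcal{X}$ of Equation~(\ref{eqn:X}) carries commuting van~Geemen-Sarti involutions exactly when $\mathrm{MW}(\mathcal{X},\pi_\mathcal{X})\cong(\mathbb{Z}/2\mathbb{Z})^2$, which, by the proof of that corollary, is equivalent to the discriminant factor $A^2-4B$ being a perfect square $C^2$; this cuts out the locus $\mathfrak{M}_{\langle 2\rangle\oplus\langle-2\rangle\oplus D_4(-1)^{\oplus 3}}\subset\mathfrak{M}_{H\oplus N}$. The factorization $4B=(A+C)(A-C)$ selects a distinguished marking of the reducible fibers, and since $A^2-4B=C^2$ the coefficient of $xz^2$ in the Weierstrass model~(\ref{eqn:Xprime}) of $\mathcal{X}'$ is itself a square. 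Feeding this canonical marking into the double-cover construction of Section~\ref{sec:geometry} produces the correspondence surface $\mathcal{G}'$, an instance of the double-quadric correspondence of Theorem~\ref{thm:duality}(1) built from the Nikulin dual $\mathcal{X}'$.

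The core of the argument is to verify that the square condition is exactly what endows $\mathcal{G}'$ with the extra symmetry responsible for a second base involution on the CHL side. Concretely, I would pass to the opposite ruling of $\mathcal{G}'$, rewrite its branch curve through the identity~(\ref{eqn:hermite00}), and observe that the squareness of $A^2-4B$ forces the coefficients $a_i(U,V)$ to be invariant under an involution of the base $\mathbb{P}^1=\mathbb{P}(U,V)$. Passing to the relative Jacobian $\mathrm{Jac}^0(\pi'_{\mathcal{G}'})$ and applying the Hermite formulae~(\ref{eqn:hermite0}) then yields a Jacobian elliptic K3 surface $\mathcal{Y}'$; by Proposition~\ref{prop1} it lies in $\mathfrak{M}_{H\oplus E_8(-2)}$, and the base involution inherited from $\mathcal{G}'$ acts on it as an antisymplectic involution commuting with the one already present from the CHL structure. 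Comparing the resulting Weierstrass data with Equation~(\ref{eqn:Z}) identifies $\mathcal{Y}'$ with the general member of the family of Proposition~\ref{prop7}, i.e.\ the CHL backgrounds carrying two commuting antisymplectic involutions.

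For the reverse direction I would start from an arbitrary $\mathcal{Y}'$ as in Equation~(\ref{eqn:Z}): Proposition~\ref{prop5} already realizes its quotients $\mathcal{Y}$ and $\widetilde{\mathcal{Y}}$, and undoing the base change together with the van~Geemen-Sarti-Nikulin construction recovers an F-theory surface $\mathcal{X}$ with $\mathrm{MW}\cong(\mathbb{Z}/2\mathbb{Z})^2$. Since both families are irreducible of dimension six and each member is matched with a partner by the correspondence $\mathcal{G}'$, this establishes the asserted duality. In contrast to Corollary~\ref{cor:subspace1}, the two sides here are not one and the same lattice-polarized family but genuinely distinct surfaces linked by the double cover---as Remark~\ref{rem:fibration_subspace} makes explicit, the Nikulin dual $\mathcal{X}'$ is not even polarized by $\langle 2\rangle\oplus\langle-2\rangle\oplus D_4(-1)^{\oplus 3}$---which is why the statement speaks of a duality rather than an isomorphism. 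I expect the only real obstacle to be the middle step: checking, by an explicit but unilluminating computation with~(\ref{eqn:hermite0}) applied to~(\ref{eqn:hermite00}), that the perfect-square condition $A^2-4B=C^2$ is equivalent to the invariance of the $(2,2)$-branch curve under the base involution, so that the two distinguished involutive structures match bijectively and the duality descends to the moduli level.
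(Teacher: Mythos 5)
Your proposal is correct and follows essentially the same route as the paper: the corollary there is derived from exactly this chain of constructions --- Corollary~\ref{cor:6dim_vGS}(1) giving the perfect-square condition $A^2-4B=C^2$, the resulting canonical marking of $\mathcal{X}'$ and double-quadric $\mathcal{G}'$, whose extra symmetry, after a change of ruling and passage to the relative Jacobian via~(\ref{eqn:hermite0}) applied to~(\ref{eqn:hermite00}), produces the surface $\mathcal{Y}'$ of Equation~(\ref{eqn:Z}) with two commuting antisymplectic involutions, already classified in Propositions~\ref{prop7} and~\ref{prop5}. The explicit computation you flag as the remaining obstacle is exactly what the paper records afterwards in Proposition~\ref{prop:situation2} and Table~\ref{tab:WEQ_2}.
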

\par Figure~\ref{fig:FTH-CHL} can then be extended. We obtain Figure~\ref{fig:FTH-CHL_3} where now the rational elliptic surface $\mathcal{R}$ -- due to the extended symmetry of $\mathcal{Y}$ -- must satisfy
\beq
\label{eqn:MW_special_2}
 \mathrm{MW}(\mathcal{R}, \pi_\mathcal{R}) \ \cong \ D_4^\vee .
\eeq
We note that Equation~(\ref{eqn:MW_special_2})  is precisely the first case considered earlier in~(\ref{eqn:casesRES}). We have the following:
\begin{proposition}
\label{prop:situation2}
In the situation of Corollary~\ref{cor:subspace2} there exist the algebraic correspondences in  Figure~\ref{fig:FTH-CHL_3}. The  defining equations for the surfaces in Figure~\ref{fig:FTH-CHL_3}  are given by Table~\ref{tab:WEQ}. Here,  $A, C$ are general homogeneous polynomials of degree four, and $a_0, \dots, a_4$ of degree 1 are defined by requiring
\beq
\label{eqn:hermite00_3}
  A(s, t) \, \frac{U-V}{2} - C(s, t) \, \frac{U+V}{2} =  \  a_0(U,V) \, s^4 + a_1(U,V) \, s^3 t + \dots + a_4(U,V) \, t^4 
\eeq
for all $s, t, U, V$.  The polynomials $f, g$ are given by
\beq
\label{eqn:hermite0b}
 f = - 4 a_0 a_4 + a_1 a_3 - \frac{1}{3} a_2^2 \,, \qquad g = -\frac{8}{3} a_0 a_2 a_4 + a_0 a_3^2 + a_1^2 a_4 - \frac{1}{3} a_1 a_2 a_3 + \frac{2}{27} a_2^3 \,.
\eeq
In particular, we have:
\begin{enumerate}
\item $\mathcal{X}, \mathcal{F} \in \mathfrak{M}_{\langle 2 \rangle \oplus \langle -2 \rangle\oplus D_4(-1)^{\oplus 3}}$, $\mathcal{X}', \mathcal{G}, \mathcal{Y} \in \mathfrak{M}_{H \oplus K_0(-1)}$, $\widetilde{\mathcal{Y}} \in \mathfrak{M}_{H \oplus D_4(-1)^{\oplus 3}}$,
\item $\mathcal{G}', \mathcal{G}, \mathcal{F}$ are double-quadrics  over $\mathbb{F}_0$, 
\item $\mathcal{R}$ is a rational elliptic surface with $\mathrm{MW}(\mathcal{R}, \pi_\mathcal{R})  \cong \ D_4^\vee$.
\end{enumerate}
\end{proposition}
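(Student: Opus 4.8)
The plan is to mirror the strategy used for the Case~I proposition, now exploiting the extra symmetry forced by the presence of \emph{two} commuting van~Geemen-Sarti involutions. The starting point is the defining constraint from Corollary~\ref{cor:6dim_vGS}~(1): the existence of a second $2$-torsion section forces $B = A^2 - C^2$ for a homogeneous quartic $C(s,t)$, equivalently $A^2 - 4B = C^2$ is a perfect square. Substituting this into the Weierstrass model~\eqref{eqn:Xprime} of $\mathcal{X}'$ shows that the coefficient of $xz^2$ becomes $C^2$, so that $\mathcal{X}'$ acquires a canonical marking and a distinguished double cover $\mathcal{G}'$ of a particularly simple symmetric shape; this is the object that generates the rest of Figure~\ref{fig:FTH-CHL_3}. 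First I would record the explicit Weierstrass and double-quadric equations for $\mathcal{X}, \mathcal{X}', \mathcal{G}, \mathcal{G}', \mathcal{Y}, \mathcal{Y}', \widetilde{\mathcal{Y}}$ and the rational surface $\mathcal{R}$ by running the double-cover constructions of Sections~\ref{ssec:double_covers} and~\ref{sec:VGS_K3} through this factorized data, thereby producing the tabulated defining equations and establishing conclusion~(2), that $\mathcal{G}', \mathcal{G}, \mathcal{F}$ are double covers of $\mathbb{F}_0$.

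The core of the argument is the Weil/Hermite transfer, specializing the general correspondence of Theorem~\ref{thm:duality}. Equation~\eqref{eqn:hermite00_3} rewrites the bi-degree $(4,1)$ form $\tfrac12\big(A(s,t)(U-V) - C(s,t)(U+V)\big)$ as a binary quartic in $(s,t)$ whose five coefficients $a_0(U,V),\dots,a_4(U,V)$ are linear in $(U,V)$. Feeding these into the classical invariants~\eqref{eqn:hermite0b} produces $f$ of degree two and $g$ of degree three in $(U,V)$, which are exactly the input data for the families $\mathcal{Y}', \mathcal{Y}, \widetilde{\mathcal{Y}}$ built in Propositions~\ref{prop7} and~\ref{prop5}. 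Thus, after changing the ruling on $\mathcal{G}'$ and passing to the relative Jacobian, I would identify $\mathrm{Jac}^0(\pi'_{\mathcal{G}'})$ with the K3 surface $\mathcal{Y}'$ carrying two commuting antisymplectic involutions, and its $\jmath$-quotients with $\mathcal{Y}$ and $\widetilde{\mathcal{Y}}$; the attendant rational elliptic surface is~\eqref{eqnYtildetildeR}, realizing the first case of~\eqref{eqn:casesRES} with $\mathrm{MW}(\mathcal{R},\pi_\mathcal{R}) \cong D_4^\vee$, which is conclusion~(3). The isomorphisms $\mathcal{X}' \cong \mathcal{G} \cong \mathcal{Y}$ and the remaining identifications then follow from an explicit exhibition of sections, exactly as in Lemma~\ref{lem:sections}: the preimages of the reducible fibers under the change of ruling supply enough sections to force the second elliptic pencil to be Jacobian.

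Finally, I would verify the lattice polarizations in conclusion~(1). For $\mathcal{X}$ with Mordell-Weil group $(\mathbb{Z}/2\mathbb{Z})^2$ and twelve fibers of type $I_2$, the polarizing lattice is $\langle 2\rangle \oplus \langle -2\rangle \oplus D_4(-1)^{\oplus 3}$ by the argument of Corollary~\ref{cor:6dim_vGS}~(1) together with the classification in \cite{Clingher:2021}. For $\mathcal{X}', \mathcal{G}, \mathcal{Y}$ one reads off the singular fibers $4I_4 + 8I_1$ with $\mathrm{MW} \cong \mathbb{Z}/2\mathbb{Z}$ (cf.~Remark~\ref{rem:fibration_subspace}), computes the discriminant $2^6$, and identifies the lattice with $H \oplus K_0$ via the overlattice construction used in Proposition~\ref{cor:2coverK3_14}; for $\widetilde{\mathcal{Y}}$ the fibers $3I_0^* + 6I_1$ with trivial Mordell-Weil group give $H \oplus D_4(-1)^{\oplus 3}$. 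The main obstacle is the bookkeeping in the Weil transfer together with the discriminant and overlattice computations: one must check that the explicit $a_i(U,V)$ coming from~\eqref{eqn:hermite00_3} genuinely reproduce the normal forms of Propositions~\ref{prop7}--\ref{prop5} rather than merely a surface in the same moduli space, and that the index-two and overlattice identifications are exact. Once the Weil parametrization is in place, the section-counting and lattice arguments are direct analogues of those already established.
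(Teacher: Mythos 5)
Your proposal is correct and takes essentially the same route as the paper's (much terser) proof, which likewise constructs every surface in Figure~\ref{fig:FTH-CHL_3} explicitly via the double-cover machinery of Sections~\ref{ssec:double_covers} and~\ref{sec:VGS_K3} together with the Hermite data~(\ref{eqn:hermite00_3})--(\ref{eqn:hermite0b}), establishes $\mathcal{G}' \cong \mathcal{Y}'$, $\mathcal{G} \cong \mathcal{Y}$, and $\mathcal{X}' \cong \mathcal{G}$ by exhibiting sections exactly as in Lemma~\ref{lem:sections} (respectively by inspection of the equation for $\mathcal{G}$ in Table~\ref{tab:WEQ_2}), and obtains the lattice identifications from the previously established Proposition~\ref{cor:2coverK3_14}, Corollary~\ref{cor:6dim_vGS}, and the Gram matrix~(\ref{eqn:Gram_matrix}) of $K_0$. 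The only differences are expository --- you re-derive the discriminant and overlattice computations that the paper merely cites --- together with a harmless normalization slip ($B = A^2 - C^2$ versus $4B = A^2 - C^2$) that the paper's own text in Section~\ref{sec:specialization2} also contains.
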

\begin{proof}
One uses the explicit constructions for double covers in Section~\ref{ssec:double_covers} and Section~\ref{sec:VGS_K3} to construct all surfaces in Figure~\ref{fig:FTH-CHL_3} explicitly.  The Gram matrix of $K_0$ was given in Equation~(\ref{eqn:Gram_matrix_1}). The existence of sections that imply isomorphisms $\mathcal{G}' \cong \mathcal{Y}'$ and $\mathcal{G} \cong \mathcal{Y}$ is proved as in Lemma~\ref{lem:sections}. The existence of sections that implies $\mathcal{X}' \cong \mathcal{G}$ follows immediately from the equation for $\mathcal{G}$ in Table~\ref{tab:WEQ_2}.
\end{proof}
\begin{figure}
\beqn
\xymatrix{
&   \mathcal{G}'   \ar@/_4pc/[ddl]_{\mathrm{Jac}^0}  \ar[dl]  \ar[d]  \ar[r]^{\cong} & \mathcal{Y}'  \ar[d] \ar[dr]\\
 \mathcal{X}'  \ar@<-2pt>[d]  &  \mathcal{G} \ar[l]_{\cong} \ar[dl]  \ar[d]  \ar[r]^{\cong} & \mathcal{Y}  \ar[d] \ar[dr] & \mathcal{R}'\\
 \mathcal{X}  \ar@<-2pt>[u]  &  \mathcal{F} \ar[l]_{\cong}   \ar[r]^{\mathrm{Jac}^0} & \widetilde{\mathcal{Y}} &  \mathcal{R}\\
 }
\eeqn
\caption{Extension of the F-theory/CHL string duality (Case II)}
\label{fig:FTH-CHL_3}
\end{figure}
 \begin{table}
\begin{tabular}{c|c}
label & defining equation \\
\hline
&\\[-0.9em]
$\mathcal{X}$ & $Y^2 Z = X^3 - A(s, t) \, X^2 Z + \big(A(s, t)^2-C(s, t)^2\big) \, X Z^2/4$ \\ 
$\mathcal{X}'$ &  $y^2 z = x^3 + 2 A(s, t) \, x^2 z + C(s, t)^2 x z^2$ \\ 
\hdashline
&\\[-0.9em]
$\mathcal{G}'$ & $W^2 =  \left\lbrace \begin{array}{l} C(s, t) \, \tilde{u}^4 +  2 A(s, t)\, \tilde{u}^2 \tilde{v}^2  + C(s, t) \, \tilde{v}^4
\\ a_4\big( (\tilde{u}^2 - \tilde{v}^2)^2,   (\tilde{u}^2 + \tilde{v}^2)^2\big) \, s^4 +\dots +  a_0\big( (\tilde{u}^2 - \tilde{v}^2)^2,   (\tilde{u}^2 + \tilde{v}^2)^2\big)  \, t^4  \end{array} \right.$\\[0.2em]
$\mathcal{G}$ & $w^2 =  \left\lbrace \begin{array}{l}  (u^2-v^2)  \big( \frac{A(s, t)-C(s, t)}{2} \, u^2- \frac{A(s, t) + C(s, t)}{2} \, v^2 \big)  \\ 
  (u^2-v^2)  \big(  a_4(u^2, v^2) \, s^4 +\dots + a_0(u^2, v^2) \, t^4 \big) \end{array}\right.$\\[0.2em]
$\mathcal{F}$ & $W^2 =  \left\lbrace \begin{array}{l} U V  (U-V)  \big( \frac{A(s, t)-C(s, t)}{2} \, U - \frac{A(s, t) + C(s, t)}{2} \, V \big) \\
 U V  (U-V) \big( a_4(U, V) \, s^4 +\dots + a_0(U, V) \, t^4\big) \end{array}\right.$\\[0.2em]
\hdashline
&\\[-0.9em]
$\mathcal{Y}'$ & $Y^2 Z = X^3 + f\big( (\tilde{u}^2 - \tilde{v}^2)^2,   (\tilde{u}^2 + \tilde{v}^2)^2 \big) \, X Z^2 + g\big( (\tilde{u}^2 - \tilde{v}^2)^2,   (\tilde{u}^2 + \tilde{v}^2)^2 \big)  \, Z^3$ \\ 
$\mathcal{Y}$ & $y^2 z = x^3 + (u^2-v^2)^2 f(u^2, v^2) \, x z^2 + (u^2-v^2)^3 g(u^2, v^2) \, z^3$ \\ [0.1em]
$\widetilde{\mathcal{Y}}$ & $Y^2 Z = X^3 + U^2 V^2 (U-V)^2 f(U, V) \, X Z^2 + U^3 V^3 (U-V)^3 g(U, V) \, Z^3$ \\ 
\hdashline
&\\[-0.9em]
$\mathcal{R}'$ & $y^2 z = x^3 +  f(u^2, v^2) \, x z^2 + g(u^2, v^2) \, z^3$ \\ [0.1em]
$\mathcal{R}$ & $Y^2 Z = X^3 + (U-V)^2 f(U, V) \, X Z^2 + (U-V)^3 g(U, V) \, Z^3$ \\ 
\hline
\end{tabular}
\caption{Defining equations for surfaces in Figure~\ref{fig:FTH-CHL_3}}
\label{tab:WEQ_2}
\end{table}
\subsection{The double \texorpdfstring{$4\mathcal{H}$}{4H}-surfaces}
We will now determine the space of correspondences $\mathcal{F}$ for the K3 surfaces $\mathcal{X}$ and $\widetilde{\mathcal{Y}}$ in Figure~\ref{fig:FTH-CHL_3}.
\par Let $\mathcal{F}$ be a double cover of the Hirzebruch surface $\mathbb{P}^1 \times \mathbb{P}^1$ branched over the union of four bi-degree $(1,1)$ curves satisfying a certain generality condition.  Such a surface $\mathcal{F}$ is also called \emph{a double $4\mathcal{H}$-surface}. We construct a geometric model as follows: using the coordinates $U$ and $s$ on $\mathbb{P}^1_{(U)} \times \mathbb{P}^1_{(s)}$ a double cover is given by 
\beq
\label{eqn:definingF}
 y^2 = \prod_{k=1}^4 \Big(  \rho_1^{(k)} s U +  \rho_2^{(k)} U +  \rho_3^{(k)} s +  \rho_4^{(k)}  \Big) \,,
\eeq 
for complex  parameters $\rho^{(i)}_j$ with $i, j \in \{1,2,3,4\}$. We denote by $H_1, \dots, H_4$ the four  different rational curves of bi-degree $(1, 1)$ and impose the following \emph{genericity} conditions: (1) every $H_i$ is irreducible, (2) $H_i \cap H_j$ consists of two different points for all $i \neq j$, and (3) for any three different indices $i, j, k$ we have $H_i \cap H_j \cap H_k = \emptyset$. This is precisely the family considered in \cite{MR1871336}, and it was proven there that under the above conditions $\mathcal{F}$ is a K3 surface.  If we define the quadratic polynomials
\beq
\begin{split}
 P^{(i, j)} & = \Big(\rho^{(i)}_1 \rho^{(j)}_3 - \rho^{(i)}_3 \rho^{(j)}_1\Big) s^2  +  \Big(\rho^{(i)}_2 \rho^{(j)}_4 - \rho^{(i)}_4 \rho^{(j)}_2\Big)  \\
 & + \Big( \rho^{(i)}_1 \rho^{(j)}_4 + \rho^{(i)}_2 \rho^{(j)}_3 -\rho^{(i)}_3 \rho^{(j)}_2 -\rho^{(i)}_4 \rho^{(j)}_1   \Big) s \,,
\end{split} 
\eeq 
then Equation~(\ref{eqn:definingF}) can be brought into the Weierstrass form
\beq
\label{eqn:definingF_WE}
 Y^2 Z = X  \Big( X - P^{(1,2)} P^{(3,4)} Z \Big)   \Big(  X - P^{(1,3)}  P^{(2,4)}  Z \Big)  \,,
\eeq
which coincides with the equation for $\mathcal{X}$ in Table~\ref{tab:WEQ_2} in the affine chart $t=1$ with
\beqn
  P^{(1,2)} P^{(3,4)} = \frac{A+C}{2}, \qquad  P^{(1,3)}  P^{(2,4)} = \frac{A -C}{2}.
\eeqn
The following is then immediate:
\begin{lemma}
\label{lem:JacF}
The generic K3 surface $\mathcal{F}$  admits a Jacobian elliptic fibration with 12 singular fibers of type $I_2$ and a Mordell-Weil group of sections $(\mathbb{Z}/2\mathbb{Z})^2$. 
\end{lemma}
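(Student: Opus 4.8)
The plan is to read the full $2$-torsion directly off the split Weierstrass form \eqref{eqn:definingF_WE} and then to pin down the rest of the Mordell--Weil group by a Shioda--Tate count. First I would note that the cubic on the right-hand side of \eqref{eqn:definingF_WE} is already factored into three linear forms in $X$, so the loci $X=0$, $X=P^{(1,2)}P^{(3,4)}Z$ and $X=P^{(1,3)}P^{(2,4)}Z$ define three sections, each fixed by the hyperelliptic involution $Y\mapsto -Y$ and hence of order two. Together with the zero section they realize the full fiberwise $2$-torsion, so $(\mathbb{Z}/2\mathbb{Z})^2\subseteq \mathrm{MW}(\mathcal{F},\pi_\mathcal{F})_{\mathrm{tor}}$. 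Since \eqref{eqn:definingF_WE} is precisely the Weierstrass equation listed for $\mathcal{X}$ in Table~\ref{tab:WEQ_2} under $P^{(1,2)}P^{(3,4)}=(A+C)/2$ and $P^{(1,3)}P^{(2,4)}=(A-C)/2$, one may alternatively simply identify $\mathcal{F}$ with the surface $\mathcal{X}$ of Proposition~\ref{prop:situation2}; I will instead argue intrinsically.

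Next I would count the singular fibers of $\pi_\mathcal{F}$, the projection onto $\mathbb{P}^1=\mathbb{P}(s,t)$. Over a fixed value of $s$ the fiber is the genus-one double cover of $\mathbb{P}^1_{(U)}$ branched at the four points cut out by $H_1,\dots,H_4$, and it degenerates exactly over the $s$-coordinates of the intersection points $H_i\cap H_j$: there the branch divisor has a node, $\mathcal{F}$ acquires an $A_1$ singularity, and its resolution contributes the second component of a fiber of type $I_2$. By the genericity conditions~(2) and~(3) there are $\binom{4}{2}\cdot 2=12$ such points, with pairwise distinct $s$-coordinates and no triple collisions in the generic case, so one gets exactly twelve fibers of type $I_2$. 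Equivalently, the discriminant of \eqref{eqn:definingF_WE} is proportional to $\big(P^{(1,2)}P^{(3,4)}\big)^2\big(P^{(1,3)}P^{(2,4)}\big)^2\big(P^{(1,2)}P^{(3,4)}-P^{(1,3)}P^{(2,4)}\big)^2$, whose twelve roots are simple and distinct. Since each $I_2$ fiber contributes $2$ to the topological Euler characteristic and $12\cdot 2=24=e(\mathcal{F})$, no further singular fibers can occur; in particular there are no $I_1$ fibers.

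Finally I would close the Mordell--Weil computation by Shioda--Tate. The trivial lattice is $T\cong H\oplus A_1(-1)^{\oplus 12}$, of rank $14$, which already equals the Picard number $\rho=14$ of the generic $\mathcal{F}$ --- the rank of its polarizing lattice $\langle 2\rangle\oplus\langle -2\rangle\oplus D_4(-1)^{\oplus 3}$ from Proposition~\ref{prop:situation2}. Hence $\rho=2+\sum_v(m_v-1)+\operatorname{rank}\mathrm{MW}=2+12+\operatorname{rank}\mathrm{MW}$ forces $\operatorname{rank}\mathrm{MW}=0$. Comparing discriminants, $|\!\operatorname{disc}T|=2^{12}$ while $|\!\operatorname{disc}\mathrm{NS}(\mathcal{F})|=2^{8}$, so $[\mathrm{NS}(\mathcal{F}):T]=4$ and therefore $\mathrm{MW}\cong \mathrm{NS}(\mathcal{F})/T$ has order $4$; as it contains $(\mathbb{Z}/2\mathbb{Z})^2$ it must equal $(\mathbb{Z}/2\mathbb{Z})^2$. (The injection of the torsion into the component groups $\bigoplus_v G_v\cong(\mathbb{Z}/2\mathbb{Z})^{12}$ independently rules out a cyclic factor $\mathbb{Z}/4\mathbb{Z}$.) The one point requiring genuine care is the genericity verification in the middle step --- that the twelve nodes of the branch divisor lie in twelve distinct fibers and each yields an honest $I_2$ rather than a degeneration to a fiber of higher Kodaira type --- since it is exactly this that secures both the fiber count and, through $e(\mathcal{F})=24$, the absence of extra sections.
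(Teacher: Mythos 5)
Your proof is correct and is essentially the paper's own approach written out in full: the paper's proof consists of the single remark that the claim ``is checked by explicit computation'' from the Weierstrass model~(\ref{eqn:definingF_WE}), and your argument --- the full $2$-torsion visible in the factored cubic, the discriminant proportional to $\bigl(P^{(1,2)}P^{(3,4)}\bigr)^2\bigl(P^{(1,3)}P^{(2,4)}\bigr)^2\bigl(P^{(1,2)}P^{(3,4)}-P^{(1,3)}P^{(2,4)}\bigr)^2$ with twelve distinct double zeros, and the Euler-characteristic count --- is precisely that computation, with the genericity caveat (the twelve nodes lying in twelve distinct fibers) correctly flagged. The one external input you add, Proposition~\ref{prop:trans_F} (giving $\rho=14$ and $|\det\mathrm{NS}(\mathcal{F})|=2^{8}$ for the Shioda--Tate step), is not circular even though it appears after the lemma, since in the paper that proposition rests on the genus-one fibration of Lemma~\ref{prop:KSTTfibration} and on \cite{MR1871336}, not on Lemma~\ref{lem:JacF}.
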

\begin{proof}
Given the Weierstrass model in Equation~(\ref{eqn:definingF_WE}) the statement is checked by explicit computation.
\end{proof}
\par Given any two distinct complex parameters $\mu, \nu \in \mathbb{C}$ with $\mu \not = \nu$, Equation~(\ref{eqn:definingF_WE}) can be brought into the form
\beqn
  y^2   = \Big(\xi + \mu \Big) \Big(\xi + \nu \Big) 
  \Big(\big( P^{(1,2)} P^{(3,4)} - P^{(1,3)} P^{(2,4)} \big) \xi + \big( \mu P^{(1,2)} P^{(3,4)} - \nu P^{(1,3)} P^{(2,4)} \big)  \Big) ,
\eeqn
which in turn can be re-written as
\beq
\label{eqn:g1fibration}
  y^2   = \sum_{i=0}^4  (\xi+\mu)(\xi+\nu) \, A_i(\xi,\mu,\nu) \, u^i \,,
\eeq
with $A_i(\xi,\mu,\nu) = a_{i,1} \xi +  a_{i,2} \mu + a_{i,3} \nu$ for $0 \le i \le 4$. The coefficients are of the form
\beq
 a_{i, j} = \sum_{k_1, \dots, k_4} \alpha_{i, j}(\vec{k}) \;  \rho^{(1)}_{k_1} \rho^{(2)}_{k_2} \rho^{(3)}_{k_3} \rho^{(4)}_{k_4} 
\eeq 
with $\alpha_{i, j}(\vec{k})  \in \{ 0, \pm1 \}$. Considering $\xi$ the affine coordinate of a projective line and $(u, y)$ the affine coordinates of a genus-one fiber in $\mathbb{P}^2$, it follows that Equation~(\ref{eqn:g1fibration}) induces a genus-one fibration on the K3 surface $\mathcal{F}$. Using \cite{MR3995925}*{Prop.~3.3} it follows immediately:
\begin{lemma}
\label{prop:KSTTfibration}
The very general K3 surface $\mathcal{F}$  admits a genus-one fibration (without section and) with three singular fibers of type $I_0^*$ and six singular fibers of type $I_1$.
\end{lemma}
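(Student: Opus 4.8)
The plan is to read off the fibre configuration from the relative Jacobian of the genus-one fibration exhibited in~\eqref{eqn:g1fibration}, and then to exclude a section by a determinant comparison. Viewing~\eqref{eqn:g1fibration} as a fibration over the $\xi$-line, the fibre over a generic $\xi$ is the double cover of $\mathbb{P}^1_{(u)}$ given by $y^2 = c(\xi)\sum_{i=0}^4 A_i(\xi,\mu,\nu)\,u^i$ with $c(\xi)=(\xi+\mu)(\xi+\nu)$. Since every coefficient of the quartic in $u$ carries the common quadratic factor $c(\xi)$, and each reduced coefficient $A_i$ is affine-linear in $\xi$, this is exactly the normal form to which \cite{MR3995925}*{Prop.~3.3} applies; the claimed statement then follows directly from that result, so the task reduces to checking that~\eqref{eqn:g1fibration} indeed meets its hypotheses.

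To make the mechanism transparent I would pass to the relative Jacobian using the classical formulae~\eqref{eqn:hermite0b}, which are homogeneous of degrees two and three in the coefficients of the quartic. Substituting $a_i = c(\xi)\,A_i$ and exploiting this homogeneity yields a Weierstrass model $\eta^2 = \zeta^3 + f\,\zeta + g$ with $f = c^2\tilde f$ and $g = c^3\tilde g$, where $\tilde f,\tilde g$ are the Hermite invariants of the reduced quartic $\sum_i A_i u^i$ and have degrees two and three in $\xi$. The discriminant therefore factors as $\Delta = c^6\,(4\tilde f^3 + 27\tilde g^2)$. At the two simple zeros $\xi=-\mu$ and $\xi=-\nu$ of $c$ one reads $\operatorname{ord}f = 2$, $\operatorname{ord}g = 3$, $\operatorname{ord}\Delta = 6$, i.e.\ Kodaira type $I_0^*$; a degree count at $\xi=\infty$ (where $\deg f = 6$, $\deg g = 9$, $\deg\Delta = 18$ force vanishing orders $2,3,6$) produces a third $I_0^*$; and the six simple roots of the degree-six polynomial $4\tilde f^3 + 27\tilde g^2$ give six fibres of type $I_1$. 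This accounts for the full Euler number $3\cdot 6 + 6\cdot 1 = 24$ of a K3 surface, so there are no further singular fibres.

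Finally I would rule out a section. By Proposition~\ref{prop:situation2}(1) the very general $\mathcal{F}$ has $\mathrm{NS}(\mathcal{F}) \cong \langle 2\rangle\oplus\langle -2\rangle\oplus D_4(-1)^{\oplus 3}$, a lattice of rank $14$ and determinant $-2^8$. If the genus-one fibration were Jacobian, its trivial lattice $H\oplus D_4(-1)^{\oplus 3}$, of rank $14$ and determinant $-2^6$, would be a finite-index sublattice of $\mathrm{NS}(\mathcal{F})$; but comparing determinants forces $2^6 = [\mathrm{NS}(\mathcal{F}):H\oplus D_4(-1)^{\oplus 3}]^2\cdot 2^8$, which is impossible. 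Hence the fibration admits no section, in agreement with the torsor structure produced by the quadratic twist by $c(\xi)$, whose multi-section index is two. The only point requiring care is the genericity input behind the citation, namely verifying that for the very general choice of the parameters $\rho^{(i)}_j$ the reduced quartic stays smooth enough that $4\tilde f^3 + 27\tilde g^2$ has six distinct roots disjoint from $\{-\mu,-\nu,\infty\}$; granting this, the remaining steps are the homogeneity bookkeeping above.
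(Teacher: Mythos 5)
Your proposal is correct and follows essentially the same route as the paper: the paper's entire proof is to apply \cite{MR3995925}*{Prop.~3.3} to the genus-one fibration in Equation~(\ref{eqn:g1fibration}), exactly as you do. Your additional material — the relative Jacobian computation via the Hermite formulas with the Kodaira-type and Euler-number bookkeeping, and the exclusion of a section by comparing $\det\bigl(H\oplus D_4(-1)^{\oplus 3}\bigr)=-2^6$ against $\det\bigl(\langle 2\rangle\oplus\langle -2\rangle\oplus D_4(-1)^{\oplus 3}\bigr)=-2^8$ (legitimate, since Proposition~\ref{prop:situation2} precedes this lemma, so no circularity) — is sound and simply makes explicit what the paper delegates to the citation.
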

The existence of such a genus-one fibration with three singular fibers of type $I_0^*$ on the K3 surface $\mathcal{F}$ allowed the authors in \cite{MR1871336}*{Thm.~1} to compute the lattice polarization of the family. We have:
\begin{proposition}
\label{prop:trans_F}
For generic parameters $\rho^{(i)}_j$ with $i, j \in \{1,2,3,4\}$ the K3 surface $\mathcal{F}$ is endowed with a canonical polarization given by the rank-fourteen lattice
\beq
  \langle 2 \rangle \oplus  \langle - 2 \rangle \oplus D_4(-1)^{\oplus 3} \,.
\eeq
Moreover, the transcendental lattice is $H(2)^{\oplus 2} \oplus \langle -2 \rangle^{\oplus 4}$ of signature $(2,6)$.
\end{proposition}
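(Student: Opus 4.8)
The plan is to treat the two assertions in turn: first the N\'eron--Severi lattice, and then the transcendental lattice, the latter being deduced from the former by a short discriminant-form computation together with Nikulin's uniqueness theorem.

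For the polarization I would start from the two elliptic structures already exhibited on $\mathcal{F}$. Lemma~\ref{lem:JacF} supplies a Jacobian fibration with twelve fibers of type $I_2$ and Mordell--Weil group $(\mathbb{Z}/2\mathbb{Z})^2$, so the Shioda--Tate formula forces $\rho(\mathcal{F}) = 2 + 12 + 0 = 14$ and presents $\mathrm{NS}(\mathcal{F})$ as the overlattice of the trivial lattice $H \oplus A_1(-1)^{\oplus 12}$ obtained by adjoining the two independent $2$-torsion sections. Since $\mathrm{MW} \cong \mathrm{NS}(\mathcal{F})/\big(H \oplus A_1(-1)^{\oplus 12}\big)$ has order $4$, this overlattice has index $4$ and hence determinant $-2^{12}/4^2 = -2^8$. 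Lemma~\ref{prop:KSTTfibration} supplies the complementary genus-one fibration with three fibers of type $I_0^*$, whose three $\widetilde{D}_4$ configurations exhibit an explicit copy of $D_4(-1)^{\oplus 3}$ inside $\mathrm{NS}(\mathcal{F})$; this is exactly the geometric input used in \cite{MR1871336}*{Thm.~1} to identify the full polarization with $\langle 2 \rangle \oplus \langle -2 \rangle \oplus D_4(-1)^{\oplus 3}$, and I would invoke that theorem here. The same conclusion also follows by matching the fibration data of Lemma~\ref{lem:JacF} against the classification of $2$-elementary N\'eron--Severi lattices in \cite{Clingher:2021}.

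For the transcendental lattice I would argue purely lattice-theoretically. By definition $\mathrm{T}_{\mathcal{F}}$ is the orthogonal complement of $\mathrm{NS}(\mathcal{F})$ in the unimodular lattice $H^2(\mathcal{F},\mathbb{Z}) \cong \Lambda_{\mathrm{K3}} \cong H^{\oplus 3} \oplus E_8(-1)^{\oplus 2}$, so its rank is $22-14 = 8$ and its signature is $(3,19)-(1,13) = (2,6)$. Primitivity of the embedding yields an isomorphism of discriminant groups $A_{\mathrm{T}_{\mathcal{F}}} \cong A_{\mathrm{NS}(\mathcal{F})}$ with $q_{\mathrm{T}_{\mathcal{F}}} = -q_{\mathrm{NS}(\mathcal{F})}$. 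Reading the summands of $\langle 2 \rangle \oplus \langle -2 \rangle \oplus D_4(-1)^{\oplus 3}$ off directly, one has $A_{\mathrm{NS}(\mathcal{F})} \cong (\mathbb{Z}/2\mathbb{Z})^8$, so the length is $\ell = 8$, while the summand $\langle 2 \rangle$ contributes a generator with $q = \tfrac{1}{2} \notin \mathbb{Z}/2\mathbb{Z}$, so the parity is $\delta = 1$; passing to $-q$ changes neither invariant, whence $\mathrm{T}_{\mathcal{F}}$ is even, indefinite, and $2$-elementary with $(\rho,\ell,\delta) = (8,8,1)$. A direct inspection shows that $H(2)^{\oplus 2} \oplus \langle -2 \rangle^{\oplus 4}$ is likewise even, indefinite, $2$-elementary with discriminant group $(\mathbb{Z}/2\mathbb{Z})^8$, parity $\delta = 1$ (from the $\langle -2\rangle$ summands), and signature $(2,6)$. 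Nikulin's uniqueness theorem \cite{MR633160}*{Thm.~4.3.2} for even, indefinite, $2$-elementary lattices then gives $\mathrm{T}_{\mathcal{F}} \cong H(2)^{\oplus 2} \oplus \langle -2 \rangle^{\oplus 4}$.

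The main obstacle lies in the first half: Shioda--Tate and the torsion index only determine the rank and the determinant of $\mathrm{NS}(\mathcal{F})$, whereas the statement asserts a specific even $2$-elementary overlattice. One must verify that the gluing prescribed by the two $2$-torsion sections --- equivalently, the $D_4(-1)^{\oplus 3}$ visible in the $3I_0^*$ fibration --- produces precisely $\langle 2 \rangle \oplus \langle -2 \rangle \oplus D_4(-1)^{\oplus 3}$ rather than some other index-$4$ overlattice of $H \oplus A_1(-1)^{\oplus 12}$ of the same determinant. This is the genuine content imported from \cite{MR1871336}*{Thm.~1} (and consistent with \cite{Clingher:2021}); once it is secured, the transcendental lattice is a routine application of Nikulin's classification, the only care required being the correct bookkeeping of the length and parity invariants under $q \mapsto -q$.
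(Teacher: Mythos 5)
Your proof is correct, and it is worth comparing to what the paper actually does. For the polarization you land exactly where the paper does: the paper offers no independent argument, simply invoking \cite{MR1871336}*{Thm.~1} on the strength of the genus-one fibration with three $I_0^*$ fibers from Lemma~\ref{prop:KSTTfibration}, which is the citation you make as well; your Shioda--Tate bookkeeping (rank $2+12+0=14$, determinant $-2^{12}/4^2=-2^8$) is a consistency check the paper does not spell out, and your alternative route through the fibration data of Lemma~\ref{lem:JacF} and the classification in \cite{Clingher:2021} is precisely how the paper identifies this same lattice elsewhere (see the proof of Corollary~\ref{cor:6dim_vGS}(1), where $12\,I_2$ fibers plus Mordell--Weil group $(\mathbb{Z}/2\mathbb{Z})^2$ are matched against that classification). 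Where you genuinely diverge is the transcendental lattice: the paper imports it wholesale from the same citation, whereas you deduce it from the polarization via the standard discriminant-form argument --- primitivity of $\mathrm{NS}(\mathcal{F})\hookrightarrow\Lambda_{\mathrm{K3}}$ gives $A_{\mathrm{T}_\mathcal{F}}\cong(\mathbb{Z}/2\mathbb{Z})^8$ with $q_{\mathrm{T}_\mathcal{F}}=-q_{\mathrm{NS}(\mathcal{F})}$, the invariants $(t_+,t_-,\ell,\delta)=(2,6,8,1)$ match those of $H(2)^{\oplus 2}\oplus\langle -2\rangle^{\oplus 4}$, and Nikulin's uniqueness theorem for even, indefinite, $2$-elementary lattices closes the argument. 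That derivation is sound, and it buys self-containedness: the second assertion of the proposition becomes a corollary of the first rather than a separately imported fact.
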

We have the immediate:
\begin{corollary}
The double $4\mathcal{H}$-surfaces $\mathcal{F}$ form the moduli space of  correspondences between the K3 surfaces $\mathcal{X}$ and $\widetilde{\mathcal{Y}}$ in Figure~\ref{fig:FTH-CHL_3}.
\end{corollary}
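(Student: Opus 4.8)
The plan is to show that the two elliptic pencils carried by a double $4\mathcal{H}$-surface $\mathcal{F}$ reproduce precisely the two vertices $\mathcal{X}$ and $\widetilde{\mathcal{Y}}$ of Figure~\ref{fig:FTH-CHL_3}, and then to identify the two six-dimensional families. First I would use the Weierstrass reduction~(\ref{eqn:definingF_WE}): the projection of $\mathcal{F}$ to the ruling of $\mathbb{P}^1\times\mathbb{P}^1$ employed in~(\ref{eqn:definingF}) is a Jacobian elliptic fibration $\pi_\mathcal{F}$ whose Weierstrass model coincides with the equation for $\mathcal{X}$ in Table~\ref{tab:WEQ_2} under the identification $P^{(1,2)}P^{(3,4)}=(A+C)/2$ and $P^{(1,3)}P^{(2,4)}=(A-C)/2$. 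By Lemma~\ref{lem:JacF} this fibration has singular fibers $12\,I_2$ and Mordell-Weil group $(\mathbb{Z}/2\mathbb{Z})^2$, so $\pi_\mathcal{F}$ realizes the isomorphism $\mathcal{F}\cong\mathcal{X}$ with $\mathcal{X}\in\mathfrak{M}_{\langle 2\rangle\oplus\langle-2\rangle\oplus D_4(-1)^{\oplus 3}}$ appearing in the figure.

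Next I would exploit the second pencil. By Lemma~\ref{prop:KSTTfibration} the genus-one fibration~(\ref{eqn:g1fibration}) on $\mathcal{F}$ has no section and singular fibers $3\,I_0^*+6\,I_1$; taking its relative Jacobian $\mathrm{Jac}^0(\pi'_\mathcal{F})$ yields a Jacobian elliptic K3 surface with the same configuration $3\,I_0^*+6\,I_1$ and trivial Mordell-Weil group, which is exactly the defining data of $\widetilde{\mathcal{Y}}\in\mathfrak{M}_{H\oplus D_4(-1)^{\oplus 3}}$ recorded in Table~\ref{tab:K3JEF4}. This realizes the arrow $\mathcal{F}\xrightarrow{\mathrm{Jac}^0}\widetilde{\mathcal{Y}}$, so that every double $4\mathcal{H}$-surface is a correspondence of the type described in Figure~\ref{fig:FTH-CHL_3}. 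For the converse I would start from a correspondence $\mathcal{F}$ as produced in Proposition~\ref{prop:situation2}, whose defining equation in Table~\ref{tab:WEQ_2} is a double-quadric over $\mathbb{F}_0$; after a change of ruling (the relative-Jacobian passage already used to relate the two rows of the $\mathcal{F}$ entry of Table~\ref{tab:WEQ_2}) its branch divisor must become a union of four bi-degree $(1,1)$ curves, exhibiting $\mathcal{F}$ as a double $4\mathcal{H}$-surface.

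Finally I would match the two families. Both carry the canonical polarization $\langle 2\rangle\oplus\langle-2\rangle\oplus D_4(-1)^{\oplus 3}$ — for the double $4\mathcal{H}$-surfaces this is Proposition~\ref{prop:trans_F}, and for the correspondences it is part~(1) of Proposition~\ref{prop:situation2} — and both are six-dimensional: the $16$ coefficients $\rho^{(i)}_j$ in~(\ref{eqn:definingF}) modulo the four scalings of the $(1,1)$-forms and the six-dimensional $\mathrm{Aut}(\mathbb{F}_0)$ give $16-4-6=6$, matching the moduli count for $\mathfrak{M}_{\langle 2\rangle\oplus\langle-2\rangle\oplus D_4(-1)^{\oplus 3}}$. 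Since the constructions of the preceding two paragraphs are mutually inverse on a general member, the moduli space of double $4\mathcal{H}$-surfaces and the moduli space of correspondences between $\mathcal{X}$ and $\widetilde{\mathcal{Y}}$ coincide, which is the assertion.

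The main obstacle I anticipate is the converse factorization step: verifying that the bi-degree $(4,4)$ branch divisor of a general correspondence $\mathcal{F}$ genuinely splits, after an automorphism of $\mathbb{F}_0$, into four irreducible $(1,1)$ curves satisfying the genericity conditions (1)--(3) of the double $4\mathcal{H}$-construction, rather than merely sharing the same lattice polarization. Equivalently, one must confirm that the relative Jacobian of the $3\,I_0^*$-fibration is exactly $\widetilde{\mathcal{Y}}$ and that the two parametrizations induce the same points of $\mathfrak{M}_{\langle 2\rangle\oplus\langle-2\rangle\oplus D_4(-1)^{\oplus 3}}$ without an intervening finite quotient. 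This is where I would spend the most care, using the uniqueness of the alternate Jacobian fibration on an $H\oplus N$-type surface established in \cite{Clingher:2021} to pin down the marking and thereby the bijection.
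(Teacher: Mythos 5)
Your first paragraph and your closing lattice-plus-dimension count follow the paper's (essentially immediate) route: Lemma~\ref{lem:JacF} identifies the Weierstrass model~(\ref{eqn:definingF_WE}) of the first ruling with the equation for $\mathcal{X}$ in Table~\ref{tab:WEQ_2}, Lemma~\ref{prop:KSTTfibration} supplies the $3I_0^*+6I_1$ genus-one fibration pointing to $\widetilde{\mathcal{Y}}$ (its precise identification is the content of the later Theorem~\ref{thm:FpF}), and Proposition~\ref{prop:trans_F} gives the rank-fourteen polarization so that both six-dimensional families sit inside the same six-dimensional space $\mathfrak{M}_{\langle 2\rangle\oplus\langle-2\rangle\oplus D_4(-1)^{\oplus 3}}$. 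The genuine gap is your converse step, and the verification you yourself flag as the ``main obstacle'' would in fact fail. The branch divisor of the correspondence $\mathcal{F}$ of Table~\ref{tab:WEQ_2} is $\{U=0\}\cup\{V=0\}\cup\{U=V\}\cup\{\tfrac{A-C}{2}\,U-\tfrac{A+C}{2}\,V=0\}$, i.e., three curves of bi-degree $(0,1)$ and one curve of bi-degree $(4,1)$. A change of ruling does not alter this divisor at all: the two rows of the $\mathcal{F}$ entry in Table~\ref{tab:WEQ_2} are the \emph{same} divisor rewritten via the identity~(\ref{eqn:hermite00_3}); they are not related by a relative-Jacobian passage (the relative Jacobian relates $\mathcal{F}$ to $\widetilde{\mathcal{Y}}$, which has a different polarization and is not isomorphic to $\mathcal{F}$). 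Nor can any automorphism of $\mathbb{F}_0$ help, since $\mathrm{Aut}(\mathbb{F}_0)$ preserves bi-degrees up to swapping the two factors, so $(0,1)+(0,1)+(0,1)+(4,1)$ can never become $(1,1)+(1,1)+(1,1)+(1,1)$.

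The correct relationship is that the double $4\mathcal{H}$-surface and the correspondence $\mathcal{F}$ are two \emph{different} double-cover presentations (two non-conjugate involutions) of the same abstract K3 surface; they are matched not by comparing branch divisors on $\mathbb{F}_0$ but through the common Weierstrass model: Equation~(\ref{eqn:definingF_WE}) equals $X\bigl(X-\tfrac{A+C}{2}Z\bigr)\bigl(X-\tfrac{A-C}{2}Z\bigr)$ under $P^{(1,2)}P^{(3,4)}=\tfrac{A+C}{2}$, $P^{(1,3)}P^{(2,4)}=\tfrac{A-C}{2}$. The converse thus amounts to showing that every general pair $(A,C)$ of quartics is realized by some choice of the coefficients $\rho^{(i)}_j$ in~(\ref{eqn:definingF}), i.e., that the coefficient map $(\rho^{(i)}_j)\mapsto(A,C)$ is dominant. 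Your final paragraph already contains the ingredients for this: both families carry the polarization $\langle 2\rangle\oplus\langle-2\rangle\oplus D_4(-1)^{\oplus 3}$ and both have exactly six moduli ($16-4-6=6$ on one side, the count of Proposition~\ref{prop:situation2} on the other), so both dominate the same irreducible six-dimensional moduli space. Replace the ``branch divisor splits into four $(1,1)$ curves after an automorphism'' step by this surjectivity argument (or by the scaling-plus-$\mathrm{PGL}_2$ fiber count for the map $(\rho^{(i)}_j)\mapsto(A,C)$), and your proof closes in agreement with the paper.
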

\subsection{Double covers of a cubic and three lines}
In this section we describe the geometry of the K3 surfaces $\widetilde{\mathcal{Y}}$ in Figure~\ref{fig:FTH-CHL_3}. They represent the CHL string backgrounds dual to the  F-theory with additional symplectic involution.
\par Let $\bar{\mathcal{Y}}$ be the double cover of the projective plane $\mathbb{P}^2 =\mathbb{P}(Z_1, Z_2, Z_3)$ branched over the union of three lines $\ell_1, \ell_2, \ell_3$ coincident in a point and a cubic $\mathit{E}$. We call such a configuration \emph{generic} if the cubic is smooth and meets the three lines in nine distinct points. In particular, the cubic does not meet the point of coincidence of the three lines. We construct a geometric model as follows: we use a suitable projective transformation to move the line $\ell_3$ to $\ell_3 = \mathrm{V}(Z_3)$. We then mark three distinct points $q_0$, $q_1$, and $q_\infty$ on $\ell_3$ and use a M\"obius transformation to move these points to $[Z_1 : Z_2 : Z_3] = [ 0 : 1 : 0]$, $[1:1:0]$, and $[1: 0: 0]$. Let $q_1: [1:1:0]$ be the point of coincidence of the three lines. Up to scaling, the three lines, coincident in $q_1$, are then given by
\beq
  \ell_1 = \mathrm{V}\big( Z_1 - Z_2  + \mu Z_3 \big) \,, \qquad  \ell_2 = \mathrm{V}\big( Z_1 - Z_2  + \nu Z_3 \big)  \,, \qquad  \ell_3 = \mathrm{V}\big(Z_3\big) \,,
\eeq  
for some parameters $\mu, \nu$ with $\mu \neq \nu$. Let the cubic $\mathit{E}=\mathrm{V} ( C(Z_1, Z_2, Z_3) )$ intersect the line $\ell_3$ at $q_0$, $q_\infty$, and at the point $[-d_2:c_1:0] \neq [1:1:0]$. Thus, we have 
\beq
\label{eqn:cubic}
 C = e_3 Z_3^3 + \Big( d_0 Z_1 + e_1 Z_2 \Big) Z_3^2 +  \Big( c_0 Z_1^2 + d_1 Z_1 Z_2 + e_2 Z_2^2 \Big) Z_3 + Z_1 Z_2 \Big(c_1 Z_1 + d_2 Z_2\Big)\,.
\eeq 
This can be written as
\beq
\label{eqn:cubic2}
 C=\Big( c_1 Z_2 + c_0 Z_3\Big) Z_1^2 + \Big( d_2 Z_2^2 +d_1 Z_2 Z_3 + d_0 Z_3^2\Big) Z_1 + \Big(e_2 Z_2^2 + e_1 Z_2 Z_3 + e_0 Z_3^2\Big) Z_3,
\eeq  
such that in $\mathbb{WP}_{(1,1,1,3)} =\mathbb{P}(Z_1, Z_2, Z_3, Y)$ the surface $\bar{\mathcal{Y}}$ is given by
\beq
\label{eqn:defining_eqn_3lines+cub_1st}
 Y^2  = \big( Z_1 - Z_2  + \mu Z_3 \big) \big( Z_1 - Z_2  + \nu Z_3 \big) Z_3 \; C(Z_1, Z_2 Z_3) \,,
\eeq
for parameters $\mu, \nu, c_0, c_1, d_0, d_1, d_2, e_0, e_1, e_2$ with $c_1 \neq 0, c_1 + d_2 \neq 0$, $\mu \neq \nu$, and a smooth cubic $\mathit{E}$  that intersects each line $\ell_1, \ell_2, \ell_3$ in three distinct points. We have the following:
\begin{lemma}
\label{lem:specialization_cubic}
The cubic $\mathit{E}$ is tangent to the line $\ell_3$ at $q_0$ if and only if $d_2=0$ and the remaining parameters are generic. The cubic $\mathit{E}$ is singular at $q_0$ if and only if $d_2=e_2=0$ and the remaining parameters are generic.
\end{lemma}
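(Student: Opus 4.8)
The plan is to prove both equivalences by a direct local analysis at $q_0 = [0:1:0]$ using the explicit cubic of Equation~(\ref{eqn:cubic2}); no deep input is required, so the only real care lies in translating the projective-geometric notions of tangency and singularity into the vanishing of the appropriate algebraic quantities and in correctly handling the accompanying genericity hypotheses.

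For the first assertion I would restrict $C$ to $\ell_3 = \mathrm{V}(Z_3)$. Setting $Z_3 = 0$ in Equation~(\ref{eqn:cubic2}) gives $C|_{\ell_3} = Z_1 Z_2 (c_1 Z_1 + d_2 Z_2)$, so that $\mathit{E} \cap \ell_3$ consists of the three points $q_0 = [0:1:0]$, $q_\infty = [1:0:0]$, and $[-d_2 : c_1 : 0]$. Since $\mathit{E}$ is a smooth cubic, $\ell_3$ is tangent to $\mathit{E}$ at $q_0$ precisely when the intersection multiplicity $(\mathit{E}\cdot\ell_3)_{q_0}$ is at least two, i.e.\ when a second intersection point collides with $q_0$. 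As $q_\infty \neq q_0$ always, this forces $[-d_2:c_1:0] = [0:1:0]$, equivalently $d_2 = 0$ (recall that $c_1 \neq 0$ is a standing hypothesis). Conversely, when $d_2 = 0$ one has $C|_{\ell_3} = c_1 Z_1^2 Z_2$, which vanishes to order exactly two at $q_0$, so that $\ell_3$ is an ordinary (non-inflectional) tangent and $q_\infty$ remains a transverse intersection; genericity of the remaining parameters keeps $\mathit{E}$ smooth.

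For the second assertion I would evaluate $C$ and its partial derivatives at $q_0$. Since $q_0 \in \mathit{E}$, the Euler relation $Z_1 \partial_{Z_1} C + Z_2 \partial_{Z_2} C + Z_3 \partial_{Z_3} C = 3C$ evaluated at $q_0 = [0:1:0]$ gives $\partial_{Z_2} C(q_0) = 3 C(q_0) = 0$ automatically, so only two partials carry information. A one-line differentiation of Equation~(\ref{eqn:cubic2}) yields
\beqn
 \partial_{Z_1} C(q_0) = d_2, \qquad \partial_{Z_2} C(q_0) = 0, \qquad \partial_{Z_3} C(q_0) = e_2,
\eeqn
whence $\mathit{E}$ is singular at $q_0$ exactly when $d_2 = e_2 = 0$. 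Inspecting the degree-two part of $C$ in the affine chart $Z_2 = 1$, namely $c_1 Z_1^2 + d_1 Z_1 Z_3 + e_1 Z_3^2$, shows that its tangent cone at $q_0$ is nondegenerate for generic remaining parameters, so $q_0$ is then an ordinary node and (again by genericity) the unique singular point of $\mathit{E}$. This is precisely the content of the claim, and the whole argument is elementary; the main, and quite minor, obstacle is simply keeping the genericity conditions aligned with the intended degenerations.
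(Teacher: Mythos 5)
Your proof is correct. The paper states Lemma~\ref{lem:specialization_cubic} without any proof, treating it as an elementary verification (with related computations deferred to the cited reference); your argument --- reading off $\mathit{E}\cap\ell_3$ from $C|_{Z_3=0}=Z_1Z_2(c_1Z_1+d_2Z_2)$ so that tangency at $q_0$ forces $[-d_2:c_1:0]=[0:1:0]$, i.e.\ $d_2=0$, and computing $\partial_{Z_1}C(q_0)=d_2$, $\partial_{Z_3}C(q_0)=e_2$ (with $\partial_{Z_2}C(q_0)=0$ forced by the Euler relation) for the singularity claim --- is precisely the routine check the authors leave to the reader, and your tangent-cone observation that $c_1Z_1^2+d_1Z_1Z_3+e_1Z_3^2$ is nondegenerate for generic remaining parameters correctly accounts for the genericity clauses in the statement.
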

In~\cite{Clingher:2020baq} the authors proved that the coefficients of the curves in the branch locus can be normalized as follows:
\begin{lemma}
\label{lem:double_cover}
Let $\bar{\mathcal{Y}}$ be the double cover of the projective plane $\mathbb{P}^2 = \mathbb{P}(Z_1, Z_2, Z_3)$ branched over three lines coincident in a point and a generic cubic. There are affine parameters $(d_2, \mu, c_0, e_2, d_0, e_1, e_0) \in \mathbb{C}^7$, unique up to the action given by
\beq
\label{eqn:rescaling}
 \Big(  d_2, \ \mu, \ c_0, \ e_2, \ d_0, \  e_1,  \, e_0 \Big) \ \mapsto \  
 \Big(  d_2, \ \Lambda \mu, \ \Lambda c_0, \ \Lambda e_2, \ \Lambda^2  d_0, \ \Lambda^2 e_1,  \, \Lambda^3 e_0 \Big) 
\eeq 
with $\Lambda \in \mathbb{C}^{\times}$, such that $\bar{\mathcal{Y}}$ in $\mathbb{WP}_{(1,1,1,3)} =\mathbb{P}(Z_1, Z_2, Z_3, Y)$ is obtained by
\beq
\label{eqn:defining_eqn_3lines+cub}
\begin{split}
 Y^2 & \,  = \big( Z_1 - Z_2  + \mu Z_3 \big) \big( Z_1 - Z_2  + \nu Z_3 \big) Z_3 \\
  & \times   \left( \Big( Z_2 + c_0 Z_3\Big) Z_1^2 + \Big( d_2 Z_2^2 + d_0 Z_3^2\Big) Z_1 + \Big(e_2 Z_2^2 + e_1 Z_2 Z_3 + e_0 Z_3^2\Big) Z_3 \right) \,,
\end{split} 
\eeq
with $\mu + \nu = (1+d_2/2)(c_0 + e_2)$ and $d_2 \neq -1$.
\end{lemma}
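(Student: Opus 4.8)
The plan is to prove this as a direct normal-form computation: starting from the branch sextic in~\eqref{eqn:defining_eqn_3lines+cub_1st} with cubic~\eqref{eqn:cubic2}, I would reduce the ten coefficients $(\mu,\nu,c_0,c_1,d_0,d_1,d_2,e_0,e_1,e_2)$ using exactly the symmetries that survive after $\ell_3=\mathrm{V}(Z_3)$ and the three marked points have been fixed. First I would pin down this residual group. A projective transformation preserving $\ell_3$ setwise and fixing $[0:1:0]$, $[1:1:0]$, $[1:0:0]$ must restrict to the identity on $\ell_3\cong\mathbb{P}^1$, hence has the shape $[Z_1:Z_2:Z_3]\mapsto[Z_1+CZ_3:Z_2+FZ_3:tZ_3]$ with $t\neq0$; together with the overall rescaling of the branch cubic (absorbed into $Y$ via $Y\mapsto\sqrt{s}\,Y$) this is a four-parameter group in $(s,C,F,t)$ acting on the coefficients, and the whole argument is the orbit analysis of this action.

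Next I would carry out the reduction in three steps. The scaling $s$ is used to set $c_1=1$; this is legitimate because the genericity hypothesis forces the cubic to meet $\ell_3$ transversally at $q_\infty=[1:0:0]$, so $c_1\neq0$. The two translations $C,F$ are then fixed by imposing two conditions at once. Expanding~\eqref{eqn:cubic2} under $Z_1\mapsto Z_1+CZ_3$, $Z_2\mapsto Z_2+FZ_3$ shows that the coefficient of $Z_1Z_2Z_3$ transforms as $d_1\mapsto d_1+2C+2d_2F$, while $c_0\mapsto c_0+F$, $e_2\mapsto e_2+Cd_2$, $d_2$ is unchanged, and the line parameters obey $\mu+\nu\mapsto\mu+\nu+2(C-F)$. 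Requiring $d_1=0$ together with $\mu+\nu=(1+d_2/2)(c_0+e_2)$ is then a linear system in $(C,F)$ whose coefficient determinant works out to $d_2^3+2d_2^2-5d_2-6=(d_2+1)(d_2+3)(d_2-2)$. This is nonzero for generic $d_2$, and crucially its factor $(d_2+1)$ matches the standing genericity condition $d_2\neq-1$ (geometrically, $c_1+d_2\neq0$ keeps the third point $[-d_2:c_1:0]$ of $E\cap\ell_3$ away from the coincidence point $q_1$). Hence there is a unique solution $(C,F)$ bringing the equation into the form~\eqref{eqn:defining_eqn_3lines+cub} with the stated constraint.

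Finally I would establish uniqueness. After $c_1=1$, $d_1=0$ and the constraint are imposed, the only surviving freedom is the scaling $Z_3\mapsto\Lambda Z_3$. One checks it preserves all three normalizations — $c_1\mapsto c_1$, $d_1\mapsto\Lambda d_1$, and both sides of $\mu+\nu=(1+d_2/2)(c_0+e_2)$ scale by $\Lambda$ — and that on the remaining parameters it acts with weights $(0,1,1,1,2,2,3)$ on $(d_2,\mu,c_0,e_2,d_0,e_1,e_0)$, which is precisely the action~\eqref{eqn:rescaling}. This identifies the stabilizer of the normal form with that one-parameter group, giving uniqueness up to~\eqref{eqn:rescaling} and recovering the normalization of~\cite{Clingher:2020baq}. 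I expect the main obstacle to be purely computational bookkeeping: tracking the translation action on all ten coefficients and confirming the non-vanishing of the $2\times2$ determinant, which is exactly what singles out the specific combination $\mu+\nu=(1+d_2/2)(c_0+e_2)$; the geometric input (transversality forcing $c_1\neq0$, and distinctness of the third intersection point forcing $d_2\neq-1$) is what guarantees the procedure is valid on the generic locus.
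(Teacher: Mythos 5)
The paper itself contains no proof of this lemma: it is quoted verbatim from \cite{Clingher:2020baq}, so there is no in-paper argument to compare against, and a self-contained orbit analysis like yours is the natural thing to supply. Most of your computation checks out. The residual group after normalizing $\ell_3=\mathrm{V}(Z_3)$ and the three marked points is indeed the four-parameter group in $(s,C,F,t)$ you describe; the transformation laws $c_0\mapsto c_0+F$, $e_2\mapsto e_2+d_2C$, $d_1\mapsto d_1+2C+2d_2F$, $\mu+\nu\mapsto\mu+\nu+2(C-F)$ (after $c_1=1$) are correct; the determinant of your $2\times 2$ system is indeed $d_2^3+2d_2^2-5d_2-6=(d_2+1)(d_2+3)(d_2-2)$; and the surviving scaling acts with weights $(0,1,1,1,2,2,3)$, matching Equation~(\ref{eqn:rescaling}).

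The gap is in how you dispose of the two extra factors of that determinant. The paper's stated genericity condition (the cubic is smooth and meets the three lines in nine distinct points) excludes $c_1=0$, $d_2=0$, and, via the requirement that $\mathit{E}$ misses the coincidence point, $d_2=-1$; it does \emph{not} exclude $d_2=-3$ or $d_2=2$, i.e.\ the third point $[-d_2:1:0]$ of $\mathit{E}\cap\ell_3$ sitting at $[3:1:0]$ or $[-2:1:0]$. At those values your linear system in $(C,F)$ has rank one: for a generic configuration the inhomogeneous system is inconsistent, so your procedure produces no normal form at all, and when it is consistent the solutions form a one-parameter family of translations preserving all three normalizations (for $d_2=2$ both rows reduce to $C+2F=0$), so uniqueness up to Equation~(\ref{eqn:rescaling}) fails as well. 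Hence ``nonzero for generic $d_2$'' is not justified by the standing hypotheses: your argument proves the lemma only on the locus $d_2\notin\{-1,-3,2\}$, a strictly stronger genericity than the one the paper defines, and since imposing $c_1=1$, $d_1=0$ and the constraint exhausts the available group freedom, this obstruction is intrinsic rather than an artifact of your route. To close the gap you must either state the strengthened genericity explicitly (almost certainly the intended reading, since your own stabilizer computation shows the uniqueness assertion is genuinely false at $d_2\in\{-3,2\}$), or show that the discrete freedom in the marking --- which two of the three points of $\mathit{E}\cap\ell_3$ are labelled $q_0$, $q_\infty$ --- can always be used to move $d_2$ off the bad set.
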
 
We denote by $\widetilde{\mathcal{Y}}$ the surface obtained as the minimal resolution of $\bar{\mathcal{Y}}$. Since $\widetilde{\mathcal{Y}}$ is the resolution of a double-sextic surface, it is a K3 surface.  We will now construct a Jacobian elliptic fibrations on it to establish the connection with $\widetilde{\mathcal{Y}}$ in Figure~\ref{fig:FTH-CHL_3}:
\begin{lemma}
\label{lem:defining_eqn_3lines+cub}
A generic K3 surface $\widetilde{\mathcal{Y}}$  admits a Jacobian elliptic fibration with the singular fibers $3 I_0^* + 6 I_1$ and a trivial Mordell-Weil group.  
\end{lemma}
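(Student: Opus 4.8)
The plan is to construct the fibration by \emph{projecting away from the triple point} $q_1=[1:1:0]$ at which the three lines $\ell_1,\ell_2,\ell_3$ meet. Every line through $q_1$ belongs to the pencil spanned by $Z_1-Z_2$ and $Z_3$, so projection from $q_1$ is the rational map $[Z_1:Z_2:Z_3]\mapsto[Z_1-Z_2:Z_3]=[U:V]$ onto the base $\mathbb{P}^1=\mathbb{P}(U,V)$ appearing in Figure~\ref{fig:FTH-CHL_3}. First I would resolve its single point of indeterminacy by blowing up $q_1$; the result is a Hirzebruch surface $\mathbb{F}_1$, ruled over $\mathbb{P}(U,V)$, on which the three lines $\widetilde\ell_1,\widetilde\ell_2,\widetilde\ell_3$ become three fibers of the ruling, the cubic $\mathit{E}$ becomes a trisection $\widetilde{\mathit{E}}$, and the exceptional curve $\mathcal{E}$ becomes a section. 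Because $q_1$ is a point of multiplicity three on the branch sextic, the pulled-back double cover of $\mathbb{F}_1$ is branched along $\widetilde\ell_1+\widetilde\ell_2+\widetilde\ell_3+\widetilde{\mathit{E}}+\mathcal{E}$; composing this double cover with the ruling yields the desired genus-one fibration on the minimal resolution $\widetilde{\mathcal{Y}}$ of $\bar{\mathcal{Y}}$. Since $\mathcal{E}$ is a branch component, its (single) preimage is a curve meeting every fiber once, hence a section, so the fibration is already Jacobian. Carrying out the Weierstrass reduction of this pencil moreover recovers the model of $\widetilde{\mathcal{Y}}$ recorded in Table~\ref{tab:WEQ_2}, matching equation~\eqref{eqn:Yprime_sub} (cf.~\cite{Clingher:2020baq}).

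Next I would read off the singular fibers from the branch configuration on $\mathbb{F}_1$. Over a generic base point the fiber is the double cover of $\mathbb{P}^1$ branched at the four transverse points of $\mathcal{E}$ and $\widetilde{\mathit{E}}$ with that fiber, i.e.\ a smooth genus-one curve. Over each of the three base points corresponding to $\ell_1,\ell_2,\ell_3$ the entire fiber lies in the branch locus; locally the double cover has the form $Y^2=u\prod_{i=1}^4(v-v_i(u))$, whose fiber over $u=0$ is a double line carrying four $A_1$ singularities, located where that fiber meets $\mathcal{E}$ once and $\widetilde{\mathit{E}}$ three times. Resolving these four nodes attaches four $(-2)$-curves to the multiplicity-two central component, producing exactly the $\widetilde{D}_4$ configuration, i.e.\ a fiber of type $I_0^*$; this yields $3I_0^*$. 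The remaining degenerations occur where two of the three points of $\widetilde{\mathit{E}}$ on a fiber collide, that is, where the line through $q_1$ is tangent to the cubic; the number of such tangent lines equals the class $d(d-1)=6$ of the smooth plane cubic, so one obtains $6I_1$ (the genericity hypotheses of Lemma~\ref{lem:double_cover} guarantee that these tangencies are simple, mutually distinct, and disjoint from the three special fibers and from the section's branch point). The Euler-number check $3\cdot 6+6\cdot 1=24$ confirms that no fibers have been overlooked.

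Finally, for the Mordell--Weil group I would invoke the Shioda--Tate formula. The trivial lattice generated by the zero section, a general fiber, and the non-identity components of the three $I_0^*$ fibers is $H\oplus D_4(-1)^{\oplus 3}$, of rank $2+3\cdot 4=14$. By Lemma~\ref{lem:double_cover} the family is six-dimensional, and the moduli dimension of K3 surfaces of Picard number fourteen is $20-14=6$; hence the very general member has $\rho=14$, matching the rank of the trivial lattice and forcing $\mathrm{rank}\,\mathrm{MW}(\widetilde{\mathcal{Y}},\pi_{\widetilde{\mathcal{Y}}})=0$. For the torsion, the trivial lattice $H\oplus D_4(-1)^{\oplus 3}$ has determinant $4^3=64$, equal to that of the polarizing lattice identified in Proposition~\ref{prop5}; since a proper overlattice would divide the determinant by a square, the trivial lattice is saturated and the torsion vanishes. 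Alternatively the absence of a $2$-torsion section can be checked directly on the generic Weierstrass model of Table~\ref{tab:WEQ_2}. I expect the main obstacle to be the $I_0^*$ identification along the three totally branched fibers --- pinning down the multiplicity-two central component and its four attached $(-2)$-curves via the local double-cover model (or, equivalently, via Tate's algorithm \cite{MR0184257} applied to the Weierstrass form of Table~\ref{tab:WEQ_2}) --- together with verifying that the tangent-line count is genuinely six and unobstructed by the genericity conditions.
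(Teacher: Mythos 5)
Your construction is correct in its main steps and takes a genuinely different route from the paper. The paper uses the same pencil of lines through $q_1=[1:1:0]$, but proceeds computationally: the substitution $Z_1=X$, $Z_2=X-(c_1+d_2)(t+\mu)(t+\nu)t$, $Z_3=(c_1+d_2)(t+\mu)(t+\nu)$ into Equation~(\ref{eqn:defining_eqn_3lines+cub_1st}) yields the explicit Weierstrass model~(\ref{eqn:defining_fibration}), whose discriminant $(t+\mu)^6(t+\nu)^6(c_1+d_2)^2p(t)$, $\deg p=6$, exhibits two $I_0^*$ fibers at $t=-\mu,-\nu$, a third at $t=\infty$, six $I_1$ fibers at the roots of $p$, and the Mordell--Weil statement is then ``checked by explicit computation.'' Your approach instead blows up the triple point to get $\mathbb{F}_1$, identifies the branch divisor $\widetilde\ell_1+\widetilde\ell_2+\widetilde\ell_3+\widetilde{\mathit{E}}+\mathcal{E}$ (the parity rule at the odd-multiplicity point $q_1$ is applied correctly), reads off the three $I_0^*$ fibers from the local model $Y^2=u\prod_i(v-v_i(u))$ along the totally branched fibers, obtains the six $I_1$ fibers from the class $d(d-1)=6$ of the cubic, and gets the section from the reduced preimage of $\mathcal{E}$. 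This explains geometrically \emph{why} the fiber configuration is $3I_0^*+6I_1$, which the paper's discriminant computation does not; the Euler-number check and the Shioda--Tate plus moduli-count argument for $\operatorname{rank}\mathrm{MW}=0$ (using the injectivity in Lemma~\ref{lem:double_cover}) are also sound.

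There is, however, a genuine gap in the torsion step. You deduce $\det\mathrm{NS}(\widetilde{\mathcal{Y}})=64$ from ``the polarizing lattice identified in Proposition~\ref{prop5},'' but Proposition~\ref{prop5} concerns the family~(\ref{eqn:Yprime_sub}); identifying \emph{that} family with the present double-sextic family is exactly the content of Proposition~\ref{lem:defining_eqn_3lines+cub_inverse} and Theorem~\ref{thm:FpF}, which come later in the paper and use the lemma you are proving. Your first-paragraph claim that the Weierstrass reduction ``recovers the model of Table~\ref{tab:WEQ_2}'' is this same nontrivial identification, asserted rather than proved (the reduction actually lands on~(\ref{eqn:defining_fibration}), and matching it to Table~\ref{tab:WEQ_2} requires the normalizations carried out in Proposition~\ref{lem:defining_eqn_3lines+cub_inverse}). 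Moreover, once the rank is known to be zero, knowing $\det\mathrm{NS}=64$ is \emph{equivalent} to triviality of the torsion, so the determinant comparison assumes what it proves; and pure lattice theory cannot close this, since $H\oplus D_4(-1)^{\oplus 3}$ does admit even overlattices of index two (its discriminant form has isotropic elements, e.g.\ sums of nonzero classes from two of the three $D_4$ summands). The gap is easy to repair intrinsically: for any section $P$ of an elliptic K3 surface the height is $h(P)=4+2(P\cdot\sigma)-\sum_v\mathrm{contr}_v(P)$, and here each $I_0^*$ fiber contributes at most $1$ while each $I_1$ contributes $0$, so $h(P)\geq 4-3=1>0$; since torsion sections have height zero, $\mathrm{MW}(\widetilde{\mathcal{Y}},\pi_{\widetilde{\mathcal{Y}}})_{\mathrm{tor}}=0$. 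With that substitution your proof is complete and independent of the later sections.
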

\begin{proof}
The pencil of lines $(Z_1 - Z_2) - t Z_3=0$ for $t\in \mathbb{C}$ through the point $q_1=[1:1:0]$ induces an elliptic fibration on $\widetilde{\mathcal{Y}}$.  We refer to this fibration as \emph{the standard fibration}. When substituting $Z_1 = X$, $Z_2=X-(c_1+d_2) (t+\mu)(t+\nu) t$, and $Z_3= (c_1+d_2)(t+\mu)(t+\nu)$ into Equation~(\ref{eqn:defining_eqn_3lines+cub_1st}) we obtain the Weierstrass model
\beq
\label{eqn:defining_fibration}
\begin{split}
 Y^2 & = X^3 - \big(t+\mu\big)\big(t+\nu\big) \Big( (c_1+2 d_2)t - (c_0 + d_1 + e_2)\Big) X^2\\
 &  + \big(c_1+d_2\big) \big(t+\mu\big)^2\big(t+\nu\big)^2 \Big(d_2 t^2 -(d_1 + 2 e_2) t + (d_0 +e_1)\Big) X \\
 & +\big(c_1+d_2\big)^2 \big(t+\mu\big)^3\big(t+\nu\big)^3 \Big( e_2 t^2 - e_1 t + e_0\Big) \,,
\end{split} 
\eeq 
with a discriminant function of the elliptic fibration $\Delta = (t+\mu)^6 (t+\nu)^6 (c_1+d_2)^2 p(t)$, where $p(t) = c_1^2 d_2^2 t^6 + \dots$ is a polynomial of degree six.  Given the Weierstrass model in Equation~(\ref{eqn:defining_fibration}) the statement is checked by explicit computation.
\end{proof}
Since we always assume $c_1 \neq0$ we have:
\begin{corollary}
\label{cor:limits1}
The  fibration in Lemma~\ref{lem:defining_eqn_3lines+cub} has the singular fibers $I_1^* + 2 I_0^* + 5 I_1$ if and only if $d_2=0$ and the remaining parameters are generic. It has the singular fibers $I_2^* + 2 I_0^* + 4 I_1$ if and only if $d_2=e_2=0$ and the remaining parameters are generic, and the singular fibers $I_3^* + 2 I_0^* + 3 I_1$ if and only if $d_2=e_2=e_1=0$ and the remaining parameters are generic. 
\end{corollary}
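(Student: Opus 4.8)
The plan is to analyze the single fiber of the standard fibration from Lemma~\ref{lem:defining_eqn_3lines+cub} that sits over $t=\infty$ — geometrically, the fiber associated with the third line $\ell_3=\mathrm{V}(Z_3)$ — and to show that it is precisely this fiber whose Kodaira type degenerates from $I_0^*$ to $I_n^*$ as $d_2,e_2,e_1$ are switched off, while the two fibers over $t=-\mu,-\nu$ remain of type $I_0^*$. First I would pass to the chart $s=1/t$ and regard the Weierstrass coefficients $a_2,a_4,a_6$ of Equation~(\ref{eqn:defining_fibration}) as sections of $\mathcal{O}_{\mathbb{P}^1}(4),\mathcal{O}_{\mathbb{P}^1}(8),\mathcal{O}_{\mathbb{P}^1}(12)$, so that the orders of vanishing at $s=0$ are $v_\infty(a_i)=2i-\deg_t a_i$. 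A direct inspection gives $v_\infty(a_2)=1$, $v_\infty(a_4)=2$, $v_\infty(a_6)=4$ in the generic case.

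The key structural point, which fixes the \emph{series} of the fiber independently of the specialization, is that the leading coefficient of $a_2(t)=-(t+\mu)(t+\nu)\big((c_1+2d_2)t-(c_0+d_1+e_2)\big)$ is $-(c_1+2d_2)$, and since $c_1\neq 0$ the polynomial $a_2$ has degree exactly $3$ throughout, so $v_\infty(a_2)=1$ in all four cases. Completing the cube to $c_4=-48\big(a_4-\tfrac{1}{3}a_2^2\big)$ and $c_6=-864\big(a_6-\tfrac{1}{3}a_2 a_4+\tfrac{2}{27}a_2^3\big)$, the terms $a_2^2$ and $a_2^3$ have valuation exactly $2$ and $3$ at infinity; under each specialization $v_\infty(a_4)$ and $v_\infty(a_6)$ only increase, so these $a_2$-terms dominate and force $v_\infty(c_4)=2$ and $v_\infty(c_6)=3$ in every case. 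By the Kodaira–Tate classification \cite{MR0184257} this pins the fiber over $t=\infty$ to the series $I_n^*$ with $n=v_\infty(\Delta)-6$; no exceptional type $IV^*,III^*,II^*$ can occur precisely because $v_\infty(c_4)$ stays equal to its minimal value $2$.

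It then remains to compute $n$, equivalently the degree of the sextic factor $p(t)$ in $\Delta=(t+\mu)^6(t+\nu)^6(c_1+d_2)^2\,p(t)$, whose leading coefficient is $c_1^2 d_2^2$. I would track the successive vanishing of the top coefficients of $p$: setting $d_2=0$ kills the $t^6$-coefficient, lowering $\deg p$ to $5$, so $v_\infty(\Delta)=7$ and the fiber becomes $I_1^*$; imposing in addition $e_2=0$ kills the $t^5$-coefficient ($\deg p=4$, type $I_2^*$); and further $e_1=0$ kills the $t^4$-coefficient ($\deg p=3$, type $I_3^*$). These three conditions match exactly the contact statements of Lemma~\ref{lem:specialization_cubic} for the cubic $\mathit{E}$ along $\ell_3$ at $q_0$, giving an independent geometric confirmation. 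Simultaneously I would check that for generic remaining parameters the two fibers over $t=-\mu,-\nu$ keep $\big(v(a_2),v(a_4),v(a_6)\big)=(1,2,3)$ with the associated resolvent cubic having distinct roots, hence remain $I_0^*$, and that the reduced $p(t)$ of degree $6-n$ has simple roots disjoint from $-\mu,-\nu$, yielding the remaining $(6-n)$ fibers of type $I_1$. The Euler-number tally $2\cdot 6+(6+n)+(6-n)=24$ then certifies completeness of the list.

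The main obstacle will be the explicit verification that switching off $d_2$, then $e_2$, then $e_1$ drops $\deg p$ by exactly one each time, producing no accidental further cancellation in $4A^3+27B^2$ (which would inflate $n$) and no drop of $v_\infty(c_4)$ below $2$ (which would change the fiber series). This is a finite but delicate leading-coefficient and resultant computation in $\mu,\nu,c_0,c_1,d_0,d_1,d_2,e_0,e_1,e_2$, and the genericity hypothesis on the remaining parameters is used precisely to rule out these degenerate coincidences.
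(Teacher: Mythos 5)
Your framework is the right one, and it is essentially the computation the paper leaves implicit: locate the degenerating fiber at $t=\infty$ in the Weierstrass model~(\ref{eqn:defining_fibration}), pin its type to the $I_n^*$ series by showing $v_\infty(c_4)=2$ and $v_\infty(c_6)=3$ (your valuation bookkeeping, using that $\deg_t a_2=3$ exactly because $c_1\neq 0$, is correct), and read off $n$ from the degree drop of $p(t)$ in $\Delta=(t+\mu)^6(t+\nu)^6(c_1+d_2)^2p(t)$, with the Euler-number tally confirming the list of remaining $I_1$ fibers. The first two degree drops also come out as you say: the $t^6$-coefficient of $(c_1+d_2)^2p$ is $c_1^2d_2^2$, and with $d_2=0$ the $t^5$-coefficient is $4c_1^5e_2$, so these vanish exactly when $d_2=0$, respectively $e_2=0$.

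However, your third step fails as stated, and the failure is structural, not a matter of the deferred ``delicate computation.'' You work in the full ten-parameter family (you explicitly list $d_1$ among your computation variables), and there the claim that ``$e_1=0$ kills the $t^4$-coefficient'' is false. Writing $a_2=(t+\mu)(t+\nu)\alpha$, $a_4=(t+\mu)^2(t+\nu)^2\beta$, $a_6=(t+\mu)^3(t+\nu)^3\gamma$, one has, for $d_2=e_2=0$, $\alpha=-c_1t+(c_0+d_1)$, $\beta=c_1(-d_1t+d_0+e_1)$, $\gamma=c_1^2(-e_1t+e_0)$, and the $t^4$-coefficient of
\[
(c_1+d_2)^2\,p \;=\; 18\alpha\beta\gamma-4\alpha^3\gamma+\alpha^2\beta^2-4\beta^3-27\gamma^2
\]
equals $c_1^4\bigl(d_1^2-4c_1e_1\bigr)$, the two contributions coming from $\alpha^2\beta^2$ and $-4\alpha^3\gamma$. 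For generic $d_1$ this does not vanish at $e_1=0$, so $\deg p$ stays $4$ and the fiber at infinity remains $I_2^*$; conversely $I_3^*$ occurs on the locus $d_1^2=4c_1e_1$, not on $e_1=0$. The corollary is true because it concerns the fibration on $\widetilde{\mathcal{Y}}$, which is by definition the resolution of the \emph{normalized} double cover of Lemma~\ref{lem:double_cover}, where $c_1=1$ and, crucially, $d_1=0$; only then does the $t^4$-coefficient reduce to $-4e_1$ up to a nonzero constant, making its vanishing equivalent to $e_1=0$. Your plan never invokes this normalization, so carried out as written it would refute, rather than prove, the third equivalence (and, by your own genericity convention on $d_1$, also its ``only if'' direction). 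Relatedly, the ``independent geometric confirmation'' you cite from Lemma~\ref{lem:specialization_cubic} covers only the first two conditions (tangency for $d_2=0$, a singular point for $d_2=e_2=0$); that lemma says nothing about $e_1=0$, which is precisely the case where the normalization matters.
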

We also have the converse statement of Lemma~\ref{lem:defining_eqn_3lines+cub}:
\begin{proposition}
\label{lem:defining_eqn_3lines+cub_inverse}
A K3 surface admitting a Jacobian elliptic fibration with the singular fibers $3 I_0^* + 6 I_1$ and a trivial Mordell-Weil group arises as the double cover of the projective plane branched over three lines coincident in a point and a cubic.
\end{proposition}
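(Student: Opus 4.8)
The plan is to run the construction of Lemma~\ref{lem:defining_eqn_3lines+cub} in reverse, using the rigidity of the fiber configuration to pin down a Weierstrass normal form and then inverting the explicit substitution that produced Equation~(\ref{eqn:defining_fibration}). First I would record the lattice data: by the Shioda--Tate formula a Jacobian elliptic K3 with reducible fibers $3I_0^*$ and trivial Mordell--Weil group has N\'eron--Severi lattice of rank $2 + 3\cdot 4 = 14$, and the fiber classes together with the zero section and general fiber span $H \oplus D_4(-1)^{\oplus 3}$; since this lattice is $2$-elementary and admits a unique primitive embedding into $\Lambda_\mathrm{K3}$, the surface is a point of the $6$-dimensional moduli space $\mathfrak{M}_{H \oplus D_4(-1)^{\oplus 3}}$. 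This both fixes the target and records how many moduli the double-plane side must reproduce.

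Next I would normalize the fibration. Using the $\mathrm{PGL}(2,\mathbb{C})$-action on the base $\mathbb{P}^1$, I move the three $I_0^*$ fibers to $t = -\mu$, $t = -\nu$, and $t = \infty$ for suitable $\mu \neq \nu$. At each finite such point the fiber type $I_0^*$ forces, for a reduced model $Y^2 = X^3 + A(t)\,X + B(t)$, the vanishing orders $v(A) = 2$ and $v(B) = 3$, together with the pole behaviour at infinity dictated by the K3 condition $\deg (A,B) \le (8,12)$. This rigidity lets me write the model, after an affine change $X \mapsto X + \varphi(t)$ and a rescaling, in exactly the shape of Equation~(\ref{eqn:defining_fibration}): a cubic in $X$ whose coefficients carry the factors $(t+\mu)(t+\nu)$, $(t+\mu)^2(t+\nu)^2$, $(t+\mu)^3(t+\nu)^3$ multiplied by polynomials of degrees $1,2,2$ in $t$. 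Reading off these residual polynomials identifies the parameters $c_0, c_1, d_0, d_1, d_2, e_0, e_1, e_2$, and inverting the substitution $Z_1 = X$, $Z_2 = X - (c_1+d_2)(t+\mu)(t+\nu)\,t$, $Z_3 = (c_1+d_2)(t+\mu)(t+\nu)$ exhibits the surface as the double cover (\ref{eqn:defining_eqn_3lines+cub_1st}) of $\mathbb{P}^2$ branched over three concurrent lines $\ell_1,\ell_2,\ell_3$ and the cubic $C$ of Equation~(\ref{eqn:cubic}).

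The step I expect to be the main obstacle is proving completeness, i.e.\ that this matching succeeds for the generic (hence every) surface in $\mathfrak{M}_{H \oplus D_4(-1)^{\oplus 3}}$ rather than only on a proper subfamily; equivalently, that the morphism from the space of normalized branch configurations of Lemma~\ref{lem:double_cover} to $\mathfrak{M}_{H \oplus D_4(-1)^{\oplus 3}}$ is dominant. I would handle this by a dimension count: Lemma~\ref{lem:double_cover} exhibits the branch data with seven affine parameters modulo the one-dimensional scaling (\ref{eqn:rescaling}) and the constraint $\mu + \nu = (1+d_2/2)(c_0+e_2)$, giving a $6$-dimensional family, matching $\dim \mathfrak{M}_{H \oplus D_4(-1)^{\oplus 3}} = 6$. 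Since Lemma~\ref{lem:defining_eqn_3lines+cub} already shows the construction lands in this moduli space and the period map is generically finite, dominance between irreducible spaces of equal dimension forces surjectivity onto a dense open set. The remaining care is to confirm that the $I_0^*$-fibration recovered on the double plane is the \emph{same} fibration one started with; here I would invoke the uniqueness, up to automorphism, of the Jacobian fibration with this reducible-fiber type on the very general member, as follows from the classification in \cite{Clingher:2021}.
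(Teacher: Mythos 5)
Your primary path is the paper's own proof: a M\"obius transformation places the three $I_0^*$ fibers at $-\mu$, $-\nu$, $\infty$; the fibration data forces the general Weierstrass shape~(\ref{eqn:general_eqn}); a shift in $X$ and a parameter matching bring it to the form~(\ref{eqn:defining_fibration}); and undoing the substitution $Z_1 = X$, $Z_2=X-(c_1+d_2)(t+\mu)(t+\nu)t$, $Z_3= (c_1+d_2)(t+\mu)(t+\nu)$ of Lemma~\ref{lem:defining_eqn_3lines+cub} exhibits the surface as the double cover~(\ref{eqn:defining_eqn_3lines+cub_1st}). Two caveats, though. First, what you describe as ``reading off these residual polynomials'' is where essentially all the work of the paper's proof lives, and it is not linear: one must first kill the degree-three coefficient $\tilde{e}_3$ in~(\ref{eqn:general_eqn}) by a shift $X \mapsto X + \rho\, t(t+\mu)(t+\nu)$ with $\rho$ a root of the cubic $\rho^3 + \tilde{c}_1 \rho^2 + \tilde{d}_2 \rho + \tilde{e}_3 =0$ (your $\varphi(t)$, but its existence needs this equation), and then solve a nonlinear matching system, which the paper does by choosing $c_1$ with $c_1^2 = \tilde{c}_1^2 - 4\tilde{d}_2$ and expressing $c_0, d_0, d_1, d_2, e_0, e_1, e_2$ as explicit rational functions with denominators powers of $c_1 - \tilde{c}_1$. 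Once this is done, the proof is complete for \emph{every} surface with the stated fibration (up to mild nondegeneracy of those denominators), with no further argument needed.

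Second, and more importantly, the step you flag as ``the main obstacle'' is not an obstacle at all in this approach, and the fallback you propose would not suffice. A dimension count plus generic finiteness only shows the family of double covers dominates $\mathfrak{M}_{H \oplus D_4(-1)^{\oplus 3}}$, i.e.\ it covers a dense open subset of very general members. But the proposition is a statement about \emph{any} K3 surface carrying a Jacobian elliptic fibration with fibers $3I_0^* + 6I_1$ and trivial Mordell--Weil group; such surfaces include members of Picard rank strictly larger than $14$, which lie outside any ``very general'' locus, so both the dominance argument and your appeal to the classification of \cite{Clingher:2021} for the very general member fail to reach them. The explicit coordinate change has no such limitation, which is exactly why the paper argues that way; if you carry out your primary path in full, you should delete the dominance discussion rather than rely on it.
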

\begin{proof}
Using a M\"obius transformation we can move the base points of the three singular fibers of type $I_0^*$ to $\mu, \nu, \infty$. An elliptic surface admitting the given Jacobian elliptic fibration then has a Weierstrass model of the form
\beq
\label{eqn:general_eqn}
 \begin{split}
 Y^2 & = X^3 + \big(t+\mu\big)\big(t+\nu\big) \Big( \tilde{c}_1 t + \tilde{c}_0\Big) X^2  +  \big(t+\mu\big)^2\big(t+\nu\big)^2 \Big( \tilde{d}_2 t^2 + \tilde{d}_1 t + \tilde{d}_0 \Big) X \\
 & +  \big(t+\mu\big)^3\big(t+\nu\big)^3 \Big(  \tilde{e}_3 t^3 + \tilde{e}_2 t^2 + \tilde{e}_1 t + \tilde{e}_0\Big) \,.
\end{split} 
\eeq 
A shift $X \mapsto X + \rho t (t+\mu)(t+\nu)$ eliminates the coefficient $\tilde{e}_3$ in Equation~(\ref{eqn:general_eqn}) if $\rho$ is a solution of $\rho^3 + \tilde{c}_1 \rho^2 + \tilde{d}_2 \rho + \tilde{e}_3 =0$. Thus, we can assume $\tilde{e}_3=0$. Next, let $c_1$ be a root of $c_1^2 = \tilde{c}_1^2 -4 \tilde{d}_2$. Then substituting
\beq
\begin{split}
c_0 = \frac{2 \tilde{d}_1}{c_1 - \tilde{c}_1} + \frac{4 \tilde{e}_2}{(c_1 - \tilde{c}_1)^2} + \tilde{c}_0, \quad
d_0 = \frac{2 \tilde{d}_0}{c_1 - \tilde{c}_1}  + \frac{4 \tilde{e}_1}{(c_1 - \tilde{c}_1)^2} , \quad
e_0=  \frac{4 \tilde{e}_0}{(c_1 - \tilde{c}_1)^2} , \\
d_1 =-\frac{2 \tilde{d}_2}{c_1 - \tilde{c}_1}  - \frac{8 \tilde{e}_1}{(c_1 - \tilde{c}_1)^2}, \quad
e_1=  -\frac{4 \tilde{e}_1}{(c_1 - \tilde{c}_1)^2} \,, \\
d_2 = - \frac{c_1+ \tilde{c}_1}{2}\,, \quad 
e_2=  \frac{4 \tilde{e}_2}{(c_1 - \tilde{c}_1)^2},
\end{split}
\eeq
into Equation~(\ref{eqn:defining_fibration}) recovers Equation~(\ref{eqn:general_eqn}).
\end{proof}
For the double $4\mathcal{H}$-surface $\mathcal{F} = \coprod_\xi \mathcal{F} _\xi$ in Proposition~\ref{prop:KSTTfibration} with the fibers $\mathcal{F} _\xi$ of genus one, we construct the relative Jacobian fibration $\coprod_\xi \mathrm{Jac}^0(\mathcal{F} _\xi)$.  We have the following:
\begin{theorem}
\label{thm:FpF}
The relative Jacobian fibration $\coprod_\xi \operatorname{Jac}(\mathcal{F} _\xi)$ associated with a generic double $4\mathcal{H}$-surface $\mathcal{F}$ is a K3 surface $\widetilde{\mathcal{Y}}$ obtained as the minimal resolution of the double-sextic surface for a generic configuration of three lines coincident in a point and a cubic. The latter defines an elliptic curve $\mathit{E}$ in $\mathbb{P}^2=\mathbb{P}(Z_1,  Z_2, Z_3)$ given by
\beq
\label{eqn:SWcurveII}
 \mathit{E}: \quad 0 = Z_1^3 + f(Z_2, Z_3) \, Z_1 + g(Z_2, Z_3) \,,
\eeq 
where $f, g$ were given by Equation~(\ref{eqn:hermite0b}).
\end{theorem}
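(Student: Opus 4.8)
The plan is to compute the relative Jacobian of the genus-one fibration on $\mathcal{F}$ directly from Weil's construction and then to recognize the resulting surface through the fiber-type characterization already established. I would begin with the genus-one fibration written in Equation~(\ref{eqn:g1fibration}), whose fiber over $\xi$ is the quartic curve $y^2 = \sum_{i=0}^4 (\xi+\mu)(\xi+\nu)\,A_i(\xi,\mu,\nu)\,u^i$ with each $A_i$ linear in $\xi$. Applying the classical Hermite formulas of Section~\ref{ssec:AJM}, in the form of Equation~(\ref{eqn:hermite0b}), to the quartic coefficients $b_i = (\xi+\mu)(\xi+\nu)\,A_i$ produces the Weierstrass model of $\operatorname{Jac}^0(\mathcal{F}_\xi)$. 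The decisive structural point is that $f$ is a quadratic and $g$ a cubic expression in the $b_i$, so the common factor $(\xi+\mu)(\xi+\nu)$ is extracted with weights $2$ and $3$, respectively. This yields the relative Jacobian in the form
\begin{equation*}
Y^2 Z = X^3 + \big[(\xi+\mu)(\xi+\nu)\big]^2 \hat f(\xi)\, X Z^2 + \big[(\xi+\mu)(\xi+\nu)\big]^3 \hat g(\xi)\, Z^3,
\end{equation*}
where $\hat f,\hat g$ are the Hermite invariants of the $A_i$ alone, hence of degrees two and three in $\xi$.

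Next I would read off the singular fibers. The prefactors force fibers of type $I_0^*$ over $\xi = -\mu$ and $\xi=-\nu$; counting vanishing orders at infinity (where $\deg \hat f = 2$, $\deg \hat g = 3$ give a discriminant of degree $18$, so $\operatorname{ord}_\infty \Delta = 6$) places a third $I_0^*$ at $\xi=\infty$, and the residual discriminant of degree $24-18=6$ contributes six fibers of type $I_1$. For generic parameters the Mordell-Weil group is trivial, and an Euler-number count confirms the total space is a K3 surface. Since the fibration is then of type $3I_0^* + 6 I_1$ with trivial Mordell-Weil group, Proposition~\ref{lem:defining_eqn_3lines+cub_inverse} applies verbatim and identifies the relative Jacobian with a surface $\widetilde{\mathcal{Y}}$ arising as the minimal resolution of a double cover of $\mathbb{P}^2$ branched over three concurrent lines and a cubic.

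It remains to pin down the cubic. Here I would match the Weierstrass model above with the standard fibration Equation~(\ref{eqn:defining_fibration}) of the double-sextic: completing the cube to remove its $X^2$-term brings Equation~(\ref{eqn:defining_fibration}) into short form, and comparing the coefficients of $XZ^2$ and $Z^3$ against $[(\xi+\mu)(\xi+\nu)]^2\hat f$ and $[(\xi+\mu)(\xi+\nu)]^3\hat g$ identifies the residual cubic factor of the branch locus. Under the identification of $A_i(\xi,\mu,\nu)$ with the homogeneous degree-one polynomials $a_i(U,V)$ of Equation~(\ref{eqn:hermite00_3}), the quantities $\hat f,\hat g$ become exactly the $f,g$ of Equation~(\ref{eqn:hermite0b}), so the cubic takes the homogeneous Weierstrass shape $Z_1^3 + f(Z_2,Z_3)\,Z_1 + g(Z_2,Z_3)=0$ of Equation~(\ref{eqn:SWcurveII}), exhibiting $\mathit{E}$ as the fiberwise Jacobian $\operatorname{Jac}(\mathit{H})$ of the genus-one fibers.

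The main obstacle I anticipate is precisely this last identification. One must carefully reconcile three parametrizations — the $\rho^{(i)}_j$ of the double $4\mathcal{H}$-surface, the $(A,C)$ normal form of $\mathcal{F}$ in Table~\ref{tab:WEQ_2}, and the $(\mu,\nu,c_i,d_i,e_i)$ coordinates of the double-sextic in Lemma~\ref{lem:double_cover} — and verify that the cubic produced by the Jacobian computation coincides, in the normalized coordinates and not merely up to projective equivalence, with the Weierstrass cubic $Z_1^3 + f Z_1 + g$. While each step is a mechanical polynomial manipulation, tracking the base change $\xi \leftrightarrow (U,V)$ and the completion of the cube without introducing spurious factors is the delicate part, and it is what ultimately confirms that the emergent elliptic curve $\mathit{E}$ is the Seiberg--Witten-type Jacobian of the genus-one fibration rather than an unrelated cubic in the branch configuration.
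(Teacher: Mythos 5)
Your proposal is correct and follows essentially the same route as the paper: both compute the relative Jacobian of the fibration in Equation~(\ref{eqn:g1fibration}) via the classical (Hermite/\cite{MR3995925}) formulas, recognize the resulting Weierstrass model as one with three $I_0^*$ fibers so that Proposition~\ref{lem:defining_eqn_3lines+cub_inverse} applies, and then extract the cubic $Z_1^3+f\,Z_1+g$ by matching against Equation~(\ref{eqn:defining_fibration}) and Table~\ref{tab:WEQ_2}. The only cosmetic difference is that you pass to the short Weierstrass form (completing the cube) and count fiber types explicitly, whereas the paper keeps the $X^2$-term so its model literally has the shape of Equation~(\ref{eqn:general_eqn}); the two are related by the coordinate shift you describe, so the arguments coincide in substance.
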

\begin{proof}
It was shown in \cite{MR3995925} how a Weierstrass model for $\mathcal{F}'= \coprod_\xi \operatorname{Jac}(\mathcal{F} _\xi)$ is constructed explicitly. Applied to Equation~(\ref{eqn:g1fibration}) we obtain a Weierstrass model for $\mathcal{F}'$ given by
\beq
\begin{split}
Y^2 & = X^3 + \big(\xi + \mu \big)  \big(\xi + \nu \big) \, A_2 \, X^2 \\
& +  \big(\xi + \mu \big)^2  \big(\xi + \nu \big)^2 \, \Big( A_1 A_3 - 4 A_0 A_4\Big) \, X \\
& +  \big(\xi + \mu \big)^3  \big(\xi + \nu \big)^3 \, \Big( A_1^2 A_4 + A_0 A_3^2 - 4 A_0 A_2 A_4\Big) \,.
\end{split}
\eeq
This equation has the form of Equation~(\ref{eqn:general_eqn}) considered in Proposition~\ref{lem:defining_eqn_3lines+cub_inverse}.  For the Weierstrass model in Equation~(\ref{eqn:defining_fibration}) we then reconstruct the cubic in the branch locus by setting $Y=0$, rescaling $X \mapsto  \big(t+\mu\big)\big(t+\nu\big) X$, and extracting the irreducible cubic part. Using the defining equation for $\widetilde{\mathcal{Y}}$ in Table~\ref{tab:WEQ_2} yields Equation~(\ref{eqn:SWcurveII}).
\end{proof}
\section{Summary of results and discussion}
\label{sec:summary}
The highly non-trivial connection between families of K3 surfaces and their polarizing lattices appears in string theory as the manifestation of the F-theory/heterotic string duality. This viewpoint has been studied in \cites{MR3366121, MR3274790, MR3712162, MR3417046, MR3933163, MR4160930, MR3991815}.  We proved in Theorem~\ref{thm:duality} that there are algebraic correspondences between the K3 surfaces polarized by the rank-ten lattices $H \oplus N$ and $H\oplus E_8(-2)$. Since the moduli spaces $\mathfrak{M}_{H \oplus N}$ and  $\mathfrak{M}_{H \oplus E_8(-2)}$ are also the moduli spaces of F-theory models with discrete flux and the CHL string (heterotic string with CHL involution), respectively, Theorem~\ref{thm:duality} and Figure~\ref{fig:FTH-CHL} provide a mathematical framework for the duality between he CHL string in seven dimensions and the dual F-theory models.
\par  A natural 6-dimensional subspace that is contained simultaneously in both aforementioned moduli spaces is the subspace where the F-theory admits an additional anti-symplectic involution (induced by an involution on the base curve), and on the CHL string side one has an additional symplectic involution (namely, a van Geemen-Sarti involution). The duality diagram in Figure~\ref{fig:FTH-CHL} then extends to Figure~\ref{fig:FTH-CHL_2}. In Theorem~\ref{thm:subspace1} we proved an explicit parametrization for elements $\widetilde{\mathcal{F}}$ of  the moduli space of correspondences. A general element $\widetilde{\mathcal{F}}$ is a double-quadrics with a branch locus that is reducible and consists of two curves of bi-degree $(1,0)$ and $(0,1)$, respectively, and a genus-one curve in the linear system $|\mathcal{O}_{\mathbb{F}_0}(2,2)|$ such that the curves intersect in ordinary rational double points. The parametrization is then based on a construction of Andr\'e Weil \cite{MR717601}, in which the Abel-Jacobi map is used to obtain embeddings of genus-one curves as symmetric divisors of bi-degree $(2,2)$ in $\mathbb{F}_0 =\mathbb{P}^1 \times \mathbb{P}^1$. 
\par  We also showed that the F-theory moduli space has another natural 6-dimensional subspace, namely the moduli space of K3 surfaces polarized by the lattice $\langle 2 \rangle  \oplus \langle -2 \rangle \oplus D_4(-1)^{\oplus 3}$. This special situation corresponds to the case when, on the F-theory side, surfaces admit an additional symplectic involution and, on the CHL string side,  an additional anti-symplectic involution exists. The duality diagram in Figure~\ref{fig:FTH-CHL} then extends to Figure~\ref{fig:FTH-CHL_3}. The correspondences $\mathcal{F}$ turn out to then be precisely the double $4\mathcal{H}$-surfaces considered in \cite{MR1871336}. In Theorem~\ref{thm:FpF} we proved that the K3 surfaces $\widetilde{\mathcal{Y}}$ associated with the CHL string also carry a beautiful geometric description: they are special double-sextic surfaces  branched over a configuration of three distinct lines coincident in a point and an additional generic cubic. The latter divisor gives rise to an elliptic curve capturing part of the K3 moduli coordinates. 
\par  In the two special examples above, both involving 6-dimensional subspaces of $ \mathfrak{M}_{H \oplus N}$,  an elliptic curve naturally emerges.  This is not the elliptic curve upon which the CHL string is constructed. Rather, the elliptic curve underlying the heterotic string arises an an anti-canonical curve (cf.~\cite{MR1797021}), here, the anti-canonical curve in the rational elliptic surface $\mathcal{R}$ in Figure~\ref{fig:FTH-CHL_2} and Figure~\ref{fig:FTH-CHL_3}, respectively. The role of the elliptic curve underlying the heterotic string and the rational elliptic surface was investigated in previous work of the authors in \cite{MR3995925}. In contrast, the elliptic curve that emerges in the parametrization of the 6-dimensional subspaces above is a Seiberg-Witten type curve and parameterizes certain moduli of the F-theory/CHL vacua under consideration. The relation between this Seiberg-Witten type curve and the twisted principal $E_8 \times E_8$ bundle over the elliptic curve defining the heterotic string will be investigated in future work by the authors.
\bibliographystyle{amsplain}
\bibliography{references}{}
\end{document}